\newtheorem{theorem}{Theorem}[section]
\numberwithin{equation}{section}
\newtheorem{corollary}[theorem]{Corollary}
\newtheorem{prop}[theorem]{Proposition}
\newtheorem{example}[theorem]{Example}
\newtheorem{lemma}[theorem]{Lemma}
\theoremstyle{definition}
\newtheorem{obs}{Remark}
\theoremstyle{definition}
\newtheorem{definition}[theorem]{Definition}
\DeclareMathOperator{\Pt}{\mathnormal{\prescript{t}{}{P}}}
\newcommand{\supp}{\text{supp}}
\newcommand{\T}{\mathbb{T}}
\newcommand{\G}{\mathnormal{\mathbb{T}^{m}\times\mathbb{R}^n}}
\newcommand{\R}{\mathbb{R}}
\newcommand{\Z}{\mathbb{Z}}
\newcommand{\N}{\mathbb{N}}
\newcommand{\s}{\mathcal{S}(\G)}
\newcommand{\g}{\mathbb{T}^1\times\mathbb{R}}
\newcommand{\Real}{\mathrm{Re}}
\newcommand{\Imag}{\mathrm{Im}}
\renewcommand{\S}{\mathcal{S}_{\sigma,\mu}(\G)}
\renewcommand{\s}{\mathcal{S}_{\sigma,\mu}(\T^1\times\R)}
\renewcommand{\SS}{\mathcal{S}'_{\sigma,\mu}(\G)}
\renewcommand{\ss}{\mathcal{S}'_{\sigma,\mu}(\T^1\times\R)}
\newcommand{\SC}{\mathcal{S}_{\sigma,\mu,C}(\G)}
\newcommand{\Smu}{\mathscr{F}_{\mu}(\T^1\times\R)}
\newcommand{\SSmu}{\mathscr{F}'_{\mu}(\T^1\times\R)}
\newcommand{\scoef}{\mathcal{S}_{\sigma_{\scalebox{0.5}{$0$}},\mu}(\g)}
\newcommand{\sscoef}{\mathcal{S}'_{\sigma_{\scalebox{0.5}{$0$}},\mu}(\g)}
\newcommand{\sigmac}{\sigma_{\scalebox{0.5}{$0$}}}
\newcommand{\longhookrightarrow}{\lhook\joinrel\longrightarrow}
\DeclareFontFamily{U}{mathx}{\hyphenchar\font45}
\DeclareFontShape{U}{mathx}{m}{n}{
      <5> <6> <7> <8> <9> <10>
      <10.95> <12> <14.4> <17.28> <20.74> <24.88>
      mathx10
      }{}
\DeclareSymbolFont{mathx}{U}{mathx}{m}{n}
\DeclareMathAccent{\widecheck}{0}{mathx}{"71}
\DeclareMathOperator{\ftil}{\widehat{\widehat{\mathnormal{f}\,}}\!\!\,}
\newcommand{\defeq}{\vcentcolon=}
\title[Global hypoellipticity on time-periodic Gelfand-Shilov spaces]{Global hypoellipticity on time-periodic Gelfand-Shilov spaces via non-discrete Fourier analysis}
\author[A. Kowacs]{Andr\'e Pedroso Kowacs}
\address{
	Universidade Federal do Paran\'{a},
	Departamento de Matem\'{a}tica,
	C.P.19096, CEP 81531-990, Curitiba, Paran\'a, Brazil
}
\email{andrekowacs@gmail.com}
\author[P. Tokoro]{Pedro Meyer Tokoro}
\address{
	Universidade Federal do Paran\'{a},
	Programa de P\'os-Gradua\c c\~ao de Matem\'{a}tica,
	C.P.19096, CEP 81531-990, Curitiba, Paran\'a, Brazil
}
\email{pedro.tokoro@ufpr.br}
\thanks{This study was financed in part by the Coordena\c c\~ao de Aperfei\c coamento de Pessoal de N\'ivel Superior - Brasil (CAPES) - Finance Code 001.}
\subjclass{Primary 35B65, 42B05; Secondary 46F05}
\keywords{Gelfand-Shilov spaces, Fourier analysis, Complex vector fields, Global hypoellipticity}
\begin{document}

\begin{abstract}
In this paper, we provide a characterization of the time-periodic Gelfand–Shilov spaces, as introduced by F. de \'Avila Silva and M. Cappiello [J. Funct. Anal., 282(9):29, 2022], through the asymptotic behaviour of both the Euclidean and periodic partial Fourier transforms of their elements. As an application, we establish necessary and sufficient conditions for global regularity---within this framework---for a broad class of constant-coefficient differential operators, as well as for first-order tube-type operators.
\end{abstract}

\maketitle

\section{Introduction}

Time-periodic Gelfand-Shilov spaces were defined by F. de \'Avila Silva and M. Cappiello in \cite{AvCap2022} to study the global regularity of time-periodic evolution operators on Gelfand-Shilov spaces. More precisely, the authors study the global hypoellipticity of operators the form
\[L=D_t+c(t)P(x,D_x)\]
on $\T^1\times\R^n$, where  $\T^1=\mathbb{S}^1\simeq\R/2\pi\Z$ is the $1$-dimensional torus, $c$ is a Gevrey complex-valued function on $\T^1$, and $P(x,D_x)$ is a self-adjoint differential operator with polynomial coefficients on $\R^n$ satisfying global ellipticity conditions.

Given $\mu,\nu>0$, the Gelfand-Shilov space $\mathcal{S}_\nu^\mu(\R^n)$ is the space of all smooth functions $\varphi$ for which there exists $C>0$ such that
\[\sup_{\alpha,\beta\in\N_0^n}C^{-|\alpha|-|\beta|}\alpha!^{-\nu}\beta!^{-\mu}\sup_{x\in\R^n}|x^\alpha\partial_x^\beta \varphi(x)|<+\infty.\]

These spaces were introduced by I. Gelfand and G. E. Shilov in \cite{GelfandShilov} as an alternative setting for studying differential equations. The functions belonging to $\mathcal{S}_\nu^\mu(\R^n)$ present uniform Gevrey regularity and a certain exponential decay at infinity, and the spaces $\mathcal{S}_\mu^\nu(\R^n)$ have applications in the study of well-posedness of evolution equations, see \cite{AAC2022,AC2019} for instance. Moreover, functions on Gelfand-Shilov spaces arise naturally in applications. For instance, as shown in \cite{CGR2011}, the eigenfunctions of generalized harmonic oscillators belong to $\mathcal{S}_\nu^\mu(\R^n)$ for suitable $\mu$ and $\nu$.

The time-periodic Gelfand-Shilov space $\S$, as introduced in \cite{AvCap2022}, is the space of smooth functions which are uniformly Gevrey regular of order $\sigma$ on the periodic variables and Gelfand-Shilov behaviour of order $\mu$ on the real variables. 
More precisely, it is the space of functions $\varphi\in C^\infty(\G)$ such that
\[\sup_{\alpha,\beta\in\N_0^n}\sup_{\gamma\in \N_0^m} C^{-|\alpha+\beta|-|\gamma|}\gamma!^{-\sigma}(\alpha!\beta!)^{-\mu}\sup_{(t,x)\in\T^m\times\R^n}|x^\alpha\partial_x^\beta\partial_t^\gamma \varphi(t,x)|<+\infty,\]
for some $C>0$.

In \cite{AvCap2022}, the authors developed a discrete Fourier analysis for $\S$ by combining the partial Fourier transform on the torus with an eigenfunction expansion induced by a globally elliptic self-adjoint operator $P(x,D_x)$ on $\R^n$ as developed in \cite{GPR2011}.

Inspired by \cite{KowTmRn,Kow_chap}, we introduce a Fourier analysis for functions on time-periodic Gelfand-Shilov spaces, combining the Fourier series on the torus with the classical Euclidean  Fourier transform. More precisely, we provide a characterization for functions on $\S$ by means of the decay of its partial and total Fourier transforms, as well as a partial characterization of $\SS$.

As an application of these results, we study the global regularity for certain classes of differential operators $P$ in the general sense that
\begin{equation}\label{SmuGH}
    u\in\SS,\ Pu\in\S\ \Rightarrow\ u\in\S.
\end{equation}

 In this work, we consider operators that are not covered by the results of \cite{AvCap2022,AvCap2025,AvCapKir2024}, as linear partial differential operators with constant coefficients and tube-type first order operators. More specifically, we obtain necessary and sufficient conditions for such regularity property for constant coefficients differential operators on the cylinder $\T^1\times\R$ with no mixed terms satisfying suitable assumptions, as follows.

\begin{theorem}\label{theo_gh_cte_intro}
    Let $P$ be a differential operator on $\T^1\times\R$ with constant coefficients and no mixed terms, such that  $$P=p\left(D_x\right)+q\left(D_t\right),$$
    where $p$ and $q$ are complex polynomials and $\deg(p)=N$. Suppose that one of the following conditions hold:
   \begin{enumerate}
       \item $N\leq 1$, $\mu\geq 1/2$.
       \item $N>1$, $\mu\geq 1$ and there exist $C,R>0$ such that
       \begin{equation*}
       |p(\xi)+q(k)|\geq C|\xi|^{N-1},\quad \text{for every } k\in\Z,\ |\xi|\geq R. 
       \end{equation*}
   \end{enumerate}
   
   Then $P$ satisfies \eqref{SmuGH} for every $\sigma\geq 1$ if and only if its symbol does not vanish on $\Z\times \R$.
\end{theorem}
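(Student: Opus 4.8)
The plan is to move to the Fourier side via the characterization of $\s$ established above and to reduce the statement to uniform-in-$k$ estimates on the reciprocal of the symbol. For the ``only if'' part, suppose the symbol vanishes at some $(k_0,\xi_0)\in\Z\times\R$. Then $u_0(t,x):=e^{i(k_0t+\xi_0x)}$ is a bounded smooth function, hence belongs to $\ss$ (it pairs continuously with the rapidly decreasing elements of $\s$), and $Pu_0=\bigl(p(\xi_0)+q(k_0)\bigr)u_0=0\in\s$; but $|u_0|\equiv1$, so $u_0\notin\s$. Thus \eqref{SmuGH} fails, and the non-vanishing of the symbol is necessary.

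\textbf{Sufficiency: reduction to the reciprocal symbol.}
Assume now $p(\xi)+q(k)\neq0$ for all $(k,\xi)\in\Z\times\R$, and let $u\in\ss$ with $f:=Pu\in\s$. Applying the periodic Fourier transform in $t$ and then the Euclidean Fourier transform in $x$, and using that $P$ has constant coefficients and no mixed terms, one obtains, for every $k\in\Z$,
\[\bigl(p(\xi)+q(k)\bigr)\,\util(k,\xi)=\ftil(k,\xi).\]
Since $\xi\mapsto p(\xi)+q(k)$ has no real zero, its reciprocal $m_k(\xi):=(p(\xi)+q(k))^{-1}$ is a well-defined real-analytic function of $\xi$ for each fixed $k$, so $\util(k,\xi)=m_k(\xi)\,\ftil(k,\xi)$; here one should first invoke the partial characterization of $\ss$ to guarantee that $\util(k,\cdot)$ is, say, a function of at most subexponential growth, so that this identity and the pointwise estimates below make sense. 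The whole task is then to show that $\util(k,\cdot)$ satisfies the Gelfand--Shilov-type estimates---uniformly in $k$ and with the $e^{-\varepsilon|k|^{1/\sigma}}$ gain inherited from $f\in\s$---that the characterization theorem requires for membership in $\s$.

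\textbf{The two key estimates.}
Because $\ftil(k,\cdot)$ already satisfies such estimates, by Leibniz's rule it is enough to prove that $m_k$ obeys, uniformly in $k\in\Z$,
\[|\partial_\xi^{\,j}m_k(\xi)|\le C\,B^{\,j}\,j!^{\,\mu}\,(1+|\xi|)^{-\rho},\qquad j\in\N_0,\ \xi\in\R,\qquad \rho:=\max(N-1,0).\]
I would establish this in two steps. First, a \emph{uniform lower bound}: combining the non-vanishing hypothesis with compactness, with $|q(k)|\to\infty$ as $|k|\to\infty$ when $\deg q\ge1$, and---for large $|\xi|$---with condition (2) in case (2), or with the explicit form of the single complex zero of the degree-$\le1$ polynomial $p+q(k)$ in case (1), one finds $c_1>0$ with $|p(\xi)+q(k)|\ge c_1(1+|\xi|)^{\rho}$ for all $(k,\xi)$; in particular $|\util(k,\xi)|\le c_1^{-1}|\ftil(k,\xi)|$, which already yields the decay of $\util$. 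Second, a \emph{$k$-independent strip of holomorphy}: one checks that this lower bound survives, with constants not depending on $k$, on a fixed strip $|\Imag\xi|<\delta$---the point being that a polynomial $p+q(k)$ of degree $N$ whose modulus is bounded below by $(1+|\xi|)^{\rho}$ must have its complex zeros at distance $\ge\delta$ from $\R$ uniformly in $k$. On that strip $m_k$ is holomorphic with $|m_k|\le C(1+|\Real\xi|)^{-\rho}$, and Cauchy's estimates on discs of radius $\delta/2$ give $|\partial_\xi^{\,j}m_k(\xi)|\le C\,j!\,(\delta/2)^{-j}(1+|\xi|)^{-\rho}$ uniformly in $k$ and in real $\xi$. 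The assumption $\mu\ge1$ in case (2) is used exactly here, to absorb the resulting factor $j!$ (Gevrey order $1$) into the $j!^{\mu}$ tolerated by $\mathcal{S}_\mu^\mu$; in case (1) the low order $N\le1$ keeps this loss harmless, and for $N=0$ the function $m_k$ is constant in $\xi$ and there is nothing to prove. Feeding these bounds into the Gelfand--Shilov estimates of $\ftil(k,\cdot)$ via Leibniz, while tracking the $e^{-\varepsilon|k|^{1/\sigma}}$ factor, gives the required estimates on $\util(k,\cdot)$, whence $u\in\s$ by the characterization theorem.

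\textbf{Main obstacle.}
I expect the second step---securing a $k$-independent strip of holomorphy for $m_k$, i.e.\ making the non-vanishing hypothesis quantitative uniformly in the discrete variable---to be the crux: if $\delta$ were allowed to shrink as $|k|\to\infty$, Cauchy's estimates would degenerate and the decay in $k$ inherited from $f$ would be destroyed. This is precisely what conditions (1) and (2) are designed to prevent. The remaining ingredients (the Leibniz estimate, summation over $k$, and the appeal to the characterization of $\s$) should be routine once these two steps are in hand.
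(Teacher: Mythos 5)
Your necessity argument is the one in the paper (Theorem \ref{theo_gh_necessary}), and for the sufficiency your route is genuinely different from the paper's: instead of estimating $\partial_\xi^{\ell}\bigl[(p(\xi)+q(k))^{-1}\bigr]$ directly through the reciprocal-derivative identity of Lemma \ref{derivative-reciprocal} together with $|\partial_\xi^{\ell}p(\xi)|\leq K(1+|\xi|)^{N-\ell}$ (as in Theorems \ref{theo_gh_cte_Nleq1} and \ref{theo_gh_cte_N>1}), you extend $m_k$ holomorphically to a $k$-independent strip and use Cauchy's estimates. That works: the uniform lower bound $|p(\xi)+q(k)|\geq c_1(1+|\xi|)^{N-1}$ is obtained exactly as in the paper (hypothesis (2) for $|\xi|\geq R$, compactness of $p(\{|\xi|\leq R\})$ plus closedness of $q(\Z)$ for $|\xi|\leq R$), and it propagates to a strip of $k$-independent width because $|p(\zeta)-p(\Real\zeta)|\leq C_p(1+|\Real\zeta|)^{N-1}|\Imag\zeta|$; hence your bound $|\partial_\xi^{j}m_k(\xi)|\leq C\,j!\,B^{j}(1+|\xi|)^{-(N-1)}$ is correct and uniform in $k$, and with Leibniz and the decay of $\widehat{f}(k,\cdot)$ it settles case (2), where $\mu\geq 1$ absorbs the factor $j!$. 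This is a tidy alternative to the paper's multinomial computation.

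The genuine gap is in case (1) with $N=1$ and $1/2\leq\mu<1$: the assertion that the Gevrey-$1$ loss is ``harmless'' there is unjustified, and it cannot be repaired. For $N=1$ the derivatives of $m_k$ really are of size $j!$ (e.g.\ $m(\xi)=(\xi+i)^{-1}$ has $|\partial^{j}m(0)|=j!$), and in the Leibniz step the factor $\binom{\beta}{\ell}\ell!(\beta-\ell)!^{\mu}=\beta!(\beta-\ell)!^{\mu-1}$ is, for $\ell=\beta$, equal to $\beta!$, which is not $O(C^{\beta}\beta!^{\mu})$ when $\mu<1$; so your estimates only give $\widehat{u}(k,\cdot)$ with Gevrey order $1$ (not $\mu$) in $\xi$. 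Moreover no sharper multiplier estimate can help, because the obstruction is intrinsic: take $p(\xi)=\xi$, $q\equiv i$, so $P=D_x+i$ has nowhere-vanishing symbol $\xi+i$, and $f(t,x)=e^{-x^2}\in\s$ for every $\mu\geq1/2$. The function $u(t,x)=u_0(x)$ with $u_0(x)=-i\int_x^{\infty}e^{x-y}e^{-y^2}\,\mathrm{d}y$ is a bounded Schwartz-type function, hence lies in $\ss$, and satisfies $Pu=f$; but $u_0(x)\sim c\,e^{x}$ as $x\to-\infty$ with $c\neq0$ (equivalently, $\widehat{u_0}(\xi)=\sqrt{\pi}\,e^{-\xi^{2}/4}(\xi+i)^{-1}$ has a pole at $\xi=-i$ and so cannot be entire), which is incompatible with the bound $|u_0(x)|\leq Ce^{-L|x|^{1/\mu}}$, $1/\mu>1$, required by Remark \ref{equiv_tpgelf}; hence $u\notin\s$ for any $\mu<1$. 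Note that the paper's own proof of Theorem \ref{theo_gh_cte_Nleq1} passes this point only because the displayed formula there for $\partial_\xi^{\ell}\bigl[(p(\xi)+q(k))^{-1}\bigr]$ omits the factor $\ell!$; with the factorial restored it meets the same difficulty you do. As written, your argument does prove case (1) for $N=0$ (all $\mu\geq1/2$) and for $N=1$ with $\mu\geq1$, in addition to case (2), but not the full range claimed in case (1).
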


 Due to the non-discreteness of the variable $\xi$, the existence of a root for $p+q$ in $\Z\times \R$ allows the construction of a singular solution, a fact that does not happen when we deal with operators in the context of a discrete Fourier analysis, as in the case of the torus, compact Lie groups or when dealing with the Fourier analysis induced by certain elliptic operators in general. We also note that our results do not exhibit Diophantine approximation phenomena, as usually occurs in the discrete setting, see for instance \cite{AKM19,Ber1994,Ber1999,BCM1993,AGK2018,GW,Hounie,KMR2021} and references therein. This lack of Diophantine conditions  
 on the continuous setting is to be expected as  first evidenced by \cite{Hounie}.
 
 In the tube case, that is, operators of the form
\[P=\partial_t+c(t)\partial_x+q(t),\]
for any Gevrey regular complex-valued functions $c,q$ on $\T^1$, we characterize when the regularity property \eqref{SmuGH} holds, but allowing for certain loss of Gevrey regularity on the periodic variables, as in \cite{AvCap2022}. More precisely, when $c$ is real valued, we show that the global regularity of $P$ is reduced to study this property for an operator with constant coefficients. When of $\Imag(c)$ is not zero, then the global regularity of $P$ is equivalent to $\Imag(c)$ not changing sign, a property that is related to the famous Nirenberg-Tr\`eves condition $(\mathcal{P})$, in addition to the global regularity of an associated operator with constant coefficients.

 We can summarize our results of the tube case as follows.
 \begin{theorem}
     Let $\sigma\ge 1$, and $P$ be a differential operator on $\T^1_t\times \R_x$ given by 
     \begin{equation*}
         P=\partial_t+(a(t)+ib(t))\partial_x+q(t),
     \end{equation*}
     where $a,b,q\in\mathscr{G}^\sigma(\T^1)$ and $a,b$ are real valued. Also let $a_0,b_0,q_0$ denote the respective averages over $\T^1$. Then for every $\mu> 1$,  $P$ is $\mathscr{F}_\mu$-globally hypoelliptic (see Definition \ref{Fmu_GH_def}) if and only if one of the following holds
     \begin{enumerate}
         \item $a,b,q$ are constant and \begin{enumerate}[i)]
             \item $b\neq 0$ and $\dfrac{a}{b}\Real(q)+\Imag(q)\not\in\Z$, or
             \item $b=0$ and either $\Real(q)\neq 0$, or both   $a=\Real(q)=0$ and $\Imag(q)\not\in\Z$.
         \end{enumerate}
         \item $b\equiv0$ and either $\Real(q_0)\neq 0$, or both $a_0=\Real(q_0)=0$ and $\Imag(q_0)\not\in\Z$.
         \item $b\not\equiv0$, $b$ does not change sign and $\dfrac{a_0}{b_0}\Real(q_0)+\Imag(q_0)\not\in\Z$.
       \end{enumerate}
       Moreover, if $b\equiv0$ or $a,b,q$ are constant, then the characterization above also holds for $\mu\geq 1/2$.
 \end{theorem}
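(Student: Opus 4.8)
The plan is to transfer the problem, via the partial Fourier transform in $x$, to a parametrized family of first order ODEs on $\T^1$, and to read off $\mathscr{F}_\mu$-global hypoellipticity from the characterizations of $\mathscr{F}_\mu$ and $\mathscr{F}'_\mu$ by decay of partial Fourier transforms. Writing $\xi$ for the covariable of $x$ and $\widehat u(t,\xi)$ for the partial Fourier transform, $Pu=f$ becomes $\mathcal L_\xi\,\widehat u(\cdot,\xi)=\widehat f(\cdot,\xi)$, where $\mathcal L_\xi=\partial_t+c_\xi(t)$ with $c_\xi(t)=-b(t)\xi+ia(t)\xi+q(t)$, so $\Real c_\xi(t)=-b(t)\xi+\Real q(t)$ and $\Imag c_\xi(t)=a(t)\xi+\Imag q(t)$. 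In these terms $\mathscr{F}_\mu$-global hypoellipticity says that the solution operator of this ODE family preserves Gelfand--Shilov decay in $\xi$, up to the admissible loss in the Gevrey order in $t$.

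I would first handle the two regimes in which $P$ is conjugate to a constant-coefficient operator. If $a,b,q$ are constant, then $P=p(D_x)+q(D_t)$ with $p(\xi)=i(a+ib)\xi+q$ (so $\deg p\le 1$) and $q(k)=ik$, and Theorem~\ref{theo_gh_cte_intro}(1) applies for every $\mu\ge 1/2$; solving $p(\xi)+q(k)=0$ on $\Z\times\R$ (for $b\neq 0$ the real part forces $\xi=\Real q/b$ and then the imaginary part forces $k=-(\tfrac{a}{b}\Real q+\Imag q)$, and for $b=0$ it is immediate) reproduces exactly item~(1). If $b\equiv 0$, I would conjugate $P$ by multiplication by $e^{\widetilde Q(t)}$, $\widetilde Q(t)=\int_0^t(q-q_0)$, and by the shear $(t,x)\mapsto(t,x+\widetilde A(t))$, $\widetilde A(t)=\int_0^t(a-a_0)$; using $a,q\in\mathscr{G}^\sigma$, both are topological isomorphisms of $\mathscr{F}_\mu$ and of $\mathscr{F}'_\mu$, they carry $P$ to $P_0=\partial_t+a_0\partial_x+q_0$, and $\mathscr{F}_\mu$-global hypoellipticity is invariant under such conjugations, so we are back to the constant case (again $\mu\ge 1/2$), which yields item~(2). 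In the remaining case $b\not\equiv 0$, the same two conjugations reduce $P$ to $P_2=\partial_t+(a_0+ib(t))\partial_x+q_0$ — the shear still removes $a-a_0$, since the term $ib(t)\partial_x$ involves $t$ only — so now only $\Real c_\xi(t)=-b(t)\xi+\Real q_0$ is non-constant; with $B(t)=\int_0^t b$ and $\xi^*=\Real q_0/b_0$, the mean $c_{0,\xi}$ of $c_\xi$ over $\T^1$ lies in $i\Z$ for some $\xi\in\R$ exactly when $\xi=\xi^*$ and $\tfrac{a_0}{b_0}\Real q_0+\Imag q_0\in\Z$.

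For sufficiency in this case, assume $b$ does not change sign — so, after the reflection $x\mapsto -x$ if needed, $b\ge 0$ and $b_0>0$ — and $\tfrac{a_0}{b_0}\Real q_0+\Imag q_0\notin\Z$. Then $c_{0,\xi}\notin i\Z$ for every $\xi$, so $\mathcal L_\xi$ admits a unique $2\pi$-periodic Green's function $\mathcal G_\xi(t,s)$, and I would check $\sup_{\xi,t,s}|\mathcal G_\xi(t,s)|<\infty$: since $B$ is monotone the exponent $\xi(B(t)-B(s))$ is controlled — on the branch $s\le t$ by the mean term $|e^{2\pi c_{0,\xi}}-1|^{-1}$ and on the branch $t<s$ by monotonicity of $B$ — while $|e^{2\pi c_{0,\xi}}-1|$ is bounded below uniformly in $\xi$ by the arithmetic condition. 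Estimating $\partial_t^j\mathcal G_\xi$ by $C^{j+1}j!^{\sigma}|\xi|^j$ (as $c_\xi\sim\xi$, $b\in\mathscr{G}^\sigma$) and $\partial_\xi^M\mathcal G_\xi$ by $C^{M+1}|\mathcal G_\xi|$ ($c_\xi$ is affine in $\xi$), and absorbing $|\xi|^j$ into the Gelfand--Shilov decay of $\widehat f$ via $r^j e^{-\epsilon r^{1/\mu}}\lesssim C^j j!^\mu$, gives $\widehat u(\cdot,\xi)=\int_{\T^1}\mathcal G_\xi(t,s)\widehat f(s,\xi)\,ds\in\mathscr{F}_\mu$, i.e.\ $u\in\mathscr{F}_\mu$. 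For the converse, if $\tfrac{a_0}{b_0}\Real q_0+\Imag q_0\in\Z$ then $e^{-C_{\xi^*}(t)}$, $C_{\xi^*}(t)=\int_0^t c_{\xi^*}$, is genuinely $2\pi$-periodic and one computes $P_2\bigl(e^{-C_{\xi^*}(t)}e^{i\xi^* x}\bigr)=0$; since this function lies in $\mathscr{F}'_\mu\setminus\mathscr{F}_\mu$, $P_2$ — hence $P$, after unwinding the conjugations — is not $\mathscr{F}_\mu$-globally hypoelliptic, and this requires no restriction on $\mu$.

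The remaining implication, that $b$ changing sign forces failure of $\mathscr{F}_\mu$-global hypoellipticity, is the heart of the argument and the only point where $\mu>1$ is genuinely used. If $b$ changes sign then $B$ is not monotone, and the branch of the periodic Green's function indexed by a ``wrong ordering'' of the values of $B$ is no longer absorbed by $c_{0,\xi}$: choosing $\xi_j\to+\infty$ when $b_0\ge 0$ and $\xi_j\to-\infty$ when $b_0<0$ (and $\xi_j\to+\infty$ on the $s\le t$ branch when $b_0=0$), I would produce $c>0$ and $t_j,s_j\in\T^1$ with $|\mathcal G_{\xi_j}(t_j,s_j)|\ge e^{c|\xi_j|}$. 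Then, synthesizing in $\xi$: solve $\mathcal L_\xi w_\xi=g_j$ for $\xi$ in a small window around $\xi_j$, with $g_j$ a fixed $t$-bump near $s_j$, and set $\widehat u(t,\xi)=\sum_j\kappa_j\,\rho(\xi-\xi_j)\,w_\xi(t)$ with pairwise disjoint bumps $\rho$ and weights $\kappa_j\downarrow 0$; then $\widehat{P_2u}(t,\xi)=\sum_j\kappa_j\,\rho(\xi-\xi_j)\,g_j(t)$ lies in $\mathscr{F}_\mu$ provided $\kappa_j\lesssim e^{-\epsilon|\xi_j|^{1/\mu}}$, whereas $|\widehat u(t_j,\xi)|\gtrsim\kappa_j e^{c|\xi_j|}$ near $\xi_j$; since $\mu>1$, i.e.\ $1/\mu<1$, the weights can be kept large enough that $\kappa_j e^{c|\xi_j|}\to\infty$, so $u\in\mathscr{F}'_\mu\setminus\mathscr{F}_\mu$ and $P_2$ (hence $P$) is not $\mathscr{F}_\mu$-globally hypoelliptic. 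For $\mu\le 1$ the forced decay of the weights can swamp this exponential growth, which is precisely why the sharper range $\mu\ge 1/2$ in the last sentence of the theorem is claimed only when $b\equiv 0$ or the coefficients are constant, cases that bypass the synthesis entirely. Collecting the pieces and rereading ``$b$ has no sign change and $\tfrac{a_0}{b_0}\Real q_0+\Imag q_0\notin\Z$'' as item~(3) completes the characterization. Beyond bookkeeping the conjugations and the Gevrey-in-$t$ losses, the obstacles I anticipate are making the Green's-function growth and the smooth $\xi$-dependence of $w_\xi$ rigorous, controlling the error terms from $P_2$ hitting the cutoffs $\rho(\xi-\xi_j)$, and checking that the lower bound on $\widehat u(t_j,\xi)$ survives integration against $g_j$.
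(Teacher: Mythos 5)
Most of your outline tracks the paper's route: the constant case via Theorem \ref{theo_gh_cte_intro}(1) and Proposition \ref{prop_equivalence_cte_coef}, the reduction of the variable-coefficient part by the conjugations $\Psi_a$ (the shear, implemented as multiplication by $e^{i\xi A(t)}$ on the Fourier side) and $\Psi_q$, the kernel element $e^{-\int_0^t(i\xi_0c(s)+q_0)\,\mathrm{d}s}e^{i\xi_0x}$ for the necessity of the arithmetic condition, and the Green's-function estimates for sufficiency when $b$ has no sign change are all essentially the paper's arguments. The genuine gap is in the step you yourself identify as the heart of the matter: $b$ changing sign $\Rightarrow$ failure of $\mathscr{F}_\mu$-global hypoellipticity. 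Your synthesis imposes three requirements that your tuning cannot meet simultaneously. To have $Pu\in\mathscr{F}_\mu$ you need $\kappa_j\lesssim e^{-\epsilon|\xi_j|^{1/\mu}}$, and you then keep the $\kappa_j$ ``large enough that $\kappa_je^{c|\xi_j|}\to\infty$''; but this makes $\widehat u$ grow genuinely exponentially along the windows, and such a $u$ is \emph{not} in $\mathscr{F}'_\mu$: for $\mu>1$ the test functions' partial Fourier transforms decay only like $e^{-L|\xi|^{1/\mu}}$ (sub-exponentially), so the pairing defining $u$ diverges (compare the sufficient growth condition $|\widehat u(t,\xi)|\le C_\varepsilon e^{\varepsilon|\xi|^{1/\mu}}$ of Proposition \ref{proppartialdecayrndistribinv}). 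Hence no contradiction with global hypoellipticity is obtained. If instead you shrink the weights to $\kappa_j\approx e^{-c|\xi_j|}$ so that $u\in\mathscr{F}'_\mu$, the crude bound $|\widehat u(t_j,\cdot)|\gtrsim\kappa_je^{c|\xi_j|}$ no longer shows $u\notin\mathscr{F}_\mu$ by itself; you would need a sharp two-sided analysis of the solution, i.e.\ a matching upper bound and a quantitative lower bound, which your sketch does not provide. Relatedly, your claim that $\mu>1$ enters through the freedom to keep the weights large is misplaced.

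This is exactly the tension the paper resolves by a different (and sharper) construction: the datum is taken as the periodic extension of $(e^{2\pi i(\xi c_0-iq_0)}-1)\,e^{B\xi}\varphi(t)e^{-i\xi a_0(t-t_0)}e^{-q_0(t-t_0)}\psi(\xi)$, where $B=\min_{t,s}\int_t^{t+s}b<0$ is the extremal value produced by the sign change and $\psi\in\mathscr{G}^\mu_c(\R)$ is a one-sided cutoff in $\xi$. The exponential normalization $e^{B\xi}$ plays the role of your weights, but chosen exactly so that the explicit solution $\widehat u(t,\xi)=e^{-i\xi a_0(t-t_0)}e^{-q_0(t-t_0)}\psi(\xi)\int_0^{2\pi}e^{\xi(B-G(t,s))}\varphi(t+s)\,\mathrm{d}s$ is uniformly bounded (so $u\in\mathscr{F}'_\mu$ by Proposition \ref{proppartialdecayrndistribinv}), while a Laplace-type lower bound at the minimizer (Lemma \ref{lemmaintegralineq}) gives $|\widehat u(t_0,\xi)|\ge K\xi^{-1/2}$, which already violates the decay characterizing $\mathscr{F}_\mu$. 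The genuine role of $\mu>1$ there is the existence of the nontrivial compactly supported (one-sided) Gevrey-$\mu$ cutoff $\psi$ in the $\xi$-variable and the finiteness of $\sup_{\xi>0}e^{B\xi+2L\xi^{1/\mu}}$, not any freedom in the size of the weights. To repair your argument you would have to redo this step along these lines (or prove matching two-sided Green's-function bounds and a stationary-phase-type lower bound after integrating against your bump), which amounts to reproducing the paper's construction.
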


This paper is structured in the following manner:  in Section \ref{sec_prelim}, we recall the fundamental theory of time-periodic Gelfand–Shilov spaces and prove certain  topological properties of these spaces. In Section \ref{sec_Fourier} we introduce the toroidal and Euclidean partial Fourier transforms in the time-periodic Gelfand–Shilov spaces, and study how these classes of functions and distributions behave under these Fourier transforms. In Sections \ref{sec_gh1} and \ref{sec_gh2} we apply the theory and results from the previous sections to the study of certain types of global hypoellipticity of two classes of differential operators. Appendix \ref{appendix} contains a few technical lemmas used in the proofs throughout the paper.

\section{Preliminaries}\label{sec_prelim}

\subsection{Gelfand-Shilov spaces}

Given $\mu,\nu>0$, the Gelfand-Shilov space $\mathcal{S}_{\nu}^{\mu}(\R^n)$ is defined as the space of the functions $\varphi\in C^\infty(\R^n)$ such that there exists $C>0$ satisfying
\[\sup_{\alpha,\beta\in\N_0^n}C^{-|\alpha|-|\beta|}\alpha!^{-\nu}\beta!^{-\mu}\sup_{x\in\R^n}|x^\alpha\partial_x^\beta \varphi(x)|<+\infty.\]

We have also the following equivalent characterizations for Gelfand-Shilov spaces, which we refer the reader to \cite[Theorem 6.1.6 and Proposition 6.1.7]{NicRod} for a proof.

\begin{theorem}\label{gelf_equiv}
	A function $\varphi\in C^\infty(\R^n)$ belongs to $\mathcal{S}_{\nu}^{\mu}(\R^n)$ if and only if one of the following equivalent conditions hold:
	\begin{enumerate}
		\item There exist $C,L>0$ such that
		\begin{equation*}\label{gelf_2}
			\sup_{\beta\in\N_0^n}\sup_{x\in\R^n} C^{-|\beta|}\beta!^{-\mu}e^{L|x|^{1/\nu}}|\partial_x^\beta \varphi(x)|<+\infty.
		\end{equation*}
        \item There exist $C,L>0$ such that
		\begin{equation*}\label{gelf_1}
			|\varphi(x)|\leq Ce^{-L|x|^{1/\nu}}\quad\text{and}\quad |\widehat{\varphi}(\xi)|\leq Ce^{-L|\xi|^{1/\mu}}.
		\end{equation*}
	\end{enumerate}
\end{theorem}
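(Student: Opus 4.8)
The plan is to base everything on one elementary equivalence — call it the \emph{moment lemma} — and then read the three stated conditions off it. The moment lemma asserts that, for a fixed $\rho>0$ and a continuous $g\colon\R^n\to\mathbb{C}$, there is $C>0$ with $\sup_{\alpha\in\N_0^n}C^{-|\alpha|}\alpha!^{-\rho}\sup_{x\in\R^n}|x^\alpha g(x)|<\infty$ if and only if there are $C',L>0$ with $|g(x)|\le C'e^{-L|x|^{1/\rho}}$ for every $x$. To prove the ``decay $\Rightarrow$ moments'' direction I would write $|x^\alpha g(x)|\le C'|x|^{|\alpha|}e^{-L|x|^{1/\rho}}$ and use the elementary maximum $\sup_{r>0}r^{m}e^{-Lr^{1/\rho}}=(\rho m/(eL))^{\rho m}$ together with $m^m\le e^m m!$ and $|\alpha|!\le n^{|\alpha|}\alpha!$ to bound the right-hand side by a constant times $D^{|\alpha|}\alpha!^\rho$. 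For the converse I would expand $|x|^{2N}=\sum_{|\alpha|=N}\binom{N}{\alpha}x^{2\alpha}$, use $(2\alpha)!\le 4^{|\alpha|}(\alpha!)^2\le 4^{N}(N!)^2$ and $\sum_{|\alpha|=N}\binom{N}{\alpha}=n^N$ to obtain $|x|^{2N}|g(x)|\le M K^{N}(N!)^{2\rho}$, and then optimise the free parameter $N$ at the value $\asymp|x|^{1/\rho}$ (via Stirling), which yields the claimed exponential bound with an $L>0$ depending only on $\rho$, $n$ and $C$.

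With the moment lemma in hand, the equivalence of the definition with (1) is essentially immediate: the defining estimate says exactly that, for each fixed $\beta$, the function $\partial_x^\beta\varphi$ satisfies the moment hypothesis with constant $C$ and amplitude $M C^{|\beta|}\beta!^\mu$; since the decay rate produced by the lemma depends only on $C,\nu,n$ and the amplitude passes through linearly, this is precisely (1), and the converse is the same computation run backwards. To get ``definition $\Rightarrow$ (2)'' I would note that the estimate on $\varphi$ is the case $\beta=0$, and obtain the estimate on $\widehat\varphi$ by a polynomial weight: expanding $(1+|x|^2)^n$ into monomials and using the definition gives $(1+|x|^2)^n|\partial_x^\beta\varphi(x)|\le M'C^{|\beta|}\beta!^\mu$, hence $\|\partial_x^\beta\varphi\|_{L^1}\le M''C^{|\beta|}\beta!^\mu$, hence $|\xi^\beta\widehat\varphi(\xi)|=|\widehat{\partial_x^\beta\varphi}(\xi)|\le M''C^{|\beta|}\beta!^\mu$; one more application of the moment lemma, now to $\widehat\varphi$ with $\rho=\mu$, produces $|\widehat\varphi(\xi)|\le C_1e^{-L_1|\xi|^{1/\mu}}$.

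The real work is in ``(2) $\Rightarrow$ definition''. From $|\varphi(x)|\le Ce^{-L|x|^{1/\nu}}$ and the moment lemma (with $\rho=\nu$) I would get $|x^\alpha\varphi(x)|\le MB^{|\alpha|}\alpha!^\nu$ for all $\alpha$; from $|\widehat\varphi(\xi)|\le Ce^{-L|\xi|^{1/\mu}}$ and Fourier inversion $\partial_x^\beta\varphi(x)=c_n\int(i\xi)^\beta\widehat\varphi(\xi)e^{ix\cdot\xi}\,d\xi$ I would estimate $|\partial_x^\beta\varphi(x)|\le c_n C\int|\xi|^{|\beta|}e^{-L|\xi|^{1/\mu}}\,d\xi$, and evaluating this integral (substitute $s=|\xi|^{1/\mu}$ and bound $\Gamma(\mu(|\beta|+n))$ by a constant times $A^{|\beta|}(|\beta|!)^\mu$) would give $|\partial_x^\beta\varphi(x)|\le MA^{|\beta|}\beta!^\mu$ uniformly in $x$. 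At that point $\varphi$ has the required moment decay and, separately, uniform Gevrey-$\mu$ control of all derivatives, and the hard part remains: upgrading these two \emph{separate} bounds to the \emph{joint} bound of the definition, i.e. restoring the decay factor $e^{-L'|x|^{1/\nu}}$ on every $\partial_x^\beta\varphi$ without spoiling the $\beta!^\mu$ growth. I expect this to be the main obstacle. It is the classical fact that $\mathcal{S}_{\nu}^{\mu}(\R^n)$ equals the intersection of the ``pure decay'' space and the ``pure Gevrey regularity'' space (an identity valid for $\mu+\nu\ge 1$, the spaces being trivial otherwise), and its proof hinges on a Landau--Kolmogorov-type interpolation inequality applied on balls centred at each point with radius tuned to $|x|$; once that upgrade is available, a final application of the moment lemma to $\partial_x^\beta\varphi$ closes the argument. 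For this last step I would simply follow \cite[Theorem 6.1.6 and Proposition 6.1.7]{NicRod}.
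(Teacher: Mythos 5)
The paper does not actually prove this statement: it is quoted as a known result with a pointer to \cite[Theorem 6.1.6 and Proposition 6.1.7]{NicRod}, which is precisely where you defer the only genuinely hard step of your sketch (upgrading the separate bounds $|x^\alpha\varphi|\le MB^{|\alpha|}\alpha!^\nu$ and $|\partial_x^\beta\varphi|\le MA^{|\beta|}\beta!^\mu$ to the joint estimate in the implication $(2)\Rightarrow$ definition). Your moment lemma and the elementary implications built on it are correct, so your proposal amounts to the same approach as the paper --- reliance on the cited reference --- with the routine parts written out in more detail.
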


Clearly, every Gelfand-Shilov space is contained in the Schwartz space $\mathcal{S}(\R^n)$, and  $\varphi\in\mathcal{S}_{\nu}^{\mu}(\R^n)$ if and only if $\widehat{\varphi}\in\mathcal{S}_{\mu}^{\nu}(\R^n)$. In particular, if $\mu=\nu$, the space $\mathcal{S}_\mu(\R^n)=\mathcal{S}_\mu^\mu(\R^n)$ is invariant under the action of the Fourier transform. 

Notice that the functions of $\mathcal{S}_\nu^\mu(\R^n)$ are Gevrey functions on $\R^n$ that admit exponential decay at infinity. In particular,
\[\mathscr{G}_c^\mu(\R^n)\subset \mathcal{S}^\mu_\nu(\R^n)\subset \mathscr{G}^\mu(\R^n),\]
where $\mathscr{G}^\mu(\R^n)$ is the space of Gevrey functions of order $\mu$ on $\R^n$, that is, the space of the $\varphi\in C^\infty(\R^n)$ such that, for every compact $K\subset\R^n$, there exist $C_0,C_1>0$ such that
\[\sup_{x\in K}|\partial_x^\alpha \varphi(x)|\leq C_0 C_1^{\alpha}\alpha!^\mu,\]
for all $\alpha\in\N_0^n$, and $\mathscr{G}_c^\mu(\R^n)$ is the space of compactly supported Gevrey functions on $\R^n$, which is non-trivial for $\mu> 1$. For more details, see \cite[Chapter I]{Rod_Gevrey}.

Now, for $\mu,\nu>0$ fixed, given $C>0$, consider the space $\mathcal{S}^\mu_{\nu,C}(\R^n)$ of all the smooth functions $\varphi$ such that
\[\|\varphi\|_{\mu,\nu,C}=\sup_{\alpha,\beta\in\N_0^n}\sup_{x\in\R^n}C^{-|\alpha|-|\beta|}\alpha!^{-\nu}\beta!^{-\mu}|x^\alpha\partial_x^\beta \varphi(x)|<+\infty.\]
Then, each $\mathcal{S}^\mu_{\nu,C}(\R^n)$ is a Banach space with the norm $\|\cdot\|_{\mu,\nu,C}$ and
\[\mathcal{S}^\mu_\nu(\R^n) = \bigcup_{C>0}\mathcal{S}^\mu_{\nu,C}(\R^n),\]
and we endow $\mathcal{S}^\mu_\nu(\R^n)$ with the inductive limit topology. For more details on the characterization of the Gelfand-Shilov spaces, we refer the reader to \cite{Petersson} and Section 6 of \cite{NicRod}.

In this work, we will restrict our attention to the symmetric case $\mu=\nu\geq1/2$ due to the invariance of these spaces under the Fourier transform and the fact that the Gelfand-Shilov spaces are trivial when $\mu+\nu<1$ \cite[Theorem 6.1.10]{NicRod}.

\subsection{Time-periodic Gelfand-Shilov spaces}

Here, we recall the definition of time-periodic Gelfand-Shilov spaces as introduced by F. de \'Avila Silva and M. Cappiello in \cite{AvCap2022}, as well as the fundamental facts concerning these spaces.

First, let us recall the definition of the time-periodic Schwartz $\mathcal{S}(\G)$ as established in \cite{KowTmRn,Kow_chap}.

\begin{definition}
    Let $\mathcal{S}(\G)$ be the space of all $f\in C^\infty(\G)$ such that
    \[\|f\|_{\alpha,\beta,\gamma} = \sup_{(t,x)\in \G}|x^\alpha\partial_x^\beta\partial_t^\gamma f(t,x)|<+\infty,\]
    for all $\alpha,\beta\in\mathbb{N}_0^n$ and $\gamma\in\mathbb{N}_0^m$.
\end{definition}

The seminorms $\|\cdot\|_{\alpha,\beta,\gamma}$ endow $\mathcal{S}(\G)$ with a Fr\'echet space structure. Proceeding as in \cite[Theorem 14.4]{Treves_TVS}, one can show that $\mathcal{S}(\G)$ is a Montel space.

Fix $\sigma\geq 1$ and $\mu\geq 1/2$. Given $C>0$, we say that a function $\varphi\in C^\infty(\T^m\times\R^n)$ belongs to $\SC$ if
\[\|\varphi\|_{\sigma,\mu,C}:=\sup_{\alpha,\beta\in\N_0^n}\sup_{\gamma\in \N_0^m} C^{-|\alpha+\beta|-|\gamma|}\gamma!^{-\sigma}(\alpha!\beta!)^{-\mu}\sup_{(t,x)\in\T^m\times\R^n}|x^\alpha\partial_x^\beta\partial_t^\gamma \varphi(t,x)|<+\infty.\]

Each $\SC$ is a Banach space with the norm $\|\cdot\|_{\sigma,\mu,C}$.

\begin{definition}
    Given $\sigma\geq 1$ and $\mu\geq 1/2$, define the time-periodic Gelfand-Shilov space as
\[\S:=\bigcup_{C>0}\SC,\]
endowed with the inductive limit topology.
\end{definition}

Consequently, a sequence $(\varphi_j)_{j\in\N}$ on $\S$ converges to a $\varphi\in\S$ in the sense of $\S$ if there exists $C>0$ such that $\varphi,\varphi_j\in \mathcal{S}_{\sigma,\mu,C}(\G)$ for all $j\in\N$ and $\|\varphi_j-\varphi\|_{\sigma,\mu,C}\to 0$.

\begin{obs}\label{equiv_tpgelf}
    From the equivalence stated on Theorem \ref{gelf_equiv}, the norms
    \[\|\varphi\|_{\sigma,\mu,C,L}:=\sup_{\beta\in\N_0^n}\sup_{\gamma\in \N_0^m} C^{-|\beta|-|\gamma|}\gamma!^{-\sigma}\beta!^{-\mu}\sup_{(t,x)\in\T^m\times\R^n}e^{L|x|^{1/\mu}}|\partial_x^\beta\partial_t^\gamma \varphi(t,x)|,\]
    with $C,L>0$, define an equivalent topology on $\S$.
\end{obs}

The next result gives us that $\S$ is a DFS space, that is, $\S$ is the inductive limit of a compact sequence of Banach spaces (see \cite{Komatsu1967}).

\begin{theorem}\label{comp_inc}
    If $C<C_+$, the inclusion $\mathcal{S}_{\sigma,\mu,C}(\G)\hookrightarrow \mathcal{S}_{\sigma,\mu,C_+}(\G)$ is continuous and compact. In particular, $\S$ is a DFS space.
\end{theorem}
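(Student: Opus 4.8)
The plan is to reduce everything to a known statement about Gelfand–Shilov spaces on $\R^n$, exploiting the fact that the torus directions are compact and the seminorm structure is essentially a tensor product of a Gevrey-type scale in $\gamma$ and a Gelfand–Shilov scale in $(\alpha,\beta)$. First I would set up the continuity of the inclusion $\mathcal{S}_{\sigma,\mu,C}(\G)\hookrightarrow\mathcal{S}_{\sigma,\mu,C_+}(\G)$, which is immediate from the inequality $\|\varphi\|_{\sigma,\mu,C_+}\le\|\varphi\|_{\sigma,\mu,C}$ whenever $C<C_+$ (each factor $C_+^{-|\alpha+\beta|-|\gamma|}\le C^{-|\alpha+\beta|-|\gamma|}$). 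The substance is compactness: given a bounded sequence $(\varphi_j)$ in $\mathcal{S}_{\sigma,\mu,C}(\G)$, I must extract a subsequence converging in the norm $\|\cdot\|_{\sigma,\mu,C_+}$.

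The key step is a diagonal/truncation argument. Fix an intermediate constant $C<C_1<C_+$. Because $(\varphi_j)$ is bounded in $\mathcal{S}_{\sigma,\mu,C}(\G)$, for any $\varepsilon>0$ one can choose finite truncation levels $A,G\in\N_0$ so that the ``tail'' of the defining supremum — taken over $|\alpha+\beta|>A$ or $|\gamma|>G$ — is bounded by $\varepsilon$ uniformly in $j$, in the $C_+$-weighted norm; this uses that $(C_+/C_1)^{-k}\to 0$ geometrically, so the larger weight $C_+^{-|\alpha+\beta|-|\gamma|}$ beats the bounded $C_1$-norm quantities on the tail. On the complementary finite set of multi-indices, $x^\alpha\partial_x^\beta\partial_t^\gamma\varphi_j$ is a bounded family of smooth functions on $\G$ which, together with finitely many of their derivatives and polynomial-times-derivative combinations, is equicontinuous and uniformly bounded on each set $\T^m\times\{|x|\le R\}$; moreover the uniform decay coming from the $C$-norm (equivalently, the exponential decay in Remark \ref{equiv_tpgelf}) controls the behaviour for $|x|>R$ uniformly in $j$. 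By Arzelà–Ascoli plus a diagonal argument over $R\to\infty$, one extracts a subsequence along which all these finitely many quantities converge uniformly on $\G$. Combining the uniformly small tail with the convergence on the finite part yields a Cauchy subsequence in $\|\cdot\|_{\sigma,\mu,C_+}$, hence convergence since $\mathcal{S}_{\sigma,\mu,C_+}(\G)$ is a Banach space.

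I expect the main obstacle to be the handling of the non-compact $\R^n$ directions in the Arzelà–Ascoli step: one cannot directly apply Arzelà–Ascoli on the non-compact space $\G$, so the uniform exponential decay must be used to make the ``near-infinity'' contribution uniformly negligible before invoking compactness on the compact pieces $\T^m\times\{|x|\le R\}$. This is exactly the place where the weight $\alpha!^{-\mu}\beta!^{-\mu}$ together with membership in the smaller Banach space $\mathcal{S}_{\sigma,\mu,C}(\G)$ is essential: it is the mechanism (via Theorem \ref{gelf_equiv} / Remark \ref{equiv_tpgelf}) that converts polynomial growth of the weights $x^\alpha$ into genuine uniform decay. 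Once this uniform decay is in hand, the rest is a standard truncation-plus-diagonal argument, and the final assertion that $\S$ is a DFS space follows directly from the definition as the inductive limit of the compact chain $\big(\mathcal{S}_{\sigma,\mu,C}(\G)\big)_{C>0}$.
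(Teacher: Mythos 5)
Your argument is correct, and its skeleton (trivial continuity from the weight comparison, a geometric tail estimate in the multi-index scale, compactness on the finitely many remaining seminorms) matches the paper's; the difference lies in how the convergent subsequence is produced. The paper first observes that $\mathcal{S}_{\sigma,\mu,C}(\G)$ embeds continuously into the time-periodic Schwartz space $\mathcal{S}(\G)$ and invokes the Montel property of $\mathcal{S}(\G)$ as a black box: a bounded sequence in $\mathcal{S}_{\sigma,\mu,C}(\G)$ is bounded in $\mathcal{S}(\G)$, hence has a subsequence converging there, and the tail estimate $(C/C_+)^{K}$ then upgrades this to convergence in $\|\cdot\|_{\sigma,\mu,C_+}$. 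You instead re-derive this compactness by hand: truncate the multi-index range, apply Arzel\`a--Ascoli on $\T^m\times\{|x|\le R\}$ to the finitely many functions $x^\alpha\partial_x^\beta\partial_t^\gamma\varphi_j$ (equicontinuity coming from the one-higher-order bounds in the $C$-norm), diagonalize over $R$, and use the uniform decay in $x$ supplied by the Gelfand--Shilov weights to pass from compact sets to all of $\G$, concluding via a Cauchy argument in the Banach space $\mathcal{S}_{\sigma,\mu,C_+}(\G)$. Your route is more self-contained (it does not rely on the Montel property of $\mathcal{S}(\G)$, which the paper itself only sketches by reference to Tr\`eves) and, by arguing with Cauchy subsequences, it avoids having to identify the limit as an element of $\mathcal{S}_{\sigma,\mu,C}(\G)$; the paper's route is shorter because it delegates exactly this Arzel\`a--Ascoli work to the cited Montel-space result. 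Two cosmetic remarks: the intermediate constant $C_1$ you introduce is unnecessary (the ratio $C/C_+$ already does the job), and in the tail estimate you should apply it to differences $\varphi_j-\varphi_{j'}$ via the triangle inequality, which is immediate since both terms obey the same $C$-norm bound.
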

\begin{proof}
    First, notice that, for all $C>0$, the inclusion $\SC\hookrightarrow\mathcal{S}(\G)$ is continuous. Indeed, given $f\in\SC$,  $\alpha,\beta\in\mathbb{N}_0^n$ and $\gamma\in\mathbb{N}_0^m$, we have that
    \begin{align*}
        \|f\|_{\alpha,\beta,\gamma} & = \sup_{(t,x)\in \G}|x^\alpha\partial_x^\beta\partial_t^\gamma f(t,x)|\\
        & = C^{|\alpha+\beta|+|\gamma|}(\alpha!\beta!)^{\mu}\gamma!^\sigma C^{-|\alpha+\beta|-|\gamma|}(\alpha!\beta!)^{-\mu}\gamma!^{-\sigma}\sup_{(t,x)\in \G}|x^\alpha\partial_x^\beta\partial_t^\gamma f(t,x)|\\
        & \leq  (C^{|\alpha+\beta|+|\gamma|}(\alpha!\beta!)^{\mu}\gamma!^\sigma)\|f\|_{\sigma,\mu,C}.
    \end{align*}

    Also, it is clear that the inclusion map $\mathcal{S}_{\sigma,\mu,C}(\G)\hookrightarrow \mathcal{S}_{\sigma,\mu,C_+}(\G)$ is continuous. It remains to show its compactness. Let $(f_j)_{j\in\N}$ be a bounded sequence in $\SC$, that is, there exists $h>0$ such that $\|f_j\|_{\sigma,\mu,C}\leq h$, for all $j\in\mathbb{N}$. 
    
    Since the inclusion $\SC\hookrightarrow\mathcal{S}(\G)$ is continuous and by the fact that $\mathcal{S}(\G)$ is a Montel space, it follows that $(f_j)_{j\in\N}$ admits a subsequence $(f_{j_k})_{k\in\N}$ which converges to some $f\in\mathcal{S}(\G)$ in the sense of $\mathcal{S}(\G)$, that is:
    \[\sup_{(t,x)\in\G}|x^\alpha\partial_x^\beta\partial_t^\gamma f_{j_k}(t,x)| \to \sup_{(t,x)\in \G}|x^\alpha\partial_x^\beta\partial_t^\gamma f(t,x)|,\]
    for every $\alpha,\beta\in\mathbb{N}_0^n$ and $\gamma\in\mathbb{N}_0^m$. 
    Since
    \[\sup_{(t,x)\in \G}C^{-|\alpha+\beta|-|\gamma|}(\alpha!\beta!)^{-\mu}\gamma!^{-\sigma}|x^\alpha\partial_x^\beta\partial_t^\gamma f_{j_k}(t,x)| \leq h,\]
    for every $\alpha,\beta\in\mathbb{N}_0^n$, $\gamma\in\mathbb{N}_0^m$, and $k\in\mathbb{N}$, the same inequality also holds for $f$, which gives us that $f\in\SC$.
    
    Let us show that $f_{j_k}\to f$ in $\mathcal{S}_{\sigma,\mu,C_+}(\G)$. Given $\varepsilon>0$, let $K>0$ be such that $(C_+/C)^{-K}<\varepsilon/2h$. Writing $C_+=C(C_+/C)$, for  $|\alpha+\beta|+|\gamma|\geq K$, we obtain that
    \[\sup_{(t,x)\in\G}C_+^{-|\alpha+\beta|-|\gamma|}(\alpha!\beta!)^{-\mu}\gamma!^{-\sigma}|x^\alpha\partial_x^\beta\partial_t^\gamma (f_{j_k}-f)(t,x)| \leq \frac{\varepsilon}{2h}\|f_{j_k}-f\|_{\sigma,\mu,C} < \varepsilon,\]
    for every $k\in\N$. Also, since $f_{j_k}\to f$ in $\mathcal{S}(\G)$ we can choose $k_0\in\mathbb{N}$ large enough so that
    \[\max_{|\alpha+\beta|+|\gamma|<K}\sup_{(t,x)\in\G}|x^\alpha\partial_x^\beta\partial_t^\gamma (f_{j_k}-f)(t,x)|<\frac{\varepsilon}{C_0},\]
    for every $k\geq k_0$, where
    \[C_0 = \displaystyle\max_{|\alpha+\beta|+|\gamma|<K}\{C_+^{-|\alpha+\beta|-|\gamma|}(\alpha!\beta!)^{-\mu}\gamma!^{-\sigma}\}.\]
    Therefore
    \[\sup_{(t,x)\in\G}C_+^{-|\alpha+\beta|-|\gamma|}(\alpha!\beta!)^{-\mu}\gamma!^{-\sigma}|x^\alpha\partial_x^\beta\partial_t^\gamma (f_{j_k}-f)(t,x)| \leq C_0\frac{\varepsilon}{C_0} = \varepsilon,\]
    for every $k\geq k_0$ and $|\alpha+\beta|+|\gamma|<K$.
    
    We conclude that $\|f_{j_k}-f\|_{\sigma,\mu,C_+}<\varepsilon$ for every $k\geq k_0$. Since $\varepsilon>0$ is arbitrary, we have that $f_{j_k}\to f$ in $\mathcal{S}_{\sigma,\mu,C_+}(\G)$, which gives us the compactness of the inclusion map.    
\end{proof}

\begin{definition}
    Given $\sigma\geq 1$ and $\mu\geq 1/2$, let $\SS$ be the set of of all linear functionals $u : \S \to \mathbb{C}$ such that
\[\varphi_j\to \varphi\ \text{in}\ \S\ \Rightarrow\ \langle u,\varphi_j\rangle\to\langle u,\varphi\rangle\ \text{in}\ \mathbb{C},\]
that is, the set of sequentially continuous linear functionals on $\S$. We will refer to its elements as tempered ultradistributions.
\end{definition}

Since the space $\S$ is DFS (in particular, a Montel DF space), we have that sequential continuity is equivalent to continuity, see \cite{Webb}. As a consequence of the previous considerations, we obtain the following characterization:

\begin{prop}\label{char_dist}
    A linear functional $u:\S\to\mathbb{C}$ belongs to $\SS$ if and only if, for each $C_1>0$, there exists a constant $C_0>0$ such that 
    \[|\langle u,\varphi\rangle| \leq C_0\sup_{\alpha,\beta\in\N_0^n}\sup_{\gamma\in\N_0^m} C_1^{-|\alpha+\beta|-|\gamma|}\gamma!^{-\sigma}(\alpha!\beta!)^{-\mu}\sup_{(t,x)\in\G}|x^\alpha\partial_x^\beta\partial_t^\gamma\varphi(t,x)|,\]
    for all $\varphi\in\S$.
\end{prop}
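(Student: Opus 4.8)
The plan is to recognize the statement as the standard description of the topological dual of an (LB)-space and to reduce it to the elementary fact that a continuous linear functional on a Banach space is norm-bounded. First I would observe that the supremum on the right-hand side of the claimed inequality is exactly $\|\varphi\|_{\sigma,\mu,C_1}$, the norm of $\mathcal{S}_{\sigma,\mu,C_1}(\G)$, with the convention that it equals $+\infty$ when $\varphi\notin\mathcal{S}_{\sigma,\mu,C_1}(\G)$. Thus the condition to be proved equivalent to $u\in\SS$ is: for every $C_1>0$ there is $C_0>0$ with $|\langle u,\varphi\rangle|\le C_0\|\varphi\|_{\sigma,\mu,C_1}$ for all $\varphi\in\mathcal{S}_{\sigma,\mu,C_1}(\G)$, the inequality being automatic for the remaining $\varphi\in\S$.

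For the sufficiency, I would take $\varphi_j\to\varphi$ in $\S$; by the description of convergence in $\S$ recalled right after its definition, there is a single $C>0$ with $\varphi,\varphi_j\in\mathcal{S}_{\sigma,\mu,C}(\G)$ for all $j$ and $\|\varphi_j-\varphi\|_{\sigma,\mu,C}\to0$. Applying the hypothesis with $C_1=C$ and using linearity of $u$ gives $|\langle u,\varphi_j\rangle-\langle u,\varphi\rangle|\le C_0\|\varphi_j-\varphi\|_{\sigma,\mu,C}\to0$, so $u$ is sequentially continuous and hence lies in $\SS$.

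For the necessity, I would fix $C_1>0$ and consider the inclusion $\iota\colon\mathcal{S}_{\sigma,\mu,C_1}(\G)\hookrightarrow\S$, which is continuous by Theorem \ref{comp_inc} (or directly from the definition of the inductive limit topology). If $u\in\SS$, then $u\circ\iota$ is a sequentially continuous linear functional on the Banach space $\mathcal{S}_{\sigma,\mu,C_1}(\G)$: indeed, if $\psi_j\to\psi$ in this norm, then $\psi_j\to\psi$ in $\S$ by the very definition of convergence in $\S$, whence $\langle u,\psi_j\rangle\to\langle u,\psi\rangle$. Since the domain is metrizable, sequential continuity is equivalent to continuity, and a continuous linear functional on a Banach space is bounded; this produces the desired $C_0$ with $|\langle u,\varphi\rangle|\le C_0\|\varphi\|_{\sigma,\mu,C_1}$ on $\mathcal{S}_{\sigma,\mu,C_1}(\G)$, and the estimate then extends to all of $\S$ since the right-hand side is infinite otherwise.

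I do not expect a genuine obstacle here: this is the usual duality argument for (LB)-spaces, and the only points requiring care are the convention that $\|\varphi\|_{\sigma,\mu,C_1}$ may take the value $+\infty$ and the passage between sequential and topological continuity — which can be handled either through the metrizability of the step spaces $\mathcal{S}_{\sigma,\mu,C_1}(\G)$, as above, or, on $\S$ itself, through the fact noted before the proposition that $\S$ is a Montel DF space, so that the two notions coincide.
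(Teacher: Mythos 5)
Your proof is correct and follows essentially the argument the paper leaves implicit: membership in $\SS$ is equivalent to a norm bound on each Banach step $\mathcal{S}_{\sigma,\mu,C_1}(\G)$ of the inductive limit, with the sufficiency direction read off from the paper's description of convergent sequences in $\S$. A minor bonus of your route is that the necessity is handled by sequential continuity on the metrizable step spaces alone, so you do not even need the fact (cited from Webb via the DFS property) that sequential continuity on $\S$ coincides with continuity; only the convention that the right-hand side is $+\infty$ off the step space is needed to state the estimate for all $\varphi\in\S$, and you address that explicitly.
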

Next, recall that for $\sigma\geq 1$, the Gevrey space on the torus $\mathscr{G}^\sigma(\T^m)$ is defined as the space of smooth functions $\psi:\T^m\to\mathbb{C}$ such that
\begin{equation}\label{ineq_Gevrey}
   \sup_{t\in\T^m}|\partial_t^{\gamma}\psi(t)|<C_0 C_1^{\gamma} \gamma!^{\sigma},
\end{equation}
for some $C_0,C_1>0$ and every $\gamma\in \N_0^m$, where $C_0$ and $C_1$ do not depend on $\gamma$.

The next proposition shows that $\S$ is a  $\mathscr{G}^\sigma(\T^m)$-module.

\begin{prop}\label{prop_closed_under_product}
    If $\psi\in\mathscr{G}^{\sigma}(\T^m)$ and $f\in\S$, then $\psi f\in \S$. 
\end{prop}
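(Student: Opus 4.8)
The plan is to use the Leibniz rule together with the Gevrey estimates for $\psi$ and the Gelfand--Shilov estimates for $f$, matching up the torus derivatives (which are the only ones acting on $\psi$). Fix $C>0$ such that $f\in\mathcal{S}_{\sigma,\mu,C}(\G)$, and fix $C_0,C_1>0$ such that the Gevrey estimate \eqref{ineq_Gevrey} holds for $\psi$. Since $\psi$ depends only on $t$, we have
\[
x^\alpha\partial_x^\beta\partial_t^\gamma(\psi f)(t,x)=\sum_{\delta\leq\gamma}\binom{\gamma}{\delta}\,\partial_t^{\gamma-\delta}\psi(t)\cdot x^\alpha\partial_x^\beta\partial_t^{\delta}f(t,x),
\]
so that
\[
|x^\alpha\partial_x^\beta\partial_t^\gamma(\psi f)(t,x)|\leq\sum_{\delta\leq\gamma}\binom{\gamma}{\delta}C_0C_1^{\gamma-\delta}(\gamma-\delta)!^{\sigma}\cdot C^{|\alpha+\beta|+|\delta|}(\alpha!\beta!)^{\mu}\delta!^{\sigma}\|f\|_{\sigma,\mu,C}.
\]

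The key step is then to absorb the combinatorial factor $\sum_{\delta\leq\gamma}\binom{\gamma}{\delta}(\gamma-\delta)!^{\sigma}\delta!^{\sigma}$ into a geometric factor of the form $\widetilde{C}^{|\gamma|}\gamma!^{\sigma}$. This is exactly the type of estimate that governs Gevrey classes being closed under products: using $\binom{\gamma}{\delta}(\gamma-\delta)!\,\delta!=\gamma!$ one gets $\binom{\gamma}{\delta}(\gamma-\delta)!^{\sigma}\delta!^{\sigma}=\gamma!\big((\gamma-\delta)!\,\delta!\big)^{\sigma-1}\leq\gamma!\cdot\gamma!^{\sigma-1}=\gamma!^{\sigma}$ since $\sigma\geq1$ and $(\gamma-\delta)!\,\delta!\leq\gamma!$; there are at most $2^{|\gamma|}$ terms in the sum (and one may need a constant like $2^m$ per coordinate when $m>1$, handled by the standard multi-index bound $\#\{\delta:\delta\leq\gamma\}\leq\prod_i(\gamma_i+1)\leq 2^{|\gamma|}\cdot(\text{const})$). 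I would record this as a short sublemma or cite the analogous estimate in the appendix (Appendix \ref{appendix}). Combining, the whole sum is bounded by $C_0\,\big(\max(C,1)\big)^{|\gamma|}\big(\max(C_1,1)\big)^{|\gamma|}\,2^{|\gamma|}\,\gamma!^{\sigma}$.

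Putting it together yields, with $C'=2\max(C,C_1,1)$,
\[
|x^\alpha\partial_x^\beta\partial_t^\gamma(\psi f)(t,x)|\leq C_0\|f\|_{\sigma,\mu,C}\;C'^{\,|\alpha+\beta|+|\gamma|}(\alpha!\beta!)^{\mu}\gamma!^{\sigma},
\]
which shows $\psi f\in\mathcal{S}_{\sigma,\mu,C'}(\G)\subset\S$. The main obstacle, such as it is, is purely bookkeeping: keeping the multi-index combinatorics clean (the factor $\binom{\gamma}{\delta}$ for $\gamma,\delta\in\N_0^m$, and the count of terms in the sum) and making sure all the separate constants $C$, $C_1$, $2^{|\gamma|}$ get folded into a single geometric constant $C'$ independent of $\alpha,\beta,\gamma$. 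No genuinely hard analysis is involved; this is the standard ``Gevrey/Gelfand--Shilov classes are closed under multiplication by Gevrey functions'' argument adapted to the mixed periodic-Euclidean setting.
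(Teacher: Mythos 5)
Your proposal is correct and follows essentially the same route as the paper: Leibniz in $t$, the Gevrey bound on $\psi$ and the Gelfand--Shilov bound on $f$, and absorption of the factor $\binom{\gamma}{\delta}\delta!^{\sigma}(\gamma-\delta)!^{\sigma}$ into $2^{|\gamma|}\gamma!^{\sigma}$ (the paper does the same absorption, just phrased via $\delta!(\gamma-\delta)!\le\gamma!$ and $\sum_{\delta\le\gamma}\binom{\gamma}{\delta}=2^{|\gamma|}$, which makes your worry about an extra per-coordinate constant unnecessary since $\#\{\delta\le\gamma\}=\prod_i(\gamma_i+1)\le 2^{|\gamma|}$). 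No gap; only cosmetic points such as writing $C_1^{|\gamma-\delta|}$ for multi-indices.
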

\begin{proof}
    If $\mathscr{G}^{\sigma}(\T^m)$ and $f\in\S$, then there exist positive constants $C_{0,\psi}$, $C_{1,\psi}$, $C_{0,f}$ and $C_{1,f}$ such that
    \[\sup_{t\in\T^m}|\partial_t^\gamma\psi(t)|\leq C_{0,\psi} C_{1,\psi}^{|\gamma|}\gamma!^\sigma,\ \forall\gamma\in\N_0^m,\]
    and
    \[\sup_{x\in\R^n}|x^\alpha\partial_x^\beta\partial_t^\gamma f(t,x)|\leq C_{0,f}C_{1,f}^{|\alpha+\beta|+|\gamma|}(\alpha!\beta!)^\mu\gamma!^\sigma,\ \forall \alpha,\beta\in\N_0^n,\ \gamma\in\N_0^m.\]

    By the inclusions in Theorem \ref{comp_inc}, we can assume without loss of generality that all these constants are greater than $1$. Then, for every $\alpha,\beta\in\N_0^n,\gamma\in\N_0^m$, by the Leibniz formula, we have that
    \begin{align*}
    \left|x^\alpha\partial_x^{\beta}\partial_t^\gamma[\psi(t)f(t,x)]\right| & \leq \sum_{\ell\leq \gamma}\binom{\gamma}{\ell}|\partial_t^{\ell} \psi(t)||x^{\alpha}\partial_x^{\beta}\partial_t^{\gamma-\ell}f(t,x)|\\
    &\leq \sum_{\ell\leq \gamma}\binom{\gamma}{\ell}C_{0,\psi}C_{1,\psi}^{|\ell|}\ell!^\sigma C_{0,f}C_{1,f}^{|\alpha+\beta|+|\gamma-\ell|}(\gamma-\ell)!^{\sigma}(\alpha!\beta!)^{\mu}\\
     &\leq C_{0,\psi} C_{0,f} 2^{|\gamma|}C_{1,\psi}^{|\gamma|} C_{1,f}^{|\alpha+\beta|+|\gamma|}\gamma!^{\sigma}(\alpha!\beta!)^{\mu}\\
    &\leq C_0C_1^{|\alpha+\beta|+|\gamma|}\gamma!^{\sigma}(\alpha!\beta!)^{\mu},
    \end{align*}
     for every $(t,x)\in\T^m\times\R^n$, where $C_0=C_{0,\psi} C_{0,f}$ and $C_1=2C_{1,\psi}C_{1,f}$. Therefore $\psi f\in\S$.
\end{proof}
\section{Fourier analysis}\label{sec_Fourier}

	In this section we introduce the partial and total Fourier transforms for time-periodic Gelfand-Shilov functions and ultradistributions. The construction is made by taking a partial Fourier series on the periodic variables and a partial Fourier transform on the real variables, following the ideas introduced by the first author in \cite{KowTmRn}. Moreover, we give a characterization of functions on these spaces by means of the decay of the corresponding Fourier coefficients. Additionally, we obtain some properties for certain classes of tempered ultradistributions. For the rest of this section, we shall consider arbitrary $\sigma\geq 1$ and $\mu\geq 1/2$, unless specified otherwise.

    \subsection{Fourier analysis for time-periodic Gelfand-Shilov functions}
     
    We define the partial Fourier transforms of a function $f\in L^1(\G)$ by:
    \[\mathcal{F}_{\R^n}(f)(t,\xi)=\widehat{f}(t,\xi)\defeq \int_{\R^n}f(t,x)e^{-i\xi\cdot x }\,\mathrm{d}x,\]
    for a.e. $(t,\xi)\in \G$, and
    \[\mathcal{F}_{\T^m}(f)(k,x)=\widehat{f}(k,x)\defeq \frac{1}{(2\pi)^m}\int_{[0,2\pi]^m}f(t,x)e^{-ik\cdot t}\,\mathrm{d}t,\]
     for all $k\in\Z^m$ and a.e. $x\in\R^n$. Also define its mixed partial Fourier transform by
     \begin{align*}
         \mathcal{F}_{\T^m}(\mathcal{F}_{\R^n}(f))(k,\xi) = \ftil(k,\xi)&\defeq\frac{1}{(2\pi)^m}\int_{[0,2\pi]^m}\int_{\R^n}f(t,x)e^{-i(k\cdot t+\xi\cdot x )}\,\mathrm{d}x\,\mathrm{d}t,
    \end{align*}
     for all $k\in\Z^m$ and a.e. $\xi\in\R^n$.
     
     Note that, by Fubini's Theorem, these expressions are all well defined and, moreover, $\widehat{f}(\cdot,\xi)\in L^1(\T^m)$ for every $\xi\in\R^n$ and $\widehat{f}(k,\cdot),\ftil(k,\cdot)\in L^1(\R^n)$ for every $k\in\Z^m$. 

    \begin{lemma}\label{lema331pet}
        Let $f\in\S$. The following statements are equivalent:
        \begin{itemize}
            \item[(i)] There exist $C_0,C_1,\varepsilon>0$ such that
            \[\sup_{x\in\R^n}\sup_{k\in\Z^m}|x^\alpha\partial_x^\beta \widehat{f}(k,x)|\leq C_0C_1^{|\alpha+\beta|}(\alpha!\beta!)^\mu e^{-\varepsilon|k|^{1/\sigma}},\quad\forall \alpha,\beta\in\N_0^n;\]
            
            \item[(ii)] There exist $C,h>0$ such that
            \[\sup_{x\in\R^n}\sup_{k\in\Z^m\setminus\{0\}}|x^\alpha\partial_x^\beta\widehat{f}(k,x)|\leq C^{|\alpha+\beta|}(\alpha!\beta!)^\mu h^{N+1}N!|k|^{-N/\sigma},\quad\forall N\in\N,\ \alpha,\beta\in\N_0^n;\]
            
            \item[(iii)] There exist $C,h>0$ such that
            \[\sup_{x\in\R^n}\sup_{k\in\Z^m\setminus\{0\}}|x^\alpha\partial_x^\beta\widehat{f}(k,x)|\leq C^{|\alpha+\beta|}(\alpha!\beta!)^\mu h(hN)^N|k|^{-N/\sigma},\quad\forall N\in\N_0,\ \alpha,\beta\in\N_0^n.\]
        \end{itemize}
    \end{lemma}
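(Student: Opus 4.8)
The plan is to prove the cyclic chain of implications $(i)\Rightarrow(ii)\Rightarrow(iii)\Rightarrow(i)$, each reducing to an elementary comparison between exponential decay in $k$ and polynomial decay with Gevrey-type growth in the exponent $N$. Since $f\in\S\subset\mathcal{S}(\G)$, each $\widehat f(k,\cdot)$ is a well-defined smooth function on $\R^n$, so every quantity involved is meaningful and only the displayed estimates will be used. For $(i)\Rightarrow(ii)$ I would start from the elementary inequality $e^{s}\ge s^{N}/N!$, valid for every $s\ge0$ and $N\in\N_0$, which gives $e^{-s}\le N!\,s^{-N}$ for $s>0$; applied with $s=\varepsilon|k|^{1/\sigma}$ it turns the factor $e^{-\varepsilon|k|^{1/\sigma}}$ in $(i)$ into $N!\,\varepsilon^{-N}|k|^{-N/\sigma}$. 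Choosing $C\ge\max\{C_1,1\}$ and $h\ge\max\{C_0,\varepsilon^{-1},1\}$ one has $C_0\varepsilon^{-N}\le h^{N+1}$, whence $(ii)$ holds with these constants, uniformly in $N$, $\alpha$ and $\beta$.

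For $(ii)\Rightarrow(iii)$, when $N\ge1$ the inequality $N!\le N^{N}$ immediately gives $h^{N+1}N!\le h\,(hN)^{N}$, so $(iii)$ holds for all $N\ge1$ with the same constants. The only remaining case is $N=0$ (with the convention $0^{0}=1$), for which the right-hand side of $(iii)$ equals $C^{|\alpha+\beta|}(\alpha!\beta!)^{\mu}h$; this follows by applying $(ii)$ with $N=1$ and using $|k|^{-1/\sigma}\le1$ for $k\neq0$, after which enlarging $h$ (for instance replacing it by $h^{2}$, which is harmless since we may assume $h\ge1$) makes the $N=0$ estimate valid while leaving the $N\ge1$ estimates untouched.

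The implication $(iii)\Rightarrow(i)$ is the main point, and the step I expect to demand the most care. Fix $k\neq0$ and set $t=|k|^{1/\sigma}$. The key remark is that, as a function of $N\in\N_0$, the quantity $(hN)^{N}t^{-N}=\exp\!\big(N\log(hN/t)\big)$ is minimized near $N\approx t/(eh)$, where it has size $e^{-t/(eh)}$. I would therefore take $N=\lfloor t/(eh)\rfloor$ whenever $t\ge 2eh$; then $N\ge1$, and since $t/(eh)-1\ge t/(2eh)$ one gets $1/(2e)<hN/t\le1/e$, hence $\log(hN/t)\le-1$ and $N\log(hN/t)\le-N\le-t/(2eh)$. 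Substituting into $(iii)$ yields, for each such $k$,
\[
\sup_{x\in\R^n}|x^\alpha\partial_x^\beta\widehat f(k,x)|\ \le\ C^{|\alpha+\beta|}(\alpha!\beta!)^{\mu}\,h\,e^{-|k|^{1/\sigma}/(2eh)},
\]
while for the finitely many $k$ with $1\le t<2eh$ I would use $(iii)$ with $N=0$, obtaining the bound $C^{|\alpha+\beta|}(\alpha!\beta!)^{\mu}h$; since $e^{-\varepsilon t}\ge e^{-1}$ on that range with $\varepsilon:=1/(2eh)$, this bound is dominated by $C^{|\alpha+\beta|}(\alpha!\beta!)^{\mu}(eh)\,e^{-\varepsilon t}$. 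Combining the two regimes gives $(i)$ with $\varepsilon=1/(2eh)$, $C_1=C$ and $C_0=eh$. The only genuinely delicate points are the bookkeeping around the integer part of $t/(eh)$ and checking that the regime $1\le t< 2eh$ comprises only finitely many $k$ — which it does, since it confines $|k|$ to a bounded subset of $\Z^m$.
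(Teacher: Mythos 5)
Your argument is correct and is essentially the standard optimization-in-$N$ argument that the paper itself does not spell out but delegates to \cite[Lemma 1.6.2]{Rod_Gevrey}: the paper's proof is a one-line reference, while you reproduce the same three-step cycle (turning $e^{-\varepsilon|k|^{1/\sigma}}$ into $N!\varepsilon^{-N}|k|^{-N/\sigma}$, using $N!\le N^N$, and then choosing $N\approx |k|^{1/\sigma}/(eh)$ to recover the exponential), so there is no genuinely different route here.

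One small omission to repair in $(iii)\Rightarrow(i)$: condition $(i)$ takes the supremum over \emph{all} $k\in\Z^m$, including $k=0$, whereas $(ii)$ and $(iii)$ only control $k\neq 0$; your two regimes $t\ge 2eh$ and $1\le t<2eh$ therefore cover every $k\neq 0$ but say nothing about $k=0$. This is where the standing hypothesis $f\in\S$ (which you invoke only for well-definedness) is actually needed: since $|x^\alpha\partial_x^\beta\widehat f(0,x)|\le\sup_{t\in\T^m}|x^\alpha\partial_x^\beta f(t,x)|\le C_0'C_1'^{|\alpha+\beta|}(\alpha!\beta!)^\mu$, the $k=0$ term satisfies $(i)$ trivially (as $e^{-\varepsilon\cdot 0}=1$) after enlarging $C_0,C_1$ if necessary. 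With that one line added, the bookkeeping around $N=\lfloor t/(eh)\rfloor$, the $N=0$ case of $(iii)$, and the replacement of $h$ by $h^2$ in $(ii)\Rightarrow(iii)$ are all fine.
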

    \begin{proof}
    The proof follows from the definition and by arguments similar to those found in the proof of \cite[Lemma 1.6.2]{Rod_Gevrey}.
    \end{proof}

    \begin{lemma}\label{lema_petr}
        Given $C,N>0$ and $\sigma\geq 1$, let $M$ be the smallest positive integer such that $M\geq N/\sigma$. Then, there exists $C'>0$ independent on $N$ such that
        \[C^{M+1}M^{\sigma M}\leq C'(C'N)^N.\]
    \end{lemma}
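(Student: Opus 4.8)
The plan is to read off from the definition of $M$ the two inequalities that drive everything, then bound each factor on the left-hand side separately, and finally collect the constants; the estimate is elementary and uniform in $N>0$.

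First I would record the basic relations. Since $M$ is the least positive integer with $M\ge N/\sigma$, we have $N/\sigma\le M$, and for $M\ge 2$ also $M-1<N/\sigma$; together with the trivial case $M=1$ this gives, using $\sigma\ge 1$,
\[
N\le \sigma M<N+\sigma,\qquad 1\le M\le \tfrac{N}{\sigma}+1\le N+1,\qquad M+1\le N+2 .
\]
The point to keep in mind throughout is that the exponent $\sigma M$ is an \emph{additive} perturbation of $N$ (namely $\sigma M<N+\sigma$), and must not be replaced by a cruder multiplicative bound such as $\sigma M\le 2N$: that is precisely what keeps the final estimate at order $(C'N)^N$ rather than $(C'N)^{2N}$.

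Next I would estimate the two factors. Set $\tilde C:=\max\{C,1\}\ge 1$, so $C^{M+1}\le \tilde C^{M+1}\le \tilde C^{N+2}$. For $M^{\sigma M}$, since $M\ge 1$ and $\sigma M\le N+\sigma$, monotonicity gives $M^{\sigma M}\le M^{N+\sigma}=M^{N}M^{\sigma}\le (N+1)^{N}(N+1)^{\sigma}$, using $M\le N/\sigma+1\le N+1$. Now invoke the two elementary inequalities $(1+1/N)^{N}\le e$ and $\ln t\le t/e$ (for $t>0$): the first yields $(N+1)^{N}=N^{N}(1+1/N)^{N}\le e\,N^{N}$, and the second, together with $N+1\le 2\max\{1,N\}$, yields $(N+1)^{\sigma}\le 2^{\sigma}\max\{1,N\}^{\sigma}\le 2^{\sigma}e^{\sigma N/e}=2^{\sigma}\bigl(e^{\sigma/e}\bigr)^{N}$. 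Hence $M^{\sigma M}\le 2^{\sigma}e\,\bigl(e^{\sigma/e}\bigr)^{N}N^{N}$.

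Finally, multiplying the two bounds gives
\[
C^{M+1}M^{\sigma M}\le \tilde C^{2}\,2^{\sigma}e\,\bigl(\tilde C\,e^{\sigma/e}\bigr)^{N}N^{N},
\]
and choosing $C':=\tilde C^{2}2^{\sigma}e+\tilde C e^{\sigma/e}$ — which depends only on $C$ and $\sigma$, not on $N$ — one has both $\tilde C^{2}2^{\sigma}e\le C'$ and $\tilde C e^{\sigma/e}\le C'$, so the right-hand side is $\le C'(C'N)^{N}$, as claimed. There is no deep obstacle here: the argument is careful bookkeeping of constants, the only genuinely important choice being to carry the exponent as $N+\sigma$ and to absorb the residual polynomial factor $M^{\sigma}$ into the exponential via $N^{\sigma}\le e^{\sigma N/e}$.
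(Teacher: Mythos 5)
Your proof is correct. Note that the paper itself does not prove this lemma at all: it simply cites Rodino's book (\cite[Lemma 1.6.3]{Rod_Gevrey}), so your argument serves as a self-contained elementary verification of that cited estimate rather than an alternative to a proof in the paper. The bookkeeping is sound throughout: the minimality of $M$ correctly yields $N\le\sigma M<N+\sigma$ and $M\le N/\sigma+1\le N+1$ in both the case $M\ge 2$ and the boundary case $M=1$ (where $N\le\sigma$), the passage $M^{\sigma M}\le M^{N+\sigma}\le (N+1)^{N+\sigma}$ uses $M\ge 1$ legitimately, the absorptions $(N+1)^N\le e\,N^N$ and $(N+1)^\sigma\le 2^\sigma e^{\sigma N/e}$ are valid for every real $N>0$ (including $0<N<1$, via the $\max\{1,N\}$ device), and the final constant $C'=\tilde C^2 2^\sigma e+\tilde C e^{\sigma/e}$ depends only on $C$ and $\sigma$, exactly as the statement requires. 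Your emphasis on keeping the exponent as the additive bound $N+\sigma$ rather than a multiplicative one is indeed the point that makes the conclusion come out as $C'(C'N)^N$, which is how the lemma is used in Lemma \ref{lema332petr}.
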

    \begin{proof}
        \cite[Lemma 1.6.3]{Rod_Gevrey}.
    \end{proof}
    
    \begin{lemma}\label{lema332petr}
        Let $f\in\S$. Given $N>0$,  suppose that there exist $C_0,C_1,L,h>0$ such that
        \[|x^\alpha\partial_t^{\gamma}\partial_x^\beta f(t,x)|\leq C_0C_1^{|\alpha+\beta|}(\alpha!\beta!)^{\mu} h^{|\gamma|}M^{\sigma|\gamma|},\]
        for all $\alpha,\beta\in\N_0^n$, $\gamma\in\N_0^m$, $(t,x)\in\G$, and $|\gamma|\leq M$, where $M$ is the smallest positive integer such that $M\geq N/\sigma$. Then, for all $\alpha,\beta\in\N_0^n$, $k\in\Z^m\setminus\{0\}$, and $x\in\R^n$:
        \[|x^\alpha\partial_x^\beta\widehat{f}(k,x)|\leq C_0C_1^{|\alpha+\beta|}(\alpha!\beta!)^\mu C'(C'N)^{N}|k|^{-N/\sigma},\]
        for some constant $C'>0$ which does not depend on $N$.
    \end{lemma}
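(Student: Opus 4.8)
The plan is to integrate by parts in the periodic variables to turn decay in $|k|$ into $|\gamma|$-th order derivative bounds, then optimize the choice of the number of integrations $|\gamma|$ as a function of $|k|$. Concretely, for fixed $\alpha,\beta\in\N_0^n$, $k\in\Z^m\setminus\{0\}$, and $x\in\R^n$, pick a coordinate $j$ with $|k_j|=\max_i|k_i|\ge|k|/\sqrt m$, and integrate by parts $M$ times in $t_j$ in the definition of $\widehat f(k,x)$, using that there are no boundary terms by periodicity. This yields
\[
|x^\alpha\partial_x^\beta\widehat f(k,x)| = |k_j|^{-M}\Bigl|\frac{1}{(2\pi)^m}\int_{[0,2\pi]^m} x^\alpha\partial_x^\beta\partial_{t_j}^{M} f(t,x)\, e^{-ik\cdot t}\,\mathrm dt\Bigr| \le |k_j|^{-M}\sup_{(t,x)\in\G}|x^\alpha\partial_x^\beta\partial_{t_j}^{M}f(t,x)|.
\]
Here $\gamma=Me_j$ has $|\gamma|=M\le M$, so the hypothesis applies and bounds the right-hand side by $C_0C_1^{|\alpha+\beta|}(\alpha!\beta!)^\mu h^{M}M^{\sigma M}|k_j|^{-M}$.

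Next I would insert $|k_j|^{-M}\le (\sqrt m)^M|k|^{-M}$ and, recalling that $M$ is the smallest integer with $M\ge N/\sigma$ so that $|k|^{-M}\le |k|^{1/\sigma}\cdot|k|^{-N/\sigma}$ when $|k|\ge 1$ — actually more carefully $|k|^{-M}\le |k|^{-N/\sigma}$ for $|k|\ge1$ since $M\ge N/\sigma$ — I obtain
\[
|x^\alpha\partial_x^\beta\widehat f(k,x)|\le C_0 C_1^{|\alpha+\beta|}(\alpha!\beta!)^\mu\,(\sqrt m\,h)^{M}M^{\sigma M}\,|k|^{-N/\sigma}.
\]
Finally, applying Lemma \ref{lema_petr} with the constant $\sqrt m\,h$ in place of $C$ gives a constant $C'>0$, independent of $N$, with $(\sqrt m\,h)^{M}M^{\sigma M}=(\sqrt m\,h)\cdot(\sqrt m\,h)^{M}M^{\sigma M}\le (\sqrt m\,h)^{-1}\cdot C'(C'N)^N$, and after absorbing the fixed factor $(\sqrt m\,h)^{-1}$ into $C'$ (enlarging it if necessary), we conclude
\[
|x^\alpha\partial_x^\beta\widehat f(k,x)|\le C_0 C_1^{|\alpha+\beta|}(\alpha!\beta!)^\mu\,C'(C'N)^N|k|^{-N/\sigma},
\]
as desired.

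I expect the only delicate point to be bookkeeping: one must check that the exponent $M$ arising from integration by parts is exactly the $M$ in the hypothesis (the smallest integer $\ge N/\sigma$), so that the assumed bound $|\gamma|\le M$ is met with equality and no larger derivatives are needed; and one must verify that the various constant adjustments — the factor $(\sqrt m)^M$ from passing from $|k_j|$ to $|k|$, the stray factor $h$ or $(\sqrt m h)^{-1}$ from the form of Lemma \ref{lema_petr}, and the requirement $|k|\ge 1$ which holds since $k\in\Z^m\setminus\{0\}$ — can all be folded into a single $N$-independent $C'$. There is also the harmless subtlety that differentiation under the integral sign is justified because $f\in\S\subset\mathcal S(\G)$, so $\partial_{t_j}^M f(\cdot,x)$ is continuous and the integral over the compact torus converges absolutely and uniformly in $x$; this makes the integration-by-parts step rigorous. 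None of these is a genuine obstacle — the argument is essentially the standard Paley–Wiener–type estimate adapted to the Gelfand–Shilov weights, with the $x^\alpha\partial_x^\beta$ factors playing a purely passive role throughout.
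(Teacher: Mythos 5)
Your argument is correct and is essentially the paper's proof: both convert the decay in $|k|$ into an $M$-th order $t$-derivative bound (you by integrating by parts $M$ times in the coordinate maximizing $|k_j|$, the paper via $x^\alpha\partial_x^\beta\widehat{\partial_t^\gamma f}(k,x)=(ik)^\gamma x^\alpha\partial_x^\beta\widehat f(k,x)$ together with the multinomial expansion $|k|^M\le\sum_{|\gamma|=M}\frac{M!}{\gamma!}|k^\gamma|$), and both then absorb the resulting $c^{M}$ factor through Lemma \ref{lema_petr}. The only blemish is a typo in your last display, which should read $(\sqrt m\,h)^{M}M^{\sigma M}=(\sqrt m\,h)^{-1}(\sqrt m\,h)^{M+1}M^{\sigma M}\le(\sqrt m\,h)^{-1}C'(C'N)^{N}$; it does not affect the conclusion.
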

    \begin{proof}
        First, notice that
        \[x^\alpha\partial_x^\beta \widehat{\partial_t^\gamma f}(k,x) = x^\alpha (ik)^\gamma 
 \partial_x^\beta\widehat{f}(k,x).\]
        Therefore if $|\gamma|=M$ we have that
        \[|k^\gamma x^\alpha\partial_x^{\beta}\widehat{f}(k,x)|\leq \frac{1}{(2\pi)^{m}}\int_{\T^m}|x^\alpha\partial_x^\beta\partial_t^\gamma f(t,x)|\,\mathrm{d}t \leq C_0C_1^{|\alpha+\beta|}(\alpha!\beta!)^\mu h^{M}M^{\sigma M},\]
        for all $\alpha,\beta\in\N_0^n$, $k\in\Z^m$ and $x\in\R^n$. If $k\neq 0$, this implies that
        \begin{align*}
        |k|^{N/\sigma}|x^\alpha\partial_x^\beta \widehat{f}(k,x)| & \leq |k|^{M}|x^\alpha\partial_x^\beta \widehat{f}(k,x)|\ \leq\  \sum_{|\gamma|=M}\dfrac{M!}{\gamma!}|k^\gamma||x^\alpha\partial_x^\beta\widehat{f}(k,x)|\\
        & \leq\ \sum_{|\gamma|=M}\dfrac{M!}{\gamma!}C_0C_1^{|\alpha+\beta|}(\alpha!\beta!)^\mu h^M M^{\sigma M}\\
        & = m^M C_0C_1^{|\alpha+\beta|}(\alpha!\beta!)^\mu h^M M^{\sigma M},
        \end{align*}
        for all $\alpha,\beta\in\N_0^n$, $k\in\Z^m$ and $x\in\R^n$. Therefore
        \[|k|^{N/\sigma}|x^\alpha\partial_x^\beta\widehat{f}(k,x)|\leq \tilde{C}^{M+1}M^{\sigma M}C_0C_1^{|\alpha+\beta|}(\alpha!\beta!)^\mu,\]
        where $\tilde{C}=\max\{1,mh\}$. Applying Lemma \ref{lema_petr} to the inequality above, we have that there exists $C'>0$ independent on $N$ such that
        \[|k|^{N/\sigma}|x^\alpha\partial_x^\beta\widehat{f}(k,x)|\leq C_0C_1^{|\alpha+\beta|}(\alpha!\beta!)^\mu C'(C'N)^N,\]
        for all $\alpha,\beta\in\N_0^n$, $k\in\Z^m$ and $x\in\R^n$, which finishes the proof.
    \end{proof}

    \begin{prop}\label{proppartialdecayrn}
    A function $f\in C^\infty(\G)\cap L^1(\T^m\times\R^n)$ belongs to $\S$ if and only if there exist $C_0,C_1,\varepsilon>0$ such that, for all $\alpha,\beta\in\N_0^n$, we have that
    
    \begin{equation}{\label{ineqpartialtorus}}
	\sup_{(k,x)\in\Z^m\times\R^n}e^{\varepsilon |k|^{1/\sigma}}|x^\alpha\partial_x^\beta\widehat{f}(k,x)|\leq C_0C_1^{|\alpha+\beta|}(\alpha!\beta!)^{\mu} .
    \end{equation}
    \end{prop}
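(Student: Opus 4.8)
\emph{Plan.} The key observation is that the asserted inequality \eqref{ineqpartialtorus} is verbatim condition (i) of Lemma \ref{lema331pet}. I would therefore prove the two implications separately, invoking the bookkeeping Lemmas \ref{lema331pet} and \ref{lema332petr} for necessity, and a direct Fourier-series argument for sufficiency.

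\emph{Necessity.} Assuming $f\in\S$, fix $C_0,C_1>0$ with $|x^\alpha\partial_x^\beta\partial_t^\gamma f(t,x)|\le C_0 C_1^{|\alpha+\beta|+|\gamma|}(\alpha!\beta!)^\mu\gamma!^\sigma$ for all multi-indices and all $(t,x)\in\G$. Given $N>0$, let $M$ be the least integer with $M\ge N/\sigma$. The elementary bound $\gamma!^\sigma\le|\gamma|!^\sigma\le|\gamma|^{\sigma|\gamma|}\le M^{\sigma|\gamma|}$, valid whenever $|\gamma|\le M$, shows that $f$ satisfies the hypothesis of Lemma \ref{lema332petr} with $h=C_1$. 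That lemma then yields, for every $N>0$ and every $k\in\Z^m\setminus\{0\}$, a bound $|x^\alpha\partial_x^\beta\widehat f(k,x)|\le C_0 C_1^{|\alpha+\beta|}(\alpha!\beta!)^\mu C'(C'N)^N|k|^{-N/\sigma}$, which is condition (iii) of Lemma \ref{lema331pet} once $C_0$ and $C'$ are absorbed into a single constant $h$; the case $k=0$ is immediate from $|x^\alpha\partial_x^\beta\widehat f(0,x)|\le\sup_{(t,x)\in\G}|x^\alpha\partial_x^\beta f(t,x)|\le C_0 C_1^{|\alpha+\beta|}(\alpha!\beta!)^\mu$. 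The equivalence (iii)$\Leftrightarrow$(i) of Lemma \ref{lema331pet} then delivers \eqref{ineqpartialtorus}.

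\emph{Sufficiency.} Assuming \eqref{ineqpartialtorus}, I would first note that the exponential factor $e^{-\varepsilon|k|^{1/\sigma}}$ makes $\sum_{k\in\Z^m}\widehat f(k,x)e^{ik\cdot t}$ converge absolutely and uniformly; since $f(\cdot,x)$ is continuous and integrable on $\T^m$, this series represents $f(t,x)$, and the uniform bounds on $\partial_x^\beta\widehat f(k,x)$ (again carrying the exponential decay in $k$) license termwise differentiation in both $t$ and $x$, so that
\[x^\alpha\partial_x^\beta\partial_t^\gamma f(t,x)=\sum_{k\in\Z^m}(ik)^\gamma\,x^\alpha\partial_x^\beta\widehat f(k,x)\,e^{ik\cdot t}.\]
Taking absolute values and inserting \eqref{ineqpartialtorus} reduces matters to estimating $\sum_{k\in\Z^m}|k|^{|\gamma|}e^{-\varepsilon|k|^{1/\sigma}}$ by a quantity of the form $C_2 C_3^{|\gamma|}\gamma!^\sigma$ with $C_2,C_3$ independent of $\gamma$. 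I would obtain this by splitting $e^{-\varepsilon|k|^{1/\sigma}}=e^{-(\varepsilon/2)|k|^{1/\sigma}}e^{-(\varepsilon/2)|k|^{1/\sigma}}$, bounding $\sup_{r\ge0}r^{|\gamma|}e^{-(\varepsilon/2)r^{1/\sigma}}$ by maximizing $s\mapsto s^{\sigma|\gamma|}e^{-(\varepsilon/2)s}$ and using $|\gamma|^{|\gamma|}\le e^{|\gamma|}|\gamma|!$, and then using the convergence of $\sum_k e^{-(\varepsilon/2)|k|^{1/\sigma}}$ together with $|\gamma|!\le m^{|\gamma|}\gamma!$. This furnishes the defining estimates of $\S$, hence $f\in\S$.

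\emph{Main obstacle.} The constant-chasing and the justification of termwise differentiation are routine once the exponential decay in $k$ is available. The only genuinely computational step — and the point at which the Gevrey exponent $\sigma$ is produced out of the decay rate $1/\sigma$ — is the estimate of $\sum_k|k|^{|\gamma|}e^{-\varepsilon|k|^{1/\sigma}}$; this is a classical type of bound, but it is where I would be most careful.
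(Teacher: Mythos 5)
Your proposal is correct and follows essentially the same route as the paper: necessity via Lemma \ref{lema332petr} combined with the equivalence in Lemma \ref{lema331pet}, and sufficiency via absolute and uniform convergence of the partial Fourier series, termwise differentiation, and Fourier inversion. The only difference is that you make explicit the bound $\sum_{k}|k|^{|\gamma|}e^{-\varepsilon|k|^{1/\sigma}}\leq C_2C_3^{|\gamma|}\gamma!^{\sigma}$ (in the spirit of Lemma \ref{lemma-exponential}), a constant-tracking step the paper leaves implicit when it concludes the defining estimates of $\S$.
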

    \begin{proof}
    First, suppose that $f\in\S$. Then given $N>0$, we have that
    \[|x^\alpha \partial_x^\beta\partial_t^\gamma f(t,x)|\leq C_0C_1^{|\alpha+\beta|}C_1^{|\gamma|}(\alpha!\beta!)^\mu\gamma!^\sigma \leq C_0 C_1^{|\alpha+\beta|}(\alpha!\beta!)^\mu C_1^{M}M^{\sigma M},\]
    for every $\alpha,\beta\in\N_0^n$, $\gamma\in\N_0^m$, $(t,x)\in\G$, with $|\gamma|=M$, where $M$ is the smallest positive integer such that $M\geq N/\sigma$. By Lemma \ref{lema332petr}, we have that condition {\it (iii)} from Lemma \ref{lema331pet} holds, implying that inequality \eqref{ineqpartialtorus} also holds.

    On the other hand, suppose that \eqref{ineqpartialtorus} holds. The previous estimates imply that the series
    \[g(t,x)\defeq\sum_{k\in\Z^m}\widehat{f}(k,x)e^{ik\cdot t}\]
    converges absolutely and uniformly on $\G$. Moreover, the series 
    \[\sum_{k\in\Z^m}\partial_x^\beta\partial_t^\gamma\left(\widehat{f}(k,x)e^{ik\cdot t}\right)\]
    also converges absolutely and uniformly on $\G$ for every $\beta\in\N_0^m$ and $\gamma\in\N_0^m$, so we can exchange the order of summation and differentiation and the series above corresponds to $\partial_x^\beta\partial_t^\gamma g(t,x)$, from which we conclude that $g\in C^\infty(\G)$. Finally, given $\alpha,\beta\in\N_0^n$ and $\gamma\in\N_0^m$, we have that
    \begin{align*}
    \sup_{(t,x)\in\T^m\times \R^n}|x^\alpha \partial_x^\beta \partial_t^\gamma g(t,x)|&=  \sup_{(t,x)\in\T^m\times \R^n} \left|x^\alpha\partial_x^\beta\partial_t^\gamma\sum_{k\in\Z^m}\widehat{f}(k,x)e^{ik\cdot t}\right|\\ &\leq \sup_{x\in\R^n} \sum_{k\in\Z^m}|k|^{|\gamma|}|x^\alpha \partial_x^\beta\widehat{f}(k,x)|\\
    & \leq C_0C_1^{|\alpha+\beta|}(\alpha!\beta!)^\mu\sum_{k\in\Z^n} |k|^{|\gamma|}e^{-\varepsilon|k|^{1/\sigma}} <\ +\infty.
    \end{align*}
    
    Finally, by the Fourier inversion theorem, it follows that the series above coincides with $f$, which finishes the proof.
    \end{proof}

    \begin{obs}
        In view of the previous theorem, given a sequence $(g(k,x))_{k\in\Z^m}$ satisfying \eqref{ineqpartialtorus}, the series
        \[\sum_{k\in\Z^m}g(k,x)e^{ikt}\]
        defines a function $g\in\S$. Also, if $f\in\S$, we have that
        \[f(t,x)=\sum_{k\in\Z^m}\widehat{f}(k,x)e^{ikt}.\]
    \end{obs}

    \begin{lemma}\label{lemma_poly}
        Fix $\nu\in\N_0^n$ and let $f\in\S$. Then, there exist $C_{0,\nu},C_{1,\nu}\geq 1$ such that, for all $\alpha,\beta\in\N_0^n$, and $\gamma\in\N_0^m$,
        \[\sup_{(t,x)\in\G}|x^\nu x^\alpha\partial_x^\beta\partial_t^\gamma f(t,x)|\leq C_{0,\nu} C_{1,\nu}^{|\alpha+\beta|+|\gamma|}(\alpha!\beta!)^\mu\gamma!^\sigma.\]
    \end{lemma}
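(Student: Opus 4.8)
The plan is to deduce the estimate directly from the definition of $\S$ by absorbing the fixed monomial $x^\nu$ into the variable monomial $x^\alpha$. Since $f\in\S$, there are constants $A_0,A_1\ge 1$ (we may take them $\ge 1$ by Theorem~\ref{comp_inc}) such that
\[
\sup_{(t,x)\in\G}|x^{\rho}\partial_x^\beta\partial_t^\gamma f(t,x)|\le A_0\,A_1^{|\rho+\beta|+|\gamma|}(\rho!\,\beta!)^\mu\gamma!^\sigma
\]
for all $\rho,\beta\in\N_0^n$ and $\gamma\in\N_0^m$. Using $x^\nu x^\alpha=x^{\alpha+\nu}$ and applying this bound with $\rho=\alpha+\nu$ yields
\[
\sup_{(t,x)\in\G}|x^\nu x^\alpha\partial_x^\beta\partial_t^\gamma f(t,x)|\le A_0\,A_1^{|\alpha+\beta|+|\nu|+|\gamma|}\bigl((\alpha+\nu)!\,\beta!\bigr)^\mu\gamma!^\sigma .
\]

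It then remains to split the right-hand side into a factor depending only on $\nu$ (and on $\mu,A_0,A_1$) and a factor of the shape required by the statement. For the geometric part I use $|\alpha+\nu+\beta|=|\alpha+\beta|+|\nu|$, so $A_1^{|\alpha+\nu+\beta|}=A_1^{|\nu|}A_1^{|\alpha+\beta|}$; for the factorial I invoke the elementary multi-index inequality $(\alpha+\nu)!\le 2^{|\alpha|+|\nu|}\alpha!\,\nu!$, which gives $(\alpha+\nu)!^\mu\le 2^{\mu|\nu|}\nu!^\mu\,(2^\mu)^{|\alpha|}\alpha!^\mu\le 2^{\mu|\nu|}\nu!^\mu\,(2^\mu)^{|\alpha+\beta|}\alpha!^\mu$. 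Collecting terms, the bound becomes $C_{0,\nu}\,C_{1,\nu}^{|\alpha+\beta|+|\gamma|}(\alpha!\,\beta!)^\mu\gamma!^\sigma$ with $C_{0,\nu}:=\max\{1,\,A_0A_1^{|\nu|}2^{\mu|\nu|}\nu!^\mu\}$ and $C_{1,\nu}:=\max\{1,\,2^\mu A_1\}$, both $\ge 1$ by construction (here one uses $A_1^{|\alpha+\beta|+|\gamma|}(2^\mu)^{|\alpha+\beta|}\le(2^\mu A_1)^{|\alpha+\beta|+|\gamma|}$, since $2^\mu A_1\ge A_1$).

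This argument is essentially mechanical; the only point deserving any care is the factorial estimate $(\alpha+\nu)!\le 2^{|\alpha|+|\nu|}\alpha!\,\nu!$, which follows coordinatewise from $\binom{\alpha_i+\nu_i}{\alpha_i}\le 2^{\alpha_i+\nu_i}$, and the bookkeeping needed to keep the resulting constants $\ge 1$ as demanded by the statement. I expect no genuine obstacle: the lemma merely records, with explicit control of the constants, that multiplication by a fixed polynomial maps $\S$ into itself, presumably for later reuse.
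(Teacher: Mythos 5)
Your proof is correct and follows essentially the same route as the paper: write $x^\nu x^\alpha=x^{\alpha+\nu}$, apply the defining estimate of $\S$ with the shifted multi-index, use $(\alpha+\nu)!\le 2^{|\alpha|+|\nu|}\alpha!\,\nu!$, and absorb the $\nu$-dependent factors into $C_{0,\nu}$ and the extra $2^\mu$ into $C_{1,\nu}$. The constants you obtain match the paper's up to the harmless $\max\{1,\cdot\}$ normalization.
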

    \begin{proof}
       Since $f\in\S$, there exist $C_0,C_1\geq 1$ such that, for all $\alpha,\beta\in\N_0^n$, $\ \gamma\in\N_0^m$ and $(t,x)\in\G$ we have that
        \[|x^\nu x^\alpha\partial_x^\beta\partial_t^\gamma f(t,x)|\leq C_0C_1^{|\alpha+\beta|+|\gamma|}C_1^{|\nu|}(\alpha+\nu)!^\mu\beta!^\mu\gamma!^\sigma.\]
       
       Therefore
        \begin{align*}
        |x^\nu x^\alpha\partial_x^\beta\partial_t^\gamma f(t,x)| & \leq C_0C_1^{|\alpha+\beta|+|\gamma|}C_1^{|\nu|}(2^{|\alpha|+|\nu|}\alpha!\nu!)^\mu\beta!^\mu\gamma!^\sigma\\
        & \leq  C_0(2^{\mu|\nu|}C_1^{|\nu|}\nu!^\mu)(2^\mu C_1)^{|\alpha+\beta|+|\gamma|}(\alpha!\beta!)^\mu\gamma!^\sigma.
        \end{align*}
        
        Taking $C_{0,\nu} = C_02^{\mu|\nu|}C_1^{|\nu|}\nu!^\mu$ and $C_{1,\nu}=2^\mu C_1$, the proof is complete.
    \end{proof}

    \begin{prop}\label{propdecaypartialxi}
		A function $f\in C^\infty(\G)\cap L^1(\T^m\times\R^n)$ belongs to $\S$ if and only if there exist $C_0,C_1>0$ such that, for all $\alpha,\beta\in\N_0^n$ and $\gamma\in\N_0^m$, we have that
	\begin{equation}\label{ineqpartialr}
        \sup_{(t,x)\in \T^m\times\R^n} |\xi^\alpha\partial_\xi^\beta\partial_t^\gamma\widehat{f}(t,\xi)|\leq C_0C_1^{|\alpha+\beta|+|\gamma|}(\alpha!\beta!)^\mu\gamma!^\sigma.
	\end{equation}
	\end{prop}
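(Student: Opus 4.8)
The plan is to prove both implications by a direct computation, using the partial Fourier transform $\mathcal{F}_{\R^n}$ for necessity and the Fourier inversion formula for sufficiency. The guiding principle is the familiar one: $\mathcal{F}_{\R^n}$ interchanges multiplication by $x^\alpha$ with $\partial_\xi^\alpha$ and $\partial_x^\beta$ with multiplication by $\xi^\beta$ (up to unimodular constants) and commutes with $\partial_t$, so that, for fixed $t$, the statement is closely related to the Fourier invariance of $\mathcal{S}_\mu^\mu(\R^n)$ recorded around Theorem \ref{gelf_equiv}; carrying out the estimates directly keeps the uniformity in $t$ and the Gevrey-$\sigma$ control in $t$ transparent. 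The assumption $\mu\geq 1/2$ will enter at exactly one place, in a factorial inequality.

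\emph{Necessity.} Assume $f\in\S$. By Lemma \ref{lemma_poly}, every $x$-weighted $(t,x)$-derivative of $f$ lies in $L^1(\R^n)$, uniformly in $t$; this justifies differentiation under the integral sign, so that $\partial_\xi^\beta\partial_t^\gamma\widehat f(t,\xi)=\int_{\R^n}(-ix)^\beta\partial_t^\gamma f(t,x)e^{-ix\cdot\xi}\,dx$ and, in particular, $\widehat f\in C^\infty(\G)$. Integrating by parts $|\alpha|$ times in $x$ (the boundary terms vanish because $f$ is Schwartz in $x$) gives
\[
|\xi^\alpha\partial_\xi^\beta\partial_t^\gamma\widehat f(t,\xi)|\;\leq\;\int_{\R^n}\bigl|\partial_x^\alpha\bigl(x^\beta\partial_t^\gamma f(t,x)\bigr)\bigr|\,dx.
\]
Expanding the right-hand side by the Leibniz rule and using $\partial_x^{\alpha'}x^\beta=\tfrac{\beta!}{(\beta-\alpha')!}x^{\beta-\alpha'}$ (which vanishes unless $\alpha'\leq\beta$), the problem reduces to estimating $\int_{\R^n}|x^{\beta-\alpha'}\partial_x^{\alpha-\alpha'}\partial_t^\gamma f(t,x)|\,dx$ for $\alpha'\leq\min(\alpha,\beta)$. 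Here I will use the elementary bound $\int_{\R^n}|h|\,dx\leq\pi^n\sup_{x}\prod_{j=1}^n(1+x_j^2)|h(x)|$, which controls each such integral by a finite sum of quantities $\sup_x|x^\nu x^{\beta-\alpha'}\partial_x^{\alpha-\alpha'}\partial_t^\gamma f(t,x)|$ with $\nu_j\leq 2$, and each of these is at most $C_0C_1^{|\alpha|+|\beta|+|\gamma|}\bigl((\beta-\alpha')!(\alpha-\alpha')!\bigr)^\mu\gamma!^\sigma$ by Lemma \ref{lemma_poly}, uniformly in $t$.

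\emph{Sufficiency.} Assume \eqref{ineqpartialr}. Then $\widehat f(t,\cdot)$ and all its $\xi$- and $t$-derivatives are rapidly decreasing, uniformly in $t$, so $g(t,x):=(2\pi)^{-n}\int_{\R^n}\widehat f(t,\xi)e^{ix\cdot\xi}\,d\xi$ defines an element of $C^\infty(\G)$; moreover $g=f$, since $f(t,\cdot)$ and $\widehat f(t,\cdot)$ both belong to $L^1(\R^n)$ and $f\in C^\infty(\G)$, so Fourier inversion applies. Arguing symmetrically — differentiating under the integral, writing $x^\alpha e^{ix\cdot\xi}=(-i)^{|\alpha|}\partial_\xi^\alpha e^{ix\cdot\xi}$, integrating by parts in $\xi$ (boundary terms again vanishing), and applying the Leibniz rule to $\partial_\xi^\alpha\bigl(\xi^\beta\partial_t^\gamma\widehat f\bigr)$ — one is led to
\[
|x^\alpha\partial_x^\beta\partial_t^\gamma f(t,x)|\;\leq\;(2\pi)^{-n}\!\!\sum_{\alpha'\leq\min(\alpha,\beta)}\!\!\binom{\alpha}{\alpha'}\frac{\beta!}{(\beta-\alpha')!}\int_{\R^n}\bigl|\xi^{\beta-\alpha'}\partial_\xi^{\alpha-\alpha'}\partial_t^\gamma\widehat f(t,\xi)\bigr|\,d\xi,
\]
and the same bump-function device together with \eqref{ineqpartialr} bounds each integral by $C_0C_1^{|\alpha|+|\beta|+|\gamma|}\bigl((\beta-\alpha')!(\alpha-\alpha')!\bigr)^\mu\gamma!^\sigma$.

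\emph{The decisive step.} Both directions reduce to showing that, for a constant $C=C(n,\mu)$,
\[
\sum_{\alpha'\leq\min(\alpha,\beta)}\binom{\alpha}{\alpha'}\frac{\beta!}{(\beta-\alpha')!}\bigl((\beta-\alpha')!(\alpha-\alpha')!\bigr)^\mu\;\leq\;C^{|\alpha|+|\beta|}(\alpha!\beta!)^\mu .
\]
Since $\alpha'\leq\beta$ one may write $\tfrac{\beta!}{(\beta-\alpha')!}=\binom{\beta}{\alpha'}\alpha'!$, and splitting $\alpha'!=(\alpha'!)^\mu(\alpha'!)^{1-\mu}$ the general term becomes $\binom{\alpha}{\alpha'}\binom{\beta}{\alpha'}\cdot\bigl[(\alpha'!)^\mu((\alpha-\alpha')!)^\mu\bigr]\cdot\bigl[(\alpha'!)^{1-\mu}((\beta-\alpha')!)^\mu\bigr]$. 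The first bracket is at most $(\alpha!)^\mu$, because $\alpha'!(\alpha-\alpha')!\leq\alpha!$. For the second bracket one uses $\delta!\leq|\delta|!$ and the inequality, valid for $0\leq a\leq b$ whenever $\mu\geq 1/2$,
\[
(a!)^{1-\mu}\bigl((b-a)!\bigr)^\mu=(a!)^{1-2\mu}\bigl(a!(b-a)!\bigr)^\mu\leq(b!)^\mu ,
\]
which holds because $(a!)^{1-2\mu}\leq 1$ and $a!(b-a)!\leq b!$ (when $\mu\geq 1$ the second bracket is bounded by $((\beta-\alpha')!)^\mu\leq(|\beta|!)^\mu$ even more directly); hence the second bracket is at most $(|\beta|!)^\mu\leq n^{\mu|\beta|}(\beta!)^\mu$. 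Summing over the at most $2^{|\alpha|}$ admissible $\alpha'$ and absorbing the binomial coefficients into $C^{|\alpha|+|\beta|}$ finishes the estimate. This factorial bookkeeping — concretely, the step $(a!)^{1-2\mu}\leq 1$, which is where $\mu\geq 1/2$ is needed — is the only genuinely non-routine point; the remaining ingredients (differentiation under the integral, integration by parts, the $L^1$ bump-function inequality, and Fourier inversion) are standard.
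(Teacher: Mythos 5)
Your proof is correct and follows essentially the same route as the paper's: partial Fourier transform in $x$, trading $\xi$-powers for $x$-derivatives and $\xi$-derivatives for $x$-powers, the polynomial-weight trick together with Lemma \ref{lemma_poly} to pass from sup-bounds to $L^1$-bounds, and Fourier inversion for the converse. The only difference is that you carry out explicitly the Leibniz expansion of $\partial_x^\alpha\bigl(x^\beta\,\cdot\,\bigr)$ and the resulting factorial resummation (the one place where $\mu\geq 1/2$ is used), a commutator bookkeeping that the paper's displayed identity passes over silently; this is added care within the same method, not a different approach.
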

    \begin{proof}
    First, suppose that $f\in\S$. By Lemma \ref{lemma_poly}, we have that
    \begin{align*}
        |\xi^\alpha\partial_\xi^\beta\partial_t^\gamma\widehat{f}(t,\xi)| & = \left|\int_{\R^n}e^{-i\xi\cdot x}x^\beta\partial_x^\alpha\partial_t^\gamma f(t,x)\,\mathrm{d}x\right|\\
        & \leq \int_{\R^n}(1+|x|)^{-N}(1+|x|)^{N}|x^\beta\partial_x^\alpha\partial_t^\gamma f(t,x)|\,\mathrm{d}x\\
        & \leq \sup_{x\in\R^n}(1+|x|) ^{N}|x^\beta\partial_x^\alpha\partial_t^\gamma f(t,x)| \int_{\R^n}(1+|x|)^{-N}\,\mathrm{d}x\\
        & \leq C_0C_1^{|\alpha+\beta|+|\gamma|}(\alpha!\beta!)^\mu\gamma!^\sigma,
    \end{align*}
    where $N\in\N$ satisfies $N>n$ so that $\int_{\R^n}(1+|x|)^{-N}\,\mathrm{d}x<+\infty$, and $C_{0}=(n+1)^NC_{0,N}, C_1=(n+1)^NC_{1,N}$ are given by Lemma \ref{lemma_poly}, since $(1+|x|)^{N}$ is a polynomial on $|x_1|,\dots,|x_n|$.

    Conversely, suppose that \eqref{ineqpartialr} holds. By Lebesgue's Dominated Convergence Theorem and the Fourier inversion theorem, we have that
    \[\frac{1}{(2\pi)^n}\int_{\R^n}e^{i\xi\cdot x}\widehat{f}(t,\xi)\,\mathrm{d}\xi\]
    is a smooth function and coincides with $f$. Moreover,
    \begin{align*}
        |x^\alpha\partial_x^\beta\partial_t^\gamma f(t,x)| & = \left|\frac{1}{(2\pi)^n}\int_{\R^n}e^{i\xi\cdot x}\xi^\beta\partial_\xi^\alpha\partial_t^\gamma \widehat{f}(t,\xi)\,\mathrm{d}\xi\right|.
    \end{align*}
    
    Following the same arguments of the proof of the first implication, it follows that $f$ belongs to $\S$.
    \end{proof}

    \begin{corollary}\label{coro_gelf_transf} 
        Let $f\in L^1(\T^m\times\R^n)$. Then $f\in\S$ if and only if $\mathcal{F}_{\R^n}f\in\S$.
    \end{corollary}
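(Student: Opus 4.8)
The plan is to obtain Corollary~\ref{coro_gelf_transf} as a direct consequence of Proposition~\ref{propdecaypartialxi}. The point is that the Gelfand--Shilov type estimates \eqref{ineqpartialr} for $\widehat f=\mathcal F_{\R^n}f$ appearing there are, up to trivial smoothness and integrability checks, exactly the assertion that $\widehat f\in\S$ (now reading $\xi\in\R^n$ as the Euclidean variable of $\G$). So the whole task is to pass between ``$f$ satisfies \eqref{ineqpartialr}'' and ``$\widehat f\in\S$''.

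For the forward implication I would start from $f\in\S\subset\mathcal S(\G)$, so that differentiation under the integral sign gives $\widehat f\in C^\infty(\G)$. Proposition~\ref{propdecaypartialxi} then furnishes $C_0,C_1>0$ with $\sup_{(t,\xi)}|\xi^\alpha\partial_\xi^\beta\partial_t^\gamma\widehat f(t,\xi)|\le C_0C_1^{|\alpha+\beta|+|\gamma|}(\alpha!\beta!)^\mu\gamma!^\sigma$ for all $\alpha,\beta\in\N_0^n$ and $\gamma\in\N_0^m$. Taking $\beta=\gamma=0$ shows $\widehat f(\cdot,\xi)$ is rapidly decreasing in $\xi$ uniformly over the compact torus $\T^m$, whence $\widehat f\in L^1(\G)$; and the displayed estimate is precisely $\|\widehat f\|_{\sigma,\mu,C_1}\le C_0<\infty$, so $\widehat f\in\mathcal S_{\sigma,\mu,C_1}(\G)\subset\S$.

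For the converse I would set $g:=\mathcal F_{\R^n}f$ and assume $g\in\S$. Then $g$ satisfies \eqref{ineqpartialr} by the definition of the seminorms, and being rapidly decreasing it satisfies $g(t,\cdot)\in L^1(\R^n)$; together with the standing hypothesis $f\in L^1(\G)$, the Fourier inversion theorem yields $f(t,x)=(2\pi)^{-n}\int_{\R^n}g(t,\xi)e^{i\xi\cdot x}\,\mathrm d\xi$ for a.e.\ $(t,x)$, and dominated convergence together with differentiation under the integral (again using the rapid decay of $g$) show the right-hand side is smooth on $\G$. Hence $f$ admits a representative in $C^\infty(\G)\cap L^1(\G)$, and Proposition~\ref{propdecaypartialxi} applies verbatim --- with $\widehat f=g$ in the role of the function satisfying \eqref{ineqpartialr} --- to conclude $f\in\S$. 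I expect no real obstacle: all the analytic content is already in Proposition~\ref{propdecaypartialxi}, and the only auxiliary facts needed are that $\widehat f$ is smooth, that it is integrable over $\G$, and that Fourier inversion legitimately identifies $f$ with a smooth function, each of which follows from the rapid decay encoded in the Gelfand--Shilov estimates and the compactness of $\T^m$.
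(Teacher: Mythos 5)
Your proposal is correct and follows essentially the same route the paper intends: the corollary is read off directly from Proposition~\ref{propdecaypartialxi}, since the estimates \eqref{ineqpartialr} are exactly the defining seminorm bounds for $\widehat f$ as an element of $\S$ in the variables $(t,\xi)$. The auxiliary checks you supply (smoothness of $\widehat f$ by differentiation under the integral, and Fourier inversion identifying $f$ with a smooth representative in the converse) are precisely the routine verifications the paper leaves implicit.
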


    \begin{prop}\label{proppartialdecaymix}
    A function $f\in C^\infty(\G)\cap L^1(\G)$ belongs to $\S$ if and only if there exist $C_0,C_1,\varepsilon>0$ such that, for all $\alpha,\beta\in\N_0^n$,
    \begin{equation}\label{ineqmix}
    \sup_{(k,\xi)\in\Z^m\times\R^n} e^{\varepsilon|k|^{1/\sigma}}|\xi^\alpha\partial_{\xi}^\beta\ftil(k,\xi)|\leq C_0C_1^{|\alpha+\beta|}(\alpha!\beta!)^\mu .
    \end{equation}
    \end{prop}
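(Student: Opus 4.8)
The plan is to read off \eqref{ineqmix} by iterating the two characterizations already established, exploiting that the mixed transform factors as a Euclidean transform followed by a toroidal one. The opening computation of this section records that $\ftil(k,\xi)=\mathcal{F}_{\T^m}(\mathcal{F}_{\R^n}f)(k,\xi)$; that is, $(\ftil(k,\cdot))_{k\in\Z^m}$ is nothing but the sequence of toroidal Fourier coefficients of the function $(t,\xi)\mapsto\widehat f(t,\xi)=\mathcal{F}_{\R^n}f(t,\xi)$. Accordingly I would run the chain of equivalences $f\in\S\iff\mathcal{F}_{\R^n}f\in\S\iff$ the toroidal Fourier coefficients of $\mathcal{F}_{\R^n}f$ obey the bound \eqref{ineqpartialtorus} (with $\xi$ in the role of $x$) $\iff$ \eqref{ineqmix}, using Corollary \ref{coro_gelf_transf} for the first step (legitimate because $\S\subset L^1(\G)$) and Proposition \ref{proppartialdecayrn} for the second.

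Spelled out, the forward implication goes: if $f\in\S$, then $g:=\mathcal{F}_{\R^n}f\in\S$ by Corollary \ref{coro_gelf_transf}, so in particular $g\in C^\infty(\G)\cap L^1(\G)$ and Proposition \ref{proppartialdecayrn} applies to $g$, producing $C_0,C_1,\varepsilon>0$ with
\[
\sup_{(k,\xi)\in\Z^m\times\R^n}e^{\varepsilon|k|^{1/\sigma}}\,|\xi^\alpha\partial_\xi^\beta\widehat g(k,\xi)|\le C_0 C_1^{|\alpha+\beta|}(\alpha!\beta!)^\mu,\qquad\forall\,\alpha,\beta\in\N_0^n;
\]
since $\widehat g(k,\xi)=\ftil(k,\xi)$, this is precisely \eqref{ineqmix}.

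For the converse I would start from \eqref{ineqmix}, observe that the sequence $(\ftil(k,\cdot))_{k\in\Z^m}$ then satisfies \eqref{ineqpartialtorus} (with $\xi$ replacing $x$), and invoke Proposition \ref{proppartialdecayrn}---more precisely the remark immediately following it---to conclude that the series $g(t,\xi):=\sum_{k\in\Z^m}\ftil(k,\xi)\,e^{ik\cdot t}$ defines an element of $\S$. The step requiring a little care is the identification $g=\mathcal{F}_{\R^n}f$: for each $\xi$ the functions $g(\cdot,\xi)$ and $\widehat f(\cdot,\xi)=\mathcal{F}_{\R^n}f(\cdot,\xi)$ both lie in $L^1(\T^m)$ and have the same sequence of toroidal Fourier coefficients, namely $(\ftil(k,\xi))_k$, so they agree almost everywhere; hence $\mathcal{F}_{\R^n}f=g\in\S$, and since $f\in L^1(\G)$, Corollary \ref{coro_gelf_transf} yields $f\in\S$.

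I do not anticipate a real obstacle: all the analytic work---propagating the polynomial-times-Gevrey bounds through the two Fourier transforms and the summation estimates for sums like $\sum_{k}|k|^{N}e^{-\varepsilon|k|^{1/\sigma}}$---is already internalized in Propositions \ref{proppartialdecayrn}, \ref{propdecaypartialxi} and Corollary \ref{coro_gelf_transf}. What needs attention is only bookkeeping: verifying that each intermediate function belongs to $C^\infty(\G)\cap L^1(\G)$ before a cited result is applied to it, and, in the converse direction, reconstructing $f$ via uniqueness of toroidal Fourier coefficients together with Corollary \ref{coro_gelf_transf}, rather than attempting to differentiate $\mathcal{F}_{\R^n}f$ directly (which is not a priori smooth).
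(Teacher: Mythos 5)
Your proposal is correct and follows the same route as the paper, whose proof simply cites Corollary \ref{coro_gelf_transf} together with Proposition \ref{proppartialdecayrn} applied to $g=\mathcal{F}_{\R^n}f$; you merely spell out the details the paper leaves implicit (in particular, the identification $g=\mathcal{F}_{\R^n}f$ in the converse via the remark after Proposition \ref{proppartialdecayrn} and uniqueness of toroidal Fourier coefficients), and these verifications are sound.
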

    \begin{proof}
    The statement is a direct consequence of Proposition \ref{proppartialdecayrn} and Corollary \ref{coro_gelf_transf}.
    \end{proof}

    \begin{obs}
        From now, we will denote $\ftil(k,\xi)$ simply by $\widehat{f}(k,\xi)$ for simplicity.
    \end{obs}

    \subsection{Fourier analysis for ultradistributions}

    Now, let $u\in \SS$. Define its partial Fourier transforms by
    $$\langle\mathcal{F}_{\R^n}(u)(t,\xi),f(t,\xi)\rangle = \langle u(t,\xi),\mathcal{F}_{\R^n}(f)(t,\xi)\rangle,$$
    for every $ \,f\in\SS$, and
    $$\mathcal{F}_{\T^m}(u)(k,\cdot)\in \mathcal{S}'_\mu(\R^n),$$
    given by 
    $$\langle\mathcal{F}_{\T^m}(u)(k,x),f(x)\rangle = \langle u(t,x),e^{ik\cdot t}f(x)\rangle,$$
    for every $f\in\mathcal{S}_\mu(\R^n)$ and $k\in\Z^m$.
    We also define its mixed partial Fourier transform by
    $$\mathcal{F}_{\T^m}(\mathcal{F}_{\R^n}(u))(k,\cdot)\in \mathcal{S}'_\mu(\R^n),$$
    given by
    $$\left\langle \mathcal{F}_{\T^m}(\mathcal{F}_{\R^n}(u))(k,\xi),f(\xi)\right\rangle = \langle u(t,\xi),e^{-ik\cdot t}\mathcal{F}_{\R^n}(f)(\xi)\rangle,$$
    for every $f\in\mathcal{S}_\mu(\R^n)$ and $k\in\Z^m$.
    
    Lastly, for $u\in\SS$ we also define $$\langle\mathcal{F}_{\R^n}^{-1}u(t,x),f(t,x)\rangle = \langle u(t,x),\mathcal{F}_{\R^n}^{-1}f(t,x)\rangle,$$
    where 
    \begin{equation*}
        \mathcal{F}^{-1}_{\R^n}(f)(t,x)\defeq \frac{1}{(2\pi)^n}\int_{\R^n}f(t,\xi)e^{i\xi\cdot x }\,\mathrm{d}\xi
    \end{equation*}
    denotes the partial inverse Fourier transform for $f\in \S$.
    By Corollary \ref{coro_gelf_transf} and the continuity of the partial Fourier transform it follows that $\mathcal{F}_{\R^n}u,\mathcal{F}_{\R^n}^{-1}u\in\SS$, for any $u\in \SS$.

    \begin{prop}\label{proppartialdecaytorusdistrib}
    Let $u\in \SS$. Then for every $C_1,\varepsilon>0$, there exists $\tilde C_0=\tilde C_0(\varepsilon,C_1)>0$ such that
\begin{equation}\label{ineqpartialtorusdistrib}
    |\langle \widehat{u}(k,x),f(x)\rangle|\leq \tilde C_0\|f\|_{\mu,C_1}e^{\varepsilon|k|^{1/\sigma}},
    \end{equation}
    for every $f\in \mathcal{S}_\mu(\R^n)$ and $k\in\Z^m$, where
    \begin{equation}\label{norm_GS}
    \|f\|_{\mu,C_1}=\sup_{\alpha,\beta\in\N_0^n}C_1^{-|\alpha+\beta|}(\alpha!\beta!)^{-\mu}\sup_{x\in\R^n}|x^\alpha\partial_x^\beta f(x)|.
    \end{equation}
    \end{prop}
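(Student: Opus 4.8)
The plan is to express $\langle\widehat u(k,\cdot),f\rangle$ through the action of $u$ on the test function $\varphi_{k,f}(t,x):=e^{ik\cdot t}f(x)$ and then invoke the continuity characterization of $\SS$ from Proposition \ref{char_dist}, choosing the free constant there appropriately. First I would note that $\varphi_{k,f}\in\S$: indeed $e^{ik\cdot t}\in\mathscr{G}^\sigma(\T^m)$, and $f\in\mathcal{S}_\mu(\R^n)$, regarded as a function of $(t,x)$ not depending on $t$, lies in $\S$ (which is immediate from the definitions), so $\varphi_{k,f}\in\S$ by Proposition \ref{prop_closed_under_product}. By the definition of $\mathcal{F}_{\T^m}$ on ultradistributions, $\langle\widehat u(k,x),f(x)\rangle=\langle u,\varphi_{k,f}\rangle$. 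Since this pairing is in any case a finite number, the asserted inequality is trivial whenever $\|f\|_{\mu,C_1}=+\infty$, so I may assume $\|f\|_{\mu,C_1}<+\infty$.

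Next I would introduce a constant $C'=C'(\varepsilon,C_1)\ge C_1$ to be fixed at the end and apply Proposition \ref{char_dist} with that constant: there is $C_0'=C_0'(C')>0$ with
\[
|\langle u,\varphi_{k,f}\rangle|\le C_0'\sup_{\alpha,\beta\in\N_0^n}\sup_{\gamma\in\N_0^m}(C')^{-|\alpha+\beta|-|\gamma|}\gamma!^{-\sigma}(\alpha!\beta!)^{-\mu}\sup_{(t,x)\in\G}\bigl|x^\alpha\partial_x^\beta\partial_t^\gamma\varphi_{k,f}(t,x)\bigr|.
\]
Because $\partial_t^\gamma\partial_x^\beta\varphi_{k,f}(t,x)=(ik)^\gamma e^{ik\cdot t}\partial_x^\beta f(x)$, one has
\[
\bigl|x^\alpha\partial_x^\beta\partial_t^\gamma\varphi_{k,f}(t,x)\bigr|=|k^\gamma|\,\bigl|x^\alpha\partial_x^\beta f(x)\bigr|\le |k|^{|\gamma|}\,C_1^{|\alpha+\beta|}(\alpha!\beta!)^{\mu}\|f\|_{\mu,C_1},
\]
the last step being just the definition \eqref{norm_GS}. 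Substituting this and using $C'\ge C_1$ so that $\sup_{\alpha,\beta}(C'/C_1)^{-|\alpha+\beta|}=1$, the problem reduces to estimating $\sup_{\gamma\in\N_0^m}(C')^{-|\gamma|}\gamma!^{-\sigma}|k|^{|\gamma|}$.

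For that supremum I would combine the multinomial inequality $|\gamma|!/\gamma!\le m^{|\gamma|}$ (so that $\gamma!^{-\sigma}\le m^{\sigma|\gamma|}/|\gamma|!^{\sigma}$) with the elementary bound $s^N/N!^{\sigma}=\bigl((s^{1/\sigma})^N/N!\bigr)^{\sigma}\le e^{\sigma s^{1/\sigma}}$, valid for $s\ge0$ and $\sigma\ge1$. Writing $N=|\gamma|$ and $s=m^{\sigma}|k|/C'$, this gives
\[
\sup_{\gamma\in\N_0^m}(C')^{-|\gamma|}\gamma!^{-\sigma}|k|^{|\gamma|}\le e^{\sigma m (C')^{-1/\sigma}|k|^{1/\sigma}}.
\]
Finally I would pick $C':=\max\{C_1,(\sigma m/\varepsilon)^{\sigma}\}$, which forces $\sigma m (C')^{-1/\sigma}\le\varepsilon$, and set $\tilde C_0:=C_0'(C')$; this yields the claimed estimate for all $k\in\Z^m$ and all $f\in\mathcal{S}_\mu(\R^n)$. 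The argument is short and I do not anticipate a genuine obstacle; the only delicate point is choosing the single constant $C'$ so that it simultaneously absorbs the factor $C_1^{|\alpha+\beta|}$ produced by the Gelfand--Shilov seminorm of $f$ and brings the exponential growth rate in $|k|$ below the prescribed $\varepsilon$, after which everything reduces to the standard Gevrey-type bound for $\sup_N s^N/N!^{\sigma}$.
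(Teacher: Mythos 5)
Your proposal is correct and follows essentially the same route as the paper: test $u$ against $e^{ik\cdot t}f(x)$, invoke Proposition \ref{char_dist} with a sufficiently large constant $C'\geq C_1$ absorbing the factor $C_1^{|\alpha+\beta|}$, and dominate $\sup_{\gamma}(C')^{-|\gamma|}\gamma!^{-\sigma}|k|^{|\gamma|}$ by $e^{\varepsilon|k|^{1/\sigma}}$ via the choice $C'=\max\{C_1,(\sigma m/\varepsilon)^{\sigma}\}$, exactly as the paper does with Lemma \ref{lemma-exponential} and $\tilde C_1\geq\max\{(\sigma/\varepsilon)^{\sigma},C_1\}$. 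Your only deviations are cosmetic and in fact slightly more careful: you prove the Gevrey bound $s^N/N!^{\sigma}\leq e^{\sigma s^{1/\sigma}}$ inline instead of citing the lemma, and you explicitly account for the passage from $\gamma!$ to $|\gamma|!$ via the factor $m^{\sigma|\gamma|}$, which the paper glosses over.
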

    \begin{proof}
        Given $f\in\mathcal{S}_\mu(\R^n)$, notice that $(t,x)\mapsto e^{ik\cdot t}f(x)\in\S$, for all $k\in\Z^m$. By Proposition \ref{char_dist} for every $\tilde C_1>0$ there exists $\tilde C_0=\tilde C_0(\tilde C_1)>0$ such that
        \begin{align*}
            & |\langle \widehat{u}(k,x),f(x)\rangle| \\
            & = |\langle u,e^{ik\cdot t}f(x)\rangle| \leq \tilde C_0\sup_{\alpha,\beta\in\N_0^n}\sup_{\gamma\in\N_0^m}\tilde C_1^{-|\alpha+\beta|}\tilde C_1^{-|\gamma|}\gamma!^{-\sigma}(\alpha!\beta!)^{-\mu}\sup_{t\in\T^m}|\partial_t^\gamma e^{ik\cdot t}|\sup_{x\in\R^n}|x^\alpha\partial_x^\beta f(x)|\\
            &\leq \tilde C_0\sup_{\alpha,\beta\in\N_0^n}\sup_{\gamma\in\N_0^m} \left(\frac{{C_1}}{\tilde C_1}\right)^{|\alpha+\beta|}C_1^{-|\alpha+\beta|}\tilde C_1^{-|\gamma|}(\alpha!\beta!)^{-\mu}(|\gamma|!)^{-\sigma}|k|^{|\gamma|} \sup_{x\in\R^n}|x^\alpha\partial_x^\beta f(x)| \\
            &\leq \tilde C_0\sup_{\alpha,\beta\in\N_0^n}\sup_{\gamma\in\N_0^m} \left(\frac{{C_1}}{\tilde C_1}\right)^{|\alpha+\beta|}C_1^{-|\alpha+\beta|}(\alpha!\beta!)^{-\mu}\left(\frac{(\sigma/\varepsilon)^{\sigma}}{\tilde C_1}\right)^{|\gamma|} e^{\varepsilon|k|^{1/\sigma}}\sup_{x\in\R^n}|x^\alpha\partial_x^\beta f(x)|,
        \end{align*}
        for all $k\in\Z^m$, where in the last line we applied Lemma \ref{lemma-exponential}. By choosing $\tilde C_1\geq \max\{(\sigma/\varepsilon)^\sigma,C_1\}$, we obtain
        \begin{align*}
            |\langle \widehat{u}(k,x),f(x)\rangle| \leq  \tilde C_0\sup_{\alpha,\beta\in\N_0^n}C_1^{-|\alpha+\beta|}(\alpha!\beta!)^{-\mu} \sup_{x\in\R^n}|x^\alpha\partial_x^\beta f(x)|e^{\varepsilon|k|^{1/\sigma}},
            \end{align*}
            for every $k\in\Z^m$, as claimed.
    \end{proof}

    \begin{lemma}\label{lemma_unifbound}
        Let $(u_j)_{j\in\N}$ be a sequence in $\SS$ such that, for each function $\varphi\in\S$, $(\langle u_j,\varphi\rangle)_{j\in\N}$ is a Cauchy sequence in $\mathbb{C}$. Then, there exists a unique $u\in\SS$ such that $u=\displaystyle\lim_{j\to\infty}u_j$.
    \end{lemma}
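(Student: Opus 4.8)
The plan is to define $u$ as the pointwise limit of the $u_j$ and then promote pointwise convergence to membership in $\SS$ via a Banach--Steinhaus argument on each of the Banach building blocks $\mathcal{S}_{\sigma,\mu,C}(\G)$, using the characterization of tempered ultradistributions in Proposition \ref{char_dist}.

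First I would set $\langle u,\varphi\rangle:=\lim_{j\to\infty}\langle u_j,\varphi\rangle$ for each $\varphi\in\S$. This is well defined because $(\langle u_j,\varphi\rangle)_{j\in\N}$ is Cauchy in the complete space $\mathbb{C}$, and $\varphi\mapsto\langle u,\varphi\rangle$ is linear since each $u_j$ is linear and pointwise limits preserve linearity. It then remains to show $u\in\SS$, which by Proposition \ref{char_dist} amounts to producing, for each $C_1>0$, a constant $C_0>0$ with $|\langle u,\varphi\rangle|\le C_0\|\varphi\|_{\sigma,\mu,C_1}$ for all $\varphi\in\S$.

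Fix $C_1>0$. Since $\mathcal{S}_{\sigma,\mu,C_1}(\G)\subseteq\S$, for every $\varphi\in\mathcal{S}_{\sigma,\mu,C_1}(\G)$ the sequence $(\langle u_j,\varphi\rangle)_{j\in\N}$ converges, hence is bounded; and by Proposition \ref{char_dist} each $u_j$ restricts to a bounded linear functional on the Banach space $(\mathcal{S}_{\sigma,\mu,C_1}(\G),\|\cdot\|_{\sigma,\mu,C_1})$. The uniform boundedness principle therefore yields $M_{C_1}>0$ with $|\langle u_j,\varphi\rangle|\le M_{C_1}\|\varphi\|_{\sigma,\mu,C_1}$ for all $j\in\N$ and all $\varphi\in\mathcal{S}_{\sigma,\mu,C_1}(\G)$. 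Letting $j\to\infty$ gives $|\langle u,\varphi\rangle|\le M_{C_1}\|\varphi\|_{\sigma,\mu,C_1}$ on $\mathcal{S}_{\sigma,\mu,C_1}(\G)$; for $\varphi\in\S$ with $\|\varphi\|_{\sigma,\mu,C_1}=+\infty$ the inequality is trivial, so it holds for all $\varphi\in\S$. As $C_1>0$ was arbitrary, Proposition \ref{char_dist} gives $u\in\SS$.

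Finally, by construction $\langle u_j,\varphi\rangle\to\langle u,\varphi\rangle$ for every $\varphi\in\S$, i.e.\ $u=\lim_{j\to\infty}u_j$ in $\SS$ (and since $\S$ is Montel this is also a strong limit); uniqueness is immediate, since any two such limits agree on every $\varphi\in\S$. The only non-formal ingredient is the invocation of the uniform boundedness principle, legitimate precisely because each $\mathcal{S}_{\sigma,\mu,C_1}(\G)$ is a Banach space and $\S$ is their set-theoretic union over $C_1>0$; I expect this to be the main (though routine) point. One could alternatively phrase the continuity of $u$ abstractly — $\S$ being a DFS space is barrelled, so a pointwise-convergent sequence in its dual has a continuous limit — but the hands-on version above is more self-contained given the tools already at hand.
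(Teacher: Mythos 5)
Your proof is correct. Note, though, that the paper does not actually write out an argument for this lemma: its ``proof'' is a citation to \cite[Lemma 2.1]{KowTmRn} and \cite[Theorem 2.9]{AvCap2022}, so what you have produced is a self-contained replacement rather than a reproduction of the paper's reasoning. Your route --- define $u$ as the pointwise limit, then for each fixed $C_1>0$ apply the uniform boundedness principle on the Banach space $\mathcal{S}_{\sigma,\mu,C_1}(\G)$ (each $u_j$ restricts to a bounded functional there by Proposition \ref{char_dist}, and pointwise convergence gives pointwise boundedness), pass to the limit, and observe that the resulting estimate is vacuous for $\varphi\in\S$ with $\|\varphi\|_{\sigma,\mu,C_1}=+\infty$ --- verifies exactly the criterion of Proposition \ref{char_dist}, so $u\in\SS$, and uniqueness is immediate. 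This is essentially the standard proof of weak-$*$ sequential completeness of the dual, specialized to the Banach steps of the inductive limit; the abstract alternative you mention (DFS spaces are barrelled, and duals of barrelled spaces are weak-$*$ sequentially complete) is presumably what the cited results encapsulate. The hands-on version buys self-containedness using only tools already in the paper ($\mathcal{S}_{\sigma,\mu,C}(\G)$ being Banach and Proposition \ref{char_dist}), at the cost of repeating a Banach--Steinhaus argument for each $C_1$; the citation route is shorter but opaque. Your parenthetical claim that the convergence is also strong because $\S$ is Montel is not needed for the statement and could simply be dropped, but it is not wrong.
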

    \begin{proof}
        See \cite[Lemma 2.1]{KowTmRn} and \cite[Theorem 2.9]{AvCap2022}.
    \end{proof}

    \begin{prop}\label{inverse-sequence-distrib}
        Let $(u(k,\cdot))_{k\in\Z^m}$ be a sequence in $\mathcal{S}'_\mu(\R^n)$ which satisfies \eqref{ineqpartialtorusdistrib}. Then, there exists a unique $\widecheck u\in\SS$ such that
        \[\langle \widecheck u,f\rangle = (2\pi)^m\sum_{k\in\Z^m}\langle u(k,x),\widehat{f}(-k,x)\rangle=\sum_{k\in\Z^m}\int_{\T^m}\langle u(k,x),f(t,x)\rangle e^{ik\cdot t}\,\mathrm{d}t,\]
        for all $f\in\S$. In this case, we define $\mathcal{F}_{\T^m}^{-1}u=\widecheck{u}$.
    \end{prop}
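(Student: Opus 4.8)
The plan is to proceed in three stages: first prove that the series defining $\langle\widecheck u,f\rangle$ converges absolutely for every $f\in\S$; then realize $\widecheck u$ as a limit of partial sums via Lemma \ref{lemma_unifbound}, which at once gives membership in $\SS$ and uniqueness; and finally identify the second, integral expression for the pairing.

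For the absolute convergence, fix $f\in\S$. By Proposition \ref{proppartialdecayrn} there are constants $C_0,C_1,\varepsilon'>0$ such that $\|\widehat f(-k,\cdot)\|_{\mu,C_1}\leq C_0\,e^{-\varepsilon'|k|^{1/\sigma}}$ for every $k\in\Z^m$, where $\|\cdot\|_{\mu,C_1}$ is the norm in \eqref{norm_GS}. Choose $\varepsilon$ with $0<\varepsilon<\varepsilon'$ and feed this pair $(\varepsilon,C_1)$ into the hypothesis \eqref{ineqpartialtorusdistrib}: there is $\tilde C_0>0$ with $|\langle u(k,x),g(x)\rangle|\leq \tilde C_0\|g\|_{\mu,C_1}\,e^{\varepsilon|k|^{1/\sigma}}$ for all $g\in\mathcal{S}_\mu(\R^n)$ and $k\in\Z^m$. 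Taking $g(x)=\widehat f(-k,x)$ and combining the two bounds yields
\[
|\langle u(k,x),\widehat f(-k,x)\rangle|\leq \tilde C_0 C_0\, e^{-(\varepsilon'-\varepsilon)|k|^{1/\sigma}},
\]
which is summable over $k\in\Z^m$. Hence the series defining $\langle\widecheck u,f\rangle$ converges absolutely.

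For membership and uniqueness, set $\langle\widecheck u_M,f\rangle:=(2\pi)^m\sum_{|k|\leq M}\langle u(k,x),\widehat f(-k,x)\rangle$ for $M\in\N$. The map $f\mapsto\widehat f(-k,\cdot)$ is sequentially continuous from $\S$ into $\mathcal{S}_\mu(\R^n)$ — this is read off the quantitative estimates behind Proposition \ref{proppartialdecayrn} (Lemmas \ref{lema331pet} and \ref{lema332petr}) — and $u(k,\cdot)\in\mathcal{S}'_\mu(\R^n)$, so each $\widecheck u_M\in\SS$. By the absolute convergence just established, $(\langle\widecheck u_M,f\rangle)_M$ is Cauchy in $\mathbb{C}$ for every $f\in\S$, so Lemma \ref{lemma_unifbound} furnishes a unique $\widecheck u\in\SS$ with $\widecheck u=\lim_{M\to\infty}\widecheck u_M$ and $\langle\widecheck u,f\rangle=(2\pi)^m\sum_{k\in\Z^m}\langle u(k,x),\widehat f(-k,x)\rangle$ for all $f$. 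Uniqueness follows because this formula prescribes the action of $\widecheck u$ on all of $\S$ (alternatively, directly from the uniqueness clause of Lemma \ref{lemma_unifbound}).

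Finally, for the integral form: since $f\in\S$, the map $t\mapsto f(t,\cdot)$ is continuous (indeed smooth) from $\T^m$ into $\mathcal{S}_\mu(\R^n)$, so $(2\pi)^m\widehat f(-k,\cdot)=\int_{\T^m}f(t,\cdot)e^{ik\cdot t}\,\mathrm{d}t$ holds as an $\mathcal{S}_\mu(\R^n)$-valued integral; applying the continuous functional $u(k,\cdot)$ and commuting it with the integral gives $(2\pi)^m\langle u(k,x),\widehat f(-k,x)\rangle=\int_{\T^m}\langle u(k,x),f(t,x)\rangle e^{ik\cdot t}\,\mathrm{d}t$, and summing over $k$ produces the second expression. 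The main obstacle is the bookkeeping that underlies the absolute convergence, namely matching the $\S$-side decay of $\widehat f(-k,\cdot)$ from Proposition \ref{proppartialdecayrn} with the $\SS$-side growth bound \eqref{ineqpartialtorusdistrib} for a compatible choice of parameters $(\varepsilon,C_1)$; the continuity of $f\mapsto\widehat f(-k,\cdot)$ and the interchange of the $\mathcal{S}'_\mu$-pairing with the $\T^m$-integral are routine but are what the remaining steps rely on.
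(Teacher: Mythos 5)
Your proposal is correct and follows essentially the same route as the paper: match the exponential decay of $\|\widehat f(-k,\cdot)\|_{\mu,C}$ coming from Proposition \ref{proppartialdecayrn} against the tempered growth allowed by \eqref{ineqpartialtorusdistrib} (choosing $\varepsilon$ strictly smaller than the decay rate, just as the paper takes $\varepsilon=\varepsilon_0/2$) to get absolute convergence, then pass to the limit of the partial sums via Lemma \ref{lemma_unifbound} to obtain $\widecheck u\in\SS$ and its uniqueness. Your extra remarks — verifying that each partial sum lies in $\SS$ via the continuity of $f\mapsto\widehat f(-k,\cdot)$, and justifying the exchange of the $\mathcal{S}'_\mu$-pairing with the $\T^m$-integral — are points the paper leaves implicit, but they do not change the argument.
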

    \begin{proof}
        For each $j\in\N$, consider the  $S_j\in\SS$ given by
        \begin{align*}
    \langle S_j(t,x),f(t,x)\rangle\ & = \sum_{|k|\leq j}\langle  u(k,x)e^{ik\cdot t},f(t,x)\rangle\\
    & = \sum_{|k|\leq j}\int_{\T^m}\langle u(k,x),f(t,x)\rangle e^{ik\cdot t}\,\mathrm{d}t  \\
    & = (2\pi)^m\sum_{|k|\leq j}\langle u(k,x),\widehat{f}(-k,x)\rangle,
    \end{align*}
    for every $f\in\S$. For each $k\in\Z^m$,  Proposition \ref{proppartialdecaytorusdistrib} implies that for all $C,\varepsilon>0$, there exists $\tilde C_0=\tilde C_0(\varepsilon,C_1)>0$ such that
    \begin{align*}
    |\langle {u}(k,x),\widehat{f}(-k,x)\rangle|&\leq  \tilde C_0\|\widehat{f}(-k,\cdot)\|_{\mu,C}e^{\varepsilon|k|^{1/\sigma}},
    \end{align*}
    for every $f\in \S$, where $\|\cdot\|_{\mu,C}$ is given by \eqref{norm_GS}. Therefore, given $j,\ell\in\N$, we have that 
    \begin{align*}
    (2\pi)^{-m}|\langle S_{j+\ell},f \rangle - \langle S_{j},f\rangle|&\leq \sum_{|k|>j}^{j+\ell} |\langle {u}(k,x),\widehat{f}(-k,x)\rangle|\leq \sum_{|k|>j}^{j+\ell}\tilde C_0\|\widehat{f}(-k,\cdot)\|_{\mu,C}e^{\varepsilon|k|^{1/\sigma}}\\
    &\leq \sum_{|k|>j}^{\infty} \tilde C_0\|\widehat{f}(-k,\cdot)\|_{\mu,C}e^{\varepsilon|k|^{1/\sigma}}.
    \end{align*}
   Since $f\in\S$, by Proposition \ref{proppartialdecayrn} there exist $C_f,\varepsilon_0,C_0>0$ such that
    \[\|\widehat{f}(k,\cdot)\|_{\mu,C_f}\leq C_0 e^{-\varepsilon_0|k|^{1/\sigma}},\]
for all $k\in\Z^m$.

    Taking $\varepsilon=\varepsilon_0/2$, we have that
    \[|\langle S_{j+\ell},f \rangle - \langle S_{j},f\rangle|\leq (2\pi)^m\sum_{|k|>j}^{\infty} C'e^{-\frac{\varepsilon_0}{2}|k|^{1/\sigma}}<+\infty,\]
    for $C'=\tilde{C}_0(C_f,\varepsilon_0/2)C_0$, which implies that
    \[\lim_{j\to\infty}|\langle S_{j+\ell},f \rangle - \langle S_{j},f\rangle| = 0\]
    due to the convergence of the series. Therefore, $(\langle S_j,f\rangle)_{j\in\N}$ is a Cauchy sequence in $\mathbb{C}$ for all $f\in\S$. It follows from Lemma \ref{lemma_unifbound} that
    \[\widecheck u = (2\pi)^m\sum_{k\in\Z^m}u(k,x)e^{ik\cdot t} = \lim_{j\to\infty}S_j\in\SS,\] 
    which finishes the proof.
    \end{proof}

    \begin{prop}\label{proppartialdecayrndistribinv}
        Consider a function $u:\G\to\mathbb{C}$ satisfying the following property: for every $\varepsilon>0$, there is $C_\varepsilon>0$ such that
        \begin{equation}\label{ineq_u_exp}
            |u(t,\xi)|\leq C_\varepsilon e^{\varepsilon|\xi|^{1/\mu}},
        \end{equation}
        for all $(t,x)\in\G$. Then, there exists $\widecheck u\in\bigcap_{\sigma\geq 1} \SS$ such that, for each $f\in\S$, we have that
        \begin{align*}
         \langle \widecheck{u},f\rangle &=\frac{1}{(2\pi)^n}\int_{\T^m} \int_{\R^n}\int_{\R^n}u(t,\xi)e^{i\xi\cdot x }f(t,x)\,\mathrm{d}\xi \,\mathrm{d}x\,\mathrm{d}t\\
         &=\frac{1}{(2\pi)^n}\int_{\T^m} \int_{\R^n} u(t,\xi)\widehat{f}(t,-\xi)\,\mathrm{d}\xi \,\mathrm{d}t.
      \end{align*}
      
      Moreover, $\mathcal{F}^{-1}_{\R^n}({u}) = \widecheck{u}.$
    \end{prop}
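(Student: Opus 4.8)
The plan is to avoid a direct convergence analysis of the triple integral: I would first show that the hypothesis \eqref{ineq_u_exp} already makes $u$ itself a tempered ultradistribution, simultaneously for every Gevrey index $\sigma\geq 1$, and then identify the object $\widecheck u$ with $\mathcal{F}^{-1}_{\R^n}u$, so that its membership in $\bigcap_{\sigma\geq 1}\SS$ is inherited from the mapping property $\mathcal{F}^{-1}_{\R^n}\colon\SS\to\SS$ noted earlier in this section (after Corollary \ref{coro_gelf_transf}).

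First I would fix $\sigma\geq 1$ and verify that the formula $\langle u,g\rangle:=\int_{\T^m}\int_{\R^n}u(t,\xi)g(t,\xi)\,\mathrm{d}\xi\,\mathrm{d}t$ defines an element of $\SS$. For $g\in\S$, Remark \ref{equiv_tpgelf} (taking $\beta=\gamma=0$ in the norm $\|\cdot\|_{\sigma,\mu,C,L}$) yields constants $C,L>0$ with $|g(t,\xi)|\leq \|g\|_{\sigma,\mu,C,L}\, e^{-L|\xi|^{1/\mu}}$ on $\G$; combining this with \eqref{ineq_u_exp} for $\varepsilon=L/2$ gives $|u(t,\xi)g(t,\xi)|\leq C_{L/2}\|g\|_{\sigma,\mu,C,L}\,e^{-(L/2)|\xi|^{1/\mu}}$, which is integrable over $\G$ since $\int_{\R^n}e^{-c|\xi|^{1/\mu}}\,\mathrm{d}\xi<+\infty$ for every $c>0$ (pass to polar coordinates and substitute $s=cr^{1/\mu}$). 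Thus the integral converges absolutely and $|\langle u,g\rangle|\leq (2\pi)^m C_{L/2}\big(\int_{\R^n}e^{-(L/2)|\xi|^{1/\mu}}\,\mathrm{d}\xi\big)\|g\|_{\sigma,\mu,C,L}$. Sequential continuity then follows at once: if $g_j\to g$ in $\S$ then, again by Remark \ref{equiv_tpgelf}, $\|g_j-g\|_{\sigma,\mu,C,L}\to 0$ for a single fixed pair $(C,L)$, whence $\langle u,g_j\rangle\to\langle u,g\rangle$. Since none of these estimates involve $\sigma$, we obtain $u\in\bigcap_{\sigma\geq 1}\SS$.

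With $u\in\bigcap_{\sigma\geq 1}\SS$ established, the mapping property $\mathcal{F}^{-1}_{\R^n}\colon\SS\to\SS$ gives $\widecheck u:=\mathcal{F}^{-1}_{\R^n}u\in\bigcap_{\sigma\geq 1}\SS$, which is the last assertion of the proposition. Its action is then computed from the definition of the adjoint: for $f\in\S$,
\[\langle\widecheck u,f\rangle=\langle u,\mathcal{F}^{-1}_{\R^n}f\rangle=\int_{\T^m}\int_{\R^n}u(t,\xi)\,\mathcal{F}^{-1}_{\R^n}f(t,\xi)\,\mathrm{d}\xi\,\mathrm{d}t,\]
and since $\mathcal{F}^{-1}_{\R^n}f(t,\xi)=\frac{1}{(2\pi)^n}\int_{\R^n}f(t,x)e^{i\xi\cdot x}\,\mathrm{d}x=\frac{1}{(2\pi)^n}\widehat f(t,-\xi)$ (the last integral being well defined, as $f\in L^1(\G)$), this equals $\frac{1}{(2\pi)^n}\int_{\T^m}\int_{\R^n}u(t,\xi)\widehat f(t,-\xi)\,\mathrm{d}\xi\,\mathrm{d}t$, i.e. the iterated form of the triple integral in the statement.

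The only delicate point is the bookkeeping in the continuity estimate: one has to use that elements of $\S$ decay like $e^{-L|\xi|^{1/\mu}}$ in the real variable --- with exponent $1/\mu$, exactly the one permitted for the growth of $u$ in \eqref{ineq_u_exp} --- and to observe that, once a single pair $(C,L)$ controlling a convergent sequence is chosen, all relevant constants are fixed and, crucially, independent of $\sigma$; this $\sigma$-uniformity is precisely what upgrades ``$\widecheck u\in\SS$ for each $\sigma$'' to ``$\widecheck u\in\bigcap_{\sigma\geq 1}\SS$''. The apparent interchange of the $x$- and $\xi$-integrations in the triple-integral expression causes no difficulty, since that expression is to be read as the iterated integral in which the $x$-integration is carried out first.
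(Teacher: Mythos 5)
Your proposal is correct, but it is organized differently from the paper's own argument, and the difference is worth recording. The paper never shows that $u$ itself is a tempered ultradistribution: it defines $\widecheck u$ directly by the pairing $f\mapsto\frac{1}{(2\pi)^n}\int_{\T^m}\int_{\R^n}u(t,\xi)\widehat f(t,-\xi)\,\mathrm{d}\xi\,\mathrm{d}t$ and verifies finiteness and sequential continuity of this composite functional, using Corollary \ref{coro_gelf_transf} together with Remark \ref{equiv_tpgelf} to get $|\widehat f(t,\xi)|\leq Ce^{-L|\xi|^{1/\mu}}$ and, for $f_j\to0$, the continuity of $\mathcal{F}_{\R^n}$ to get $\|\widehat{f_j}\|_{\sigma,\mu,C,L}\to0$, then bounding $|\langle\widecheck u,f_j\rangle|\leq C'\|\widehat{f_j}\|_{\sigma,\mu,C,L}$. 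You instead factor the argument: first the elementary estimate showing that $u$, acting by integration, lies in $\bigcap_{\sigma\geq1}\SS$ (here the decay $e^{-L|\xi|^{1/\mu}}$ of the test function itself, rather than of its transform, does the work, and the $\sigma$-independence of the constants is handled exactly as in the paper), and then the mapping property $\mathcal{F}^{-1}_{\R^n}\colon\SS\to\SS$ stated after Corollary \ref{coro_gelf_transf}, followed by unwinding the adjoint definition to obtain the displayed formula. The ingredients are the same (Remark \ref{equiv_tpgelf}, Corollary \ref{coro_gelf_transf}/continuity of the partial Fourier transform, integrability of $e^{-c|\xi|^{1/\mu}}$), but your decomposition has a concrete advantage: the final assertion $\mathcal{F}^{-1}_{\R^n}(u)=\widecheck u$ only makes sense once $u$ is known to belong to $\SS$, and your route establishes this explicitly and yields the identification by construction, whereas the paper's written proof stops at $\widecheck u\in\SS$ and leaves that point implicit. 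Your closing remark that the triple integral must be read as the iterated integral with the $x$-integration performed first is also the right reading (absolute Fubini fails, since $e^{\varepsilon|\xi|^{1/\mu}}$ is not integrable in $\xi$), and it is how the paper itself uses the formula.
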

    \begin{proof}
        Let $f\in\S$, for some $\sigma\geq 1$. By Remark \ref{equiv_tpgelf} and Corollary \ref{coro_gelf_transf}, there exist $C,L>0$ such that
        \[|\widehat f(t,\xi)|\leq Ce^{-L|\xi|^{1/\mu}},\quad\forall (t,\xi)\in\G.\]

       	By choosing $\varepsilon=L/2$ in inequality \eqref{ineq_u_exp}, we see that the integrals above are finite. Therefore the map $f\mapsto\langle\widecheck u,f\rangle$ is well-defined and linear. Now, let $(f_j)_{j\in\N}$ be a sequence in $\S$ converging to $0$ in $\S$, for some $\sigma\geq 1$. By the continuity of Fourier transform, we have that $(\mathcal{F}_{\R^n}{f_j})_{j\in\N}\equiv (\widehat{f_j})_{j\in\N}$ also converges to $0$ in $\S$. Then, in view of Remark \ref{equiv_tpgelf}, there exist $C,L>0$ such that
        \[\|\widehat{f_j}\|_{\sigma,\mu,C,L}\to 0,\ j\to\infty.\]
        Once again considering $\varepsilon=L/2$ in inequality \eqref{ineq_u_exp}, we have that
        \begin{align*}
            |\langle \widecheck{u},f_j\rangle | & \leq (2\pi)^{-n}\int_{\T^m}\int_{\R^n}|u(t,\xi)\widehat{f_j}(t,-\xi)|\,\mathrm{d}\xi\,\mathrm{d}t\\
            & \leq {(2\pi)^{-n} C_{L/2}}\int_{\T^m}\int_{\R^n} e^{-\frac{L}{2}|\xi|^{1/\mu}}e^{L|\xi|^{1/\mu}}|\widehat{f_j}(t,-\xi)|\,\mathrm{d}\xi\,\mathrm{d}t\\
            & \leq C'\|\widehat{f_j}\|_{\sigma,\mu,C,L}\to 0,
        \end{align*}
        as $j\to\infty$, where
        \[0<C'={(2\pi)^{-n+m}C_{L/2}C}\int_{\R^n}e^{-\frac{L}{2}|\xi|^{1/\mu}}\,\mathrm{d}\xi <+\infty.\]
        Therefore we conclude that $\widecheck u\in\SS$.
    \end{proof}

    Finally, from the previous results, we obtain:

    \begin{corollary}
        Let $(u(k,\xi))_{k\in\Z^m}$ be a sequence of complex-valued functions on $\R^n$ such that, for every $\varepsilon>0$, there exists $C_\varepsilon>0$ satisfying
        \[|u(k,\xi)|\leq C_\varepsilon e^{\varepsilon|k|^{1/\sigma}}\!e^{\varepsilon|\xi|^{1/\mu}}\]
        for every $(k,\xi)\in\Z^m\times\R^n$. Then $\mathcal{F}^{-1}_{\R^n}(\mathcal{F}^{-1}_{\T^m}(u))$ is well defined and belongs to $\SS$. Moreover, $\mathcal{F}^{}_{\T^m}(\mathcal{F}^{}_{\R^n}(\mathcal{F}^{-1}_{\R^n}(\mathcal{F}^{-1}_{\T^m}(u)))) = u$ in the sense of $\SS$.
    \end{corollary}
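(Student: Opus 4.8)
The plan is to build $U:=\mathcal{F}^{-1}_{\R^n}\bigl(\mathcal{F}^{-1}_{\T^m}(u)\bigr)$ in two moves: first invoke Proposition \ref{inverse-sequence-distrib} to give $\mathcal{F}^{-1}_{\T^m}(u)$ a meaning as an element of $\SS$, and then apply $\mathcal{F}^{-1}_{\R^n}$, which maps $\SS$ into $\SS$ (as noted right after its definition on ultradistributions). Since the composition is taken exactly in the order written --- inner $\mathcal{F}^{-1}_{\T^m}$, outer $\mathcal{F}^{-1}_{\R^n}$ --- no commutation of the two partial inverse transforms is needed, and everything reduces to checking that $(u(k,\cdot))_{k\in\Z^m}$ is a sequence in $\mathcal{S}'_\mu(\R^n)$ satisfying \eqref{ineqpartialtorusdistrib}; the identity $\mathcal{F}_{\T^m}(\mathcal{F}_{\R^n}(U))=u$ will then come out of the defining formulas.

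For this reduction I would fix $C_1>0$ and regard each $u(k,\cdot)$ as the functional $g\mapsto\int_{\R^n}u(k,\xi)g(\xi)\,\mathrm{d}\xi$ on $\mathcal{S}_\mu(\R^n)$. A standard Gelfand--Shilov estimate (Theorem \ref{gelf_equiv} with $\nu=\mu$, made quantitative in the norm $\|\cdot\|_{\mu,C_1}$ of \eqref{norm_GS}) gives $C=C(C_1,\mu,n)>0$ and $L=L(C_1,\mu,n)>0$ with $|g(\xi)|\le C\|g\|_{\mu,C_1}e^{-L|\xi|^{1/\mu}}$ for all $g\in\mathcal{S}_\mu(\R^n)$, $\xi\in\R^n$. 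For $\varepsilon\in(0,L)$ the hypothesis furnishes $C_\varepsilon>0$ with $|u(k,\xi)|\le C_\varepsilon e^{\varepsilon|k|^{1/\sigma}}e^{\varepsilon|\xi|^{1/\mu}}$, whence
\[
|\langle u(k,\cdot),g\rangle|\ \le\ C_\varepsilon e^{\varepsilon|k|^{1/\sigma}}\int_{\R^n}e^{\varepsilon|\xi|^{1/\mu}}|g(\xi)|\,\mathrm{d}\xi\ \le\ \Bigl(C C_\varepsilon\!\int_{\R^n}\!e^{(\varepsilon-L)|\xi|^{1/\mu}}\,\mathrm{d}\xi\Bigr)\|g\|_{\mu,C_1}\,e^{\varepsilon|k|^{1/\sigma}},
\]
the integral being finite because $\varepsilon<L$. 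With $k$ fixed this shows $u(k,\cdot)\in\mathcal{S}'_\mu(\R^n)$; letting $k$ range, it is exactly \eqref{ineqpartialtorusdistrib} for every $\varepsilon\in(0,L)$, hence for every $\varepsilon>0$ (for $\varepsilon\ge L$ apply the case $\varepsilon=L/2$ and bound $e^{(L/2)|k|^{1/\sigma}}\le e^{\varepsilon|k|^{1/\sigma}}$).

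Granting the above, Proposition \ref{inverse-sequence-distrib} yields $W:=\mathcal{F}^{-1}_{\T^m}(u)\in\SS$ with $\langle W,\varphi\rangle=(2\pi)^m\sum_{k\in\Z^m}\langle u(k,x),\widehat\varphi(-k,x)\rangle$ for $\varphi\in\S$, and then $U=\mathcal{F}^{-1}_{\R^n}(W)\in\SS$, which is the first assertion. For the recovery identity I would fix $k\in\Z^m$ and $f\in\mathcal{S}_\mu(\R^n)$ and compute, using the definition of the mixed transform of an ultradistribution, then that of $\mathcal{F}^{-1}_{\R^n}$, and Fourier inversion on $\R^n$,
\[
\langle\mathcal{F}_{\T^m}(\mathcal{F}_{\R^n}(U))(k,\cdot),f\rangle=\langle U(t,\xi),e^{-ik\cdot t}\mathcal{F}_{\R^n}(f)(\xi)\rangle=\langle W(t,x),e^{-ik\cdot t}f(x)\rangle .
\]
Inserting $\varphi(t,x)=e^{-ik\cdot t}f(x)\in\S$ into the formula for $W$ --- for which $\widehat\varphi(-j,x)=f(x)\,\delta_{j,k}$ --- collapses the sum to one term and returns $\langle u(k,\cdot),f\rangle$; as $k$ and $f$ are arbitrary, $\mathcal{F}_{\T^m}(\mathcal{F}_{\R^n}(U))=u$ in $\SS$. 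The main obstacle is the estimate in the middle paragraph --- upgrading the pointwise sub-exponential growth of $(k,\xi)\mapsto u(k,\xi)$ to the functional bound \eqref{ineqpartialtorusdistrib} with the decay in $k$ uniform over test functions --- after which the remainder is a direct application of Proposition \ref{inverse-sequence-distrib} and the mapping property of $\mathcal{F}^{-1}_{\R^n}$.
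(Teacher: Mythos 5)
Your proposal is correct and follows exactly the route the paper intends (the paper offers no written proof beyond ``from the previous results''): upgrade the pointwise bound to \eqref{ineqpartialtorusdistrib} via the exponential decay of Gelfand--Shilov test functions, apply Proposition \ref{inverse-sequence-distrib} to get $\mathcal{F}^{-1}_{\T^m}(u)\in\SS$, use that $\mathcal{F}^{-1}_{\R^n}$ preserves $\SS$, and unwind the definitions for the recovery identity. One bookkeeping caveat: inserting $\varphi(t,x)=e^{-ik\cdot t}f(x)$ into the literal formula of Proposition \ref{inverse-sequence-distrib} returns $(2\pi)^m\langle u(k,\cdot),f\rangle$ rather than $\langle u(k,\cdot),f\rangle$, a factor stemming from the paper's own mismatched normalizations of the toroidal transform on functions versus ultradistributions, so you should either track this constant explicitly or state the convention that absorbs it.
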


    \section{Global hypoellipticity of operators with constant coefficients}\label{sec_gh1}

From now, we apply the Fourier analysis developed in the previous section to obtain a characterization for the global regularity of certain classes of differential operators on $\T^1\times\R$. 

We will focus in a type of global regularity commonly known as global hypoellipticity. Following \cite{AvCap2022}, in this setting this concept is  defined as follows.

\begin{definition}
    Let $\sigma\geq 1$ and $\mu\geq 1/2$. We say that a differential operator $P$ on $\T^1\times\R$ is $\mathcal{S}_{\sigma,\mu}$-globally hypoelliptic if
    \[u\in\mathcal{S}'_{\sigma,\mu}(\T^1\times\R),\ Pu\in\s\ \Rightarrow\ u\in\s.\]
\end{definition}

In this section we provide a characterization for the $\mathcal{S}_{\sigma,\mu}$-global hypoellipticity of a class of differential operators with constant coefficients.  From now on we will consider $\sigma\geq 1$ and $\mu\geq 1/2$ arbitrary.

\begin{theorem}\label{theo_gh_necessary}
Let $P$ be a differential operator on $\T^1\times\R$ with constant coefficients. If $P$ is $\mathcal{S}_{\sigma,\mu}$-globally hypoelliptic, then its symbol does not vanish on $\Z\times\R$.

\end{theorem}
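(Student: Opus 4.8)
The plan is to prove the contrapositive: if the symbol of $P$ vanishes somewhere on $\Z\times\R$, then $P$ is not $\mathcal{S}_{\sigma,\mu}$-globally hypoelliptic. So I would begin by assuming there is a point $(k_0,\xi_0)\in\Z\times\R$ at which the polynomial symbol $P(\tau,\xi)$ of the constant-coefficient operator $P=P(D_t,D_x)$ vanishes, $P(k_0,\xi_0)=0$, and I would take as candidate singular solution the bounded exponential $u(t,x)=e^{ik_0t}\,e^{i\xi_0x}$.

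First I would verify that $u\in\ss$. Since $D_t^j e^{ik_0t}=k_0^j e^{ik_0t}$ and $D_x^\ell e^{i\xi_0x}=\xi_0^\ell e^{i\xi_0x}$, the function $u$ is smooth and bounded on $\T^1\times\R$, so for $\varphi\in\s$ the pairing $\langle u,\varphi\rangle=\int_{\T^1}\int_\R e^{ik_0t+i\xi_0x}\varphi(t,x)\,dx\,dt$ is absolutely convergent and linear in $\varphi$. Its sequential continuity on $\s$ follows by estimating $\int_\R|\varphi(t,x)|\,dx\le\big(\int_\R(1+x^2)^{-1}dx\big)\sup_{(t,x)}(1+x^2)|\varphi(t,x)|$ and dominating the right-hand side by the Gelfand--Shilov seminorms of Proposition \ref{char_dist}; equivalently, one may invoke the continuous inclusion $\s\hookrightarrow\mathcal{S}(\T^1\times\R)$ established en route to Theorem \ref{comp_inc} together with the fact that a bounded function defines a tempered distribution. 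Hence $u\in\ss$.

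Next, applying $P$ term by term gives $Pu=P(k_0,\xi_0)\,u=0$, which certainly lies in $\s$. It then remains to observe that $u\notin\s$: every $\varphi\in\s$ satisfies $\sup_{(t,x)}|x\,\varphi(t,x)|<+\infty$ (the defining seminorm with $\alpha=1$, $\beta=\gamma=0$), while $\sup_{(t,x)}|x\,u(t,x)|=\sup_{x\in\R}|x|=+\infty$, irrespective of whether $\xi_0$ vanishes. Thus $u$ is an element of $\ss$ with $Pu\in\s$ but $u\notin\s$, so $P$ fails to be $\mathcal{S}_{\sigma,\mu}$-globally hypoelliptic, which is the desired conclusion.

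I do not expect a genuine obstacle here; the only point that calls for a little care is confirming that the merely bounded function $u$ defines a tempered ultradistribution of the class $\ss$ rather than just a classical tempered distribution, and this is exactly what the continuous embedding $\s\hookrightarrow\mathcal{S}(\T^1\times\R)$ from the proof of Theorem \ref{comp_inc} provides. (By contrast, the sufficiency direction and the sharper statement in Theorem \ref{theo_gh_cte_intro}, where one must manufacture singular solutions from points at which $|p+q|$ is small but nonzero as $\xi$ ranges over a continuum, is the genuinely delicate part — but for the present necessity statement an exact zero at an integer frequency $k_0$ is all that is needed.)
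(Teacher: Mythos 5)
Your proposal is correct and follows the paper's own argument exactly: given a zero $(k_0,\xi_0)\in\Z\times\R$ of the symbol, take $u(t,x)=e^{i(k_0t+\xi_0x)}$, note $u\in\ss\setminus\s$ (unbounded $|x\,u|$ rules out membership in $\s$, boundedness plus the embedding $\s\hookrightarrow\mathcal{S}(\T^1\times\R)$ gives the ultradistribution property) and $Pu=p(k_0,\xi_0)u=0\in\s$. The extra detail you supply for $u\in\ss$ is a correct elaboration of what the paper leaves implicit.
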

\begin{proof}
First note that $P$ can be written as
\begin{equation*}
    P=p(D_t,D_x),
\end{equation*}
where $D=i^{-1}\partial$ and $p\in\mathbb{C}[x,y]$ is a complex polynomial on two variables, which also corresponds to its symbol.

Suppose that $p(k,\xi)=0$ for some $(k_0,\xi_0)\in\Z\times \R$, and consider the function $u\in C^\infty(\g)$ given by
    \[u(t,x)=e^{i(k_0t+\xi_0x)},\]
   for every $(t,x)\in\T^1\times\R$. Notice that $u\in\ss\setminus\s$ and $Pu=p(k_0,\xi_0)u=0$. Therefore, $P$ is not $\mathcal{S}_{\sigma,\mu}$-globally hypoelliptic.
\end{proof}

In order to obtain sufficient conditions for $\mathcal{S}_{\sigma,\mu}$-global hypoellipticity, we make some further assumptions on their symbols. First we consider the case of operators of order at most $1$ in the variable $x$.

\begin{theorem}\label{theo_gh_cte_Nleq1}
    Let $P$ be a differential operator on $\T^1\times\R$ with constant coefficients and no mixed terms, such that  $$P=p\left(D_x\right)+q\left(D_t\right),$$
   where $p$ and $q$ are complex polynomials and $\deg(p)\leq 1$. Then $P$ is $\mathcal{S}_{\sigma,\mu}$-globally hypoelliptic if and only if its symbol does not vanish on $\Z\times \R$.
\end{theorem}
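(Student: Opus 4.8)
The plan is to prove both implications via the partial Fourier transform in $x$, following the characterizations in Propositions \ref{propdecaypartialxi} and \ref{proppartialdecaytorusdistrib}. The necessity direction is already handled by Theorem \ref{theo_gh_necessary}, so the work is entirely in proving sufficiency: assuming $p(\xi)+q(k)\neq 0$ for all $(k,\xi)\in\Z\times\R$, we must show that $u\in\ss$ with $Pu\in\s$ forces $u\in\s$. First I would pass to the $x$-side Fourier transform: writing $\widehat u(k,\xi)$ for the mixed transform and $\widehat f = \widehat{Pu}$, the fact that $P$ has no mixed terms and constant coefficients gives the symbol identity $\widehat{f}(k,\xi) = (p(\xi)+q(k))\,\widehat u(k,\xi)$ at the level of ultradistributions (for each fixed $k$, $\widehat u(k,\cdot)\in\mathcal S'_\mu(\R)$ and multiplication by the polynomial $p(\xi)+q(k)$ is continuous there). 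Since $f\in\s$, Proposition \ref{proppartialdecaymix} gives $C_0,C_1,\varepsilon>0$ with $e^{\varepsilon|k|^{1/\sigma}}|\xi^\alpha\partial_\xi^\beta\widehat f(k,\xi)|\le C_0C_1^{|\alpha+\beta|}(\alpha!\beta!)^\mu$ uniformly in $(k,\xi)$.

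The heart of the matter is then a \emph{division estimate}: I need to recover decay and Gevrey/Gelfand–Shilov bounds for $\widehat u(k,\xi) = \widehat f(k,\xi)/(p(\xi)+q(k))$ from those of $\widehat f$. Since $\deg p\le 1$, write $p(\xi)=a\xi+b$. If $a\neq 0$, the polynomial $p(\xi)+q(k)$ has a single root $\xi=-\,(b+q(k))/a$ in $\C$, which by hypothesis is never real; the key quantitative point is that the distance from this root to $\R$ is bounded below \emph{uniformly in $k$} — this is where the hypothesis ``does not vanish on $\Z\times\R$'' together with $a\ne0$ and the fact that $\{\mathrm{Im}(-(b+q(k))/a) : k\in\Z\}$ is either a single value (if $\deg q\le 1$, up to the linear term in $k$... in fact for general $q$ one must argue that the imaginary part cannot tend to $0$) must be combined. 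Concretely: $\mathrm{Im}((b+q(k))/a)$ is a real polynomial in $k$; if it is nonconstant it is unbounded but could still have small values at some integers, so instead one uses that for $|\xi|$ large $|p(\xi)+q(k)|\gtrsim |\xi|$ swallows everything, and for $\xi$ in a bounded set one uses continuity plus the non-vanishing on the \emph{compact} set $\{k\}\times[-R,R]$ — but uniformity in $k$ still needs care. The clean route when $\deg p\le1$: if $a\ne 0$ then $|p(\xi)+q(k)|\ge |a||\xi| - |b+q(k)|$, and separately $|p(\xi)+q(k)|\ge |\,\mathrm{Im}(p(\xi)+q(k))| = |\mathrm{Im}(q(k))|$ when $a$... hmm — so the genuinely safe argument is: since there are no mixed terms one may also consider the $t$-Fourier side. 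I would actually split into the subcase $a=0$ (then $P = b + q(D_t)$ has discrete symbol $b+q(k)$, non-vanishing on $\Z$, so $|b+q(k)|\ge c>0$ and division is trivial) and the subcase $a\ne 0$ where one proves $\inf_{k\in\Z,\xi\in\R}|p(\xi)+q(k)|\cdot(1+|\xi|)^{-1}>0$ by a compactness-at-infinity argument: outside $|\xi|\ge R$ it is $\ge \frac{|a|}{2}|\xi|$ once $R$ beats $\sup$-type bounds, and this forces $|b+q(k)|\le |a|R$, so only finitely many $k$ survive, and for those finitely many $k$ and $|\xi|\le R$ we use compactness and the hypothesis. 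This gives $|p(\xi)+q(k)|\ge c(1+|\xi|)$.

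Granting the lower bound $|p(\xi)+q(k)|\ge c(1+|\xi|)$, I would then transfer the $\xi^\alpha\partial_\xi^\beta$ estimates through the division. For each fixed $k$, $\widehat u(k,\cdot)$ is an ultradistribution in $\mathcal S'_\mu(\R)$ satisfying $(p(\xi)+q(k))\widehat u(k,\cdot)=\widehat f(k,\cdot)$ with $\widehat f(k,\cdot)$ a Gelfand–Shilov function of $\mathcal S_\mu$ with norms $\le C_0C_1^{|\alpha+\beta|}(\alpha!\beta!)^\mu e^{-\varepsilon|k|^{1/\sigma}}$. Since $1/(p(\xi)+q(k))$ is smooth on $\R$ (no real roots) with $|\partial_\xi^j(p(\xi)+q(k))^{-1}|\le C_j(1+|\xi|)^{-1-j}$ uniformly in $k$ — a routine estimate once the lower bound is in hand, using that $\deg p\le1$ so the derivatives of the denominator are bounded — the Leibniz rule gives that $\widehat u(k,\cdot)=\widehat f(k,\cdot)/(p(\xi)+q(k))$ is again in $\mathcal S_\mu(\R)$ with the \emph{same} type of bound $e^{\varepsilon|k|^{1/\sigma}}|\xi^\alpha\partial_\xi^\beta\widehat u(k,\xi)|\le C_0'C_1'^{|\alpha+\beta|}(\alpha!\beta!)^\mu$; here I absorb the polynomial weight $\xi^\alpha$ against the decay of $\widehat f$ and handle the factorials with a standard combinatorial estimate (the factor $\mu\ge1/2$ is enough since $\deg p\le1$ contributes no factorial growth — this is exactly why case (1) needs only $\mu\ge1/2$ and no extra symbol hypothesis, whereas $N>1$ in Theorem \ref{theo_gh_cte_intro} needs the explicit $|p(\xi)+q(k)|\ge C|\xi|^{N-1}$ and $\mu\ge1$). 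Once we have these bounds for $\widehat u(k,\xi)$ uniformly in $(k,\xi)$, Proposition \ref{proppartialdecaymix} (equivalently \ref{propdecaypartialxi} after inverting the $t$-series) yields $u\in\s$, completing the proof.

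The main obstacle I anticipate is precisely the \emph{uniformity in $k$} of the lower bound $|p(\xi)+q(k)|\ge c(1+|\xi|)$: one must rule out the possibility that, as $k\to\infty$, the complex roots of $\xi\mapsto p(\xi)+q(k)$ approach the real axis. For $\deg p\le 1$ this is manageable as sketched (the root is $-(b+q(k))/a$ and the argument reduces to a finite set of $k$), but the write-up needs to be careful to first fix $R$ depending only on $a,b$ and the $\s$-norm of $f$, then observe the surviving $k$'s form a finite set, and only then invoke compactness. A secondary, more bookkeeping-type obstacle is verifying that the symbol identity $\widehat{Pu}(k,\xi)=(p(\xi)+q(k))\widehat u(k,\xi)$ holds rigorously in $\mathcal S'_\mu(\R)$ for each $k$ — this follows from the definitions of the partial Fourier transforms of ultradistributions given in Section \ref{sec_Fourier} together with the intertwining of $D_x$ with multiplication by $\xi$ and of $D_t$ with multiplication by $k$ on the $t$-Fourier coefficients, but it should be stated as a short lemma.
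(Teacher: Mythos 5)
Your overall strategy (necessity via Theorem \ref{theo_gh_necessary}, then division of $\widehat{f}(k,\xi)$ by the symbol and transfer of the estimates through Proposition \ref{proppartialdecaymix}) is the same as the paper's, but the key quantitative lemma you rely on in the case $\deg p=1$, $a\neq 0$ is false: the non-vanishing of $p(\xi)+q(k)$ on $\Z\times\R$ does \emph{not} imply $|p(\xi)+q(k)|\geq c(1+|\xi|)$ uniformly in $k$. Take $p(\xi)=\xi$ and $q(k)=-k^{2}+i$: the symbol $\xi-k^{2}+i$ has imaginary part $1$, so it never vanishes, yet at $\xi=k^{2}$ its modulus equals $1$ while $1+|\xi|=1+k^{2}\to\infty$. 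Your compactness-at-infinity argument fails for exactly this reason: for $|\xi|\geq R$, the triangle inequality gives $|a\xi+b+q(k)|\geq\frac{|a|}{2}|\xi|$ only for those $k$ with $|b+q(k)|\leq\frac{|a|}{2}|\xi|$; the exceptional $k$ are those with $|b+q(k)|$ \emph{large} (hence $|k|$ large), and as $\xi$ ranges over $\R$ these form an infinite set, so they are not disposed of by compactness on $\{|\xi|\leq R\}$ and do not ``force $|b+q(k)|\leq|a|R$''. Consequently the asserted uniform bounds $|\partial_\xi^{j}(p(\xi)+q(k))^{-1}|\leq C_j(1+|\xi|)^{-1-j}$ are unavailable, and the division step as you set it up does not close.

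What is true, and is all that is needed (and is what the paper proves), is the statement you first raised and then abandoned: the distance from the unique root of $\xi\mapsto p(\xi)+q(k)$ to $\R$ is bounded below uniformly in $k$. Dividing $P$ by the leading coefficient, one may assume $p(\xi)=\xi+c_0$; then $\Imag(p(\xi)+q(k))=\Imag(c_0)+\Imag(q(k))$ is independent of $\xi$, and since $k\mapsto\Imag(q(k))$ is a real polynomial on $\Z$ its image is a closed (indeed discrete or a single point) subset of $\R$, so the non-vanishing hypothesis yields a constant $K_0>0$ with $|p(\xi)+q(k)|\geq K_0$ on all of $\Z\times\R$ --- your worry that a nonconstant $\Imag(q)$ ``could still have small values at some integers'' is groundless, because off a finite set it tends to infinity. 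No growth in $|\xi|$ is needed when $\deg p\leq1$, and with this constant bound the Leibniz computation proceeds as in the paper (the case $\deg p=0$ you treat exactly as the paper does, via closedness of $(b+q)(\Z)$). Two smaller points for the write-up: even for linear $p$ one has $\partial_\xi^{\ell}\bigl[(p(\xi)+q(k))^{-1}\bigr]=(-1)^{\ell}\ell!\,a^{\ell}(p(\xi)+q(k))^{-\ell-1}$, so your parenthetical claim that $\deg p\leq1$ ``contributes no factorial growth'' to the derivatives of the reciprocal is not accurate and the factor $\ell!$ must be absorbed in the Leibniz sum; and the identity $\widehat{Pu}(k,\xi)=(p(\xi)+q(k))\widehat{u}(k,\xi)$, which the paper invokes as ``comparing Fourier coefficients'', should indeed be recorded as a short lemma as you suggest.
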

\begin{proof}
     By Theorem \ref{theo_gh_necessary}, it is enough to prove the sufficiency of the condition above.
 
  Hence suppose that the set 
   \[\mathcal{Z}_P\defeq\{(k,\xi)\in\Z\times\R\,:\,p(\xi)+q(k)=0\}\]
   is empty.

    We claim that there exists $K_0> 0$ such that
    \begin{equation}\label{ineq_symbol_delta}
        |p(\xi)+q(k)|\geq K_0,
    \end{equation}
    for every $(k,\xi)\in\Z\times \R$.
    
    If $\deg(p)<1$, then $p(\xi)\equiv c\in\mathbb{C}$ is constant.  Consider the polynomial $\tilde q(k)\defeq c+q(k)$.
    Then  $\tilde q(k)\neq 0$ for every $k\in\Z$ by our assumption. Since $\tilde q$ is a closed map, we have that
    \begin{equation*}
        |p(\xi)+q(k)|\geq \inf_{k\in\Z}|\tilde q(k)|>0,
    \end{equation*}
    for every $(k,\xi)\in\Z\times \R$. If $\deg(p)=1$, without loss of generality we may assume that $p(\xi)=\xi+c_0$, for some $c_0\in\mathbb{C}$. Therefore
    \begin{equation*}
        |p(\xi)+q(k)|^2=|\xi+c_0+q(k)|^2= |\xi +\Real(c_0)+\Real(q(k))|^2+|\Imag(c_0)+\Imag(q(k))|^2,
    \end{equation*}
    for every $(k,\xi)\in\Z\times \R$. Note that $\mathcal{Z}_P=\varnothing$ implies $\Imag(c_0)\not\in -\Imag(q)(\Z)$, for otherwise 
    \begin{equation*}
        (k_0,-\Real(c_0)-\Real(q(k_0)))\in \mathcal{Z}_P,
    \end{equation*}
    where $k_0\in\Z$ satisfies $-\Imag(q)(k_0)=\Imag(c_0)$. Then since $\Imag(q)$ is a closed map, we have that 
\begin{equation*}
    |p(\xi)+q(k)|\geq \frac{1}{2}\operatorname{dist}(\Imag(c_0),\Imag(q)(\Z))>0,
\end{equation*}
for every $(k,\xi)\in\Z\times \R$. This concludes the proof of inequality \eqref{ineq_symbol_delta}.

    Therefore,  if $u\in\ss$ is such that $Pu=f\in\s$, by comparing Fourier coefficients we have that
    \[\widehat{u}(k,\xi)=\dfrac{\widehat{f}(k,\xi)}{p(\xi)+q(k)},\quad (k,\xi)\in\Z\times\R.\]

    Then, given $\alpha,\beta\in\N_0$,  by Lemma \ref{derivative-reciprocal} we have that 
\begin{align}
        & |\xi^\alpha\partial_\xi^\beta\widehat{u}(k,\xi)| =\left|\xi^\alpha\sum_{\ell=0}^\beta \binom{\beta}{\ell}\partial_\xi^{\ell}\left(\frac{1}{p(\xi)+q(k)}\right)\partial_\xi^{\beta-\ell}\widehat{f}(k,\xi)\right|\notag \\
        &=\left| \frac{\xi^\alpha\partial_\xi^\beta\widehat{f}(k,\xi)}{p(\xi)+q(k)}+\delta_{\deg(p),1}\xi^\alpha\sum_{\ell=1}^\beta \binom{\beta}{\ell}\partial_\xi^{\beta-\ell}\widehat{f}(k,\xi) (-1)^{\ell}\frac{1}{(p(\xi)+q(k))^{\ell+1}}\right|\notag\\
        &\leq K_0^{-1}|\xi^\alpha\partial_\xi^\beta\widehat{f}(k,\xi)|+\sum_{\ell=1}^\beta \binom{\beta}{\ell}|\xi^\alpha\partial_\xi^{\beta-\ell}\widehat{f}(k,\xi)|K_0^{-(\ell+1)} \notag,
        \end{align}
    for every $(k,\xi)\in\Z\times \R$, where $\delta_{\deg(p),1}=1$ if $\deg(p)=1$ and $0$ otherwise. 
    
    Since $f\in\s$, there exist $C_0,C_1\geq 1$ and $1\geq \varepsilon>0$ such that
    \begin{equation}\label{eq_estimates_f}     |\xi^\alpha\partial_\xi^\beta\widehat{f}(k,\xi)|\leq C_0C_1^{\alpha+\beta}(\alpha!\beta!)^\mu e^{-\varepsilon|k|^{1/\sigma}},
    \end{equation}  
    for every $\alpha,\beta\in\N_0$, $(k,\xi)\in\Z\times\R$. Therefore, for $K=\max\{K_0^{-1},1\}$, we have that 
\begin{align*}
      |\xi^\alpha\partial_\xi^\beta\widehat{u}(k,\xi)| &\leq K_0^{-1}C_0C_1^{\alpha+\beta}(\alpha!\beta!)^\mu e^{-\varepsilon|k|^{1/\sigma}}+\sum_{\ell=1}^\beta \binom{\beta}{\ell}C_0C_1^{\alpha+\beta-\ell}(\alpha!(\beta-\ell)!)^\mu e^{-\varepsilon|k|^{1/\sigma}}K_0^{-(\ell+1)}\\
      &\leq 2^\beta K^{\beta}C_0C_1^{\alpha+\beta}(\alpha!\beta!)^\mu e^{-\varepsilon|k|^{1/\sigma}}\leq \tilde C_0 \tilde C_1^{\alpha+\beta}(\alpha!\beta!)^\mu e^{-\varepsilon|k|^{1/\sigma}},
\end{align*}
for every $(k,\xi)\in\Z\times \R$, where $\tilde C_0=C_0$ and $\tilde C = 2KC_1$. 
 Hence, by Proposition \ref{proppartialdecaymix}, we have that $u\in\s$, which implies that $P$ is $\mathcal{S}_{\sigma,\mu}$-globally hypoelliptic.
\end{proof}

In particular, for first-order operators, we have the following:

\begin{prop}\label{dtaibdx}
    Let $c_1,c_2,c_3\in\mathbb{C}$. Then, the operator
    \[P=c_1\partial_x+c_2\partial_t+c_3\]
    is $\mathcal{S}_{\sigma,\mu}$-globally hypoelliptic if and only if
    \[\mathcal{Z}=\{(k,\xi)\in\Z\times\R\,:\, c_1\xi+c_2k-ic_3=0\}=\varnothing.\]
    In particular, if $c_1,c_2,c_3\in\R$ and $c_3\neq 0$, then $P$ is $\mathcal{S}_{\sigma,\mu}$-globally hypoelliptic.
\end{prop}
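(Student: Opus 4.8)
The plan is to obtain Proposition~\ref{dtaibdx} as an immediate consequence of Theorem~\ref{theo_gh_cte_Nleq1}. First I would put $P$ into the form required there. Using $\partial=iD$, one has
\[
P=c_1\partial_x+c_2\partial_t+c_3=\bigl(ic_1D_x+c_3\bigr)+ic_2D_t=p(D_x)+q(D_t),
\]
with $p(\xi)=ic_1\xi+c_3$ and $q(k)=ic_2k$. In every case $\deg(p)\le 1$ (it equals $1$ if $c_1\neq 0$, and $p$ is constant---possibly the zero polynomial---if $c_1=0$), so the hypotheses of Theorem~\ref{theo_gh_cte_Nleq1} are satisfied with no restriction on the $c_j$.

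Next I would match the zero set. The symbol of $P$ is $p(\xi)+q(k)=i(c_1\xi+c_2k)+c_3$, and for $(k,\xi)\in\Z\times\R$ this vanishes exactly when $c_1\xi+c_2k-ic_3=0$, i.e.\ when $(k,\xi)\in\mathcal{Z}$. Hence the phrase ``the symbol of $P$ does not vanish on $\Z\times\R$'' is literally the statement $\mathcal{Z}=\varnothing$, and Theorem~\ref{theo_gh_cte_Nleq1} then yields that $P$ is $\mathcal{S}_{\sigma,\mu}$-globally hypoelliptic if and only if $\mathcal{Z}=\varnothing$.

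For the final assertion, assume $c_1,c_2,c_3\in\R$ with $c_3\neq 0$. If some $(k,\xi)\in\mathcal{Z}$, then $c_1\xi+c_2k=ic_3$; but the left-hand side is real while the right-hand side has nonzero imaginary part, a contradiction. Thus $\mathcal{Z}=\varnothing$ and $P$ is $\mathcal{S}_{\sigma,\mu}$-globally hypoelliptic by the first part. There is no genuine obstacle in this argument; the only points deserving care are the bookkeeping of the factor $i$ relating $\partial$ to $D$ (and hence the symbol of $P$ to the set $\mathcal{Z}$), and checking that the degenerate subcases $c_1=0$ and $c_1=c_2=0$ remain covered by Theorem~\ref{theo_gh_cte_Nleq1}, which they do since there $p$ is simply a constant.
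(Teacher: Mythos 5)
Your proof is correct and takes essentially the same route as the paper: the paper observes that $P$ is $\mathcal{S}_{\sigma,\mu}$-globally hypoelliptic if and only if $i^{-1}P=c_1D_x+c_2D_t-ic_3$ is, and then invokes Theorem \ref{theo_gh_cte_Nleq1}, while you apply that theorem directly to $P=p(D_x)+q(D_t)$ with $p(\xi)=ic_1\xi+c_3$, $q(k)=ic_2k$ and verify that the vanishing of the symbol is exactly the condition defining $\mathcal{Z}$. The difference is only the bookkeeping of the factor $i$, and your handling of the degenerate cases $c_1=0$ and of the final real-coefficient assertion is fine.
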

\begin{proof}
   Note that $P$ is $\mathcal{S}_{\sigma,\mu}$-globally hypoelliptic if and only if $i^{-1}P$ is $\mathcal{S}_{\sigma,\mu}$-globally hypoelliptic. Therefore the claim follows from Theorem \ref{theo_gh_cte_Nleq1}.
\end{proof}

\begin{corollary}\label{coro_cte}
    Let $a,b\in\R$ and $c\in\mathbb{C}$. Then, the operator
    \[P=\partial_t+(a+ib)\partial_x+c\]
    is $\mathcal{S}_{\sigma,\mu}$-globally hypoelliptic if and only if one of the following conditions hold:
    \begin{enumerate}
        \item[(i)] $b\neq 0$ and $\dfrac{a}{b}\Real(c)+\Imag(c)\notin\mathbb{Z}$;
        \item[(ii)] $b=0$ and either $\Real(c)\neq 0$, or both $a=\Real(c)=0$ and $\operatorname{Im}(c)\notin\Z$.
    \end{enumerate}
\end{corollary}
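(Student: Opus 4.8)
The plan is to reduce the statement directly to Proposition \ref{dtaibdx} by tracking which condition on the symbol corresponds to emptiness of the zero set. Here $P=\partial_t+(a+ib)\partial_x+c$, so in the notation of Proposition \ref{dtaibdx} we have $c_1=a+ib$, $c_2=1$, $c_3=c$, and $P$ is $\mathcal{S}_{\sigma,\mu}$-globally hypoelliptic if and only if
\[
\mathcal{Z}=\{(k,\xi)\in\Z\times\R\,:\,(a+ib)\xi+k-ic=0\}=\varnothing.
\]
Separating real and imaginary parts, the equation $(a+ib)\xi+k-ic=0$ is equivalent to the system $a\xi+k+\Imag(c)=0$ and $b\xi-\Real(c)=0$. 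So the whole proof is an elementary case analysis of when this two-by-two (over $\R$, with the first unknown $\xi$ real and $k$ an integer) system has a solution.

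First I would treat the case $b\neq 0$. Then the second equation forces $\xi=\Real(c)/b$, and substituting into the first gives $k=-a\Real(c)/b-\Imag(c)$; such an integer $k$ exists precisely when $a\Real(c)/b+\Imag(c)\in\Z$. Hence $\mathcal{Z}\neq\varnothing$ iff $\frac{a}{b}\Real(c)+\Imag(c)\in\Z$, which is exactly the negation of condition (i). Next, the case $b=0$: the second equation becomes $\Real(c)=0$, so if $\Real(c)\neq0$ then $\mathcal{Z}=\varnothing$ automatically (global hypoellipticity holds), matching the first alternative of (ii). If $\Real(c)=0$, the remaining equation is $a\xi+k+\Imag(c)=0$; when $a\neq0$ we can solve for $\xi=-(k+\Imag(c))/a$ with, say, $k=0$, so $\mathcal{Z}\neq\varnothing$ and $P$ is not globally hypoelliptic; when $a=0$ the equation reduces to $k+\Imag(c)=0$, which is solvable iff $\Imag(c)\in-\Z=\Z$, i.e. $\mathcal{Z}=\varnothing$ iff $\Imag(c)\notin\Z$. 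Collecting the subcases with $b=\Real(c)=0$ gives exactly the second alternative of (ii) (the condition ``$a=\Real(c)=0$ and $\Imag(c)\notin\Z$''), and one checks the disjunction in (ii) covers all $b=0$ cases where $\mathcal{Z}=\varnothing$.

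There is essentially no obstacle here: the only thing to be careful about is bookkeeping — making sure the stated disjunctions (i)--(ii) are logically equivalent to ``$\mathcal{Z}=\varnothing$'' with no missed or double-counted subcase, in particular that when $b=0$, $\Real(c)=0$ and $a\neq0$ the operator is correctly declared non-hypoelliptic (this is why the second clause of (ii) insists on $a=0$). I would simply write out the three-way split ($b\neq0$; $b=0,\ \Real(c)\neq0$; $b=0,\ \Real(c)=0$) and in each branch invoke Proposition \ref{dtaibdx}.
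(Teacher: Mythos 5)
Your proposal is correct and follows essentially the same route as the paper: reduce to Proposition \ref{dtaibdx}, split the symbol equation into the real system $a\xi+k+\Imag(c)=0$, $b\xi-\Real(c)=0$, and run the same case analysis ($b\neq0$; $b=0,\ \Real(c)\neq0$; $b=0,\ \Real(c)=0$ with subcases $a\neq0$ and $a=0$). No gaps.
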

\begin{proof}
    By Proposition \ref{dtaibdx}, it is enough to show that $\mathcal{Z}=\varnothing$ if and only if {\it (i)} or {\it (ii)} holds. First, note that $(k,\xi)\in\mathcal{Z}_P$ if and only if
    \[k+a\xi+\Imag(c)=0\quad\text{and}\quad b\xi-\Real(c)=0.\]
    In these conditions, if $(k,\xi)\in\mathcal{Z}_P$ and $b\neq 0$, we must have $\xi=\dfrac{\Real(c)}{b}$. Substituting this value in the first equality, we obtain
    \[k=-\left(\dfrac{a}{b}\Real(c)+\Imag(c)\right).\]
    
    Therefore we have that if $b\neq 0$, $\mathcal{Z}=\varnothing$ if and only if $\dfrac{a}{b}\Real(c)+\Imag(c)\not\in\Z$. On the other hand, if $b=0$ and $\Real(c)\neq 0$, then $\mathcal{Z}=\varnothing$. In the case where $b=0$ and $\Real(c)=0$, if $a\neq 0$ we can choose
    \[\xi_k=-\dfrac{k+\Imag(c)}{a},\]
    for any $k\in\Z$,  so that $(k,\xi)\in \mathcal{Z}\neq\varnothing$. However, if $a=0$, then $(k,\xi)\in\mathcal{Z}_P$ if and only if $k=-\Imag(c)\in\Z$. 
\end{proof}

\begin{corollary}\label{gh_const}
    Let $a,b\in\R$ and $c\in\mathbb{C}$. Then, the operator
    \[P=\partial_x+(a+ib)\partial_t+c\]
    is $\mathcal{S}_{\sigma,\mu}$-globally hypoelliptic if and only if one of the following conditions hold:
    \begin{enumerate}
        \item[(i)] $b\neq 0$ and $\dfrac{\Real(c)}{b}\notin\mathbb{Z}$;
        \item[(ii)] $b=0$ and $\Real(c)\neq 0$.
    \end{enumerate}
\end{corollary}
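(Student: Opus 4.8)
The plan is to reduce the statement to an elementary zero-set computation via Proposition \ref{dtaibdx}, exactly in the spirit of the proof of Corollary \ref{coro_cte}. Applying that proposition with $c_1=1$, $c_2=a+ib$ and $c_3=c$, the operator $P=\partial_x+(a+ib)\partial_t+c$ is $\mathcal{S}_{\sigma,\mu}$-globally hypoelliptic if and only if
\[\mathcal{Z}=\{(k,\xi)\in\Z\times\R\,:\,\xi+(a+ib)k-ic=0\}=\varnothing,\]
so it suffices to show that $\mathcal{Z}=\varnothing$ is equivalent to the disjunction of (i) and (ii).

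The first step is to separate the defining equation of $\mathcal{Z}$ into real and imaginary parts. Since $-ic=\Imag(c)-i\Real(c)$ and $(a+ib)k=ak+ibk$, a point $(k,\xi)\in\Z\times\R$ lies in $\mathcal{Z}$ if and only if
\[\xi+ak+\Imag(c)=0\qquad\text{and}\qquad bk-\Real(c)=0.\]
The point to keep track of — and essentially the only structural difference from Corollary \ref{coro_cte} — is that here the integrality constraint is imposed on $k$ through the second equation, rather than on a quantity determined by $\xi$; this is what makes the relevant obstruction $\Real(c)/b\in\Z$ instead of $\tfrac{a}{b}\Real(c)+\Imag(c)\in\Z$.

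It then remains to run the case analysis. If $b\neq0$, the second equation forces $k=\Real(c)/b$: when $\Real(c)/b\in\Z$ this is an admissible index and, setting $\xi=-ak-\Imag(c)\in\R$, we obtain a point of $\mathcal{Z}$, whereas if $\Real(c)/b\notin\Z$ no solution exists; hence $\mathcal{Z}=\varnothing \iff \Real(c)/b\notin\Z$, which is (i). If $b=0$, the second equation becomes $\Real(c)=0$: for $\Real(c)\neq0$ it is unsolvable and $\mathcal{Z}=\varnothing$, while for $\Real(c)=0$ every $k\in\Z$ yields the solution $\xi=-ak-\Imag(c)$, so $\mathcal{Z}\neq\varnothing$; hence $\mathcal{Z}=\varnothing \iff \Real(c)\neq0$, which is (ii). Combining the two cases and invoking Proposition \ref{dtaibdx} completes the proof. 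I do not anticipate any genuine obstacle: the entire argument is bookkeeping of real and imaginary parts together with care about which of $k$ and $\xi$ is required to be an integer.
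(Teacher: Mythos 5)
Your proof is correct and takes essentially the same route as the paper: reduce via Proposition \ref{dtaibdx} to the emptiness of $\mathcal{Z}$, split the symbol equation into real and imaginary parts, and run the case analysis on $b$. (Your explicit witness $\xi=-ak-\Imag(c)$ is in fact the accurate one; the point displayed in the paper's proof omits the factor $a$, a harmless slip.)
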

\begin{proof}
    By Proposition \ref{dtaibdx}, it is enough to show that $\mathcal{Z}=\varnothing$ if and only if {\it (i)} or {\it (ii)} holds.
   Note that $(k,\xi)\in\mathcal{Z}_P$ if and only if 
    \[ak+\xi+\Imag(c)=0\quad\text{and}\quad bk-\Real(c)=0.\]
    
    If $b\neq 0$, we have that $\dfrac{\Real(c)}{b}\notin\mathbb{Z}$ implies $\mathcal{Z}=\varnothing$. On the other hand, $\dfrac{\Real(c)}{b}\in\mathbb{Z}$ implies that
    \[\left(\frac{\Real(c)}{b},-\left(\frac{\Real(c)}{b}+\Imag(c)\right)\right)\in\mathcal{Z},\]
    hence $\mathcal{Z}\neq \varnothing$. If $b=0$, then $\mathcal{Z}=\varnothing$ if and only if $\Real(c)\neq 0$, since  $ak+\xi+\Imag(c)=0$ for $\xi=-\Imag(c)-ak$ and any $k\in\Z$. 
\end{proof}

\begin{obs}\label{not_sigma}
    Notice that in the results above, the conditions for $\mathcal{S}_{\sigma,\mu}$-global hypoellipticity do not depend on $\sigma\geq 1$ nor $\mu\geq 1/2$.
\end{obs}

For operators of any order on the variable $x$, some further assumptions are necessary. From the proof of the previous theorem, note that the non-vanishing on $\Z\times \R$ condition for the symbol implies a positive lower bound for its absolute value. This is not true in general when $\deg(p)>1$. Indeed, following the notation of Theorem \ref{theo_gh_cte_Nleq1}, for $p(\xi)=-ix^2+x+i$ and $q(k)=k^2-k$ we have that $p(\xi)+q(k)\neq 0$ for every $(k,\xi)\in\R^2$, but
\begin{equation*}
    p(\sqrt{k^2+1})+q(k)=\sqrt{k^2+1}-k\to 0,
\end{equation*}
as $k\to \infty$. Moreover, due to the growth at infinity of the derivatives of $p(\xi)$ when $\deg(p)>1$, we in fact need to assume a growth condition for the symbol and restrict to the case $\mu\geq 1$, as follows.

\begin{theorem}\label{theo_gh_cte_N>1}
    Let $P$ be a differential operator on $\T^1\times\R$ with constant coefficients and no mixed terms, such that  $$P=p\left(D_x\right)+q\left(D_t\right),$$
   where $p$ and $q$ are complex polynomials,  $\deg(p)=N>1$ and there exist $C,R>0$ such that
       \begin{equation}\label{lower_bound_theorem}
           |p(\xi)+q(k)|\geq C|\xi|^{N-1},\quad \text{for every }k\in\Z,\ |\xi|\geq R.
       \end{equation}
       Then $P$ is $\mathcal{S}_{\sigma,\mu}$-globally hypoelliptic for $\mu\geq 1$ if and only if its symbol does not vanish on $\Z\times \R$.
\end{theorem}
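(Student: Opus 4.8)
The necessity of the non-vanishing condition is already contained in Theorem \ref{theo_gh_necessary}, so the plan is to prove sufficiency. Assume that $p(\xi)+q(k)$ does not vanish on $\Z\times\R$, take $u\in\ss$ with $Pu=f\in\s$, and, exactly as in the proof of Theorem \ref{theo_gh_cte_Nleq1}, compare partial Fourier coefficients to obtain
\[\widehat u(k,\xi)=\frac{\widehat f(k,\xi)}{p(\xi)+q(k)},\qquad (k,\xi)\in\Z\times\R.\]
The goal is then to show that the right-hand side satisfies the estimate \eqref{ineqmix} of Proposition \ref{proppartialdecaymix}, which yields $u\in\s$.

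The first step is to upgrade the non-vanishing hypothesis to a two-regime lower bound: there are $K_0>0$ and the constants $C,R>0$ of \eqref{lower_bound_theorem} such that $|p(\xi)+q(k)|\ge C|\xi|^{N-1}$ for $|\xi|\ge R$, $k\in\Z$, and $|p(\xi)+q(k)|\ge K_0$ for $|\xi|\le R$, $k\in\Z$. The second inequality is obtained by splitting into $|k|$ bounded --- a compact set on which the symbol is continuous and non-zero --- and $|k|$ large, where $|q(k)|\to\infty$ if $\deg q\ge1$ (dominating the bounded quantity $\sup_{|\xi|\le R}|p(\xi)|$), while if $q$ is constant the compactness argument by itself already suffices.

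The heart of the argument, and the step I expect to be the main obstacle, is a uniform bound on the $\xi$-derivatives of the reciprocal symbol:
\[\sup_{(k,\xi)\in\Z\times\R}\left|\partial_\xi^\ell\!\left(\frac{1}{p(\xi)+q(k)}\right)\right|\le C_2^{\,\ell+1}\ell!,\qquad \ell\in\N_0,\]
for a constant $C_2>0$ independent of $k$ and $\ell$. This is where the hypotheses $N>1$ and \eqref{lower_bound_theorem} are essential. Writing $h(\xi)=p(\xi)+q(k)$ and applying Lemma \ref{derivative-reciprocal} (Faà di Bruno), $\partial_\xi^\ell(1/h)$ is a sum of terms of the shape $h^{-(r+1)}\prod_i p^{(j_i)}(\xi)$ with $1\le r\le\ell$, $j_i\ge1$, $\sum_i j_i=\ell$; since $p^{(j)}$ is a polynomial of degree $\le N-j\le N-1$, each factor is $O((1+|\xi|)^{N-1})$, so the numerator is $O((1+|\xi|)^{(N-1)r})$, whereas the two-regime lower bound gives $|h|^{r+1}\gtrsim|\xi|^{(N-1)(r+1)}$ for $|\xi|\ge R$ (and $\gtrsim K_0^{r+1}$ for $|\xi|\le R$). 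Because $N-1\ge1$, the resulting exponent of $|\xi|$ is $\le-(N-1)<0$, so each term is bounded uniformly in $(k,\xi)$; collecting the combinatorial weights of Faà di Bruno, whose total is an ordered Bell number and hence $\le C^\ell\ell!$, yields the claimed estimate. The delicate point is the bookkeeping that produces exactly one factorial $\ell!$ rather than a worse power.

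Finally, Leibniz's rule gives
\[\xi^\alpha\partial_\xi^\beta\widehat u(k,\xi)=\sum_{\ell=0}^{\beta}\binom{\beta}{\ell}\partial_\xi^\ell\!\left(\frac{1}{p(\xi)+q(k)}\right)\xi^\alpha\partial_\xi^{\beta-\ell}\widehat f(k,\xi).\]
Since $f\in\s$, Proposition \ref{proppartialdecaymix} furnishes $C_0,C_1,\varepsilon>0$ with $|\xi^\alpha\partial_\xi^{\beta-\ell}\widehat f(k,\xi)|\le C_0C_1^{\alpha+\beta-\ell}(\alpha!(\beta-\ell)!)^\mu e^{-\varepsilon|k|^{1/\sigma}}$; inserting this together with the bound from the previous step, the $\ell$-th summand is controlled by a constant times $C_2^{\ell+1}C_1^{\alpha+\beta-\ell}\binom{\beta}{\ell}\ell!\,(\beta-\ell)!^\mu\,\alpha!^\mu\,e^{-\varepsilon|k|^{1/\sigma}}$. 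Here one uses the identity $\binom{\beta}{\ell}\ell!\,(\beta-\ell)!^\mu=\beta!\,(\beta-\ell)!^{\mu-1}\le\beta!^\mu$, valid precisely because $\mu\ge1$ --- for $\mu<1$ the unavoidable factorial coming from the reciprocal cannot be absorbed, which is exactly why the statement is restricted to $\mu\ge1$. Summing over $\ell$ and absorbing the $\beta+1$ terms into a geometric factor produces $|\xi^\alpha\partial_\xi^\beta\widehat u(k,\xi)|\le\tilde C_0\tilde C_1^{\alpha+\beta}(\alpha!\beta!)^\mu e^{-\varepsilon|k|^{1/\sigma}}$, so \eqref{ineqmix} holds and Proposition \ref{proppartialdecaymix} gives $u\in\s$, completing the proof.
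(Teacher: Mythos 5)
Your proposal is correct and follows essentially the same route as the paper: necessity via Theorem \ref{theo_gh_necessary}, a two-regime lower bound for the symbol (constant on $|\xi|\le R$ by compactness/closedness, $\gtrsim|\xi|^{N-1}$ by \eqref{lower_bound_theorem} otherwise), a uniform bound $C^{\ell+1}\ell!$ on the $\xi$-derivatives of the reciprocal symbol, and then Leibniz plus the absorption $\binom{\beta}{\ell}\ell!\,(\beta-\ell)!^{\mu}\le\beta!^{\mu}$ (exactly where $\mu\ge1$ enters), concluding with Proposition \ref{proppartialdecaymix}. The only cosmetic differences are in the bookkeeping: the paper uses Lemma \ref{derivative-reciprocal} together with the multinomial count $(\ell')^{\ell}\le e^{\ell}\ell!$, while you invoke the classical Fa\`a di Bruno expansion and bound the total weight by an ordered Bell number, which yields the same $C^{\ell+1}\ell!$ estimate.
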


\begin{proof}
  By Theorem \ref{theo_gh_necessary}, it is enough to prove the sufficiency of the condition above.
 
  Hence suppose that the set 
   \[\mathcal{Z}_P\defeq\{(k,\xi)\in\Z\times\R\,:\,p(\xi)+q(k)=0\}\]
   is empty.
   
Note that by \eqref{lower_bound_theorem}, there exist $K_0>0$ and $R>0$ such that
    \begin{equation*}
        |p(\xi)+q(k)|\geq K_0|\xi|^{N-1},
    \end{equation*}
    for every $(k,\xi)\in \Z\times \R$ such that $|\xi|\geq R$.

    On the other hand, since $p$ is continuous, $p(\{\xi\in\R:|\xi|\leq R\})$ is compact in $\mathbb{C}$ and since $q(\Z)\subset \mathbb{C}$ is closed, we have that $\delta_{ R}\defeq\operatorname{dist}(p(\{\xi\in\R:|\xi|\leq R\}),-q(\Z))>0$, so that
    \begin{equation}\label{delta_R}
        |p(\xi)+q(k)|\geq \delta_{R},\quad 
    \end{equation}
    for every  $|\xi|\leq  R$. Therefore taking $K_1=(1+R)^{-(N-1)}\min\{K_0 R^{N-1},{\delta_R}\}$, we have that 
    \begin{equation}\label{eq_symbol_decay_cte}
        |p(\xi)+q(k)|\geq K_1(1+|\xi|)^{N-1},
    \end{equation}
    for every $(k,\xi)\in \Z\times \R$. Therefore,  if $u\in\ss$ is such that $Pu=f\in\s$, by comparing Fourier coefficients we have that
    \[\widehat{u}(k,\xi)=\dfrac{\widehat{f}(k,\xi)}{p(\xi)+q(k)},\quad (k,\xi)\in\Z\times\R.\]

    Then, given $\alpha,\beta\in\N_0$,  by Lemma \ref{derivative-reciprocal}, for $\beta>0$ we have that 
\begin{align}
        & |\xi^\alpha\partial_\xi^\beta\widehat{u}(k,\xi)| =\left|\xi^\alpha\sum_{\ell=0}^\beta \binom{\beta}{\ell}\partial_\xi^{\ell}\left(\frac{1}{p(\xi)+q(k)}\right)\partial_\xi^{\beta-\ell}\widehat{f}(k,\xi)\right|\notag \\
        &=\left| \frac{\xi^\alpha\partial_\xi^\beta\widehat{f}(k,\xi)}{p(\xi)+q(k)}+\xi^\alpha\sum_{\ell=1}^\beta \binom{\beta}{\ell}\partial_\xi^{\beta-\ell}\widehat{f}(k,\xi) \sum_{\ell'=1}^{\ell}(-1)^{\ell'}\binom{\ell+1}{\ell'+1}\frac{\partial_\xi^{\ell}[(p(\xi)+q(k))^{\ell'}]}{(p(\xi)+q(k))^{\ell'+1}}\right|\notag\\
        &\leq K_1^{-1}(1+|\xi|)^{-(N-1)}|\xi^\alpha\partial_\xi^\beta\widehat{f}(k,\xi)|+\sum_{\ell=1}^\beta \binom{\beta}{\ell}|\xi^\alpha\partial_\xi^{\beta-\ell}\widehat{f}(k,\xi) |\sum_{\ell'=1}^{\ell}\binom{\ell+1}{\ell'+1}\frac{1}{|p(\xi)+q(k)|^{\ell'+1}}\notag\\
        &\times\sum_{\substack{|\eta|=\ell}}\binom{\ell!}{\eta_1,\dots,\eta_{\ell'}} |p(\xi)+q(k)|^{\ell'-|\supp(\eta)|}\prod_{\substack{1\leq i\leq \ell'\\\eta_i\neq 0 }}|\partial_\xi^{\eta_i}p(\xi)|,\label{ineq_u_cte}
        \end{align}
where $\supp(\eta)=\{1\leq i\leq \ell':\eta_i\neq 0\}$, for $\eta\in\N_0^{\ell'}$, since 
\begin{align*}
    \partial_\xi^{\eta_i} [p(\xi)+q(k)]=\begin{cases}
        \partial_\xi^\eta p(\xi)&\text{if }\eta_i\neq 0,\\
        p(\xi)+q(k)&\text{if }\eta_i=0.
    \end{cases}
\end{align*}

 Also, since $\deg(p)=N$, there exists $K\geq 1$ such that     
    \begin{align*}
        |\partial_\xi^\ell p(\xi)|
        \leq 
            K(1+|\xi|)^{N-\ell},
    \end{align*}
     for every $\ell\in\N_0$ and $\xi\in\R$. Therefore
        \begin{align*}     
        &\frac{1}{|p(\xi)+q(k)|^{\ell'+1}}\sum_{\substack{|\eta|=\ell}}\binom{\ell!}{\eta_1,\dots,\eta_{\ell'}} |p(\xi)+q(k)|^{\ell'-|\supp(\eta)|}\prod_{\substack{1\leq i\leq \ell'\\\eta_i\neq 0 }}|\partial_\xi^{\eta_i}p(\xi)|\\
        &= \sum_{|\eta|=\ell}\binom{\ell!}{\eta_1,\dots,\eta_{\ell'}} |p(\xi)+q(k)|^{-(1+|\supp(\eta)|)}K^{|\supp(\eta)|}(1+|\xi|)^{N\cdot |\supp(\eta)|-\ell}\\
        &\leq\sum_{|\eta|=\ell}\binom{\ell!}{\eta_1,\dots,\eta_{\ell'}} K_1^{-(1+|\supp(\eta)|)}K^{\ell'}(1+|\xi|)^{-(N-1)(1+|\supp(\eta)|)+N\cdot |\supp(\eta)|-\ell}\\
      &\leq\sum_{|\eta|=\ell}\binom{\ell!}{\eta_1,\dots,\eta_{\ell'}}K_2^{\ell+1} K^\ell(1+|\xi|)^{|\supp(\eta)|-\ell-(N-1)}\\
         &\leq K_2^{\ell+1} K^\ell\sum_{|\eta|=\ell}\binom{\ell!}{\eta_1,\dots,\eta_{\ell'}}\leq K_2^{\ell+1} K^\ell e^\ell \ell!,
\end{align*}
where $K_2=\max\{1,K_1^{-1}\}$ and we have used the fact that
\[\sum_{|\eta|=\ell}\binom{\ell!}{\eta_1,\dots,\eta_{\ell'}} = (\ell')^\ell \leq \ell^\ell \leq e^\ell\ell!.\]

Also, since $f\in\s$, there exist $C_0,C_1\geq 1$ and $1\geq \varepsilon>0$ such that
    \begin{equation}\label{eq_estimates_f_2}     |\xi^\alpha\partial_\xi^\beta\widehat{f}(k,\xi)|\leq C_0C_1^{\alpha+\beta}(\alpha!\beta!)^\mu e^{-\varepsilon|k|^{1/\sigma}},
    \end{equation}  
    for every $\alpha,\beta\in\N_0$, $(k,\xi)\in\Z\times\R$.
    
Applying these estimates to \eqref{ineq_u_cte}, we have that
\begin{align*}
     |\xi^\alpha\partial_\xi^\beta\widehat{u}(k,\xi)| &=K_1^{-1}C_0C_1^{\alpha+\beta}(\alpha!\beta!)^\mu e^{-\varepsilon|k|^{1/\sigma}}\\
    &\phantom{=}+\sum_{\ell=1}^\beta \binom{\beta}{\ell}C_0C_1^{\alpha+\beta-\ell}(\alpha!(\beta-\ell)!)^\mu e^{-\varepsilon|k|^{1/\sigma}}\sum_{\ell'=1}^{\ell}\binom{\ell+1}{\ell'+1}K_2^{\ell+1} K^\ell e^\ell \ell!\\
    &\leq K_1^{-1}C_0C_1^{\alpha+\beta}(\alpha!\beta!)^\mu e^{-\varepsilon|k|^{1/\sigma}}\\
    &\phantom{=}+\sum_{\ell=1}^\beta \binom{\beta}{\ell}C_0C_1^{\alpha+\beta-\ell}(\alpha!(\beta-\ell)!)^\mu K_2^{\beta+1} K^\beta e^\beta \ell!^\mu e^{-\varepsilon|k|^{1/\sigma}}\sum_{\ell'=1}^{\ell}\binom{\ell+1}{\ell'+1}\\
    &\leq K_1^{-1}C_0C_1^{\alpha+\beta}(\alpha!\beta!)^\mu e^{-\varepsilon|k|^{1/\sigma}}\\
    &\phantom{=}+C_0C_1^{\alpha+\beta}(\alpha!\beta!)^\mu K_2^{\beta+1} K^\beta e^\beta e^{-\varepsilon|k|^{1/\sigma}}2^\beta  (3^\beta-1)\\
    &\leq \tilde C_0\tilde C_1^{\alpha+\beta}(\alpha!\beta!)^\mu e^{-\varepsilon|k|^{1/\sigma}},
\end{align*}
for every $(k,\xi)\in\Z\times \R$, where $\tilde C_0= C_0K_2$, $\tilde C_1= 6eC_1KK_2$, and we used the fact that $\ell!\leq \ell!^\mu$, since $\mu\geq 1$, and the following estimate:
\begin{equation*}
    \sum_{\ell=1}^\beta\sum_{\ell'=1}^{\ell}\binom{\ell+1}{\ell'+1}\leq  \sum_{\ell=1}^\beta\sum_{\ell'=0}^{\ell}\binom{\ell}{\ell'}\frac{\ell+1}{\ell'+1}\leq \beta+1\sum_{\ell=1}^\beta\sum_{\ell'=0}^{\ell}\binom{\ell}{\ell'}\leq 2^\beta  (3^\beta-1) \leq 6^\beta.
\end{equation*}
     
    An analogous argument yields similar estimates for $\beta=0$.
    Hence, by Proposition \ref{proppartialdecaymix}, we have that $u\in\s$, which implies that $P$ is $\mathcal{S}_{\sigma,\mu}$-globally hypoelliptic.
\end{proof}

\begin{obs}
    A sufficient condition so that  \eqref{lower_bound_theorem} is satisfied is that the \L ojasiewicz exponent at infinity (see \cite{zbMATH05257541}) of $\Real(p+q)$ or $\Imag(p+q)$ is greater than $N-1$. Proposition 2.4 in \cite{zbMATH05789579} provides a formula for calculating such exponent via Puiseux expansions, which can be computed with the Newton polygon of these polynomials.
\end{obs}

\begin{example}
    Consider a differential operator  given by
    \begin{equation*}
        P=p(\partial_x)+q(\partial_t),
    \end{equation*}
    where $p$ is a complex polynomial, $q$ is a real polynomial and $\deg(\Imag(p))\geq \deg(p)-1$. Then for $\deg(p)=N>1$, since $\Imag(p(\xi))\sim |\xi|^{N-1}$ for large $|\xi|$, by Theorem \ref{theo_gh_cte_N>1} the operator $P$ is $\mathcal{S}_{\sigma,\mu}$-globally hypoelliptic for $\mu\geq 1$ if and only if $p+q$ does not vanish on $\Z\times \R$. For instance, the operator
    \begin{equation*}
\partial_x^{\ell+1}+i\partial_x^{\ell}+\partial_t^j+c,
    \end{equation*}
    where $\ell,j\in\N_0$, $\ell\geq2$ and $c\in\R$, is $\mathcal{S}_{\sigma,\mu}$-globally hypoelliptic for $\mu\geq 1$ if and only if $c\not\in\Z$.
\end{example}

\begin{corollary}
 Let $P$ be a differential operator on $\T^1\times\R$ with constant coefficients and no mixed terms, such that  $$P=p\left(D_x\right)+q\left(D_t\right),$$
   where $p$ and $q$ are real polynomials and $\deg(p)=N>1$. Then $P$ is 
    $\mathcal{S}_{\sigma,\mu}$-globally hypoelliptic for $\mu\geq 1$ if and only if $p+q$ does not vanish on $\Z\times \R$.
\end{corollary}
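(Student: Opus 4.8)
The plan is to reduce everything to Theorem~\ref{theo_gh_cte_N>1}: since $p$ and $q$ are real, I claim that the sole hypothesis that the symbol $p+q$ does not vanish on $\Z\times\R$ already forces the growth estimate \eqref{lower_bound_theorem}, after which that theorem applies verbatim (for every $\mu\geq 1$). The necessity of the non-vanishing condition is, as always, Theorem~\ref{theo_gh_necessary}, so only sufficiency requires an argument.

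Assume $p(\xi)+q(k)\neq 0$ for every $(k,\xi)\in\Z\times\R$. First I would note that $N=\deg(p)$ is necessarily even: an odd-degree real polynomial is surjective onto $\R$, so one could solve $p(\xi)=-q(0)$ and produce a zero of the symbol at $(0,\xi)$. Since replacing $P$ by $-P$ alters neither its $\mathcal{S}_{\sigma,\mu}$-global hypoellipticity nor the vanishing set of the symbol on $\Z\times\R$, I may assume the leading coefficient $a_N$ of $p$ is positive; then $m_p:=\min_{\xi\in\R}p(\xi)$ exists and $p(\xi)\to+\infty$ as $|\xi|\to\infty$.

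The heart of the matter is to show that $\delta:=\inf_{k\in\Z}\big(m_p+q(k)\big)>0$. For each fixed $k$, the map $\xi\mapsto p(\xi)+q(k)$ is continuous with minimum value $m_p+q(k)$ and limits $+\infty$ at $\pm\infty$; by the intermediate value theorem, non-vanishing forces $m_p+q(k)>0$ for every $k\in\Z$. If $q$ is non-constant, then $|q(k)|\to\infty$ as $|k|\to\infty$, so any sequence of integers along which $m_p+q(k)$ tends to $0$ must be bounded, hence takes finitely many values, so the infimum is attained and strictly positive; if $q$ is constant this is trivial. Choosing $R\geq 1$ large enough that $p(\xi)\geq \tfrac{a_N}{4}|\xi|^N+m_p$ for $|\xi|\geq R$ (possible since the leading term dominates), we obtain, for all such $\xi$ and all $k\in\Z$,
\[
|p(\xi)+q(k)|=p(\xi)+q(k)=\big(p(\xi)-m_p\big)+\big(m_p+q(k)\big)\geq \tfrac{a_N}{4}|\xi|^N+\delta\geq \tfrac{a_N}{4}|\xi|^{N-1},
\]
which is precisely \eqref{lower_bound_theorem} with $C=a_N/4$. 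Theorem~\ref{theo_gh_cte_N>1} then yields the conclusion.

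The only step I expect to require care is the passage from the pointwise positivity of $m_p+q(k)$ to the uniform bound $\delta>0$: non-vanishing of the symbol is an ``open'' condition at each $k$ separately, and ruling out degeneration of these values along a sequence of $k$'s is exactly where the properness of the polynomial $q$ on $\Z$ enters. The remaining manipulations are routine estimates on leading terms of polynomials.
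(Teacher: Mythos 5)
Your proof is correct and follows essentially the same route as the paper: both verify the lower bound \eqref{lower_bound_theorem} from the non-vanishing hypothesis---using that $p$ is real of even degree, so its range is a half-line bounded below and $q(k)$ must stay above $-\min_{\xi\in\R}p(\xi)$---and then invoke Theorem \ref{theo_gh_cte_N>1} (with Theorem \ref{theo_gh_necessary} for necessity). The only difference is that your uniform bound $\delta>0$ is superfluous: since $\tfrac{a_N}{4}|\xi|^{N}\geq\tfrac{a_N}{4}|\xi|^{N-1}$ for $|\xi|\geq 1$, the pointwise nonnegativity of $m_p+q(k)$ already suffices.
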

\begin{proof}
 Indeed, note that by continuity the image of $p$ is a closed interval of the real line. If $p+q$ does not vanish on $\Z\times \R$, the set $-q(\Z)$ must be disjoint from $p(\R)$ and therefore without loss of generality we may assume that $p(\R)=[\lambda,\infty)$, for some $\lambda\in\R$.  Then
 \begin{equation*}
     |p(\xi)+q(k)|\geq |p(\xi)|-|\lambda|\sim |\xi|^N,
 \end{equation*}
 for every $k\in\Z$ and sufficiently large $|\xi|$. The result then follows from Theorem \ref{theo_gh_cte_N>1}.
\end{proof}

We also note that the results above are independent of $\sigma,\mu\geq 1$.

\section{Global hypoellipticity of first-order tube-type operators}\label{sec_gh2}

In this section, we obtain a characterization for the global regularity of operators of the form
\[P=\partial_t + (a(t)+ib(t))\partial_x + q(t),\]
where $a,b,q\in\mathscr{G}^\sigma(\T^1)$ are Gevrey functions on $\T^1$, and $a,b$ are real-valued.

Due to a loss of Gevrey regularity on the $t$-variable that arises when $a$ or $b$ are non-constant, we consider an alternative notion of regularity, as in  \cite{AvCap2022}, defined as follows. First, fix $\mu\geq 1/2$ and consider the space
\[\mathscr{F}_\mu(\G) = \bigcup_{\sigma\geq 1}\mathcal{S}_{\sigma,\mu,\sigma}(\G),\]
endowed with the inductive limit topology.

As a set, $\mathscr{F}_\mu(\G)$ coincides with the union of the spaces $\S$ over all $\sigma\geq 1$. Indeed, first note that the inclusion
\[\mathscr{F}_\mu(\G)\subset \bigcup_{\sigma\geq 1}\S\]
is clear. On the other hand, if $f$ belongs to $\bigcup_{\sigma\geq 1}\S$, then there are $\sigma_0\geq 1$ and $C>0$ such that $f\in\mathcal{S}_{\sigma_0,\mu,C}(\G)$. Hence, if we take $\sigma'=\max\{\sigma_0,C\}$, we have that $f\in\mathcal{S}_{\sigma',\mu,\sigma'}(\G)\subset \mathscr{F}_\mu(\G)$, proving that the reverse inclusion also holds.

Now, notice that if $\sigma_+>\sigma$, then the inclusion $\mathcal{S}_{\sigma,\mu,\sigma}(\G)\hookrightarrow \mathcal{S}_{\sigma_+,\mu,\sigma_+}(\G)$ is compact. Indeed, we can decompose the inclusion map above as the compositions of the inclusions:
\[\mathcal{S}_{\sigma,\mu,\sigma}(\G) \overset{i_1}{\longhookrightarrow} \mathcal{S}_{\sigma,\mu,\sigma_+}(\G) \overset{i_2}{\longhookrightarrow} \mathcal{S}_{\sigma_+,\mu,\sigma_+}(\G).\]

This composition is compact because $i_1$ is compact by Theorem \ref{comp_inc} and $i_2$ is bounded. This shows that $\mathscr{F}_\mu(\G)$ is a DFS space.

Let $\mathscr{F}'_\mu(\G)$ denote the topological dual of $\mathscr{F}_\mu(\G)$, which can be regarded by duality as the space
\[\bigcap_{\sigma\geq 1}\mathcal{S}'_{\sigma,\mu,\sigma}(\G),\]
endowed with the projective limit topology. Note that  $\mathscr{F}'_\mu(\G)$ coincides with the set given by the intersection of all $\mathcal{S}'_{\sigma,\mu}(\G)$ over $\sigma\geq 1$. Moreover, $\mathscr{F}'_\mu(\G)$ is a FS space. 

\begin{definition}\label{Fmu_GH_def}
    We say that a differential operator $P$ is $\mathscr{F}_\mu$-globally hypoelliptic if
    \[u\in\mathscr{F}'_\mu(\G),\ Pu\in\mathscr{F}_\mu(\G)\ \Rightarrow\ u\in\mathscr{F}_\mu(\G).\]
\end{definition}

Notice that if a differential operator $P$ is not $\mathscr{F}_\mu$-globally hypoelliptic, there exists $u\in \bigcap_{\sigma\geq 1} \SS$ such that $u\not\in \bigcup_{\sigma\geq 1}\S$ for any $\sigma\geq1$ satisfying $Pu=f\in\mathcal{S}_{\sigma',\mu}(\G)$, for some $\sigma'\geq 1$. In particular, $P$ is not $\mathcal{S}_{\sigma',\mu}$-globally hypoelliptic.

On the other hand, if $P$ is  $\mathcal{S}_{\sigma,\mu}$-globally hypoelliptic for every $\sigma\geq 1$, then $P$ is $\mathscr{F}_\mu$-globally hypoelliptic. Indeed, if $Pu=f\in \mathscr{F}_\mu(\G)$, we must have $f\in \S$, for some $\sigma\geq 1$ and hence $u\in \S\subset \mathscr{F}_\mu(\G)$.

The next proposition shows that for constant coefficient differential operators with no mixed terms on $\T^1\times \R$, both notions of global hypoellipticity presented so far coincide.

\begin{prop}\label{prop_equivalence_cte_coef}    
Let $P$ be a differential operator on $\T^1\times\R$ with constant coefficients and no mixed terms. Then $P$ is  $\mathscr{F}_\mu$-globally hypoelliptic if and only if $P$ is $\mathcal{S}_{\sigma,\mu}$-globally hypoelliptic, for any $\sigma\geq 1$. 
\end{prop}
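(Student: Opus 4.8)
The plan is to show that $\mathscr{F}_\mu$-global hypoellipticity is equivalent to $\mathcal{S}_{\sigma,\mu}$-global hypoellipticity holding for every $\sigma\ge1$; the implication "$\mathcal{S}_{\sigma,\mu}$ for all $\sigma$" $\Rightarrow$ "$\mathscr{F}_\mu$" is among the remarks preceding the statement, so the content is the converse, which I prove by contraposition: if $P$ is not $\mathcal{S}_{\sigma_0,\mu}$-globally hypoelliptic for some $\sigma_0$, then $P$ is not $\mathscr{F}_\mu$-globally hypoelliptic. Write $P=p(D_x)+q(D_t)$ and $s(k,\xi)=p(\xi)+q(k)$. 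If $s$ vanishes at some $(k_0,\xi_0)\in\Z\times\R$, then, exactly as in the proof of Theorem \ref{theo_gh_necessary}, $u=e^{i(k_0t+\xi_0x)}$ is bounded, hence lies in $\mathscr{F}'_\mu$, satisfies $Pu=0\in\mathscr{F}_\mu$, but is not in $\mathscr{F}_\mu$, so we are done; hence assume $s$ is zero-free on $\Z\times\R$. Under this hypothesis $Pu=f$ forces $\widehat u(k,\xi)=\widehat f(k,\xi)/s(k,\xi)$, and everything reduces to whether this quotient still obeys the estimates of Propositions \ref{proppartialdecayrn}--\ref{proppartialdecaymix}.

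The structural observation is that, since $P$ has no mixed terms, for each $k$ the function $\xi\mapsto s(k,\xi)$ is a polynomial of the fixed degree $N\defeq\deg p$, zero-free on $\R$, and the behaviour of $1/s(k,\cdot)$ and all its $\xi$-derivatives — which is exactly what decides whether $\widehat u(k,\cdot)$ stays Gelfand--Shilov in $x$ — depends only on $p,q$ and on how close the (necessarily non-real) zeros of $p(\cdot)+q(k)$ come to $\R$; the parameter $\sigma$ enters only through the decay $\widehat f(k,\cdot)=O(e^{-\varepsilon|k|^{1/\sigma}})$. If $N\le1$ this case cannot occur here, since by Theorem \ref{theo_gh_cte_Nleq1} and the remarks following it zero-freeness of $s$ already gives $\mathcal{S}_{\sigma,\mu}$-global hypoellipticity for every $\sigma$, contradicting the failure at $\sigma_0$. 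If $N\ge2$, put $d_k\defeq\inf_{\xi\in\R}|s(k,\xi)|=\operatorname{dist}(-q(k),p(\R))>0$ and suppose first that $\mu\ge1$ and $\inf_k d_k>0$: then the zeros of every $s(k,\cdot)$ lie a fixed distance from $\R$ (a uniform lower bound on $d_k$ and one on those zero-distances being equivalent), so all the $1/s(k,\cdot)$ are holomorphic and bounded in a single common horizontal strip, uniformly in $k$; hence dividing $\widehat f(k,\cdot)$ by $s(k,\cdot)$ leaves the Gelfand--Shilov level of the bound unchanged and only multiplies it by $d_k^{-1}$, which is bounded, so $u$ inherits decay $e^{-\varepsilon'|k|^{1/\sigma}}$ and lies in $\mathcal{S}_{\sigma,\mu}(\T^1\times\R)$ for every $\sigma$ — again contradicting the failure at $\sigma_0$, so this subcase is excluded too.

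Consequently, under the standing assumptions we are in the remaining case: $N\ge2$ and either $\mu<1$ or $\inf_k d_k=0$, and in it I would produce a common singular solution showing $P$ is not $\mathscr{F}_\mu$-globally hypoelliptic. If $\mu<1$, $s(k_0,\cdot)$ has a pole off $\R$, so for a fixed Gaussian $\phi$ the function $\phi/s(k_0,\cdot)$ is not entire, hence not in $\mathcal{S}_\mu(\R)$, while $u\defeq(\phi/s(k_0,\cdot))\otimes e^{ik_0\cdot}$ is Schwartz (hence in $\mathscr{F}'_\mu$), $Pu=\phi\otimes e^{ik_0\cdot}\in\mathscr{F}_\mu$, and $u$ lies in no $\mathcal{S}_{\sigma,\mu}(\T^1\times\R)$. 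If $\inf_k d_k=0$, pick a subsequence $k_j$ along which $s(k_j,\cdot)$ has a zero $\zeta_j$ with $\operatorname{Im}\zeta_j\to0$ (such zeros must exist, for otherwise $|s(k,\xi)|$ would be bounded below on $\R$), and note that by the \L ojasiewicz inequality at infinity $|\operatorname{Im}\zeta_j|$ goes to $0$ no faster than a fixed negative power of $|k_j|$; taking $\phi_j$ a Gaussian centred near $\operatorname{Re}\zeta_j$ and $a_j>0$ decreasing rapidly enough, $u\defeq\sum_j a_j(\phi_j/s(k_j,\cdot))\otimes e^{ik_j\cdot}$ is a bounded continuous function, hence in $\mathscr{F}'_\mu$, with $Pu=\sum_j a_j\phi_j\otimes e^{ik_j\cdot}\in\mathscr{F}_\mu$, yet $u\notin\mathscr{F}_\mu$ because the shrinking analyticity strip of $1/s(k_j,\cdot)$ makes $\|\phi_j/s(k_j,\cdot)\|$ in every Gelfand--Shilov norm on $\R$ grow faster than any polynomial in $|k_j|$ — a growth that no weight $e^{\varepsilon|k_j|^{1/\sigma}}$ can absorb.

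The main obstacle is exactly this last step: one has to make the blow-up of those Gelfand--Shilov norms quantitative in terms of the rate $\operatorname{Im}\zeta_j\to0$, and then calibrate $\phi_j$ and $a_j$ so that $Pu$ remains in $\mathscr{F}_\mu$ while $u$ escapes every $\mathcal{S}_{\sigma,\mu}(\T^1\times\R)$; the fact that this growth is super-polynomial in $|k_j|$ is precisely what makes the obstruction blind to $\sigma$ and thus forces the three notions of global hypoellipticity to coincide.
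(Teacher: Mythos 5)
Your argument coincides with the paper's wherever the paper's characterization theorems apply: the forward implication is the discussion preceding the statement, and when the symbol vanishes at some $(k_0,\xi_0)\in\Z\times\R$ you use exactly the paper's null solution $e^{i(k_0t+\xi_0x)}\in\mathscr{F}'_\mu(\T^1\times\R)\setminus\mathscr{F}_\mu(\T^1\times\R)$; indeed the paper's whole proof is Remark \ref{not_sigma} combined with the solution constructed in Theorem \ref{theo_gh_necessary}, and it never confronts the ``symbol zero-free but hypoellipticity fails at some $\sigma_0$'' scenario that occupies most of your proposal. The trouble is that your treatment of that scenario has genuine gaps. In the subcase $N\ge2$, $\mu\ge1$, $\inf_k d_k>0$ (with $d_k=\inf_{\xi\in\R}|p(\xi)+q(k)|$), the parenthetical equivalence you rely on --- a uniform lower bound for $|p(\xi)+q(k)|$ on $\Z\times\R$ being the same as a uniform distance of the zeros of $p(\cdot)+q(k)$ from $\R$ --- is false: since $|p'(\xi)|\sim|\xi|^{N-1}$ is unbounded, a zero $\zeta$ with $|\operatorname{Im}\zeta|\approx\delta|\zeta|^{1-N}$ is compatible with $|p(\xi)+q(k)|\ge\delta$ for all real $\xi$. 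Hence there is no common strip of holomorphy with uniform bounds, the Cauchy--Leibniz estimates you appeal to are unavailable, and the conclusion that this subcase ``cannot occur'' is unsupported; this is precisely why Theorem \ref{theo_gh_cte_N>1} assumes the growth condition \eqref{lower_bound_theorem} rather than mere zero-freeness of the symbol.

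The remaining subcase ($N\ge2$, $\inf_k d_k=0$) is, by your own admission, not proved, and the heuristic offered for it is quantitatively insufficient: the weights $e^{\varepsilon|k_j|^{1/\sigma}}$ themselves dominate every polynomial, so Gelfand--Shilov norms of $\phi_j/s(k_j,\cdot)$ growing ``faster than any polynomial in $|k_j|$'' does not prevent $u$ from lying in some $\mathcal{S}_{\sigma,\mu}(\T^1\times\R)$. To exclude every $\sigma$ you need blow-up at least of size $e^{c|k_j|^{\theta}}$ for some $\theta>0$, while simultaneously the $a_j$ must decay like $e^{-\varepsilon'|k_j|^{1/\sigma'}}$ for one fixed $\sigma'$ so that $Pu\in\mathscr{F}_\mu(\T^1\times\R)$ with the uniform constants demanded by Proposition \ref{proppartialdecaymix}; whether this calibration is possible, given that $\operatorname{Im}\zeta_j$ can only approach $0$ at a polynomial rate in $|k_j|$, is exactly the point left open. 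Your $\mu<1$ construction is essentially sound once stated on the Fourier side ($\widehat u(k_0,\cdot)=\phi/s(k_0,\cdot)$, using that elements of $\mathcal{S}_\mu(\R)$ are entire for $\mu<1$), but observe that it applies verbatim when $\deg p=1$ and the zero of $\xi\mapsto p(\xi)+q(k_0)$ is non-real, so it collides with Theorem \ref{theo_gh_cte_Nleq1}, which you invoke to dismiss the case $N\le1$; at least one of these two steps cannot stand as written, so the regime $1/2\le\mu<1$ needs more care than either the sketch or the citation provides. In sum, the proposal establishes the proposition only where it reduces to the paper's short argument (symbol-vanishing obstruction plus the $\sigma$-independent characterizations), and the additional cases it undertakes remain genuinely unresolved.
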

\begin{proof}
By Remark \ref{not_sigma}, if $P$ is $\mathcal{S}_{\sigma,\mu}$-globally hypoelliptic for some $\sigma\geq 1$, then $P$ is $\mathcal{S}_{\sigma,\mu}$-globally hypoelliptic for every $\sigma\geq 1$, and consequently $P$ is $\mathscr{F}_\mu$-globally hypoelliptic by the previous discussion. Conversely, suppose that $P$ is not $\mathcal{S}_{\sigma,\mu}$-globally hypoelliptic. Then by the proof of Theorem \ref{theo_gh_necessary} we have that there exists $u\in \mathscr{F}'_\mu(\G)$ such that $Pu=0\in\mathscr{F}_\mu(\G)$, and consequently $P$ is not $\mathscr{F}_\mu$-globally hypoelliptic.
\end{proof}

\begin{corollary}\label{coro-tube-cte}
    The operator
    \[P=\partial_t + (a_0+ib_0)\partial_x + q_0,\]
    where $q_0\in\mathbb{C}$, $a_0,b_0\in\R$ is $\mathscr{F}_\mu$-globally hypoelliptic if and only if
    \begin{enumerate}
        \item[(i)] $b_0\neq 0$ and $\frac{a_0}{b_0}\Real(q_0)+\Imag(q_0)\notin\mathbb{Z}$;
        \item[(ii)] $b_0=0$ and either $\Real(q_0)\neq 0$, or both $a_0=\Real(q_0)=0$ and $\operatorname{Im}(q_0)\notin\Z$.
    \end{enumerate}
\end{corollary}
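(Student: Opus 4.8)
The plan is to obtain this corollary as an immediate combination of two results already established for constant-coefficient operators. First I would observe that $P=\partial_t+(a_0+ib_0)\partial_x+q_0$ is a differential operator on $\T^1\times\R$ with constant coefficients and with no mixed terms (the only derivatives appearing are $\partial_t$ and $\partial_x$ separately, together with the order-zero term $q_0$), so Proposition \ref{prop_equivalence_cte_coef} applies: $P$ is $\mathscr{F}_\mu$-globally hypoelliptic if and only if $P$ is $\mathcal{S}_{\sigma,\mu}$-globally hypoelliptic for some (equivalently, every) $\sigma\geq 1$.

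Next I would simply invoke Corollary \ref{coro_cte} with the identifications $a=a_0$, $b=b_0$, and $c=q_0$: that corollary characterizes the $\mathcal{S}_{\sigma,\mu}$-global hypoellipticity of $\partial_t+(a+ib)\partial_x+c$ precisely by conditions (i) and (ii) as stated here (recall also, via Remark \ref{not_sigma}, that these conditions do not depend on $\sigma$ or $\mu$). Chaining the two equivalences gives exactly the claimed statement.

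Since both ingredients are already proved in the excerpt, there is essentially no obstacle: the content of the corollary is entirely the translation from the $\mathcal{S}_{\sigma,\mu}$ setting to the $\mathscr{F}_\mu$ setting, which is handled uniformly by Proposition \ref{prop_equivalence_cte_coef}, plus the explicit root analysis of the symbol $\xi\mapsto i(a_0+ib_0)\xi + ik - q_0$ already carried out in the proof of Corollary \ref{coro_cte}. The only point worth restating for clarity is that ``no mixed terms'' is needed in order to legitimately apply Proposition \ref{prop_equivalence_cte_coef}, and that here it holds trivially.
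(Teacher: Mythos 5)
Your proposal is correct and matches the paper's intended argument: the corollary is stated there without proof precisely because it follows immediately from Proposition \ref{prop_equivalence_cte_coef} (applicable since $P$ has constant coefficients and no mixed terms) combined with the characterization in Corollary \ref{coro_cte}, together with the $\sigma$-independence noted in Remark \ref{not_sigma}. Nothing further is needed.
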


\subsection{Real case} First, let us consider the case $b\equiv 0$, that is, operators of the form
\[P=\partial_t+a(t)\partial_x+q(t),\]
where $a,q\in\mathscr{G}^{\sigmac}(\T^1)$, for some fixed $\sigmac\geq 1$, and $a$ is real-valued. Consider the constants
\[a_0=\dfrac{1}{2\pi}\int_{0}^{2\pi}a(t)\,\mathrm{d}t\in\R\quad\text{and}\quad q_0=\dfrac{1}{2\pi}\int_0^{2\pi}q(t)\,\mathrm{d}t\in\mathbb{C}.\]

We also set
\[A(t)=\int_0^t a(s)\,\mathrm{d}s - a_0t\quad\text{and}\quad Q(t)= \int_0^t q(s)\,\mathrm{d}s - q_0t. \]

\begin{lemma}\label{lemma_ca}
There exist $C_{0,a},C_{1,a}\geq 1$ such that for every $L>0$, $\beta,\gamma\in\N_0$ we have that
    \begin{equation}\label{est_ca}
        e^{-L|\xi|^{\frac{1}{\mu}}}\left|\partial_\xi^\beta\partial_t^\gamma e^{i\xi A(t)}\right|\leq   C_{0,a} C_{1,a}^{\beta+\gamma} \gamma!^{\sigmac+\mu},
    \end{equation}
    for every $(t,\xi)\in\T^1\times\R$.
\end{lemma}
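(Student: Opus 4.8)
The plan is to estimate $\partial_\xi^\beta\partial_t^\gamma e^{i\xi A(t)}$ by exploiting the fact that $A$ is a primitive (minus its linear part) of the Gevrey function $a$, hence $A\in\mathscr{G}^{\sigmac}(\T^1)$ as well, and then to absorb the growth in $\xi$ coming from $\partial_\xi$ into an exponential via the elementary inequality $|\xi|^\beta\le C^\beta\beta!^\mu e^{L|\xi|^{1/\mu}}$ (this is exactly the kind of estimate recorded in the appendix lemma used in Proposition \ref{proppartialdecaytorusdistrib}; see \textbf{Lemma \ref{lemma-exponential}}). First I would compute $\partial_\xi^\beta e^{i\xi A(t)} = (iA(t))^\beta e^{i\xi A(t)}$, so that the only $\xi$-dependence is the factor $(iA(t))^\beta$ times $e^{i\xi A(t)}$, with $|e^{i\xi A(t)}|=1$ since $A$ is real valued (here I use that $a$ is real, hence $A$ is real). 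Thus $|\partial_\xi^\beta e^{i\xi A(t)}| = |A(t)|^\beta \le \|A\|_\infty^\beta$, a bound independent of $\xi$ but not yet carrying the needed $\gamma$-derivatives.

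Next I would bring in the $t$-derivatives. Writing $g(t,\xi) := \partial_\xi^\beta e^{i\xi A(t)} = (iA(t))^\beta e^{i\xi A(t)}$, I apply $\partial_t^\gamma$ and use the Leibniz rule, or more efficiently the Faà di Bruno / Leibniz-type estimates for derivatives of $h\mapsto h^\beta$ and of $e^{i\xi A(t)}$. The key points are: (a) $A\in\mathscr{G}^{\sigmac}(\T^1)$, so $\sup_t|\partial_t^j A(t)|\le C_0 C_1^j j!^{\sigmac}$; (b) derivatives of $e^{i\xi A(t)}$ produce factors of $\xi$ (one power of $\xi$ per application of the chain rule through the exponent), and collecting the worst case one gets $|\partial_t^\gamma e^{i\xi A(t)}|\le C_0 C_1^\gamma \gamma!^{\sigmac}(1+|\xi|)^\gamma$ — this is a standard Gevrey-chain-rule estimate and I would cite or prove a short lemma to that effect; (c) differentiating the polynomial factor $(iA(t))^\beta$ in $t$ contributes at most $C^\beta C^\gamma \gamma!^{\sigmac}$ many bounded terms, and never introduces any $\xi$. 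Combining, one obtains a bound of the shape
\[
\left|\partial_\xi^\beta\partial_t^\gamma e^{i\xi A(t)}\right| \le C_0 C_1^{\beta+\gamma}\,\gamma!^{\sigmac}\,(1+|\xi|)^{\gamma}.
\]

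Finally I would dispose of the factor $(1+|\xi|)^\gamma$. For any $L>0$, Lemma \ref{lemma-exponential} (the elementary convexity estimate $r^\gamma \le (\gamma!)^\mu (\mu/L)^{\mu\gamma} e^{L r^{1/\mu}}$, up to harmless constants, valid for $r\ge 0$) gives $(1+|\xi|)^\gamma \le C_L^\gamma\,\gamma!^{\mu}\,e^{L|\xi|^{1/\mu}}$. Substituting and multiplying through by $e^{-L|\xi|^{1/\mu}}$ yields
\[
e^{-L|\xi|^{1/\mu}}\left|\partial_\xi^\beta\partial_t^\gamma e^{i\xi A(t)}\right| \le C_0\,(C_1 C_L)^{\beta+\gamma}\,\gamma!^{\sigmac+\mu},
\]
which is \eqref{est_ca} after renaming constants; note that $C_{0,a},C_{1,a}$ depend only on $a$ (through $C_0,C_1$ and $\|A\|_\infty$) and on $\mu$, and — crucially — the constant $C_{1,a}$ can be taken independent of $L$ only if we are careful, otherwise we accept $C_{1,a}$ depending on a fixed reference $L$; re-reading the statement, the quantifier ``for every $L>0$'' with $C_{0,a},C_{1,a}$ chosen first forces using the $L$-uniform form, so I would instead keep the $\xi$-power as $(1+|\xi|)^\gamma$ and apply the sharper bound $\sup_{r\ge0} r^\gamma e^{-Lr^{1/\mu}} = (\gamma\mu/L)^{\gamma\mu}e^{-\gamma\mu} \le (\gamma\mu)^{\gamma\mu}e^{-\gamma\mu}L^{-\gamma\mu}$ and absorb only the $\gamma$-dependent part $(\gamma\mu)^{\gamma\mu}e^{-\gamma\mu}\le C^\gamma \gamma!^\mu$ into $\gamma!^{\sigmac+\mu}$, while the $L^{-\gamma\mu}$ is harmless for $L$ small and, for $L$ large, only improves the estimate — so one can bound uniformly over $L$ in a bounded range and treat large $L$ trivially. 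The main obstacle is precisely this bookkeeping of the quantifier order on $L$ versus $C_{0,a},C_{1,a}$, together with correctly tracking that each $t$-derivative of the exponential costs exactly one power of $\xi$ (so $\gamma$ derivatives cost $|\xi|^\gamma$, not more); once those two points are handled, the rest is routine Gevrey calculus.
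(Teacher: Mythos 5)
Your main derivation is correct and follows essentially the same route as the paper: after writing $\partial_\xi^\beta e^{i\xi A(t)}=(iA(t))^\beta e^{i\xi A(t)}$, the $t$-derivatives are handled by Fa\`a di Bruno (Lemma \ref{faa}) together with the Gevrey bounds for $A\in\mathscr{G}^{\sigmac}(\T^1)$ and the algebraic Lemmas \ref{lemma_algebraic_1} and \ref{lemma_algebraic_2}, each $t$-derivative costing at most one power of $\xi$; the resulting factor $(1+|\xi|)^{\gamma}$ is then absorbed via Lemma \ref{lemma-exponential}. Computing $\partial_\xi^\beta$ explicitly first, as you do, is a harmless reorganization of the paper's computation (which applies Fa\`a di Bruno in $t$ first and then Leibniz in $\xi$), and in both versions the exponent $\gamma!^{\sigmac+\mu}$ arises with the extra $\gamma!^{\mu}$ coming precisely from trading the powers of $\xi$ for $e^{L|\xi|^{1/\mu}}$; your intermediate bound $|\partial_t^\gamma e^{i\xi A(t)}|\leq C_0C_1^{\gamma}\gamma!^{\sigmac}(1+|\xi|)^{\gamma}$ is exactly what those lemmas give.

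The flaw is in your final paragraph. The absorption step unavoidably produces a factor of order $(\mu/L)^{\mu}$ per $t$-derivative, so the constant playing the role of $C_{1,a}$ does depend on $L$, and your attempt to remove this dependence is wrong on its own terms: $L^{-\gamma\mu}$ is harmless for $L$ \emph{large} and blows up as $L\to 0^{+}$, not the other way around. Indeed, no $L$-uniform constants can exist when $a$ is nonconstant: for $\beta=0$, $\gamma=1$ the left-hand side of \eqref{est_ca} is $e^{-L|\xi|^{1/\mu}}|\xi|\,|a(t)-a_0|$, whose supremum over $\xi$ is comparable to $(\mu/(eL))^{\mu}\sup_{t}|a(t)-a_0|$ and tends to $\infty$ as $L\to0^{+}$. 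The paper's own proof has the same $L$-dependence (its constant $\tilde C_a$ contains $1+(\mu/L)^{\mu}$), and in the later application $L$ is fixed before the lemma is invoked, so the statement is to be read as ``for every $L>0$ there exist $C_{0,a},C_{1,a}$ (depending on $L$, but not on $\beta,\gamma,t,\xi$)''. Simply record the $L$-dependence of your constants instead of trying to force uniformity; with that reading, your argument is complete.
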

\begin{proof}
    It is easy to see that $A\in\mathscr{G}^{\sigmac}(\T^1)$, so there exist $C_0,C\geq 1$ such that
    \begin{equation}\label{ineq_gevrey_A}
        |\partial_t^\ell A(t)|\leq C_0C^{\ell}\ell!^{\sigmac},
    \end{equation}
    for every $\ell\in\N_0$,  $t\in\T^1$. Note that if $\gamma=0$, then 
    \begin{align*}
         e^{-L|\xi|^{\frac{1}{\mu}}}\left|\partial_\xi^\beta e^{i\xi A(t)}\right|&=         e^{-L|\xi|^{\frac{1}{\mu}}}\left|A(t)\right|^\beta \leq C_0^\beta C^\beta,
    \end{align*}
    for every $(t,\xi)\in\T^1\times\R$. 

 Next consider $\gamma\geq 1$. Then, by Fa\`a di Bruno's Formula (Lemma \ref{faa}) and inequality \eqref{ineq_gevrey_A} we have that
    \begin{align*}
        & e^{-L|\xi|^{\frac{1}{\mu}}}|\partial_\xi^{\beta}\partial_t^\gamma e^{i\xi A(t)}|\\&\leq e^{-L|\xi|^{\frac{1}{\mu}}}\left|\sum_{\Delta(\gamma)}\frac{\gamma!}{\tau!}\partial_\xi^\beta\left[e^{i\xi A(t)}\prod_{\ell=1}^{\gamma}\left[\frac{\partial_t^\ell(i\xi A(t))}{\ell!}\right]^{\tau_\ell}\right]\right|\\
        & =e^{-L|\xi|^{\frac{1}{\mu}}}\left|\sum_{\Delta(\gamma)}\frac{\gamma!}{\tau!}\sum_{\ell'=0}^{\min\{|\tau|,\beta\}}\binom{\beta}{\ell'}e^{i\xi A(t)}(iA(t))^{\beta-\ell'}\frac{|\tau|!}{(|\tau|-\ell')!}\xi^{|\tau|-\ell'}\prod_{\ell=1}^{\gamma}\left[\frac{i\partial_t^\ell A(t)}{\ell!}\right]^{\tau_\ell}\right|\\
        &\leq \sum_{\Delta(\gamma)}\frac{\gamma!}{\tau!}\sum_{\ell'=0}^{\min\{|\tau|,\beta\}}\binom{\beta}{\ell'}(1+|A(t)|)^{\beta}\left(\frac{\mu}{L}\right)^{\mu(|\tau|-\ell')}(|\tau|-\ell')!^{\mu-1}|\tau|!\prod_{\ell=1}^{\gamma}\left[\frac{|\partial_t^\ell A(t)|}{\ell!}\right]^{\tau_\ell}\\
        &\leq 2^{\beta}\left(1+\left(\frac{\mu}{L}\right)^{\mu}\right)^{\gamma}\sup_{t\in\T^1}(1+|A(t)|)^\beta\gamma!^{\mu}\sum_{\Delta(\gamma)}\frac{\gamma!}{\tau!}|\tau|!\prod_{\ell=1}^{\gamma}\left[\frac{C_0C^{\ell}\ell!^{\sigmac}}{\ell!}\right]^{\tau_\ell}\\
        &\leq  \tilde{C}_a^{\beta+\gamma}C^{\gamma}\gamma!^{\mu}\sum_{\Delta(\gamma)}\frac{\gamma!}{\tau!}C_0^{|\tau|} |\tau|!^{\sigmac}\prod_{\ell=1}^{\gamma}\ell!^{({\sigmac}-1)\tau_\ell},
    \end{align*}
    with $\Delta(\gamma)$ as defined in \eqref{Delta_set_defi} and $\tilde{C}_a=2\displaystyle\sup_{t\in\T^1}(1+|A(t)|)\left(1+\left(\frac{\mu}{L}\right)^{\mu}\right)$. Here, we used Lemma \ref{lemma-exponential} on the fourth line. Then, applying Lemmas \ref{lemma_algebraic_1} and \ref{lemma_algebraic_2} we have that
    \begin{align*}
         e^{-L|\xi|^{\frac{1}{\mu}}}|\partial_t^\gamma e^{i\xi A(t)}|&\leq \tilde{C}_a^{\beta+\gamma}C^{\gamma}\gamma!^{\mu}\sum_{\Delta(\gamma)}\frac{\gamma!}{\tau!}C_0^{|\tau|}|\tau|!\gamma!^{({\sigmac}-1)}\\
        &\leq \tilde{C}_a^{\beta+\gamma}C^{\gamma}\gamma!^{{\sigmac}+\mu}\sum_{\Delta(\gamma)}\frac{|\tau|!}{\tau!}C_0^{|\tau|}\\
        &\leq \tilde{C}_a^{\beta+\gamma}C^{\gamma}\gamma!^{{\sigmac}+\mu} C_0(1+C_0)^{\gamma-1}\\
        & \leq C_0(\tilde{C}_aC(1+C_0))^{\beta+\gamma}\gamma!^{{\sigmac}+\gamma}\\
        & \leq  C_{0,a}C_{1,a}^{\beta+\gamma}\gamma!^{{\sigmac}+\mu},
    \end{align*}
    for every $(t,\xi)\in\T^1\times\R$, where $C_{0,a}=C_0$ and $C_{1,a}=\tilde{C}_aC(1+C_0)$. Together with the case $\gamma=0$, this implies in inequality \eqref{est_ca}, as claimed.
\end{proof}
\begin{prop}
    The operator $\Psi_a:\mathscr{F}_\mu(\g)\to \mathscr{F}_\mu(\g)$ given by
    \[ (\Psi_a u)(t,x) = \frac{1}{2\pi}\int_{\R} e^{i\xi A(t)}\widehat{u}(t,\xi)e^{i\xi x}\,\mathrm{d}\xi  \]
    is well-defined and is and automorphism of both $\Smu$ and $\SSmu$. Moreover
    \[P_0\circ\Psi_a = \Psi_a\circ P,\]
    where
    \begin{equation}\label{P0} 
    P_0 = \partial_t + a_0\partial_x + q(t).
    \end{equation}
\end{prop}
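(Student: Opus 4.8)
The plan is to transfer everything to the partial Fourier transform in the variable $x$. By definition $\Psi_a u=\mathcal F^{-1}_{\R}\bigl[e^{i\xi A(t)}\widehat u(t,\xi)\bigr]$, so $\widehat{\Psi_a u}(t,\xi)=e^{i\xi A(t)}\widehat u(t,\xi)$, and the whole statement reduces to: (a) multiplication by the symbol $e^{i\xi A(t)}$ is a topological automorphism of the space of partial Fourier transforms (in $x$) of the elements of $\Smu$, and, by transposition, of $\SSmu$; and (b) a short symbolic identity giving the conjugation. The point throughout is that $A$ is \emph{real valued}: then $|e^{i\xi A(t)}|\equiv 1$ and $\partial_\xi^\beta e^{i\xi A(t)}=(iA(t))^\beta e^{i\xi A(t)}$ is bounded uniformly in $\xi$, so multiplying by $e^{i\xi A(t)}$ costs only a finite loss of Gevrey order in $t$, the one recorded in Lemma \ref{lemma_ca}.

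First I would show that $\Psi_a$ is well defined and continuous on $\Smu$. Fix $u\in\Smu$, so $u\in\s$ for some $\sigma\geq 1$; combining Proposition \ref{propdecaypartialxi}, Corollary \ref{coro_gelf_transf} and Remark \ref{equiv_tpgelf} (and $\sup_\xi|\xi|^\alpha e^{-\frac L2|\xi|^{1/\mu}}\leq C^\alpha\alpha!^\mu$ for a suitable $C$), there are $C_0,C_1,L>0$ with
\[|\xi^\alpha\partial_\xi^\beta\partial_t^\gamma\widehat u(t,\xi)|\leq C_0C_1^{\alpha+\beta+\gamma}(\alpha!\beta!)^\mu\gamma!^\sigma e^{-L|\xi|^{1/\mu}},\qquad\alpha,\beta,\gamma\in\N_0.\]
In particular the integral defining $\Psi_a u$ converges uniformly in $t$, $\Psi_a u\in C^\infty(\g)\cap L^1(\g)$, and its partial Fourier transform equals $e^{i\xi A(t)}\widehat u(t,\xi)$. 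Differentiating this product by the Leibniz rule, leaving $\xi^\alpha$ attached to the derivatives of $\widehat u$, bounding the derivatives of $e^{i\xi A(t)}$ by Lemma \ref{lemma_ca} with the parameter $L/2$ (so that $e^{\frac L2|\xi|^{1/\mu}}e^{-L|\xi|^{1/\mu}}\leq 1$), and using $\gamma'!^{\sigmac+\mu}(\gamma-\gamma')!^\sigma\leq\gamma!^{\sigma'}$ with $\sigma'=\max\{\sigma,\sigmac+\mu\}$ together with the standard binomial bookkeeping, one obtains
\[|\xi^\alpha\partial_\xi^\beta\partial_t^\gamma\widehat{\Psi_a u}(t,\xi)|\leq C_0'(C_1')^{\alpha+\beta+\gamma}(\alpha!\beta!)^\mu\gamma!^{\sigma'},\]
with $C_0',C_1'$ depending only on $C_0,C_1,\mu$ and $a$. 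By Proposition \ref{propdecaypartialxi} this gives $\Psi_a u\in\mathcal S_{\sigma',\mu}(\g)\subset\Smu$; since the estimate is uniform on each Banach step of $\Smu$ and $\Smu$ is a DFS space, $\Psi_a\colon\Smu\to\Smu$ is continuous.

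Next, $-A$ is produced from $-a\in\mathscr G^{\sigmac}(\T^1)$ by the same formula, so the previous paragraph also shows that $\Phi_a u:=\mathcal F^{-1}_{\R}[e^{-i\xi A(t)}\widehat u(t,\xi)]$ defines a continuous operator on $\Smu$; since $e^{i\xi A(t)}e^{-i\xi A(t)}=1$ on the Fourier side, $\Phi_a$ is a two-sided inverse of $\Psi_a$, which is therefore a topological automorphism of $\Smu$. For ultradistributions I would extend $\Psi_a$ by transposition: a Parseval computation in $x$ gives, for $u,\varphi\in\Smu$,
\[\int_{\T^1}\int_{\R}(\Psi_a u)(t,x)\,\varphi(t,x)\,\mathrm dx\,\mathrm dt=\int_{\T^1}\int_{\R}u(t,x)\,(R\Psi_a R\varphi)(t,x)\,\mathrm dx\,\mathrm dt,\]
where $R$ denotes the reflection $x\mapsto-x$, an isometric automorphism of each Banach step of $\Smu$. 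Hence ${}^t\Psi_a=R\Psi_a R$ is an automorphism of $\Smu$, its transpose is an automorphism of $\SSmu$ extending $\Psi_a$ (with inverse the transpose of $R\Phi_a R$), and consistency with the function case is exactly the displayed identity. Finally, for the conjugation recall that here $b\equiv0$, so $P=\partial_t+a(t)\partial_x+q(t)$ and $P_0=\partial_t+a_0\partial_x+q(t)$; on the partial Fourier side $\widehat{Pu}=\partial_t\widehat u+i\xi a(t)\widehat u+q(t)\widehat u$ and $\widehat{P_0v}=\partial_t\widehat v+i\xi a_0\widehat v+q(t)\widehat v$. Putting $v=\Psi_a u$, i.e.\ $\widehat v=e^{i\xi A(t)}\widehat u$, and using $\partial_t e^{i\xi A(t)}=i\xi(a(t)-a_0)e^{i\xi A(t)}$ (since $A'(t)=a(t)-a_0$), the terms $\pm i\xi a_0$ cancel and $\widehat{P_0\Psi_a u}=e^{i\xi A(t)}\widehat{Pu}=\widehat{\Psi_a Pu}$; thus $P_0\circ\Psi_a=\Psi_a\circ P$ on $\Smu$, and on $\SSmu$ by the same computation with $\widehat u$ an ultradistribution (equivalently, by transposition and continuity).

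The step I expect to be the main obstacle is the estimate of the second paragraph: one must check that multiplication by $e^{i\xi A(t)}$ preserves the bounds of Proposition \ref{propdecaypartialxi} with only the controlled loss $\sigma\mapsto\max\{\sigma,\sigmac+\mu\}$ in the periodic Gevrey order. This is precisely where the real-valuedness of $A$ enters (no growth in $\xi$) and where Lemma \ref{lemma_ca} is the tailored tool; the only genuine subtlety is to split the decay weight $e^{-L|\xi|^{1/\mu}}$ of $\widehat u$ so as to absorb the subexponential blow-up $e^{\frac L2|\xi|^{1/\mu}}$ of the $t$-derivatives of $e^{i\xi A(t)}$. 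Once this is granted, the inverse $\Phi_a$, the passage to $\SSmu$, and the intertwining identity are all routine.
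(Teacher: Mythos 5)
Your proof is correct and follows essentially the same route as the paper: reduce $\Psi_a$ to multiplication by $e^{i\xi A(t)}$ on the partial Fourier side, control the loss of Gevrey order in $t$ via the weight-splitting estimate of Lemma \ref{lemma_ca}, observe $\Psi_a^{-1}=\Psi_{-a}$, and verify the intertwining identity by the same direct computation using $A'(t)=a(t)-a_0$. The only cosmetic difference is that you extend to $\SSmu$ by transposition (via the reflection $R$), whereas the paper directly checks sequential continuity of $\varphi\mapsto\langle\widehat u, e^{i\xi A(t)}\varphi\rangle$ against null sequences; both hinge on the same multiplier estimate, so this is packaging rather than substance.
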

\begin{proof}
    First we will prove that $\Psi_a$ is well-defined. Indeed, first notice that given $u\in\SSmu$, we have that $\widehat{u}\in\SSmu$. Therefore for every $\tilde{C},\tilde L>0$ and $\sigma\geq 1$, there exists $B=B(\tilde{C},L,\sigma)>0$ such that 
    \begin{equation}\label{conjugation_a_distrib}
        |\langle \widehat{u},\varphi\rangle| \leq B\sup_{\beta,\gamma\in\N_0}\sup_{(t,x)\in\g} \tilde C^{-\beta-\gamma}\gamma!^{-(\sigma+\mu)}\beta!^{-\mu}e^{\tilde L|\xi|^{\frac{1}{\mu}}}|\partial_\xi^\beta\partial_t^\gamma\varphi(t,\xi)|,
    \end{equation} 
    for all $\varphi\in\mathcal{S}_{\sigma,\mu}(\g)$. Second, notice that given $\varphi_j\to0$ in $\mathcal{S}_{\sigma',\mu}(\g)$ for some $\sigma'\geq 1$, there exist a sequence $C_j\to 0$ and constants $C\geq 1$, $L>0$ such that
    \begin{equation*}
       \sup_{\beta,\gamma\in\N_0} C^{-\beta-\gamma}\gamma!^{-\sigma'}\beta!^{-\mu}\sup_{(t,x)\in\T^1\times\R}e^{2L|x|^{\frac{1}{\mu}}}|\partial_x^\beta\partial_t^\gamma \varphi_j(t,x)|\leq C_j,
    \end{equation*}
    which implies that
        \begin{equation*}
       \sup_{(t,x)\in \T^1\times\R}|e^{2L|x|^{\frac{1}{\mu}}}\partial_x^\beta\partial_t^\gamma \varphi_j(t,x)|\leq C_j C^{\beta+\gamma}\gamma!^{\sigma'}\beta!^{\mu},
    \end{equation*}
    for every $\alpha,\beta,\gamma\in\N_0$. Taking $\sigma=\max\{\sigmac,\sigma'\},\ \tilde L=L$ and $\tilde C=2C_{1,a}C$ in \eqref{conjugation_a_distrib},  by Lemma \ref{lemma_ca} we have that
    \begin{align*}
        &|\langle e^{i\xi A(t)}\widehat{u}(t,\xi),\varphi_j(t,\xi)\rangle|\\
        & =|\langle \widehat{u}(t,\xi),e^{i\xi A(t)}\varphi_j(t,\xi)
        \rangle|\\
        & \leq B\sup_{\beta,\gamma\in\N_0} \tilde{C}^{-\beta-\gamma}\gamma!^{-(\sigma+\mu)}\beta!^{-\mu}\sup_{(t,x)\in\T^1\times\R}e^{L|\xi|^{\frac{1}{\mu}}}|\partial_\xi^{\beta}\partial_t^{\gamma}[e^{i\xi A(t)}\varphi_j(t,\xi)]|\\
        &\leq B\sup_{\beta,\gamma\in\N_0} \tilde{C}^{-\beta-\gamma}\gamma!^{-(\sigma+\mu)}\beta!^{-\mu} \sum_{\ell=0}^\gamma\sum_{\ell'=0}^\beta \binom{\gamma}{\ell}\binom{\beta}{\ell'}\sup_{(t,x)\in\T^1\times\R}e^{-L|\xi|^{\frac{1}{\mu}}}|\partial_\xi^{\ell'}\partial_t^{\ell}e^{i\xi A(t)}|\\
        &\phantom{\leq} \times\sup_{(t,x)\in\T^1\times\R}e^{2L|\xi|^{\frac{1}{\mu}}}|\partial_\xi^{\beta-\ell'}\partial_t^{\gamma-\ell}\varphi_j(t,\xi)|\\
        &\leq B\sup_{\beta,\gamma\in\N_0} \tilde{C}^{-\beta-\gamma}\gamma!^{-(\sigma+\mu)}\beta!^{-\mu}\\
         &\phantom{\leq} \times\sum_{\ell=0}^\gamma \sum_{\ell'=0}^\beta\binom{\gamma}{\ell} \binom{\beta}{\ell'}C_{0,a}C_{1,a}^{\ell+\ell'}\ell!^{(\sigmac+\mu)}C_j C^{(\beta-\ell')+(\gamma-\ell)}(\gamma-\ell)!^{\sigma'}(\beta-\ell')!^{\mu}\\
         &\leq B\sup_{\beta,\gamma\in\N_0} \tilde{C}^{-\beta-\gamma}\gamma!^{-(\sigma+\mu)}\beta!^{-\mu}\sum_{\ell=0}^\gamma \sum_{\ell'=0}^\beta\binom{\gamma}{\ell} \binom{\beta}{\ell'}C_{0,a}C_{1,a}^{\beta+\gamma}\gamma!^{(\sigma+\mu)}C_j C^{\beta+\gamma}\beta!^{\mu}\\
         &\leq C_jBC_{0,a} C_{1,a}\sup_{\beta,\gamma\in\N_0}{\left(\frac{2C_{1,a}C}{\tilde{C}}\right)}^{\beta+\gamma}\\
         &= C_jBC_{0,a} C_{1,a}.
    \end{align*}
    
     Therefore we conclude that
     \[\lim_{j\to\infty}\langle e^{i\xi A(t)}\widehat{u}(t,\xi),\varphi_j(t,\xi)\rangle=0,\]
     which gives us that $e^{i\xi A(t)}\widehat{u}(t,\xi)\in\mathcal{S}'_{\sigma',\mu}(\T^1\times\R)$. Since $\sigma'\geq 1$ was arbitrary, we conclude that $e^{i\xi A(t)}\widehat{u}(t,\xi)\in \SSmu$. Therefore $\Psi_au\in\SSmu$. Clearly $\Psi_a^{-1}=\Psi_{-a}$, so $\Psi_a$ is an automorphism for $\SSmu$. Next we will prove that $\Psi_a$ maps $\Smu$ bijectively into $\Smu$. Indeed, let $u\in\Smu$. Then since $\widehat{u}\in\Smu$, there exist $C_0,C_1,L>0$ such that 
     \begin{equation*}
         e^{2L|\xi|^{\frac{1}{\mu}}}|\partial_\xi^{\beta}\partial_t^{\gamma}\widehat{u}(t,\xi)|\leq C_0 C_1^{\beta+\gamma}\beta!^{\mu}\gamma!^{\sigma},
     \end{equation*}
     for every $(t,\xi)\in\T^1\times\R$, $\alpha,\beta,\gamma\in\N_0$, and some $\sigma\geq 1$. Let $\sigma'=\max\{\sigmac,\sigma\}$. Then
     \begin{align*}
         e^{L|\xi|^{\frac{1}{\mu}}}|\partial_\xi^{\beta}\partial_t^\gamma[e^{i\xi A(t)}\widehat{u}(t,\xi)]|&\leq  \sum_{\ell=0}^\gamma\sum_{\ell'=0}^{\beta}\binom{\gamma}{\ell}\binom{\beta}{\ell'}e^{-L|\xi|^{\frac{1}{\mu}}}|\partial_\xi^{\ell'}\partial_t^\ell e^{i\xi A(t)}|e^{2L|\xi|^{\frac{1}{\mu}}}|\partial_\xi^{\beta-\ell'}\partial_t^{\gamma-\ell}\widehat{u}(t,\xi)|\\
         &\leq \sum_{\ell=0}^\gamma\sum_{\ell'=0}^{\beta}\binom{\gamma}{\ell}\binom{\beta}{\ell'} C_{0,a}C_{1,a}^{\ell'+\ell}\ell!^{\sigmac+\mu}C_0 C_1^{\beta-\ell'+\gamma-\ell}(\beta-\ell')!^{\mu}(\gamma-\ell)!^{\sigma+\mu}\\
         &\leq C_{0,a}C_0\sum_{\ell=0}^\gamma\sum_{\ell'=0}^{\beta}\binom{\gamma}{\ell}\binom{\beta}{\ell'} C_{1,a}^{\beta+\gamma}C_1^{\beta+\gamma}\beta!^{\mu}\gamma!^{\sigma'+\mu}\\
         &\leq C_{0,a}C_02^{\beta+\gamma}C_{1,a}^{\beta+\gamma}C_1^{\beta+\gamma}\beta!^{\mu}\gamma!^{\sigma'+\mu}\\
        &=\tilde{C}_0'\tilde{C}_1^{\beta+\gamma}\beta!^{\mu} \gamma!^{\sigma'+\mu},
         \end{align*}
        for every $(t,\xi)\in\T^1\times\R$, $\alpha,\beta,\gamma\in\N_0$, where $\tilde{C}_0=C_{0,a}C_0$ and $\tilde{C}_1=2C_{1,a}C_1$ and we assumed without loss of generality that $C_1\geq 1$. Therefore $e^{i\xi A(t)}\widehat{u}(t,\xi)\in \mathcal{S}_{\sigma'+\mu,\mu}(\g)\subset\Smu$ and consequently $\Psi_au\in\Smu$. Also, since $\Psi_a^{-1}=\Psi_{-a}$, we see that $\Psi_a$ acts bijectively over $\Smu$ and therefore is an automorphism. 

          Finally, notice that 
     \begin{align*}
         & \widehat{L_{0}\circ\Psi_a u}(t,\xi) \\
         & =e^{i\xi A(t)}i\xi(a(t)-a_0)\widehat{u}(t,\xi)+e^{i\xi A(t)}\partial_t\widehat{u}(t,\xi)+e^{i\xi A(t)}a_0(i\xi)\widehat{u}(t,\xi)+e^{i\xi A(t)}q(t)\widehat{u}(t,\xi) \\
         & =e^{i\xi A(t)}(\partial_tu(t,\xi)+a(t)(i\xi)u(t,\xi)+q(t)u(t,\xi)) \ =\ \widehat{\Psi_a\circ L u}(t,\xi),
     \end{align*}
     for every $(t,\xi)\in\T^1\times \R$. Therefore we conclude that $L_0\circ\Psi_a=\Psi_a\circ L$, as claimed.
\end{proof}

\begin{prop}\label{prop_Psiq}
    The operator $\Psi_q:\sscoef\to\sscoef$ given by
    \[(\Psi_q u)(t,x) = e^{Q(t)}u(t,x)\]
    is well-defined and is an automorphism of $\sscoef$ and $\scoef$. Moreover,
    \[P_{00}\circ\Psi_q = \Psi_q\circ P_0,\]
    where $P_0$ is given by \eqref{P0} and
    \begin{equation}\label{P00}
    P_{00} = \partial_t+a_0\partial_x+q_0.
    \end{equation}
\end{prop}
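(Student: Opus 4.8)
The plan is to reduce everything to the module property established in Proposition~\ref{prop_closed_under_product}, namely that $\scoef$ is a module over $\mathscr{G}^{\sigmac}(\T^1)$, observing that $\Psi_q$ is just multiplication by an \emph{invertible} element of that ring. First I would check that $Q$ is a well-defined function on $\T^1$: since $Q'(t)=q(t)-q_0$ has vanishing average over $\T^1$, we get $Q(t+2\pi)=Q(t)$, so $Q$ descends to the torus, and since $q\in\mathscr{G}^{\sigmac}(\T^1)$ so does its primitive $Q$. Next I would show $e^{\pm Q}\in\mathscr{G}^{\sigmac}(\T^1)$; this follows from Fa\`a di Bruno's formula (Lemma~\ref{faa}) applied to $t\mapsto e^{\pm Q(t)}$ together with the algebraic estimates in Lemmas~\ref{lemma_algebraic_1}--\ref{lemma_algebraic_2}, exactly as in the simpler, $\xi$-free version of the computation in the proof of Lemma~\ref{lemma_ca} (alternatively one may invoke the standard closure of Gevrey classes under composition with entire functions).

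With $e^{\pm Q}\in\mathscr{G}^{\sigmac}(\T^1)$ in hand, Proposition~\ref{prop_closed_under_product} gives $e^{Q}\varphi\in\scoef$ for every $\varphi\in\scoef$, so $\Psi_q$ maps $\scoef$ into itself; inspecting the proof of that proposition, it sends the Banach building block $\mathcal{S}_{\sigmac,\mu,C}(\g)$ into $\mathcal{S}_{\sigmac,\mu,C'}(\g)$ with $C'$ and the norm bound depending only on $C$ and the Gevrey constants of $e^{Q}$, hence $\Psi_q$ is continuous on the inductive limit $\scoef$. Since $\Psi_q\Psi_{-q}=\Psi_{-q}\Psi_q=\mathrm{Id}$ and $\Psi_{-q}$ is continuous by the same argument, $\Psi_q$ is an automorphism of $\scoef$. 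To handle $\sscoef$ I would read the pointwise formula as transposition, $\langle\Psi_q u,\varphi\rangle:=\langle u,e^{Q}\varphi\rangle$: this is meaningful because $e^{Q}\varphi\in\scoef$, and if $\varphi_j\to 0$ in $\scoef$ then $e^{Q}\varphi_j\to 0$ in $\scoef$ by the continuity just established, so $\langle\Psi_q u,\varphi_j\rangle\to 0$ and thus $\Psi_q u\in\sscoef$; again $\Psi_{-q}$ provides the inverse, making $\Psi_q$ an automorphism of $\sscoef$.

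Finally, for the intertwining identity I would compute directly with the Leibniz rule: for $u\in\sscoef$ (or $u$ smooth, the distributional case being identical since $Q\in C^\infty(\T^1)$),
\[
P_{00}(\Psi_q u)=\partial_t(e^{Q}u)+a_0\partial_x(e^{Q}u)+q_0 e^{Q}u=e^{Q}\bigl(\partial_t u+(Q'+q_0)u+a_0\partial_x u\bigr),
\]
and since $Q'(t)+q_0=q(t)$ this equals $e^{Q}\bigl(\partial_t u+a_0\partial_x u+q(t)u\bigr)=\Psi_q(P_0 u)$, i.e. $P_{00}\circ\Psi_q=\Psi_q\circ P_0$. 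The only genuinely non-formal point in the whole argument is the verification that $e^{\pm Q}\in\mathscr{G}^{\sigmac}(\T^1)$ and, correspondingly, that multiplication by it is continuous on the DFS space $\scoef$; once that is granted, everything else is bookkeeping with the module structure and transposition.
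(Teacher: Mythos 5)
Your proposal is correct and follows essentially the same route as the paper: $e^{\pm Q}\in\mathscr{G}^{\sigmac}(\T^1)$ by closure of Gevrey classes under composition with the (analytic) exponential, the module property of Proposition~\ref{prop_closed_under_product} for the action on $\scoef$, duality/transposition together with the (sequential) continuity of multiplication by $e^{Q}$ for the action on $\sscoef$, the inverse $\Psi_{-q}$, and the same Leibniz computation with $Q'+q_0=q$ for the intertwining. The only cosmetic difference is that you factor the distributional estimate through the continuity of multiplication on $\scoef$, whereas the paper redoes that Leibniz estimate directly on the pairing (and then records the continuity as a remark), so the content is the same.
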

\begin{proof}
    First, let us prove that $\Psi_q$ is well-defined. Notice that given $u\in\sscoef$, for every  $\tilde C>0$, there exists $B=B(\tilde C)>0$ such that 
    \begin{equation*}
        |\langle u,\varphi\rangle| \leq B\sup_{\alpha,\beta\in\N_0^n}\sup_{\gamma\in\N_0^m} \tilde C^{-|\alpha+\beta|-|\gamma|}\gamma!^{-\sigmac}(\alpha!\beta!)^{-\mu}\sup_{(t,x)\in\G}|x^\alpha\partial_x^\beta\partial_t^\gamma\varphi(t,x)|,
    \end{equation*} 
    for all $\varphi\in\scoef$. Also note that for all $\varphi_j\to0$ in $\scoef$, there exist a sequence $C_j\to0$ and a constant $C\geq 1$ such that
    \begin{equation*}
       \sup_{\alpha,\beta,\gamma\in\N_0}\sup_{(t,x)\in\T^1\times\R} {C}^{-\alpha-\beta-\gamma|}\gamma!^{-\sigmac}(\alpha!\beta!)^{-\mu}\left|x^\alpha\partial_x^\beta\partial_t^\gamma \varphi_j(t,x)\right|\leq C_j,
    \end{equation*}
    for all $j\in\N$, which implies that
        \begin{equation*}
       \sup_{(t,x)\in\T^1\times\R} \left|x^\alpha\partial_x^\beta\partial_t^\gamma \varphi_j(t,x) \right| \leq C_j {C}^{\alpha+\beta+\gamma}\gamma!^{\sigmac}(\alpha!\beta!)^{\mu},
    \end{equation*}
    for every $\alpha,\beta,\gamma\in\N_0$. 
    Third, note that $Q(t)\in\mathscr{G}^{\sigmac}(\T^1)$ and the Gevrey classes are also closed under composition. Since the exponential function is analytic, we have that $e^{Q(t)}\in\mathscr{G}^{\sigmac}(\T^1)$. Thus, there exist $C_{0,q},C_{1,q}\geq 1$ such that
    \[\sup_{t\in\T^1}|\partial_t^\gamma e^{Q(t)}|\leq C_{0,q}C_{1,q}^\gamma\gamma!^{\sigmac},\]
    for all $\gamma\in\N_0$.
    
    Therefore, given $\varphi_j\in\scoef$, $\varphi_j\to0$ in $\scoef$, we have that
    \begin{align*}
        |\langle e^{Q(t)}{u}(t,x),\varphi_j(t,x)\rangle|&=|\langle {u}(t,x),e^{Q(t)}\varphi_j(t,x)
        \rangle|\\
        & \leq B\sup_{\alpha,\beta\in\N_0}\sup_{\gamma\in\N_0} \tilde C^{-\alpha-\beta-\gamma}\gamma!^{-\sigmac}(\alpha!\beta!)^{-\mu}\!\!\!\sup_{(t,x)\in\T^1\times\R}|x^{\alpha}\partial_x^{\beta}\partial_t^{\gamma}[e^{Q(t)}\varphi_j(t,x)]|\\
        &\leq B\sup_{\alpha,\beta\in\N_0}\sup_{\gamma\in\N_0} \tilde C^{-\alpha-\beta-\gamma}\gamma!^{-\sigmac}(\alpha!\beta!)^{-\mu}\\
        &\phantom{\leq} \times\sum_{\ell=0}^\gamma \binom{\gamma}{\ell}\sup_{t\in\T^1}|\partial_t^{\ell }e^{Q(t)}|\sup_{(t,x)\in\T^1\times\R}|x^{\alpha}\partial_x^{\beta}\partial_t^{\gamma-\ell}\varphi_j(t,x)|\\
        &\leq B\sup_{\alpha,\beta\in\N_0}\sup_{\gamma\in\N_0} \tilde C^{-\alpha-\beta-\gamma}\gamma!^{-\sigmac}(\alpha!\beta!)^{-\mu}\\
         &\phantom{\leq} \times\sum_{\ell=0}^\gamma \binom{\gamma}{\ell} C_{0,q}C_{1,q}^\ell \ell!^{\sigmac}C_j {C}^{\alpha+\beta+(\gamma-\ell)}(\gamma-\ell)!^{\sigmac}(\alpha!\beta!)^{\mu}\\
         &\leq C_jC_{0,q}B\sup_{\alpha,\beta\in\N_0}\sup_{\gamma\in\N_0}\left(\frac{2C_{0,q}C}{\tilde C}\right)^{\alpha+\beta+\gamma}.
    \end{align*}
    
    Taking $\tilde C=2C_{0,q}C$, since $C_j\to 0$, we have that
    \[\lim_{j\to\infty}\langle e^{Q(t)}u(t,x),\varphi_j(t,x)\rangle=0,\]
    which implies $e^{Q(t)}u(t,x)\in\sscoef$. Clearly $\Psi_q^{-1}=\Psi_{-q}$, therefore $\Psi_q$ is an automorphism for $\sscoef$. 
    
    The fact that $\Psi_q$ maps $\scoef$ to $\scoef$ follows from Proposition \ref{prop_closed_under_product}. Again since $\Psi_q^{-1}=\Psi_{-q}$, the it is clear that $\Psi_qu^{-1}\in\scoef$ and so $\Psi_q$ is an automorphism. Finally, we have that 
     \begin{align*}
         P_{00}\circ\Psi_q u(t,x)&=e^{Q(t)}(q(t)-q_0)u(t,x)+e^{Q(t)}\partial_tu(t,x)+e^{Q(t)}a_0\partial_xu(t,x)+e^{Q(t)}q_0u(t,x)\\
         &=e^{Q(t)}(\partial_tu(t,x)+a_0\partial_xu(t,x)+q(t)u(t,x))\\
         &=\Psi_q\circ P_0 u(t,x),
     \end{align*}
     as claimed.
\end{proof}

\begin{obs}
    Notice that the proof of the previous result implies that the multiplication by a function belonging to $\mathscr{G}^{\sigma}(\T^1)$ is a continuous map from $\mathcal{S}_{\sigma,\mu}(\g)$ to $\mathcal{S}_{\sigma,\mu}(\g)$, for every $\sigma\geq 1$. 
\end{obs}
\begin{corollary}
    The operator $\Psi_q$ in Proposition \ref{prop_Psiq} is an automorphism of both $\SSmu$ and $\Smu$.
\end{corollary}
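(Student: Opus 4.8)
The plan is to obtain this Corollary directly from Proposition~\ref{prop_Psiq} and Proposition~\ref{prop_closed_under_product}, the only real work being the bookkeeping of the inductive/projective limit structures of $\mathscr{F}_\mu(\g)$ and $\mathscr{F}'_\mu(\g)$. First I would recall the two structural facts already established in this section: as a set $\mathscr{F}_\mu(\g)=\bigcup_{\sigma\geq1}\mathcal{S}_{\sigma,\mu}(\g)$, carrying the inductive limit topology of the compact chain $\big(\mathcal{S}_{\sigma,\mu,\sigma}(\g)\big)_{\sigma\geq1}$, while $\mathscr{F}'_\mu(\g)=\bigcap_{\sigma\geq1}\mathcal{S}'_{\sigma,\mu}(\g)$ with the projective limit topology. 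I would also record, from the proof of Proposition~\ref{prop_Psiq}, that $e^{Q}\in\mathscr{G}^{\sigmac}(\T^1)\subset\mathscr{G}^{\sigma}(\T^1)$ for every $\sigma\geq\sigmac$, with Gevrey constants $C_{0,q},C_{1,q}\geq1$ that do not depend on $\sigma$ (and likewise for $-q$, whose associated primitive is $-Q$, so that $\Psi_q^{-1}=\Psi_{-q}$).

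For the action on $\mathscr{F}_\mu(\g)$: given $f\in\mathscr{F}_\mu(\g)$, I would pick $\sigma\geq\max\{1,\sigmac\}$ with $f\in\mathcal{S}_{\sigma,\mu}(\g)$; since $e^{Q}\in\mathscr{G}^{\sigma}(\T^1)$, Proposition~\ref{prop_closed_under_product} gives $\Psi_q f=e^{Q}f\in\mathcal{S}_{\sigma,\mu}(\g)\subset\mathscr{F}_\mu(\g)$. Because $\mathscr{F}_\mu(\g)$ is an inductive limit, continuity of $\Psi_q$ reduces to continuity of its restriction to each step $\mathcal{S}_{\sigma,\mu,\sigma}(\g)$; here I would use the explicit norm estimate from the proof of Proposition~\ref{prop_closed_under_product} (equivalently, the remark following Proposition~\ref{prop_Psiq}) to see that multiplication by $e^{Q}$ sends $\mathcal{S}_{\sigma,\mu,\sigma}(\g)$, after the continuous inclusion into $\mathcal{S}_{\max\{\sigma,\sigmac\},\mu,\sigma}(\g)$, continuously into $\mathcal{S}_{\max\{\sigma,\sigmac\},\mu,2C_{1,q}\sigma}(\g)$, which embeds continuously into $\mathcal{S}_{\sigma'',\mu,\sigma''}(\g)\subset\mathscr{F}_\mu(\g)$ for $\sigma''=\max\{\sigma,\sigmac,2C_{1,q}\sigma\}$. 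Since the same argument applies to $-q$ and $\Psi_q\circ\Psi_{-q}=\Psi_{-q}\circ\Psi_q=\mathrm{id}$, this makes $\Psi_q$ a topological automorphism of $\mathscr{F}_\mu(\g)$.

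For the action on $\mathscr{F}'_\mu(\g)$: I would define $\langle\Psi_q u,\varphi\rangle=\langle u,e^{Q}\varphi\rangle$ for $u\in\mathscr{F}'_\mu(\g)$ and $\varphi\in\mathscr{F}_\mu(\g)$. This is well defined because $e^{Q}\varphi\in\mathscr{F}_\mu(\g)$ by the previous step, and $\Psi_q u$ is (sequentially) continuous because $\varphi_j\to0$ in $\mathscr{F}_\mu(\g)$ forces $e^{Q}\varphi_j\to0$ there, by the continuity of $\Psi_q$ on $\mathscr{F}_\mu(\g)$ just proved, whence $\langle\Psi_q u,\varphi_j\rangle=\langle u,e^{Q}\varphi_j\rangle\to0$. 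Thus $\Psi_q$ on $\mathscr{F}'_\mu(\g)$ is exactly the transpose of multiplication by $e^{Q}$ on $\mathscr{F}_\mu(\g)$, hence continuous on the FS space $\mathscr{F}'_\mu(\g)$, and $\Psi_{-q}$ is its inverse; therefore $\Psi_q$ is an automorphism of $\mathscr{F}'_\mu(\g)$.

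The only genuinely delicate point, and the one I would be most careful about, is the growth of the ``radius'' $C$ under multiplication inside the inductive limit: one must verify that $e^{Q}f$ lands in a \emph{fixed} later step $\mathcal{S}_{\sigma'',\mu,\sigma''}(\g)$ of the defining chain, rather than needing $C$ to grow with the seminorm index. This is precisely what Proposition~\ref{prop_closed_under_product} guarantees, since there the constant changes only from $C$ to $2C_{1,q}C$ with $C_{1,q}$ fixed; everything else is a routine transcription of Propositions~\ref{prop_closed_under_product} and~\ref{prop_Psiq} together with the standard criterion for continuity of linear maps out of an inductive limit.
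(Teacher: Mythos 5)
Your argument is correct and follows essentially the same route as the paper: reduce to Gevrey indices $\sigma\geq\sigmac$ (where $e^{Q}\in\mathscr{G}^{\sigma}(\T^1)$ with $\sigma$-independent constants), apply Propositions \ref{prop_closed_under_product}/\ref{prop_Psiq} level by level, and conclude via $\Psi_q^{-1}=\Psi_{-q}$. The only variation is cosmetic: you handle $\mathscr{F}'_\mu(\T^1\times\R)$ by transposing multiplication by $e^{Q}$ on the inductive limit and checking continuity on each step $\mathcal{S}_{\sigma,\mu,\sigma}(\T^1\times\R)$ explicitly, whereas the paper verifies membership in each $\mathcal{S}'_{\sigma,\mu}(\T^1\times\R)$, $\sigma\geq\sigmac$, directly and uses the intersection description; your continuity bookkeeping is in fact slightly more detailed than the paper's proof.
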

\begin{proof}
    Let $u\in \SSmu= \bigcap_{\sigma\geq 1}\mathcal{S}_{\sigma,\mu}'(\g)$. By the inclusions $\mathcal{S}_{\sigma_1,\mu}'(\g)\subset\mathcal{S}_{\sigma_2,\mu}'(\g)$ for $\sigma_1\geq \sigma_2$, it is enough to prove that $u\in\bigcap_{\sigma\geq \sigmac}\mathcal{S}_{\sigma,\mu}'(\g)$. Indeed, since $q\in \mathcal{S}_{\sigma,\mu}(\g)$ for every $\sigma\geq\sigmac$, by Proposition \ref{prop_Psiq} we have that $\Psi_q u\in\mathcal{S}_{\sigmac,\mu}'(\g)$ for every $\sigma\geq\sigmac$ and therefore $\Psi_qu\in\SSmu$. Since $\Psi_{q}^{-1}=\Psi_{-q}$, we conclude that $\Psi_q$ is an automorphism of $\SSmu$.
    
    Now consider $u\in\Smu$. Then $u\in\mathcal{S}_{\sigma}(\g)$, for some $\sigma\geq1$. If $\sigma\geq \sigmac$, then $q\in \mathcal{S}_{\sigma}(\g)$ as well and therefore by Proposition \ref{prop_Psiq} we have that $\Psi_qu\in\mathcal{S}_{\sigma}(\g)\subset\Smu$. On the other hand, if $\sigma<\sigmac$, then $u\in \scoef$ and $\Psi_qu\in \scoef\subset\Smu$, proving that $\Psi_q(\Smu)\subset\Smu$. Once again we conclude that $\Psi_q$ is an automorphism of $\Smu$ by the fact that $\Psi_{q}^{-1}=\Psi_{-q}$.
\end{proof}

\begin{prop}\label{gh_P_P00}
    The operator $\Psi=\Psi_a\circ\Psi_q$ is an automorphism of $\Smu$ and $\SSmu$ and satisfies $P_{00}\circ\Psi=\Psi\circ P$. Moreover, $P$ is $\mathscr{F}_{\mu}$-globally hypoelliptic if and only if $P_{00}$ is $\mathscr{F}_{\mu}$-globally hypoelliptic.
\end{prop}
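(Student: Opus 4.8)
The plan is to obtain the statement by combining the two preceding propositions — the one establishing the properties of $\Psi_a$ and Proposition~\ref{prop_Psiq} on $\Psi_q$ — with a short abstract conjugation argument, so that essentially no new analytic work is needed. \emph{First}, $\Psi=\Psi_a\circ\Psi_q$ is an automorphism of both $\Smu$ and $\SSmu$: $\Psi_a$ is such an automorphism by the proposition immediately above, $\Psi_q$ is by the corollary following Proposition~\ref{prop_Psiq}, and a composition of automorphisms is again one, with inverse $\Psi^{-1}=\Psi_{-q}\circ\Psi_{-a}$.

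\emph{Second}, I would verify the intertwining identity $P_{00}\circ\Psi=\Psi\circ P$. The one point deserving attention is that $\Psi_a$ and $\Psi_q$ commute: $\Psi_q$ is multiplication by $e^{Q(t)}$, which depends on $t$ only, so the partial Fourier transform in $x$ satisfies $\mathcal{F}_{\R}(e^{Q(t)}u)(t,\xi)=e^{Q(t)}\widehat u(t,\xi)$, and hence $\Psi_a\circ\Psi_q$ and $\Psi_q\circ\Psi_a$ both send $u$ to the element whose partial Fourier transform in $x$ equals $e^{Q(t)}e^{i\xi A(t)}\widehat u(t,\xi)$. Granting this, and using the identities $P_0\circ\Psi_a=\Psi_a\circ P$ and $P_{00}\circ\Psi_q=\Psi_q\circ P_0$ established in the preceding propositions, I obtain
\[P_{00}\circ\Psi=P_{00}\circ\Psi_q\circ\Psi_a=\Psi_q\circ P_0\circ\Psi_a=\Psi_q\circ\Psi_a\circ P=\Psi\circ P,\]
first as operators on $\SSmu$, and then by restriction on $\Smu$.

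\emph{Finally}, I would deduce the equivalence of $\mathscr{F}_\mu$-global hypoellipticity by rewriting the identity as $P=\Psi^{-1}\circ P_{00}\circ\Psi$. Assume $P_{00}$ is $\mathscr{F}_\mu$-globally hypoelliptic and let $u\in\SSmu$ with $Pu\in\Smu$. Put $v\defeq\Psi u\in\SSmu$; then $P_{00}v=\Psi(Pu)\in\Smu$ because $\Psi$ preserves $\Smu$, so $v\in\Smu$ by hypoellipticity of $P_{00}$, and therefore $u=\Psi^{-1}v\in\Smu$ since $\Psi^{-1}$ also preserves $\Smu$. The converse follows by the symmetric argument, interchanging $P$ with $P_{00}$ and $\Psi$ with $\Psi^{-1}$. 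I do not expect a genuine obstacle here: the only delicate point is bookkeeping — making sure each intertwining identity and each automorphism property is invoked on the appropriate space ($\SSmu$, then restricted to $\Smu$) — and all of this is already available from the previous results, so the argument is purely formal.
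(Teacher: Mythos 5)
Your proposal is correct and follows essentially the same route as the paper: combine the automorphism and intertwining properties of $\Psi_a$ and $\Psi_q$ from the two preceding results and conclude by the formal conjugation argument $P=\Psi^{-1}\circ P_{00}\circ\Psi$, exactly as the paper does. Your explicit verification that $\Psi_a$ and $\Psi_q$ commute (since $e^{Q(t)}$ depends only on $t$ and hence passes through the partial Fourier transform in $x$) is a welcome extra precision, since with $\Psi=\Psi_a\circ\Psi_q$ the chain $P_{00}\circ\Psi_q=\Psi_q\circ P_0$, $P_0\circ\Psi_a=\Psi_a\circ P$ only yields $P_{00}\circ\Psi=\Psi\circ P$ after this commutation, a point the paper leaves implicit.
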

\begin{proof}
    The claims that $\Psi$ is an automorphism of $\Smu$ and $\SSmu$ and $P_{00}\circ \Psi=\Psi\circ P$ are direct consequences of the last two Propositions.
    
    Now suppose that $P_{00}$ is $\mathscr{F}_\mu$-globally hypoelliptic and let $u\in\SSmu$ be such that $Pu=f\in\Smu$. Then $\Psi\circ Pu=P_{00}\circ \Psi u=\Psi f\in\Smu$, since $\Psi$ is an automorphism of $\Smu$. Since $P_{00}$ is $\mathscr{F}_{\mu}$-globally hypoelliptic, this implies that $\Psi u\in\Smu$. Consequently, since $\Psi$ is an automorphism of $\Smu$, we have that $u\in\Smu$, which proves that $P$ is $\mathscr{F}_{\mu}$-globally hypoelliptic. The converse follows similarly, by considering that $P\circ \Psi^{-1}=\Psi^{-1}\circ P_{00}$.
\end{proof}

Finally, Proposition \ref{gh_P_P00} and Corollary \ref{coro-tube-cte} gives us the following:

\begin{theorem}
    The operator $P$ is $\mathscr{F}_{\mu}$-globally hypoelliptic if and only if one of the following conditions hold:
    \begin{enumerate}
        \item[(i)] $\Real(q_0)\neq 0$;
        \item[(ii)] $a_0=\Real(q_0)=0$ and $\operatorname{Im}(q_0)\notin\Z$.
    \end{enumerate}
\end{theorem}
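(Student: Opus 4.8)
The plan is to reduce the $\mathscr{F}_\mu$-global hypoellipticity of $P=\partial_t+a(t)\partial_x+q(t)$ to that of the constant-coefficient model operator $P_{00}=\partial_t+a_0\partial_x+q_0$ and then read off the answer from the characterization already available for such operators. Concretely, I would carry this out in two steps.

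First, I would invoke Proposition \ref{gh_P_P00}: the conjugation $\Psi=\Psi_a\circ\Psi_q$ is an automorphism of both $\Smu$ and $\SSmu$ and satisfies $P_{00}\circ\Psi=\Psi\circ P$. Hence, if $u\in\SSmu$ and $Pu=f\in\Smu$, then $P_{00}(\Psi u)=\Psi f\in\Smu$, and $\mathscr{F}_\mu$-global hypoellipticity of $P_{00}$ forces $\Psi u\in\Smu$, whence $u=\Psi^{-1}(\Psi u)\in\Smu$; the converse is symmetric using $P\circ\Psi^{-1}=\Psi^{-1}\circ P_{00}$. Therefore $P$ is $\mathscr{F}_\mu$-globally hypoelliptic if and only if $P_{00}$ is.

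Second, I would apply Corollary \ref{coro-tube-cte} to $P_{00}$ in the case $b_0=0$, which is exactly the case at hand since $b\equiv 0$ and so $a_0+ib_0=a_0\in\R$. That corollary states that $\partial_t+(a_0+ib_0)\partial_x+q_0$ with $b_0=0$ is $\mathscr{F}_\mu$-globally hypoelliptic precisely when $\Real(q_0)\neq 0$, or when $a_0=\Real(q_0)=0$ and $\Imag(q_0)\notin\Z$. Chaining the two equivalences yields exactly conditions (i) and (ii) of the statement.

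All the substantive work has already been done in the preceding results: that $\Psi_a$ and $\Psi_q$ are well-defined automorphisms intertwining $P$ with $P_{00}$ (resting on the Gelfand--Shilov-type estimate for $e^{i\xi A(t)}$ from Lemma \ref{lemma_ca} and on Proposition \ref{prop_closed_under_product}), together with the computation of the zero set $\mathcal{Z}$ of the symbol of $P_{00}$ underlying Corollary \ref{coro-tube-cte}. Consequently there is no genuine obstacle left at this stage; the point demanding the most care upstream was verifying that conjugation by $e^{i\xi A(t)}$ preserves $\SSmu$ and not merely $\Smu$, since that requires controlling the $\xi$-growth produced by differentiating $e^{i\xi A(t)}$ against the decay of the test functions.
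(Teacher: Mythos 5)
Your proposal is correct and is essentially identical to the paper's own argument: the theorem is obtained by combining Proposition \ref{gh_P_P00} (reduction of the $\mathscr{F}_\mu$-global hypoellipticity of $P$ to that of $P_{00}=\partial_t+a_0\partial_x+q_0$ via the automorphism $\Psi=\Psi_a\circ\Psi_q$) with Corollary \ref{coro-tube-cte} applied in the case $b_0=0$. Nothing further is needed.
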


\subsection{Complex case} Finally, we study the global hypoellipticity for the general case of operators of the form
\[P=\partial_t+(a(t)+ib(t))\partial_x+q(t),\]
where $a,b,q\in\mathscr{G}^{\sigmac}(\T^1)$ are Gevrey functions on $\T^1$ for some fixed $\sigmac\geq 1$, with $a$ and $b$ real-valued, and $b\not\equiv 0$. From the results on the previous subsection, it is enough to consider the case where $a\equiv a_0\in\R$ and $q\equiv q_0\in\mathbb{C}$. Consider also the operator
\[\tilde P = \partial_t + c_0\partial_x+q_0,\]
where
\[c_0 = a_0 + \dfrac{i}{2\pi}\int_{0}^{2\pi}b(t)\,\mathrm{d}t = a_0+ib_0.\]

\begin{theorem}
    For $\mu >1$, the operator $P$ is $\mathscr{F}_{\mu}$-globally hypoelliptic if and only if  $b$ does not change sign and $\dfrac{a_0}{b_0}\Real(q_0)+\Imag(q_0)\not\in\Z$.
\end{theorem}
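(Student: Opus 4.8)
The plan is to pass, via the partial Fourier transform $\mathcal{F}_{\R}$ in $x$, to the family of first-order ODE operators $\partial_t+c(\cdot,\xi)$ on $\T^1$, where $c(t,\xi)=i\xi a_0-\xi b(t)+q_0$, and to read off global hypoellipticity from the fibrewise solvability together with uniform bounds on the fibre Green's functions. By the reduction recorded just above (conjugating by $\Psi_a$ and $\Psi_q$; see Proposition~\ref{prop_Psiq} and the one preceding it), I may assume $a\equiv a_0\in\R$ and $q\equiv q_0\in\mathbb{C}$, with $b\in\mathscr{G}^{\sigma_0}(\T^1)$ real-valued and $b\not\equiv0$; note that $b\not\equiv0$ forces $b_0\neq0$ whenever $b$ does not change sign, so $\tfrac{a_0}{b_0}\Real(q_0)+\Imag(q_0)$ is meaningful in all the cases at issue. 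The $t$-average of $c(\cdot,\xi)$ is $\lambda(\xi)=i\xi a_0-\xi b_0+q_0$, and the fibre at $\xi$ is uniquely solvable unless $\lambda(\xi)\in i\Z$, which (as $b_0\neq0$) happens only at $\xi_0=\Real(q_0)/b_0$ and only when $\tfrac{a_0}{b_0}\Real(q_0)+\Imag(q_0)\in\Z$. I also set $\beta(t)=\int_0^t b(s)\,ds$.

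For the necessity direction I would exhibit a singular solution in each forbidden case. If $\tfrac{a_0}{b_0}\Real(q_0)+\Imag(q_0)\in\Z$, then $\lambda(\xi_0)\in i\Z$, so $w_0(t):=\exp(-\int_0^t c(s,\xi_0)\,ds)$ is smooth, $2\pi$-periodic and nowhere zero; $u(t,x):=w_0(t)e^{i\xi_0x}$ is a bounded smooth function, hence lies in $\mathscr{F}'_\mu(\g)$ by Proposition~\ref{char_dist}, satisfies $Pu\equiv0\in\mathscr{F}_\mu(\g)$, and is not in $\mathscr{F}_\mu(\g)$ because $|u(t,x)|=|w_0(t)|$ does not decay in $x$. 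If instead $b$ changes sign, then $\beta$ is not monotone on $\R$, and --- choosing the Fourier support in $\{\xi>0\}$ when $b_0\le0$ and in $\{\xi<0\}$ when $b_0>0$ --- I can pick a point $t^\ast$ (a local maximum, resp.\ minimum, of $\beta$), a relatively compact neighbourhood $U\ni t^\ast$, and a cut-off $\chi\in\mathscr{G}^{\sigma}(\T^1)$ of order $\sigma>1$ supported in $U$ with $\chi\equiv1$ near $t^\ast$, such that the modulus of $\exp(\int_t^{t^\ast}c(s,\xi)\,ds)$ is bounded on $U$ and is $\le Ce^{-\delta|\xi|}$, $\delta>0$, on $\operatorname{supp}\chi'$. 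Setting $\widehat u(t,\xi)=\chi(t)\exp(\int_t^{t^\ast}c(s,\xi)\,ds)$ on the chosen half-line and $0$ elsewhere, one computes $\widehat{Pu}(t,\xi)=\chi'(t)\exp(\int_t^{t^\ast}c(s,\xi)\,ds)$ there; since $\int_t^{t^\ast}c(s,\xi)\,ds$ is affine in $\xi$ and $b,\chi$ are Gevrey, this gives $\widehat u$ the growth tolerated by Proposition~\ref{proppartialdecayrndistribinv} (so $u\in\mathscr{F}'_\mu(\g)$) and $\widehat{Pu}$ the decay required by Proposition~\ref{propdecaypartialxi} (so $Pu\in\mathscr{F}_\mu(\g)$), while $\widehat u(t^\ast,\xi)\equiv1$ on that half-line forces $u\notin\mathscr{F}_\mu(\g)$ by Proposition~\ref{proppartialdecaymix}.

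For the sufficiency direction, assume $b$ does not change sign (so $b_0\neq0$; say $b\ge0$ and $b_0>0$, the opposite sign being symmetric) and $\tfrac{a_0}{b_0}\Real(q_0)+\Imag(q_0)\notin\Z$, and let $u\in\mathscr{F}'_\mu(\g)$ with $Pu=f\in\mathscr{F}_\mu(\g)$. Applying $\mathcal{F}_\R$, $\widehat u$ solves the fibrewise ODE $\partial_t\widehat u+c(\cdot,\xi)\widehat u=\widehat f$; under the hypothesis no fibre is non-solvable, so each has a unique $2\pi$-periodic solution, given by a Green's function $G(t,s;\xi)$, and $\widehat P$ is injective on $\mathscr{F}'_\mu(\g)$ (its fibre operators having trivial kernel). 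The key observation is that since $b\ge0$ the primitive $\beta$ is nondecreasing, so in the representation of $G$ that integrates along the direction of decay of the fibre homogeneous solution the a priori dangerous factor $|1-e^{-2\pi\lambda(\xi)}|^{-1}$ cancels exactly against the homogeneous-solution factor, giving $\sup_{t,s}|G(t,s;\xi)|\le C$ uniformly in $\xi$; differentiating $G$ in $\xi$ (affine in $\xi$ in the exponent) and in $t$ brings in only polynomial-in-$\xi$ and Gevrey-$\sigma_0$ factors. Feeding the Gelfand--Shilov estimates for $\widehat f$ (Remark~\ref{equiv_tpgelf}, Corollary~\ref{coro_gelf_transf}) into $(t,\xi)\mapsto\int_0^{2\pi}G(t,s;\xi)\widehat f(s,\xi)\,ds$, I obtain the bounds of Proposition~\ref{propdecaypartialxi} for some $\sigma\ge1$ (a controlled Gevrey loss), so this function lies in $\mathcal{S}_{\sigma,\mu}(\g)$; by injectivity it equals $\widehat u$, hence $u=\mathcal{F}_\R^{-1}\widehat u\in\mathscr{F}_\mu(\g)$. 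Equivalently, these fibre estimates show that $P$ is $\mathscr{F}_\mu$-globally hypoelliptic if and only if $\tilde P=\partial_t+c_0\partial_x+q_0$ is, and the conclusion then follows from Corollary~\ref{coro-tube-cte} since $b_0\neq0$.

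The hard part, on both sides, is the single phenomenon underlying the Nirenberg--Tr\`eves condition $(\mathcal{P})$. On the sufficiency side it is the rigorous proof of the uniform bound for $G(t,s;\xi)$ and the control of its $\xi$- and $t$-derivatives: the cancellation only surfaces once one commits to integrating ``in the good direction'', and this is exactly where the hypothesis that $b$ does not change sign is used. On the necessity side it is verifying that the cut-off homogeneous solution really defines an element of $\mathscr{F}'_\mu(\g)$ and that $Pu$ meets the Gelfand--Shilov bounds defining $\mathscr{F}_\mu(\g)$ --- which reduces to splitting $e^{-\delta|\xi|}$, absorbing the powers of $\xi$ produced by $t$-differentiation, and choosing $t^\ast$, $U$ and $\chi$ correctly according to the sign of $b_0$.
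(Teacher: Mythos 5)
Your overall architecture matches the paper's: reduce to $a\equiv a_0$, $q\equiv q_0$ via $\Psi_a,\Psi_q$; kill the arithmetic condition with the bounded null solution $w_0(t)e^{i\xi_0x}$ (identical to the paper's Proposition \ref{prop-L-GH-implies-tildeL}); and prove sufficiency by solving the fibre ODE with the formula adapted to the sign of $\xi$ so that both the resonance factor and the kernel stay bounded, at the cost of a Gevrey loss in $t$ (this is exactly the paper's $\mathcal{H}_1,\mathcal{H}_2$ estimates). Two caveats there: the dangerous factor $|1-e^{-2\pi\lambda(\xi)}|^{-1}$ does not ``cancel exactly'' against anything --- it is bounded below uniformly because the arithmetic condition excludes zeros, continuity/compactness handles the bounded $\xi$-region near $\Real(q_0)/b_0$, and $b_0\neq 0$ pushes $|e^{\mp 2\pi\lambda(\xi)}|$ away from $1$ at infinity once the formula is chosen per sign of $\xi$; the sign condition on $b$ only controls the kernel $\mathcal{H}_2$. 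This is the content you defer to as ``the hard part,'' so it is a sketch-level omission rather than a wrong idea.

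The genuine gap is in your counterexample for the case where $b$ changes sign. As written, you set $\widehat{u}(t,\xi)=\chi(t)\exp\bigl(\int_t^{t^\ast}c(s,\xi)\,ds\bigr)$ on a half-line in $\xi$ and $0$ elsewhere, so $\widehat{Pu}(t,\xi)=\chi'(t)\exp\bigl(\int_t^{t^\ast}c(s,\xi)\,ds\bigr)$ has a jump at $\xi=0$ (its value at $\xi=0^{\pm}$ is $\chi'(t)e^{q_0(t^\ast-t)}\not\equiv 0$ on one side and $0$ on the other). By Corollary \ref{coro_gelf_transf} and Proposition \ref{propdecaypartialxi}, membership of $Pu$ in any $\mathcal{S}_{\sigma,\mu}(\T^1\times\R)$ forces its partial Fourier transform in $x$ to be smooth in $\xi$ with Gevrey-$\mu$ bounds, so your $Pu$ is \emph{not} in $\mathscr{F}_\mu(\T^1\times\R)$ and the construction does not contradict global hypoellipticity. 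The fix is to insert a cut-off $\psi(\xi)$ of Gevrey order $\mu$ supported in the chosen half-line and $\equiv 1$ for large $|\xi|$ --- exactly what the paper does --- and such $\psi$ exists only because $\mu>1$; this is precisely where the hypothesis $\mu>1$ enters the necessity direction, a point your sketch never locates even though sufficiency holds for all $\mu\geq 1/2$. You should also justify the existence of $t^\ast$, $U$, $\chi$ with $\beta(t)\leq\beta(t^\ast)$ on $\operatorname{supp}\chi$ and $\beta(t^\ast)-\beta(t)\geq\delta'>0$ on $\operatorname{supp}\chi'$ (true when $b$ changes sign, but it needs an argument). Note the paper proceeds differently at this point: it builds $f$ directly from the minimizer of $G(t,s)=\int_t^{t+s}b(w)\,dw$, writes the unique periodic fibre solution explicitly, and shows it is bounded yet decays no faster than $|\xi|^{-1/2}$ at $t_0$ via the Laplace-type lower bound of Lemma \ref{lemmaintegralineq}; your cut-off-of-the-homogeneous-solution route can be made to work, but only after the corrections above.
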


\begin{obs}
    Notice that the assumption that $b$ does not change sign implies $b_0\neq 0$. 
\end{obs}

The proof of this result will be divided in the subsequent propositions.

\begin{prop}\label{prop-L-GH-implies-tildeL}
    Let $\mu\geq 1/2$. If $P$ is $\mathscr{F}_{\mu}$-globally hypoelliptic-, then $\tilde P$ is $\mathscr{F}_{\mu}$-globally hypoelliptic.
\end{prop}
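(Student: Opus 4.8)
The plan is to prove the contrapositive: assuming $\tilde P$ is \emph{not} $\mathscr{F}_\mu$-globally hypoelliptic, I will produce $w\in\SSmu\setminus\Smu$ with $Pw=0$, which forces $P$ not to be $\mathscr{F}_\mu$-globally hypoelliptic. Since $\tilde P=\partial_t+c_0\partial_x+q_0$ has constant coefficients and no mixed terms, Proposition \ref{prop_equivalence_cte_coef} together with Proposition \ref{dtaibdx} shows that failure of $\mathscr{F}_\mu$-global hypoellipticity for $\tilde P$ is equivalent to the existence of $(k_0,\xi_0)\in\Z\times\R$ at which the symbol $p(k,\xi)=ik+ic_0\xi+q_0$ of $\tilde P$ vanishes, equivalently $k_0+c_0\xi_0-iq_0=0$. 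The whole problem then reduces to upgrading the constant-coefficient null solution $e^{i(k_0t+\xi_0x)}$ of $\tilde P$ to a null solution of $P$.

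To carry this out, I would set $B(t)=\int_0^t\bigl(b(s)-b_0\bigr)\,\mathrm{d}s$. Then $B$ is a $2\pi$-periodic function in $\mathscr{G}^{\sigmac}(\T^1)$, since $B(2\pi)=\int_0^{2\pi}b(s)\,\mathrm{d}s-2\pi b_0=0$; in particular $B$ is bounded and smooth. Define
\[w(t,x)=e^{\xi_0 B(t)}\,e^{i(k_0t+\xi_0x)}.\]
Because $B$ is bounded and smooth, $w$ is a bounded smooth function on $\g$, hence pairs continuously with every $\varphi\in\mathcal{S}_{\sigma,\mu}(\g)$ (whose elements decay exponentially in $x$ by Remark \ref{equiv_tpgelf}), so $w\in\mathcal{S}'_{\sigma,\mu}(\g)$ for all $\sigma\geq1$, that is, $w\in\SSmu$. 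On the other hand $|w(t,x)|=e^{\xi_0 B(t)}$ is positive and independent of $x$, so $w$ has no decay in $x$ and cannot lie in $\mathcal{S}_{\sigma,\mu}(\g)$ for any $\sigma\geq1$; hence $w\notin\Smu$.

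Finally I would verify $Pw=0$ directly. Writing $g(t)=e^{\xi_0 B(t)+ik_0t}$, one has $\partial_x w=i\xi_0 w$ and $\partial_t w=\bigl(\xi_0 B'(t)+ik_0\bigr)w=\bigl(\xi_0(b(t)-b_0)+ik_0\bigr)w$, so
\[Pw=\bigl[\xi_0(b(t)-b_0)+ik_0+(a_0+ib(t))(i\xi_0)+q_0\bigr]w=\bigl[-b_0\xi_0+ik_0+ia_0\xi_0+q_0\bigr]w=p(k_0,\xi_0)\,w=0,\]
the two $b(t)$-dependent terms cancelling exactly. Thus $Pw=0\in\Smu$ while $w\in\SSmu\setminus\Smu$, so $P$ is not $\mathscr{F}_\mu$-globally hypoelliptic, which is the desired contrapositive.

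The only real subtlety — and the step I would flag as the conceptual obstacle — is that one should \emph{not} try to mimic the real case and conjugate $P$ to $\tilde P$ through the operator $u\mapsto\frac{1}{2\pi}\int_\R e^{-\xi B(t)}\widehat u(t,\xi)e^{i\xi x}\,\mathrm{d}\xi$. That map does intertwine $P$ and $\tilde P$ formally, but $e^{-\xi B(t)}$ grows exponentially in $\xi$ wherever $B$ changes sign, so it is not an automorphism of $\Smu$ or $\SSmu$ — which is precisely why the converse implication requires the hypothesis that $b$ does not change sign. The argument above sidesteps this by applying the offending factor only to the single exponential $e^{i(k_0t+\xi_0x)}$, where it collapses to the harmless bounded function $e^{\xi_0 B(t)}$.
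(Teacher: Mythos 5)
Your proof is correct and follows essentially the same route as the paper: from a zero $(k_0,\xi_0)$ of the symbol of $\tilde P$ (obtained via the constant-coefficient characterization), you build a bounded null solution of $P$ with no decay in $x$, and indeed your $w(t,x)=e^{\xi_0 B(t)}e^{i(k_0t+\xi_0x)}$ is exactly the paper's function $v(t,x)=\exp\{-\int_0^t(i\xi_0c(s)+q_0)\,\mathrm{d}s\}e^{i\xi_0x}$ rewritten using the relation $k_0+c_0\xi_0-iq_0=0$. The membership $w\in\SSmu\setminus\Smu$ and the cancellation in $Pw=0$ are verified just as in the paper, so no further comment is needed.
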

\begin{proof}
    Suppose that $\tilde{P}$ is not $\mathscr{F}_{\mu}$-globally hypoelliptic. Then by Corollary \ref{coro-tube-cte} there exists $(k_0,\xi_0)\in\Z\times\R$ such that $k+c_0\xi_0-iq_0=0$, or equivalently $c_0\xi_0-iq_0\in\Z$. 
    
    Consider the function $v$ given by
    \[v(t,x) = \exp\left\{-\int_0^{t}i\xi_0c(s)+q_0{\,\mathrm{d}s}\right\}e^{i\xi_0x}.\]

     Notice that $v$ is well-defined by the hypothesis on $(k_0,\xi_0)$. Also, $v\in\bigcap_{\sigma\geq 1}\mathcal{S}_{\sigma,\mu}(\g)'=\mathscr{F}_\mu'(\g)$ by Proposition \ref{proppartialdecayrndistribinv}.
    Moreover, $v\not\in \mathcal{S}_{\sigma,\mu}(\g)$ for any $\sigma\geq 1$, because
    \begin{equation*}
        |v(2\pi,x)|=|\exp(-2\pi i(\xi_0c_0-iq_0)|=1,
    \end{equation*}
    for every $x\in\R$. Hence, $v\in\SSmu\setminus\Smu$. Since $v$ satisfies
	$$Pv=0\in\Smu$$
	we conclude that that $P$ is not $\mathscr{F}_{\mu}$-globally hypoelliptic.
\end{proof}

Notice that by the proof above and Proposition \ref{prop_equivalence_cte_coef}  we conclude that if $\tilde{P}$ is not $\mathcal{S}_{\sigma,\mu}$-globally hypoelliptic then $P$ is not $\mathcal{S}_{\sigma,\mu}$-globally hypoelliptic, for any $\sigma\geq 1$.

\begin{prop}
     If $P$ is $\mathscr{F}_{\mu}$-globally hypoelliptic and $\mu>1$, then $b$ does not change sign.
\end{prop}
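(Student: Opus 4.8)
We prove the contrapositive: if $b$ changes sign, then $P$ is not $\mathscr{F}_{\mu}$-globally hypoelliptic, which we witness by producing $u\in\SSmu\setminus\Smu$ with $Pu\in\Smu$. First, by Proposition~\ref{prop-L-GH-implies-tildeL} and Corollary~\ref{coro-tube-cte}, $\mathscr{F}_\mu$-global hypoellipticity of $P$ forces $b_0\neq0$; and since the change of variables $x\mapsto-x$ preserves $\mathscr{F}_\mu$-global hypoellipticity while replacing $(a_0,b)$ by $(-a_0,-b)$, we may assume $b_0>0$. Everything is done on the partial Fourier transform in $x$.

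The geometric input is this. Choose $t^\ast$ with $b(t^\ast)<0$ and $\eta>0$ so small that $b<0$ on $[c,d]:=[t^\ast-\eta,t^\ast+\eta]$; since $b$ is positive somewhere and $\{b>0\}$ is disjoint from $[c,d]$, there is $q\in(d,c+2\pi)$ with $b(q)>0$. Put $\tilde B(t)=\int_0^t b(s)\,\mathrm{d}s$, so $\tilde B\in\mathscr{G}^{\sigmac}(\R)$ and $\tilde B(t+2\pi)=\tilde B(t)+2\pi b_0$. On the closed interval $\overline I:=[q,d+2\pi]$, whose length is $<2\pi$ because $q>d$, the function $\tilde B$ attains its maximum at an interior point $t_0$: the endpoint $q$ is excluded since $\tilde B$ is strictly increasing near it ($b(q)>0$), and $d+2\pi$ is excluded since $\tilde B(d+2\pi)<\tilde B(c+2\pi)$ (because $b<0$ on $(c,d)$ and $\tilde B$ is $2\pi$-quasiperiodic). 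Hence $\{t\in\overline I:\tilde B(t)=\tilde B(t_0)\}$ is a compact subset of the open interval $I:=(q,d+2\pi)$; fixing an open $U$ with this set $\subset U\subset\overline U\subset I$, compactness gives $\varepsilon>0$ such that $\tilde B\le\tilde B(t_0)$ on $I$ and $\tilde B\le\tilde B(t_0)-\varepsilon$ on $I\setminus U$.

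Now fix $\sigma\ge\max\{\sigmac,2\}$ and pick $\theta\in\mathscr{G}^{\sigma}(\T^1)$ with $\operatorname{supp}\theta\subset I$ and $\theta\equiv1$ on $U$, together with a step function $\rho\in\mathscr{G}^{\mu}(\R)$, $\rho\equiv0$ on $(-\infty,1]$ and $\rho\equiv1$ on $[2,\infty)$; such a $\rho$ exists precisely because $\mu>1$. Set $g(t,\xi):=\xi\bigl(\tilde B(t)-\tilde B(t_0)\bigr)-ia_0\xi t-q_0t$, let $\Phi_\xi$ be the $2\pi$-periodic function that equals $\theta(t)e^{g(t,\xi)}$ on $I$ and $0$ outside (well defined and Gevrey in $t$, since $\theta$ is compactly supported in the open interval $I$ of length $<2\pi$), and define $\widehat u(t,\xi):=\rho(\xi)\Phi_\xi(t)$, $u:=\mathcal{F}_{\R}^{-1}\widehat u$. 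Since $\tilde B-\tilde B(t_0)\le0$ on $I$, we have $|\widehat u(t,\xi)|\le C$ uniformly, so $u\in\SSmu$ by Proposition~\ref{proppartialdecayrndistribinv}; on the other hand $|\widehat u(t_0,\xi)|=|\rho(\xi)|\,e^{-\Real(q_0)t_0}\to e^{-\Real(q_0)t_0}>0$, so $\widehat u$ satisfies no bound of the form $|\widehat u(t,\xi)|\le Ce^{-L|\xi|^{1/\mu}}$, whence $u\notin\Smu$ by the characterizations of Section~\ref{sec_Fourier}. Finally, since $\partial_t g=-\bigl(ia_0\xi-\xi b(t)+q_0\bigr)$, in $\widehat{Pu}=\partial_t\widehat u+(ia_0\xi-\xi b(t)+q_0)\widehat u$ the first-order contribution cancels the zero-order one, and
\[
\widehat{Pu}(t,\xi)=\rho(\xi)\,\bigl[\text{$2\pi$-periodic extension of }\theta'(t)e^{g(t,\xi)}\text{ from }I\bigr]+\rho'(\xi)\,\Phi_\xi(t).
\]
On $\operatorname{supp}\theta'\subset I\setminus U$ one has $\Real g(t,\xi)\le-\varepsilon\xi+C$, so $\widehat{Pu}$ decays like $e^{-\varepsilon\xi}$ in $\xi$; moreover $g$ is affine in $\xi$, so the $\xi$-derivatives contribute no factorials, while the $t$-derivatives are estimated through Leibniz and Fa\`a di Bruno against the Gevrey-$\sigmac$ bounds of $\tilde B$ (each $\partial_t$ extracting one factor $\xi$ from $g$). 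Using $\mu\ge1$ to absorb the resulting $\xi^{\alpha+\gamma}e^{-\varepsilon\xi}$ into $(\alpha!)^{\mu}(\gamma!)^{\sigmac+2}$ and $\rho\in\mathscr{G}^{\mu}$ to absorb the $\xi$-cutoff derivatives into $(\beta!)^{\mu}$, one verifies the estimates of Proposition~\ref{propdecaypartialxi} for $\widehat{Pu}$, so $\widehat{Pu}\in\mathcal{S}_{\sigma',\mu}(\T^1\times\R)$ with $\sigma'=\sigmac+3$ and hence $Pu\in\Smu$, contradicting the hypothesis.

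The main obstacle is this last estimate, putting $\widehat{Pu}$ into a single space $\mathcal{S}_{\sigma',\mu}$: it requires tracking how each time derivative extracts a factor $\xi$ from the phase $g$, and it is the only step where the strict inequality $\mu>1$ is genuinely needed, through the non-triviality of the Gevrey-$\mu$ cutoff $\rho$ in the $\xi$-variable. The geometric lemma, by contrast, is elementary once the quasiperiodicity $\tilde B(t+2\pi)=\tilde B(t)+2\pi b_0$ is taken into account.
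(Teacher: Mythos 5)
Your construction is correct, but it takes a genuinely different route from the paper's. The paper first reduces to the case where $\tilde P$ is $\mathscr{F}_\mu$-globally hypoelliptic, prescribes the datum $\widehat f(t,\xi)$ explicitly (a Gevrey cutoff in $t$ and a Gevrey-$\mu$ cutoff in $\xi$ times $(e^{2\pi i(\xi c_0-iq_0)}-1)e^{B\xi}$, with $B=\min_{t,s}\int_t^{t+s}b<0$), and then recovers $\widehat u$ as the unique periodic solution of the ODE in $t$ via Lemma \ref{lemmaodesol}; the singularity of $u$ is detected through the Laplace-type lower bound of Lemma \ref{lemmaintegralineq}, which gives $|\widehat u(t_0,\xi)|\gtrsim \xi^{-1/2}$, incompatible with the $e^{-L|\xi|^{1/\mu}}$ decay required for membership in $\Smu$. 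You instead prescribe $\widehat u$ directly as a quasimode: a Gevrey cutoff in $t$, supported where the primitive $\tilde B$ of $b$ stays below its maximum over a suitable interval (your interior-maximum argument, using $b(q)>0$ at one end and $\int_c^d b<0$ plus quasiperiodicity at the other, is sound and exists precisely because $b$ changes sign), times $e^{g(t,\xi)}$ with $g$ a $t$-primitive of minus the symbol, times a Gevrey-$\mu$ step in $\xi$. Then $\widehat u$ is bounded (so $u\in\SSmu$), does not decay at the maximum point (so $u\notin\Smu$), and $\widehat{Pu}$ is supported where the $t$-cutoff varies, where the phase is $\leq-\varepsilon\xi$, so $Pu\in\Smu$. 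Your route dispenses with the reduction to $\tilde P$, with the ODE solution formula, and with Lemma \ref{lemmaintegralineq}, at the price of redoing Gevrey bookkeeping for $\widehat{Pu}$ that is entirely analogous to estimates already in the paper; both arguments need $\mu>1$ chiefly through the existence of the Gevrey-$\mu$ cutoff in the $\xi$-variable.

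Two slips, neither fatal. First, your opening claim that $\mathscr{F}_\mu$-global hypoellipticity of $P$ forces $b_0\neq0$ does not follow from Proposition \ref{prop-L-GH-implies-tildeL} and Corollary \ref{coro-tube-cte}: the corollary allows $b_0=0$ provided, e.g., $\Real(q_0)\neq0$. Fortunately your construction never uses $b_0>0$ (nor $b_0\neq 0$): the interior-maximum argument and all subsequent estimates are independent of $b_0$, so you should simply delete that reduction. Second, $\widehat{Pu}$ has no $\rho'(\xi)\Phi_\xi(t)$ term: after the partial Fourier transform in $x$, $\partial_x$ becomes multiplication by $i\xi$ and $\partial_t$ does not act on $\rho$, so $\widehat{Pu}(t,\xi)$ equals $\rho(\xi)$ times the periodization of $\theta'(t)e^{g(t,\xi)}$ only; since that is exactly the term you estimate, the conclusion stands.
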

\begin{proof}
    Suppose that $b$ changes sign. If $\tilde{P}$ is not $\mathscr{F}_{\mu}$-globally hypoelliptic, then $P$ is  not $\mathscr{F}_{\mu}$-globally hypoelliptic by Proposition \ref{prop-L-GH-implies-tildeL}, hence we may assume that $\tilde{P}$ is $\mathscr{F}_{\mu}$-globally hypoelliptic.
    
    First assume that $b_0\geq 0$ and let
    \begin{equation*}
    G(t,s) \defeq\int_t^{t+s}b(w)\,\mathrm{d}w,
    \end{equation*}
    \begin{equation*}
        B\defeq \min_{s,t\in[0,2\pi]}G(t,s)=\int_{t_0}^{t_0+s_0}b(w)\,\mathrm{d}w.
    \end{equation*}
    
    Notice that $B<0$ since $b$ is continuous and changes sign. We may assume, without loss of generality, that $t_0,s_0,t_0+s_0\in(0,2\pi)$. Consider $\sigma_1>1$ and choose $\varphi\in \mathscr{G}^{\sigma_1}(\T^1)$ such that $\supp(\varphi)\subset[t_0+s_0-\delta,t_0+s_0+\delta]$, where $\delta>0$ is such that $(t_0+s_0-\delta ,t_0+s_0+\delta)\subset(0,2\pi)$, and $0\leq \varphi\leq 1$ with $\varphi(t)\equiv1$ in $[t_0+s_0-\delta/2,t_0+s_0+\delta/2]$. 
    Also, since $\mu>1$, we can take $\psi\in \mathscr{G}^\mu_c(\R)$ such that $\supp(\psi)\subset[0,\infty)$ and $\psi(\xi)\equiv1 $ on $[1,\infty)$. For each $\xi\in\R$, define $\widehat{f}(\cdot,\xi)$ to be the $2\pi$-periodic smooth extension of
    \[t\mapsto (e^{2\pi i(\xi c_0-iq_0)}-1)e^{B\xi}\varphi(t)e^{-i\xi a_0(t-t_0)}e^{-q_0(t-t_0)}\psi(\xi).\]
    
    Notice that
\begin{align}
|\partial_\xi^\beta\partial_t^\gamma\widehat{f}(t,\xi)|\leq & \sum_{\ell=0}^\gamma\sum_{\ell'=0}^\beta  \binom{\gamma}{\ell}\binom{\beta}{\ell'}|\partial_t^{\gamma-\ell}\varphi(t)||\partial_\xi^{\beta-\ell'}\psi(\xi)||\partial_\xi^{\ell'}\partial_t^{\ell}e^{\xi(B-ia_0(t-t_0)+2\pi i c_0)-q_0(t-t_0)+2\pi q_0}|\notag\\
    & +\sum_{\ell=0}^\gamma\sum_{\ell'=0}^\beta \binom{\gamma}{\ell}\binom{\beta}{\ell'} |\partial_t^{\gamma-\ell}\varphi(t)||\partial_\xi^{\beta-\ell'}\psi(\xi)||\partial_\xi^{\ell'}\partial_t^{\ell}e^{\xi(B-ia_0(t-t_0))-q_0(t-t_0)}|\label{ineq_fourer_changes_sign},
\end{align}
    for every $(t,\xi)\in\T^1\times\R$, $\beta,\gamma\in\N_0$.
    
    Next, consider the function $e^{p(t,\xi)}$, where $p(t,\xi)=ia_1\xi t+c_2\xi + c_3t+c_4$ is a complex polynomial on two variables and $a_1\in\R$. Then for $L>0$, by applying Lemma \ref{lemma-exponential} we have that
        \begin{align}
          e^{L|\xi|^{\frac{1}{\mu}}}|\partial_\xi^{\beta}\partial_t^\gamma e^{p(t,\xi)}|
          &\leq  e^{L|\xi|^{\frac{1}{\mu}}}\sum_{\ell=0}^{\min\{\gamma,\beta\}}\binom{\beta}{\ell}|\partial_{\xi}^{\beta-\ell}e^{p(t,\xi)}||\partial_\xi^\ell[(ia_1\xi +c_3)^\gamma]|\notag\\
         &= e^{L|\xi|^{\frac{1}{\mu}}}\sum_{\ell=0}^{\min\{\gamma,\beta\}}\binom{\beta}{\ell}|e^{p(t,\xi)}||ia_1t+c_2|^{\beta-\ell}\frac{\gamma!}{(\gamma-\ell)!}|ia_1\xi +c_3|^{\gamma-\ell}|a_1|^\ell\notag\\
         &\leq \sum_{\ell=0}^{\min\{\gamma,\beta\}}\binom{\beta}{\ell} e^{2\pi|\Real(c_3)|+\Real(c_4)}e^{|\Real(c_2)|\xi}C^{\beta}|a_1|^\ell2^{\gamma-\ell}\notag\\
         &\phantom{\geq} \times\left[|a_1|^{\gamma-\ell}\frac{\gamma!}{(\gamma-\ell)!}\left(\frac{\mu}{L}\right)^{\mu(\gamma-\ell)}(\gamma-\ell)!^\mu e^{2L|\xi|^{\frac{1}{\mu}}}+\frac{\gamma!}{(\gamma-\ell)!}e^{L|\xi|^{\frac{1}{\mu}}}|c_3|^{\gamma-\ell}\right]\notag\\
         &\leq (2C)^\beta e^{2\pi|\Real(c_3)|+\Real(c_4)}e^{|\Real(c_2)|\xi+2L|\xi|^{\frac{1}{\mu}}}\gamma!^{\mu}2^{\gamma}(1+|a_1|)^\gamma\left(2+\left(\frac{\mu}{L}\right)^{\mu}+|c_3|\right)^\gamma \notag\\
         &\leq  (2C)^\beta e^{2\pi|\Real(c_3)|+\Real(c_4)}e^{|\Real(c_2)|\xi+2L|\xi|^{\frac{1}{\mu}}}\gamma!^{\mu}2^{\gamma}(1+|a_1|)^\gamma\left(2+\left(\frac{\mu}{L}\right)^{\mu}+|c_3|\right)^\gamma\notag\\
         & \leq \tilde{C}_0\tilde{C}_1^{\beta+\gamma}\gamma!^{\mu}e^{|\Real(c_2)|\xi+2L|\xi|^{\frac{1}{\mu}}},\label{ineq_exp_poly}
    \end{align}
    for every $(t,\xi)\in [0,2\pi]\times\R$ and $\beta,\gamma\in\N_0$, where $C=\max_{t\in[0,2\pi]}|ia_1t+c_2|+1$, $\tilde{C}_0=e^{2\pi|\Real(c_3)|+\Real(c_4)}$, $\tilde{C}_1=\max\left\{2C,2(1+|a_1|)(2+(\frac{\mu}{L})^\mu+|c_3|)\right\}$ and we used the fact that $\mu> 1$. Evidently, both exponentials in the inequality \eqref{ineq_fourer_changes_sign} are of the form $e^{p(t,\xi)}$. Hence, from the inequalities above, we conclude that 
\begin{align*}
     &e^{L|\xi|^{\frac{1}{\mu}}}|\partial_\xi^\beta\partial_t^\gamma\widehat{f}(t,\xi)|\\
     &\leq\sum_{\ell=0}^\gamma\sum_{\ell'=0}^\beta  \binom{\gamma}{\ell}\binom{\beta}{\ell'}C_{0,\varphi}C_{1,\varphi}^{\gamma-\ell}(\gamma-\ell)!^{\sigma_1}C_{0,\psi}C_{1,\psi}^{\beta-\ell'}(\beta-\ell')!^{\mu}\tilde C_{0,1}\tilde C_{1,1}^{\ell'+\ell}\ell!^{\mu}e^{(B-2\pi b_0)\xi+L'|\xi|^{\frac{1}{\mu}}}\\
     &\phantom{\geq} +\sum_{\ell=0}^\gamma\sum_{\ell'=0}^\beta  \binom{\gamma}{\ell}\binom{\beta}{\ell'}C_{0,\varphi}C_{1,\varphi}^{\gamma-\ell}(\gamma-\ell)!^{\sigma_1}C_{0,\psi}C_{1,\psi}^{\beta-\ell'}(\beta-\ell')!^{\mu}\tilde C_{0,2}\tilde C_{1,2}^{\ell'+\ell}\ell!^{\mu}e^{B\xi+L'|\xi|^{\frac{1}{\mu}}},
\end{align*}
 for every $(t,\xi)\in\T^1\times\R$, $\beta,\gamma\in\N_0$, and for some constants $C_{0,\varphi}, C_{1,\varphi},C_{0,\psi},C_{1,\psi}>0$ depending on $\varphi$ and $\psi$, and $\tilde C_{0,j}, \tilde C_{1,j}\geq 1$, $j=1,2$ given by \eqref{ineq_exp_poly}. Therefore, since $\mu>1$ and $\supp \widehat{f}(\cdot,\xi)\subset [0,\infty)$, for every $(t,\xi)\in\T^1\times\R$ and $\beta,\gamma\in\N_0$ we have that
\begin{align*}
     e^{L|\xi|^{\frac{1}{\mu}}}|\partial_\xi^\beta\partial_t^\gamma\widehat{f}(t,\xi)|&\leq 2K_0C_{0,\varphi}C_{0,\psi}C_{11}2^\beta 2^\gamma C_{1,\varphi}^{\gamma}C_{1,\psi}^{\beta}\gamma!^{\max\{\sigma_1,\mu\}}\beta!^{\mu}C_{21}^{\beta}C_{31}^{\gamma}\\
     &\leq C_0C_1^{\beta+\gamma}\gamma!^{\max\{\sigma_1,\mu\}}\beta!^\mu,
\end{align*}
 where $C_0 = 2K_0C_{0,\varphi}C_{0,\psi}C_{11}$, $C_1= 2\max\{C_{1,\varphi} C_{1,\psi}\tilde C_{1,1}\tilde C_{1,2},1\} $ and $K_0=\sup_{\xi>0}e^{B\xi+2L|\xi|^{\frac{1}{\mu}}}$, where $K_0$ is finite since $B<0$ and $\mu>1$. Therefore we conclude that $\widehat{f}\in \mathcal{S}_{\max\{\sigma_1,\mu\},\mu}(\g)$ and consequently $f\in\mathcal{S}_{\max\{\sigma_1,\mu\},\mu}(\g)\subset\Smu$ by Corollary \ref{coro_gelf_transf}. 
    
    \medskip

    Now, notice that if $Pu=f$, then taking the partial Fourier transform with respect to the variable $x$ yields
    $$\partial_t\widehat{u}(t,\xi)+i(\xi(a_0+ib(t))-iq_0)\widehat{u}(t,\xi) = \widehat{f}(t,\xi),$$
    for $(t,\xi)\in\g$.    
    For each fixed $\xi\in\R$, by Lemma \ref{lemmaodesol}  these differential equations admit unique solution given by
    $$\widehat{u}(t,\xi) = \frac{1}{e^{2\pi i(\xi c_0-iq_0)}-1}\int_{0}^{2\pi}\widehat{f}(t+s)e^{\int_t^{t+s}i(\xi c(w)-iq_0)\,\mathrm{d}w}\,\mathrm{d}s,$$
    which in this case can be written as
    $$\widehat{u}(t,\xi) = e^{-i\xi a_0(t-t_0)}e^{-q_0(t-t_0)}\psi(\xi)\int_0^{2\pi}e^{\xi(B-G(t,s))}\varphi(t+s)\,\mathrm{d}s,$$
    since $\xi c_0-iq\not\in\Z$ for every $\xi\in\R$ by  Corollary \ref{coro-tube-cte} because $\tilde{P}$ is $\mathscr{F}_{\mu}$-globally hypoelliptic. 
    
    Notice that, since $\supp(\psi)\subset [0,+\infty)$, we have that
    $$|\widehat{u}(t,\xi)|\leq 2\pi e^{2\pi|\Real(q_0)|},$$
    for every $(t,\xi)\in\g$.  Proposition \ref{proppartialdecayrndistribinv} then implies that $u\in\SSmu$.

    On th other hand, note that if $\xi>1$ is sufficiently big, then
    \begin{align*}
        |\widehat{u}(t_0,\xi)|&=\left|\int_0^{2\pi}e^{\xi(B-G(t_0,s))}\varphi(t_0+s)\,\mathrm{d}s\right|\\
        &\geq \int_{s_0-\frac{\delta}{2}}^{s_0+\frac{\delta}{2}}e^{\xi(B-G(t_0,s))}\,\mathrm{d}s\ \geq\ K\frac{1}{\sqrt{\xi}},
    \end{align*}
    for some $K>0$, by Lemma \ref{lemmaintegralineq}. Therefore, by Proposition \ref{propdecaypartialxi}, we conclude that $\widehat{u}\not\in\mathcal{S}_{\sigma,\mu}(\g)$ for any $\sigma\geq 1$, hence $u\not\in\Smu$. Since  $\widehat{Pu}(t,\xi)=\widehat{f}(t,\xi)$ for every $(t,\xi)\in\T^1\times\R$ implies that $Pu=f$, we obtain that $P$ is not $\mathscr{F}_{\mu}$-globally hypoelliptic, as claimed.
    
    Suppose now that $b_0<0$. The proof is similar, the main difference is that now we define
    \begin{equation*}
    \tilde G(t,s) \defeq\int_{t-s}^{t}b(w)\,\mathrm{d}w,
    \end{equation*}
    \begin{equation*}
        \tilde B\defeq\max_{s,t\in[0,2\pi]}\int_{t-s}^tb(w)\,\mathrm{d}w=\int_{t_1-s_1}^{t_1}b(w)\,\mathrm{d}w.
    \end{equation*}
    
    Again, we may assume, without loss of generality, that $t_1,s_1,t_1-s_1\in(0,2\pi)$. Let $\tilde{\varphi}\in \mathscr{G}^{\sigma_1}(\T^1)$ be such that $\supp(\tilde{\varphi})\subset[t_1-s_1-\delta,t_1-s_1+\delta]$, where $\delta>0$ is such that $(t_1-s_1-\delta ,t_1-s_1+\delta)\subset(0,2\pi)$, and $0\leq \tilde \varphi\leq 1$ with $\tilde \varphi(t)\equiv1$ in $[t_1-s_1-\delta/2,t_1-s_1+\delta/2]$. Then, for each $\xi\in\R$, we define $\widehat{f}(\cdot,\xi)$ to be the $2\pi$-periodic smooth extension of
    $$t\mapsto\left(1-e^{-2\pi i(\xi c_0-iq_0)}\right)e^{-\tilde{B}\xi}\tilde{\varphi}(t)e^{-i\xi a_0(t-t_1)}e^{-q(t-t_1)}\psi(\xi),$$
    and the remaining part of the proof follows analogously, but we use formula \eqref{sol-} to study the solution $\widehat{u}(t,\xi)$.
\end{proof}
\begin{prop}
    Suppose that $\tilde P$ is $\mathscr{F}_{\mu}$-globally hypoelliptic, $b\not\equiv 0$ and $b$ does not change sign. Then, $P$ is $\mathscr{F}_{\mu}$-globally hypoelliptic for any $\mu\geq 1/2$.
\end{prop}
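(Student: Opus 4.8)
The plan is to diagonalise $P$ in the $x$-frequency: a partial Fourier transform in $x$ turns $P$ into a family of first-order periodic ODEs on $\T^1$, indexed by $\xi\in\R$, whose monodromy is exactly the symbol of $\tilde P$ at $\xi$; global hypoellipticity of $\tilde P$ makes each of them uniquely solvable, and the hypothesis that $b$ does not change sign is precisely what makes the resulting solution controllable uniformly in $\xi$. Normalising, since $b$ does not change sign we may assume $b\ge 0$ on $\T^1$, so $b_0>0$ (the case $b\le 0$ is identical after interchanging $\xi\ge 0$ and $\xi<0$ in what follows). Let $u\in\SSmu$ with $Pu=f\in\Smu$, so that $f\in\mathcal S_{\sigma_1,\mu}(\g)$ for some $\sigma_1\ge 1$, and recall $b\in\mathscr G^{\sigmac}(\T^1)$. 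Applying $\mathcal F_\R$ in $x$, the coefficient $\widehat u(t,\xi)$ solves, for each fixed $\xi\in\R$,
\[\partial_t\widehat u(t,\xi)+i\bigl(\xi c(t)-iq_0\bigr)\widehat u(t,\xi)=\widehat f(t,\xi),\qquad c(t)=a_0+ib(t),\]
an equation whose monodromy is $e^{2\pi i(\xi c_0-iq_0)}$. Since $\tilde P$ is $\mathscr F_\mu$-globally hypoelliptic, Corollary \ref{coro-tube-cte} forces $b_0\ne 0$ and $\xi c_0-iq_0\notin\Z$ for every $\xi\in\R$, so the monodromy never equals $1$; hence by Lemma \ref{lemmaodesol} the equation has a unique $2\pi$-periodic solution for each $\xi$, which for $\xi\ge 0$ equals
\[\widehat u(t,\xi)=\frac{1}{e^{2\pi i(\xi c_0-iq_0)}-1}\int_0^{2\pi}\widehat f(t+s,\xi)\,e^{\int_t^{t+s}i(\xi c(w)-iq_0)\,\mathrm{d}w}\,\mathrm{d}s,\]
and for $\xi<0$ equals the analogous ``backward'' representation \eqref{sol-}; that this function is $\mathcal F_\R u$ follows by uniqueness (the bound of the next step makes it an element of $\SSmu$ through Proposition \ref{proppartialdecayrndistribinv}, and its difference with $u$ satisfies the homogeneous equation frequency by frequency, with non-trivial monodromy, hence vanishes).

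The heart of the proof is a uniform-in-$\xi$ Gevrey estimate for this formula, combining three points. First, the reciprocal of the monodromy, together with its $\xi$-derivatives, is under control: the line $\xi\mapsto\xi c_0-iq_0$ has imaginary part $\xi b_0-\Real(q_0)$, which vanishes only at $\xi^{*}=\Real(q_0)/b_0$, where the real part equals $\tfrac{a_0}{b_0}\Real(q_0)+\Imag(q_0)\notin\Z$ (Corollary \ref{coro-tube-cte} again); since $|e^{2\pi iz}-1|\to 1$ as $\Imag z\to+\infty$ and $|e^{2\pi iz}-1|\to\infty$ as $\Imag z\to-\infty$, this gives $|e^{2\pi i(\xi c_0-iq_0)}-1|\ge\delta>0$ for all $\xi$, and Lemma \ref{derivative-reciprocal} then controls $\partial_\xi^\beta\bigl(e^{2\pi i(\xi c_0-iq_0)}-1\bigr)^{-1}$, which moreover decays exponentially as $\xi\to+\infty$ because $\Real(\xi c_0-iq_0)\to-\infty$ there (here $b_0>0$ is used). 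Second, because $b\ge 0$, for $\xi\ge 0$ and $0\le s\le 2\pi$ the exponent in the integrand obeys
\[\Real\!\int_t^{t+s}\!i\bigl(\xi c(w)-iq_0\bigr)\,\mathrm{d}w=-\xi\!\int_t^{t+s}\!b(w)\,\mathrm{d}w+s\,\Real(q_0)\le 2\pi|\Real(q_0)|,\]
so the exponential is uniformly bounded; its $\partial_\xi$-derivative is bounded and its second $\partial_\xi$-derivative vanishes, while its $\partial_t$-derivatives are treated by Fa\`a di Bruno (Lemma \ref{faa}) exactly as in Lemma \ref{lemma_ca}, using $\partial_t\!\int_t^{t+s}\!i(\xi c(w)-iq_0)\,\mathrm{d}w=-\xi\bigl(b(t+s)-b(t)\bigr)$ and $b\in\mathscr G^{\sigmac}(\T^1)$, so that $\partial_t^\gamma$ produces at worst $\gamma$ powers of $\xi$ with Gevrey-$\sigmac$ constants in $\gamma$. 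Third, these powers of $\xi$ are absorbed by the Gelfand--Shilov decay of $\widehat f$: Remark \ref{equiv_tpgelf} and Corollary \ref{coro_gelf_transf} give $|\partial_\xi^\beta\partial_t^\gamma\widehat f(t,\xi)|\le C_0C^{\beta+\gamma}\beta!^{\mu}\gamma!^{\sigma_1}e^{-L|\xi|^{1/\mu}}$, and $\sup_{\xi}|\xi|^{m}e^{-L|\xi|^{1/\mu}}\le C^{m}(m!)^{\mu}$ by Lemma \ref{lemma-exponential}. Differentiating under the integral sign and applying the Leibniz rule, the three points combine to give
\[\sup_{(t,\xi)\in\g}\bigl|\xi^\alpha\partial_\xi^\beta\partial_t^\gamma\widehat u(t,\xi)\bigr|\le C_0C_1^{\alpha+\beta+\gamma}(\alpha!\beta!)^{\mu}\gamma!^{\sigma'},\qquad\alpha,\beta,\gamma\in\N_0,\]
for a suitable $\sigma'\ge 1$; the case $\xi<0$ is the same argument applied to \eqref{sol-}, again using $b\ge 0$.

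Finally, this estimate (in particular with $\gamma=0$ and $\alpha$ arbitrarily large, which yields rapid decay in $\xi$) shows that $\widehat u\in C^\infty(\g)\cap L^1(\g)$ and satisfies \eqref{ineqpartialr}, so Proposition \ref{propdecaypartialxi} gives $u=\mathcal F_\R^{-1}\widehat u\in\mathcal S_{\sigma',\mu}(\g)\subset\Smu$; hence $P$ is $\mathscr F_\mu$-globally hypoelliptic. I expect the main obstacle to be the estimate of the middle paragraph, and within it the Fa\`a di Bruno/Leibniz bookkeeping: one has to ensure that each factor of $\xi$ generated by the $t$-derivatives of $e^{\int_t^{t+s}i(\xi c(w)-iq_0)\,\mathrm{d}w}$ --- factors that grow linearly in $\xi$ --- is matched exactly against the decay $e^{-L|\xi|^{1/\mu}}$ of $\widehat f$, while every constant stays of the form $C_1^{\alpha+\beta+\gamma}(\alpha!\beta!)^{\mu}\gamma!^{\sigma'}$. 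This is computationally heavy, but structurally it is the same computation as in Lemma \ref{lemma_ca} and in the real case, now with the exponent carrying the additional parameter $\xi$.
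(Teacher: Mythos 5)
Your proposal is correct and follows essentially the same route as the paper's proof: partial Fourier transform in $x$, the forward/backward formulas of Lemma \ref{lemmaodesol} chosen according to the sign of $\xi$ so that $b\geq 0$ keeps the exponential in the integrand uniformly bounded, a uniform lower bound on the monodromy factor coming from Corollary \ref{coro-tube-cte}, Fa\`a di Bruno/Leibniz estimates for the $t$- and $\xi$-derivatives, and absorption of the resulting powers of $\xi$ by the decay of $\widehat f$ via Lemma \ref{lemma-exponential}, at the cost of Gevrey regularity in $t$ only, which is harmless for $\mathscr{F}_\mu$-global hypoellipticity. Two harmless slips worth fixing in a write-up: the second $\partial_\xi$-derivative of the exponential does not vanish (only that of its exponent does; its $\xi$-derivatives are bounded by $\bigl(2\pi(\sup_{\T^1}|b|+|a_0|)\bigr)^{\ell}$ times the exponential, as in the paper's formula for $\partial_\xi^{\ell}\mathcal{H}_2$), and $\bigl(e^{2\pi i(\xi c_0-iq_0)}-1\bigr)^{-1}$ does not decay as $\xi\to+\infty$ (it tends to $-1$), but only uniform boundedness of it and of its $\xi$-derivatives is needed, which your argument otherwise supplies.
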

\begin{proof}
    Suppose that $Pu=f\in\Smu$. First note that due to the inclusions $\mathscr{G}^{\sigma_1}(\T^1)\subset \mathscr{G}^{\sigma_2}(\T^1)$ and $\mathcal{S}_{\sigma_1,\mu}(\g)\subset\mathcal{S}_{\sigma_2,\mu}(\g)$ for $\sigma_1\leq \sigma_2$, we can assume without loss of generality that $\sigmac=\sigma$, where  $f\in\s$. Then, by taking the partial Fourier transform with respect to the second variable, we see that $\widehat{u}(t,\xi)$ must be the only solution to $\widehat{Pu}(t,\xi) = \widehat{f}(t,\xi)$ given by any of the two equivalent formulas in Lemma \ref{lemmaodesol}. Suppose first $b(t)\geq 0$, which implies $ b_0>0$. For $\xi\geq 0$ consider the solution $\widehat{u}(t,\xi)$ given by the formula
    \begin{align*}
        \widehat{u}(t,\xi)=\frac{1}{e^{2\pi i(\xi c_0-iq_0)}-1}\int_{0}^{2\pi}\widehat{f}(t+s,\xi)e^{qs}e^{i\xi a_0s-\xi\int_{t}^{t+s}b(w)\,\mathrm{d}w}\,\mathrm{d}s,
    \end{align*}
    and for $\xi<0$, consider the equivalent formula
        \begin{align*}
        \widehat{u}(t,\xi)=\frac{1}{1-e^{-2\pi i(\xi c_0-iq_0)}}\int_{0}^{2\pi}\widehat{f}(t-s,\xi)e^{-qs}e^{-i\xi a_0s+\xi\int_{t-s}^{t}b(w)\,\mathrm{d}w}\,\mathrm{d}s.
    \end{align*}

    Set 
    \begin{equation*}
        \mathcal{H}_1(\xi)=1-e^{-2\pi i (\xi c_0-i q_0)}\quad\text{and}\quad 
         \mathcal{H}_2(t,s,\xi)=e^{-qs+\xi\left(\int_{t-s}^t b(w)\,\mathrm{d}w-ia_0s\right)},
    \end{equation*}
    for every $t,s\in [0,2\pi]$, $\xi\in\R$.
    Notice that $\mathcal{H}_1$ is analytic and therefore belongs to $\mathscr{G}^\sigma(\R)$.

    Since $\tilde P$ is $\mathscr{F}_{\mu}$-globally hypoelliptic, by Corollary \ref{coro-tube-cte} we have that $|k+c_0\xi-iq_0|\neq 0$ for every $(k,\xi)\in\Z\times \R$. From the proof of Theorem \ref{theo_gh_cte_Nleq1} we conclude that there exists $\delta>0$ such that 
    $|k+c_0\xi-iq_0|\geq \delta$, for every $(k,\xi)\in\Z\times \R$.

Note that by continuity that this implies that there exists $\varepsilon>0$ such that $|\mathcal{H}_1(\xi)|\geq\varepsilon$, for every $\xi\in\R$. Also, by Lemma \ref{derivative-reciprocal}, for $\ell\geq 1$ we have that
    \begin{align*}
        \partial_\xi^{\ell}[\mathcal{H}_1(\xi)^{-1}]&=\sum_{k=1}^\ell(-1)^k\binom{\ell+1}{k+1}\frac{1}{\mathcal{H}_1(\xi)^{k+1}}\partial_\xi^\ell[\mathcal{H}_1(\xi)^k]\\
        &=\sum_{k=1}^\ell(-1)^k\binom{\ell+1}{k+1}\frac{1}{\mathcal{H}_1(\xi)^{k+1}}\partial_\xi^\ell\left[\sum_{j=0}^k\binom{k}{j}(-1)^{k-j}e^{-2\pi ij(\xi c_0-iq_0)}\right]\\
        &=\sum_{k=1}^\ell(-1)^k\binom{\ell+1}{k+1}\frac{1}{\mathcal{H}_1(\xi)^{k+1}}\sum_{j=0}^k\binom{k}{j}(-1)^{k-j+\ell}(2\pi i j c_0)^\ell e^{-2\pi ij(\xi c_0-iq_0)},
    \end{align*}
for every $\xi\in\R$. Hence 
        \begin{align}
        \left|\partial_\xi^{\ell}[\mathcal{H}_1(\xi)^{-1}]\right| & \leq(2\pi \ell |c_0|)^\ell\sum_{k=1}^\ell\binom{\ell+1}{k+1}\varepsilon^{-(k+1)}\sum_{j=0}^k\binom{k}{j} e^{-2\pi j \Real(q_0)} \notag \\
        &\leq (2\pi |c_0|)^\ell e^\ell \ell!\sum_{k=1}^\ell\binom{\ell+1}{k+1}\varepsilon^{-(k+1)}\left(1+e^{-2\pi  \Real(q_0)}\right)^k \notag \\
        &\leq \varepsilon^{-1}(2\pi |c_0|)^\ell e^\ell \ell!\sum_{k=0}^{\ell}\binom{\ell}{k}\frac{\ell+1}{k+1}\left(\frac{1+e^{-2\pi  \Real(q_0)}}{\varepsilon}\right)^k \notag \\
        & \leq \varepsilon^{-1}(4\pi |c_0|)^\ell e^\ell \ell!(\ell+1)\left(1+\frac{1+e^{-2\pi  \Real(q_0)}}{\varepsilon}\right)^\ell, \label{H1}
    \end{align}
    for every $\xi<0$ and $\ell\in\N_0$, since $e^{2\pi b_0\xi}\leq 1$ and where we used
    \[\sum_{k=1}^{\ell}\binom{\ell+1}{k+1}\leq \sum_{k=0}^{\ell}\binom{\ell}{k}\frac{\ell+1}{k+1}\leq (\ell+1)\sum_{k=0}^{\ell}\binom{\ell}{k} = (\ell+1)2^\ell.\]
    
    Also, note that since $b(t)\geq 0$ for every $t\in\T^1$, we have that $\int_{t-s}^tb(w)\,\mathrm{d}w\geq 0$ and therefore
    \begin{equation*}
        |\mathcal{H}_2(t,s,\xi)|\leq e^{2\pi|\Real(q_0)|},
    \end{equation*}
for every $t,s\in [0,2\pi]$, $\xi<0$.
    
By the Fa\`a di Bruno's Formula, we have that
    \begin{align}\label{H2_1}
    \partial_t^{\ell_1}\mathcal{H}_2(t,s,\xi) = \mathcal{H}_2(t,s,\xi)\sum_{\tau\in\Delta(\ell_1)}\frac{\ell_1!}{\tau!}\xi^{|\tau|}\prod_{j_1=1}^{\ell_1}\left(\frac{1}{j_1!}\partial_t^{j_1-1}[b(t)-b(t-s)]\right)^{\tau_{j_1}}. 
    \end{align}
    
    Moreover, it is easy to check that 
    \begin{align}\label{H2_2}
        \partial_\xi^{\ell_2'}\mathcal{H}_2(t,s,\xi) = \mathcal{H}_2(t,s,\xi)\left(\int_{t-s}^tb(w)\,\mathrm{d}w-ia_0s\right)^{\ell_2'}.
    \end{align}
    
    Hence, combining \eqref{H2_1} and \eqref{H2_2}, we obtain by the Leibniz's rule that
    \begin{align*}
    \partial_\xi^{\ell_2'}\partial_t^{\ell_1}\mathcal{H}_2(t,s,\xi) =&\sum_{\tau\in\Delta(\ell_1)}\sum_{j_2=0}^{\min\{\ell_2',|\tau|\}}\binom{\ell_2'}{j_2}\mathcal{H}_2(t,s,\xi)\left(\int_{t-s}^tb(w)\,\mathrm{d}w-ia_0s\right)^{\ell_2'-j_2}\\&\times\frac{|\tau|!}{(|\tau|-j_2)!}\xi^{|\tau|-j_2}
    \frac{\ell_1!}{\tau!}\prod_{j_1=1}^{\ell_1}\left(\frac{1}{j_1!}\partial_t^{j_1-1}[b(t)-b(t-s)]\right)^{\tau_{j_1}}. 
\end{align*}

But note that
\begin{equation*}
    \left|\frac{1}{j_1!}\partial_t^{j_1-1}[b(t)-b(t-s)]\right|\leq \frac{1}{j_1!}2C_{0,b}C_{1,b}^{j_1-1}(j_1-1)!^\sigma\leq 2C_{0,b}C_{1,b}^{j_1-1}j_1!^{\sigma-1}, 
\end{equation*}
for some $C_{0,b},C_{1,b}\geq 1$. Hence
\begin{equation*}
   \prod_{j_1=1}^{\ell_1}\left(\frac{1}{j_1!}\partial_t^{j_1-1}[b(t)-b(t-s)]\right)^{\tau_{j_1}}\leq (2C_{0,b})^{|\tau|}C_{1,b}^{\ell_1}\prod_{j_1=1}^{\ell_1}j_1!^{(\sigma-1)\tau_{j_1}}. 
\end{equation*}

Therefore we can estimate
\begin{align}
    \left|\partial_\xi^{\ell_2'}\partial_t^{\ell_1}\mathcal{H}_2(t,s,\xi)\right| \leq &\   |\mathcal{H}_2(t,s,\xi)|\sum_{\tau\in\Delta(\ell_1)}\sum_{j_2=0}^{\min\{\ell_2',|\tau|\}}\binom{\ell_2'}{j_2}|s|^{\ell_2'-j_2}C_{b,a_0}^{\ell_2'-j_2}\frac{|\tau|!}{(|\tau|-j_2)!}\notag\\
    &\  \times |\xi|^{|\tau|-j_2}
    \frac{\ell_1!}{\tau!}(2C_{0,b})^{|\tau|}C_{1,b}^{\ell_1}\prod_{j_1=1}^{\ell_1}j_1!^{(\sigma-1)\tau_{j_1}}\notag\\
    \leq &\  e^{2\pi|\Real(q)|}\sum_{j_2=0}^{\min\{\ell_2',|\tau|\}}\binom{\ell_2'}{j_2}(2\pi)^{\ell_2'}C_{b,a_0}^{\ell_2'}|\xi|^{\ell_1}C_{1,b}^{\ell_1}\notag\\
    &\ \times\sum_{\tau\in\Delta(\ell_1)} 
    \frac{\ell_1!}{\tau!}(2C_{0,b})^{|\tau|}|\tau|!^\sigma\prod_{j_1=1}^{\ell_1}j_1!^{(\sigma-1)\tau_{j_1}}\notag\\
        \leq &\ e^{2\pi|\Real(q)|}(4\pi C_{b,a_0})^{\ell_2'}|\xi|^{\ell_1}C_{1,b}^{\ell_1}\sum_{\tau\in\Delta(\ell_1)} 
    \frac{\ell_1!}{\tau!}(2C_{0,b})^{|\tau|}|\tau|!\ell_1!^{\sigma-1}\notag\\
    \leq &\ e^{2\pi|\Real(q)|}(4\pi C_{b,a_0})^{\ell_2'}|\xi|^{\ell_1}C_{1,b}^{\ell_1}\ell_1!^\sigma 2C_{0,b}(1+2C_{0,b})^{\ell_1-1},\label{H2}
\end{align}
for every $t,s\in[0,2\pi]$ and $\xi<0$, where $C_{b,a_0} = \displaystyle\max_{w\in[0,2\pi]} |b(w)| + |a_0|$ and in the last two lines we applied Lemmas \ref{lemma_algebraic_1} and \ref{lemma_algebraic_2}.

Next note that
\begin{align}\label{eq-util-final-proof}
    \partial_\xi^\beta\partial_t^{\gamma}\widehat{u}(t,\xi)=\sum_{|\ell'|=\beta} \binom{\beta}{\ell'}\partial_\xi^{\ell'_1}[\mathcal{H}_1(\xi)^{-1}]\sum_{|\ell|=\gamma}\binom{\gamma}{\ell}\int_0^{2\pi}\partial_\xi^{\ell'_2}\partial_t^{\ell_1}\mathcal{H}_2(t,s,\xi)\partial_\xi^{\ell'_3}\partial_t^{\ell_2}\widehat{f}(t-s,\xi)\,\mathrm{d}s,
\end{align}
for every $(t,\xi)\in\T^1\times\R$. Since $\widehat{f}\in\s$, there exist $L>0$, $C_{0,f},C_{1,f}\geq 1$ such that
\begin{equation*}
    e^{L|\xi|^{\frac{1}{\mu}}}|\partial_\xi^\beta\partial_t^\gamma \widehat{f}(t,\xi)|\leq C_{0,f}C_{1,f}^{\beta+\gamma}\beta!^\mu\gamma!^\sigma,
\end{equation*}
for every $(t,\xi)\in\T^1\times\R$, $\beta,\gamma\in\N_0$.
Applying \eqref{H1}, \eqref{H2} and the inequality above to $\eqref{eq-util-final-proof}$, yields
\begin{align*}
    &e^{\frac{L}{2}|\xi|^{\frac{1}{\mu}}}|\partial_\xi^\beta\partial_t^{\gamma}\widehat{u}(t,\xi)|\\
    &\leq \sum_{|\ell'|=\beta} \binom{\beta}{\ell'}\varepsilon^{-1}(4\pi |c_0|)^{\ell_1'} e^{\ell_1'} \ell_1'!(\ell_1'+1)\left(1+\frac{1+e^{-2\pi  \Real(q_0)}}{\varepsilon}\right)^{\ell_1'}\sum_{|\ell|=\gamma}\binom{\gamma}{\ell}2\pi e^{2\pi|\Real(q)|}(4\pi C_{b,a_0})^{\ell_2'}\\
    & \phantom{\leq} \times |\xi|^{\ell_1}e^{-\frac{L}{2}|\xi|^{\frac{1}{\mu}}}C_{1,b}^{\ell_1}\ell_1!^\sigma (1+2C_{0,b})^{\ell_1}  \sup_{(t,\xi)\in\g}e^{L|\xi|^{\frac{1}{\mu}}}|\partial_\xi^{\ell'_3}\partial_t^{\ell_2}\widehat{f}(t,\xi)|\\
    &\leq \varepsilon^{-1} \sum_{|\ell'|=\beta} \sum_{|\ell|=\gamma}\binom{\beta}{\ell'}\binom{\gamma}{\ell}K_{c_0,q_0}^{\ell_1'}
    (4\pi C_{b,a_0})^{\ell_2'}C_{1,b}^{\ell_1} (1+2C_{0,b})^{\ell_1}\textstyle{(\frac{2\mu}{L})^{\mu\ell_1}}\ell_1'!\ell_1!^\mu\ell_1!^\sigma C_{0,f}C_{1,f}^{\ell_3'+\ell_2}\ell_2!^{\sigma}\ell_3'!^\mu\\
    &\leq C_0 \tilde{C}_{1}^{\beta+\gamma} 3^\beta 2^\gamma \beta!^{\mu}\gamma!^{\sigma+\mu}\leq C_0C_1^{\beta+\gamma}\beta!^\mu\gamma!^{\sigma+\mu},
\end{align*}
for every $\xi<0$, where $C_0=\varepsilon^{-1}C_{0,f}$, $\tilde{C}_1=K_{c_0,q_0}\max\{4\pi C_{b,a_0},1\}(1+2C_{0,b})(\frac{2\mu}{L})^\mu C_{1,b}C_{1,f},$ $C_1=6\tilde{C}_1$, and $K_{c_0,q_0}= \max\{4\pi|c_0|e\left(1+\varepsilon^{-1}\left(1+e^{-2\pi  \Real(q_0))}\right)\right),1\}$, .

For $\xi\geq0$, we obtain similar inequalities for $\widehat{u}(t,\xi)$ by an analogous argument. 

This proves that $\widehat{u}\in \mathcal{S}_{\sigma+\mu,\mu}(\g)$.  Consequently $u\in\mathcal{S}_{\sigma+\mu,\mu}\subset  \Smu$ by Corollary \ref{coro_gelf_transf} and therefore $P$ is $\mathscr{F}_{\mu}$-globally hypoelliptic. In the case where $b(t)\leq 0$ and so $b_0<0$, the proof is similar, only now we swap the arguments for $\widehat{u}(t,\xi)$ with respect to the cases $\xi\geq 0$ and $\xi<0$.
\end{proof}

\begin{corollary}
    Suppose that $b\not\equiv0$ does not change sign and $\frac{a_0}{b_0}\Real(q_0)+\Imag(q_0)\not\in\Z$. Then $P$ is $\mathscr{F}_\mu$-globally hypoelliptic for every $\mu\geq 1/2$.
\end{corollary}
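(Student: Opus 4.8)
The plan is to deduce this corollary by simply composing Corollary~\ref{coro-tube-cte} with the proposition immediately preceding it. The first step is to observe that the hypotheses force $b_0\neq 0$: since $b\in\mathscr{G}^{\sigmac}(\T^1)$ is continuous, does not change sign, and is not identically zero, its average
\[b_0=\frac{1}{2\pi}\int_0^{2\pi}b(t)\,\mathrm{d}t\]
is a nonzero real number (if $b\ge 0$ everywhere then $b_0>0$, and if $b\le 0$ everywhere then $b_0<0$). This is exactly the content of the Remark stated just after the theorem in this subsection.

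With $b_0\neq 0$ in hand, I would then apply Corollary~\ref{coro-tube-cte} to the constant-coefficient operator $\tilde P=\partial_t+c_0\partial_x+q_0$, where $c_0=a_0+ib_0$. Because $b_0\neq 0$, condition (i) of that corollary is the relevant alternative, and the standing hypothesis $\frac{a_0}{b_0}\Real(q_0)+\Imag(q_0)\not\in\Z$ is precisely the requirement appearing there; hence $\tilde P$ is $\mathscr{F}_\mu$-globally hypoelliptic for every $\mu\ge 1/2$. Finally, since $\tilde P$ is $\mathscr{F}_\mu$-globally hypoelliptic, $b\not\equiv 0$, and $b$ does not change sign, the preceding proposition applies verbatim and yields that $P$ is $\mathscr{F}_\mu$-globally hypoelliptic for every $\mu\ge 1/2$, which is the claim.

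I do not expect a genuine obstacle here: the argument is a bookkeeping composition of two results already proved above. The only point that deserves a sentence of justification is that the constant feeding into Corollary~\ref{coro-tube-cte} is $c_0=a_0+ib_0$ rather than the full coefficient $a(t)+ib(t)$; this is legitimate because the complex-case analysis has already reduced, via the conjugating automorphisms $\Psi_a$ and $\Psi_q$, to the case $a\equiv a_0$ and $q\equiv q_0$, and the quantity $b_0$ enters only through the average defining $\tilde P$.
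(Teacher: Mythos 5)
Your proposal is correct and is exactly the argument the paper intends: the hypothesis that $b\not\equiv 0$ does not change sign gives $b_0\neq 0$, Corollary \ref{coro-tube-cte} (case (i)) then shows $\tilde P$ is $\mathscr{F}_\mu$-globally hypoelliptic, and the immediately preceding proposition transfers this to $P$ for every $\mu\geq 1/2$. Your remark about the reduction to $a\equiv a_0$, $q\equiv q_0$ via $\Psi_a$ and $\Psi_q$ matches the paper's own standing reduction at the start of the complex-case subsection, so nothing further is needed.
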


\section{Considerations about global solvability}

In this section, we apply the topological properties proved in Section \ref{sec_prelim} to show that the two versions of global hypoellipticity considered in this work imply suitable notions of global solvability on the spaces $\mathcal{S}_{\sigma,\mu}(\G)$ and $\mathscr{F}_\mu(\G)$. 

\begin{definition}
Fix $\sigma\geq 1$ and $\mu\geq 1/2$.
    \begin{enumerate}
        \item A linear partial differential operator $P:\S\to\S$ is $\mathcal{S}_{\sigma,\mu}$-globally solvable if, for every $f\in(\ker\Pt)^\perp$, there exists $u\in\S$ such that $Pu=f$.

        \item A linear partial differential operator $P:\mathscr{F}_\mu(\G)\to \mathscr{F}_\mu(\G)$ is $\mathscr{F}_{\mu}$-globally solvable if, for every $f\in(\ker\Pt)^\perp$, there exists $u\in\mathscr{F}_\mu(\G)$ such that $Pu=f$.
    \end{enumerate}    
\end{definition}

These are the definitions of solvability considered in \cite{AvCap2025} and \cite{AvCapKir2024}. Since $\S$ and $\mathscr{F}_\mu(\G)$ are DFS spaces, the two notions of global solvability above are equivalent to the closedness of the range of the operator $P$ on the respective functional settings by \cite[Lemma 2.2]{Araujo2017}.

\begin{prop}
    Let $P:\S\to\S$ be a linear partial differential operator. If $P$ is $\mathcal{S}_{\sigma,\mu}$-globally hypoelliptic, then $P$ is $\mathcal{S}_{\sigma,\mu}$-globally solvable. Similarly, if $P:\mathscr{F}_\mu(\G)\to \mathscr{F}_\mu(\G)$ is $\mathcal{S}_{\mu}$-globally hypoelliptic, then $P$ is $\mathscr{F}_{\mu}$-globally solvable.
\end{prop}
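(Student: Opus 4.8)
The plan is to show that global hypoellipticity forces the range of $P$ to be closed, since by \cite[Lemma 2.2]{Araujo2017} this is equivalent to global solvability in both DFS settings. I will treat the $\mathcal{S}_{\sigma,\mu}$ case in detail; the $\mathscr{F}_\mu$ case is identical word for word, replacing $\S$ by $\mathscr{F}_\mu(\G)$ and $\SS$ by $\mathscr{F}'_\mu(\G)$, both of which are DFS by Theorem \ref{comp_inc} and the discussion preceding Definition \ref{Fmu_GH_def}. First I would set up the standard functional-analytic dichotomy: since $\S$ is a reflexive (DFS) space, the range of the continuous operator $P$ is closed if and only if the range of its transpose $\Pt$ on $\SS$ is closed, and moreover $P$ is globally solvable precisely when $\mathrm{Range}(P)=(\ker\Pt)^\perp$. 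So it suffices to show $\mathrm{Range}(P)$ is weakly closed, equivalently closed, in $\S$.

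The key step is the following: let $(f_j)\subset \mathrm{Range}(P)$ with $f_j\to f$ in $\S$, say $f_j = Pu_j$ with $u_j\in\S$. I want to produce $u\in\S$ with $Pu=f$. The obstruction is that the $u_j$ need not converge, nor even be bounded, because $P$ may have an infinite-dimensional kernel in $\SS$. To fix this, I would pass to the quotient: let $\mathcal{K}=\ker(P:\SS\to\SS)$ — note by hypothesis any element of $\ker P$ lying in $\SS$ with $Pu\in\S$ (trivially $0\in\S$) must already lie in $\S$, so in fact $\ker(P|_{\SS}) = \ker(P|_{\S}) =: \mathcal{K}\subset\S$, and $\mathcal{K}$ is a closed subspace of the DFS space $\S$, hence itself DFS, in particular it is complemented-free but the quotient $\S/\mathcal{K}$ is again a DFS (in particular barrelled, complete) space. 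The induced map $\bar P:\S/\mathcal{K}\to\S$ is continuous and injective. Now here is where global hypoellipticity enters decisively: I claim $\bar P$ has closed range, which one obtains from the open mapping / closed graph circle of ideas once one knows that a sequence $\bar u_j$ with $\bar P\bar u_j = f_j \to f$ admits a subsequence converging in $\S/\mathcal{K}$. The mechanism is: apply global hypoellipticity together with the a priori estimates — if $u\in\SS$ solves $Pu=f\in\S$ then $u\in\S$, and the closed graph theorem applied to $\bar P^{-1}:\mathrm{Range}(\bar P)\to\S/\mathcal{K}$ (both sides being webbed / DFS spaces, so Pták's theorem or the De Wilde closed graph theorem applies) gives that $\bar P^{-1}$ is continuous provided $\mathrm{Range}(\bar P)$ is closed; to bootstrap, one checks directly that $\mathrm{Range}(\bar P)$ is sequentially closed, which combined with the DFS (hence Montel, hence the closed and sequentially closed coincide) structure gives closedness.

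Concretely, the argument I would write runs: given $f_j=Pu_j\to f$ in $\S$, the sequence $(f_j)$ is contained in some Banach step $\mathcal{S}_{\sigma,\mu,C}(\G)$ and is bounded there; I would like to lift to a bounded sequence of solutions. Choose $u_j\in\SS$ with $Pu_j=f_j$; since $f_j\in\S$ and $P$ is $\mathcal{S}_{\sigma,\mu}$-globally hypoelliptic, in fact $u_j\in\S$. The map $\SS\ni u\mapsto Pu\in\SS$ restricted to $\S$ has image containing $(f_j)$; consider the family $\{u_j\}$ modulo $\mathcal{K}$. Using that $P:\S\to\S$ is a continuous linear surjection onto its range and $\S$ is DFS (so ultrabornological and webbed), the De Wilde open mapping theorem yields that $\bar P:\S/\mathcal{K}\to\overline{\mathrm{Range}(P)}$ would be open once $\mathrm{Range}(P)$ is closed — so the genuine content is sequential closedness of $\mathrm{Range}(P)$, which I would extract as follows: pick representatives $u_j$ of minimal "size"; if $(u_j)$ stays bounded in some $\mathcal{S}_{\sigma,\mu,C'}(\G)$, then by Theorem \ref{comp_inc} a subsequence converges in $\mathcal{S}_{\sigma,\mu,C'_+}(\G)$ to some $u\in\S$, and by continuity of $P$ and of the duality pairing $Pu=\lim Pu_{j_k}=f$, so $f\in\mathrm{Range}(P)$. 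If no bounded choice of representatives exists, one derives a contradiction with injectivity of $\bar P$ on the quotient together with the DFS structure — precisely, the normalized sequence $v_j = u_j/\|u_j\|$ would then satisfy $Pv_j\to 0$ while $\|v_j\|=1$, and a Montel-type compactness argument on the quotient produces a nonzero element of $\ker\bar P$, a contradiction.

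The main obstacle I anticipate is making the "bounded choice of representatives" step rigorous — i.e., that one can normalize solutions modulo the kernel and extract compactness without the kernel being complemented in any Banach step. The clean way around this is to invoke that DFS spaces are (LB)-spaces satisfying the strict Mittag-Leffler condition, so that a closed subspace's quotient is again a regular (LB)-space; then the abstract equivalence "globally hypoelliptic $\Rightarrow$ the transpose $\Pt$ has closed range $\Rightarrow$ $P$ has closed range" of \cite[Lemma 2.2]{Araujo2017} can be applied directly to $P$ itself once one observes that $\ker\Pt$ is finite-dimensional or, more robustly, that $P$ being injective modulo a DFS kernel on a DFS space with hypoellipticity gives a continuous inverse by the closed graph theorem. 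I would structure the final write-up to lean on \cite[Lemma 2.2]{Araujo2017} for the solvability $\Leftrightarrow$ closed range equivalence and on Theorem \ref{comp_inc} for the compactness that powers the closed-range argument, keeping the rest as a short application.
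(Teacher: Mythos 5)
Your reduction of solvability to closed range via \cite[Lemma 2.2]{Araujo2017} matches the paper's framing, but the heart of your argument---the direct proof that $\mathrm{Range}(P)$ is (sequentially) closed in the DFS space $\S$ by a normalize-and-compactify argument modulo $\mathcal{K}=\ker P$---has a genuine gap, and it is exactly the step you flag yourself. First, $\S$ carries no norm, and the solutions $u_j\in\S$ supplied by global hypoellipticity need not lie in a common Banach step $\mathcal{S}_{\sigma,\mu,C'}(\G)$, so ``representatives of minimal size'' and $v_j=u_j/\|u_j\|$ are not defined; DFS regularity controls bounded sets, but boundedness of some choice of representatives is precisely what is in question. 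Second, even if you force all $u_j$ into one step and normalize in its norm, Theorem \ref{comp_inc} only yields a subsequence converging in the strictly larger step $\mathcal{S}_{\sigma,\mu,C'_+}(\G)$, whose norm is weaker. The limit $v$ does satisfy $Pv=0$, but the lower bound $\operatorname{dist}_{C'}(v_j,\mathcal{K})\approx 1$ does not transfer: it is perfectly compatible with $\operatorname{dist}_{C'_+}(v_{j_k},\mathcal{K})\to 0$, so no contradiction with $v\in\mathcal{K}$ (or even with $v=0$) is obtained. Your fallback---De Wilde/Pt\'ak closed graph or open mapping applied to $\bar P^{-1}$---is circular, as you concede: those theorems require $\mathrm{Range}(\bar P)$ to be closed (or at least of a suitable class in the induced topology), which is what is to be proved.

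By contrast, the paper does not prove closed range by hand: it invokes \cite[Corollary 5.4]{AFR22proc}, whose hypothesis is formulated across the scale of spaces---for some $\sigma_+>\sigma$, $u\in\mathcal{S}_{\sigma_+,\mu}(\G)$ and $Pu\in\S$ imply the existence of $v\in\S$ with $Pv=Pu$---and this hypothesis is an immediate consequence of $\mathcal{S}_{\sigma,\mu}$-global hypoellipticity because $\mathcal{S}_{\sigma_+,\mu}(\G)\subset\SS$ (and likewise with $\mu_+>\mu$ and $\mathscr{F}_{\mu_+}(\G)$ for the second claim). All the functional-analytic work your compactness scheme attempts to reproduce is contained in that cited result, which crucially exploits the gain $\sigma_+>\sigma$ along the scale rather than compactness inside a single inductive limit. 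To repair your write-up, either cite such a result (as the paper does) or reconstruct its proof; the argument as written does not close.
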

\begin{proof}
    Since $\S$ is a DFS space, it follows from \cite[Corollary 5.4]{AFR22proc} that, if for some $\sigma_+>\sigma$ we have that
    \[u\in\mathcal{S}_{\sigma_+,\mu}(\G),\ Pu\in\S\ \Rightarrow\ \exists v\in \mathcal{S}_{\sigma,\mu}(\G)\text{ such that }Pu=Pv,\]
    then $P$ is $\mathcal{S}_{\sigma,\mu}$-globally solvable. In particular, the $\mathcal{S}_{\sigma,\mu}$-global hypoellipticity of $P$ implies the property above, which gives us the $\mathcal{S}_{\sigma,\mu}$-global solvability of $P$. 
    
    Next, note that given $\mu_+>\mu$, we have that $\mathscr{F}_\mu(\G)\subset \mathscr{F}_{\mu_+}(\G)$. The $\mathscr{F}_\mu$-global hypoellipticity implies the following property:
    \[u\in\mathscr{F}_{\mu_+}(\G),\ Pu\in\mathscr{F}_\mu(\G)\ \Rightarrow\ \exists v\in \mathscr{F}_\mu(\G)\text{ such that }Pu=Pv.\]
    
  Since $\mathscr{F}_\mu(\G)$ is a DFS space, again by \cite[Corollary 5.4]{AFR22proc} we obtain the $\mathscr{F}_\mu$-global solvability of $P$ .
\end{proof}

Hence, the results of the previous sections also give us sufficient conditions for the global solvability of operators on $\T^1\times\R$.

\appendix\section{Technical results}\label{appendix}

\begin{lemma}[Fa\`a di Bruno's Formula]\label{faa}
	Given $f\in C^\infty(\mathbb{R})$ and $N\in\mathbb{N}$, we have that
	\begin{equation*}{\partial_t^N}e^{f(t)}=e^{f(t)}\displaystyle\sum_{\tau\in\Delta(N)}\dfrac{N!}{\tau!}\prod_{\ell=1}^{N}\left(\dfrac{1}{\ell!}\partial_t^\ell f(t)\right)^{\tau_\ell},
    \end{equation*}
    where
    \begin{equation}\label{Delta_set_defi}
    \Delta(N)=\{\tau=(\tau_1,\dots,\tau_N)\in\N_0^{N}:\tau_1+2\tau_2+\dots+N\tau_N=N\}.
\end{equation}
\end{lemma}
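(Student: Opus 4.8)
The plan is to prove the formula by induction on $N$. For $N=1$ the set $\Delta(1)$ consists of the single multi-index $\tau=(1)$, so the right-hand side collapses to $e^{f(t)}\partial_t f(t)=\partial_t e^{f(t)}$ by the chain rule.

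For the inductive step, assume the identity holds for some $N\ge1$ and differentiate both sides once more. Applying the product rule to $e^{f(t)}\prod_{\ell=1}^{N}\big(\partial_t^\ell f/\ell!\big)^{\tau_\ell}$ splits $\partial_t^{N+1}e^{f}$ into two groups of terms: one in which $e^{f}$ is differentiated, producing a factor $\partial_t f=\partial_t^1 f/1!$; and one in which the product is differentiated, where the Leibniz rule yields, for each $j$ with $\tau_j\ge1$, a term carrying a factor $\tau_j$ together with $\partial_t\big(\partial_t^j f/j!\big)=(j+1)\,\partial_t^{j+1}f/(j+1)!$. In both cases the surviving monomial is again of the form $\prod_{\ell=1}^{N+1}\big(\partial_t^\ell f/\ell!\big)^{\sigma_\ell}$ with $\sigma\in\Delta(N+1)$: writing $e_j$ for the $j$-th unit multi-index, the first group sends $\tau\in\Delta(N)$ to $\sigma=\tau+e_1$, and the $j$-th part of the second group sends $\tau$ to $\sigma=\tau-e_j+e_{j+1}$.

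The crux of the argument is the ensuing bookkeeping: fix $\sigma\in\Delta(N+1)$ and add up the weights of all its preimages. The preimage $\tau=\sigma-e_1$, present exactly when $\sigma_1\ge1$, contributes $\tfrac{N!}{\tau!}=\tfrac{N!\,\sigma_1}{\sigma!}$, while the preimage $\tau=\sigma+e_j-e_{j+1}$, present exactly when $\sigma_{j+1}\ge1$, contributes $\tfrac{N!}{\tau!}\,\tau_j\,(j+1)=\tfrac{N!\,(j+1)\,\sigma_{j+1}}{\sigma!}$; in both cases the absent preimages correspond to a vanishing numerator, so no case distinction is needed. Summing, the coefficient of $e^{f}\prod_\ell\big(\partial_t^\ell f/\ell!\big)^{\sigma_\ell}$ equals $\tfrac{N!}{\sigma!}\big(\sigma_1+\sum_{j=1}^{N}(j+1)\sigma_{j+1}\big)=\tfrac{N!}{\sigma!}\sum_{m=1}^{N+1}m\,\sigma_m=\tfrac{(N+1)!}{\sigma!}$, the last step being exactly the defining relation $\sum_m m\sigma_m=N+1$ of $\Delta(N+1)$. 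This closes the induction. I expect the weight-counting to be the only delicate point; the rest is a routine use of the product and chain rules.

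As an alternative, one could avoid most of the combinatorics as follows. An easy induction via the product rule shows $\partial_t^N e^{f(t)}=e^{f(t)}Q_N\big(f'(t),\dots,f^{(N)}(t)\big)$ for a universal polynomial $Q_N$; hence both sides are unchanged if $f$ is replaced by its degree $N$ Taylor polynomial at the fixed point $t$. Then, writing $e^{f(t+h)}=e^{f(t)}\exp\big(\sum_{\ell\ge1}\tfrac{f^{(\ell)}(t)}{\ell!}h^\ell\big)$, expanding the outer exponential as a power series in $h$ and reading off the coefficient of $h^N$ via the multinomial theorem reproduces the right-hand side. I would present the inductive argument as the main proof.
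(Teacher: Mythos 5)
Your proof is correct, but there is nothing in the paper to compare it with: Lemma \ref{faa} is stated in the appendix as a classical identity without any proof (and, unlike the other appendix lemmas, without even a reference), so you are supplying an argument where the paper gives none. Your induction is the standard one and the delicate point—the recollection of coefficients—checks out: for $\sigma\in\Delta(N+1)$ the preimage $\tau=\sigma-e_1$ contributes $N!\,\sigma_1/\sigma!$ and the preimage $\tau=\sigma+e_j-e_{j+1}$ contributes $N!\,(j+1)\,\sigma_{j+1}/\sigma!$, and the observation that absent preimages carry a vanishing numerator legitimately removes all case distinctions, so the total is $\frac{N!}{\sigma!}\sum_{m=1}^{N+1}m\,\sigma_m=\frac{(N+1)!}{\sigma!}$ as required; you also correctly verify that the images $\tau+e_1$ and $\tau-e_j+e_{j+1}$ land in $\Delta(N+1)$. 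The alternative route is also sound: the preliminary induction showing $\partial_t^N e^{f}=e^{f}Q_N(f',\dots,f^{(N)})$ is exactly what justifies replacing $f$ by its degree-$N$ Taylor polynomial, after which the exponential of a polynomial in $h$ is entire and extracting the coefficient of $h^N$ by the multinomial theorem is a finite, legitimate computation. Either version would serve; the inductive one is self-contained and matches the combinatorial normalization $N!/\tau!\cdot\prod_\ell(\partial_t^\ell f/\ell!)^{\tau_\ell}$ used throughout the paper.
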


\begin{lemma}\label{derivative-reciprocal}
    Let $g\in C^{\infty}(\R)$ be such that $g(t)\neq 0$ for every $t\in\R$. Then for every $n\in\N$ we have that
    \begin{equation*}
        \partial_t^n [g(t)^{-1}]=\sum_{k=1}^n(-1)^k\binom{n+1}{k+1}\frac{1}{g(t)^{k+1}}\partial_t^n[g(t)^k],\quad \forall t\in\R.
    \end{equation*}
\end{lemma}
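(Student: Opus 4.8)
The plan is to establish the identity by reorganizing its right-hand side through the factorization $g^k=g^{k+1}\cdot g^{-1}$ and the Leibniz rule, then isolating the principal term and showing that every remaining contribution vanishes as an alternating binomial sum of a polynomial of too small degree. First I would observe that, since $n\ge 1$, the summand $k=0$ may be freely appended to the sum: it equals $(n+1)g^{-1}\partial_t^n[1]=0$. So it suffices to prove $\sum_{k=0}^{n}(-1)^k\binom{n+1}{k+1}g^{-(k+1)}\partial_t^n[g^k]=\partial_t^n[g^{-1}]$. Next I would substitute $\partial_t^n[g^k]=\partial_t^n[g^{k+1}g^{-1}]=\sum_{j=0}^n\binom{n}{j}\partial_t^j[g^{k+1}]\,\partial_t^{n-j}[g^{-1}]$ and interchange the two finite sums, rewriting the right-hand side as
\[\sum_{j=0}^{n}\binom{n}{j}\,\partial_t^{n-j}[g^{-1}]\left(\sum_{k=0}^{n}(-1)^k\binom{n+1}{k+1}\frac{\partial_t^j[g^{k+1}]}{g^{k+1}}\right).\]

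The heart of the matter is to evaluate the inner bracket for each fixed $j$. For $j=0$ it reduces to $\sum_{k=0}^{n}(-1)^k\binom{n+1}{k+1}$, which, after reindexing $m=k+1$ and invoking $\sum_{m=0}^{n+1}(-1)^m\binom{n+1}{m}=0$, equals $1$; this recovers exactly the term $\partial_t^n[g^{-1}]$. For $1\le j\le n$ I would introduce $\phi_j(m):=g^{-m}\partial_t^j[g^m]$ and prove, by induction on $j$ using the recursion $\phi_{j+1}(m)=mL\,\phi_j(m)+\partial_t\phi_j(m)$, where $L:=g'/g$ is well defined because $g$ never vanishes, that $m\mapsto\phi_j(m)$ is a polynomial of degree $j$ with $\phi_j(0)=0$ (Fa\`a di Bruno, Lemma \ref{faa}, gives an alternative route to this fact). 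Using $\phi_j(0)=0$ to append the index $m=0$, and then the finite-difference identity $\sum_{m=0}^{N}(-1)^m\binom{N}{m}P(m)=0$ for every polynomial $P$ with $\deg P<N$ — here $N=n+1$ and $\deg\phi_j=j\le n$ — shows that the inner bracket vanishes. Only the $j=0$ term survives, which is the claim.

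The single non-routine step is recognizing the factorization $g^k=g^{k+1}g^{-1}$, after which the statement decomposes into a $j=0$ contribution that is precisely the left-hand side and higher-order terms that are alternating sums of a polynomial of degree at most $n$ in the summation index. The supporting facts are entirely elementary: the degree and vanishing properties of $\phi_j$ follow from a one-line induction, and the finite-difference identity can be recalled by applying $(x\,d/dx)^r$ to $\sum_{m=0}^{N}(-1)^m\binom{N}{m}x^m=(1-x)^N$ and setting $x=1$. I therefore anticipate no real obstacle beyond assembling these pieces with care; a direct induction on $n$ seems less convenient precisely because $\partial_t^n$ occurs on both sides with different arguments.
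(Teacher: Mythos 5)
Your proof is correct, and it takes a genuinely different route from the paper, which offers no written argument at all: the paper's proof of Lemma \ref{derivative-reciprocal} merely remarks that the identity follows from a general form of the Fa\`a di Bruno formula and otherwise points to a reference. Your argument is self-contained and elementary: appending the vanishing $k=0$ term, factoring $g^{k}=g^{k+1}g^{-1}$, applying the Leibniz rule, and exchanging sums reduces everything to the inner sums $\sum_{m=1}^{n+1}(-1)^{m-1}\binom{n+1}{m}\phi_j(m)$ with $\phi_j(m)=g^{-m}\partial_t^j[g^m]$; the $j=0$ sum equals $1$ and reproduces $\partial_t^n[g^{-1}]$, while for $1\le j\le n$ the sum is, up to sign, the $(n+1)$-st finite difference at the integers $0,\dots,n+1$ of a polynomial in $m$ of degree at most $j\le n$, hence zero. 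This buys a verification using only the Leibniz rule and the vanishing of high-order finite differences of low-degree polynomials, at the price of the combinatorial bookkeeping that Fa\`a di Bruno would package. Two points worth making explicit in a write-up: first, state the induction as ``for integer $m\ge 0$, $\phi_j(m)$ agrees with a polynomial in $m$ of degree at most $j$ whose coefficients are smooth functions of $t$'' --- ``at most'' suffices and avoids worrying that the leading coefficient $(g'/g)^j$ may vanish at some $t$; second, note that $\phi_j(0)=\partial_t^j[1]=0$ for $j\ge 1$ holds directly, which is what licenses adjoining the $m=0$ term before invoking the finite-difference identity with $N=n+1$. Since only integer powers of $g$ appear, the argument also applies verbatim to complex-valued $g$, which is how the lemma is actually used in the paper.
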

\begin{proof}
	This follows from the more general version of the Faà di Bruno's Formula. Alternatively, another proof can be found in  \cite{Leslie01101991}.
\end{proof}

\begin{lemma}\label{lemma_algebraic_1}
    If $\tau=(\tau_1,\dots,\tau_N)\in\Delta(N)$ then, for any $\sigma\geq 1$, we have that
    \begin{equation*}
        |\tau|!^{\sigma}\prod_{\ell=1}^{N}\ell!^{(\sigma-1)\tau_{\ell}}\leq |\tau|!N!^{\sigma-1}
    \end{equation*}
\end{lemma}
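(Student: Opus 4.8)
The plan is to reduce the inequality to the purely combinatorial statement
\[
|\tau|!\,\prod_{\ell=1}^{N}(\ell!)^{\tau_\ell}\ \le\ N!\qquad\text{for all }\tau\in\Delta(N),
\]
and then to prove this. For the reduction, divide the claimed estimate by $|\tau|!>0$; what remains is
\[
\Bigl(|\tau|!\,\prod_{\ell=1}^{N}(\ell!)^{\tau_\ell}\Bigr)^{\sigma-1}\ \le\ (N!)^{\sigma-1},
\]
and since $\sigma-1\ge 0$ and both bases are positive, this is immediate from the displayed combinatorial inequality by monotonicity of $x\mapsto x^{\sigma-1}$ on $(0,\infty)$.

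To prove the combinatorial inequality I would relabel the multi-index. Set $m:=|\tau|$, which is at least $1$ because $\sum_{\ell}\ell\tau_\ell=N\ge 1$, and list the parts of $\tau$ as a tuple $(a_1,\dots,a_m)$, letting each integer $\ell\in\{1,\dots,N\}$ appear exactly $\tau_\ell$ times among the $a_j$. Then $\tau\in\Delta(N)$ translates to: the $a_j$ are positive integers with $a_1+\dots+a_m=N$, and moreover $\prod_{\ell}(\ell!)^{\tau_\ell}=a_1!\cdots a_m!$. Hence the target inequality becomes the assertion that the multinomial coefficient satisfies
\[
\frac{N!}{a_1!\cdots a_m!}\ \ge\ m!\qquad\text{whenever }a_1,\dots,a_m\ge 1\text{ and }\sum_{j=1}^{m} a_j=N.
\]

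I would prove this last bound by induction on $m$. The case $m=1$ forces $a_1=N$ and gives equality. For $m\ge 2$, peel off one part:
\[
\frac{N!}{a_1!\cdots a_m!}=\binom{N}{a_m}\,\frac{(N-a_m)!}{a_1!\cdots a_{m-1}!}\ \ge\ \binom{N}{a_m}\,(m-1)!,
\]
the inductive hypothesis being applied to the positive integers $a_1,\dots,a_{m-1}$, whose sum $N-a_m$ is at least $m-1\ge 1$. Since the remaining $m-1$ parts are each $\ge 1$ we have $1\le a_m\le N-1$, and also $N\ge m$; therefore $\binom{N}{a_m}\ge\binom{N}{1}=N\ge m$, whence $\frac{N!}{a_1!\cdots a_m!}\ge m\,(m-1)!=m!$. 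The only step needing a word of justification is the estimate $\binom{N}{a_m}\ge N$, which is the unimodality of binomial coefficients: $\binom{N}{k}$ restricted to $1\le k\le N-1$ is minimized at the endpoints $k=1$ and $k=N-1$, where it equals $N$. I expect this induction — really just the verification of $\binom{N}{a_m}\ge N$ — to be the only mildly delicate point; everything else is bookkeeping.
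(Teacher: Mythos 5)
Your proof is correct. Note that the paper does not actually prove this lemma: it simply cites \cite[Lemma 2.1]{BDG2018}, so there is no internal argument to compare against, and your self-contained elementary proof is a genuine alternative to deferring to the literature. Your reduction is sound: dividing by $|\tau|!$ turns the claim into $\bigl(|\tau|!\prod_{\ell}(\ell!)^{\tau_\ell}\bigr)^{\sigma-1}\le (N!)^{\sigma-1}$, which for $\sigma\ge 1$ follows from the $\sigma$-free inequality $|\tau|!\prod_{\ell}(\ell!)^{\tau_\ell}\le N!$ (the case $\sigma=1$ being trivial since both sides are $1$). Your translation of $\Delta(N)$ into a partition $a_1+\dots+a_m=N$ with $m=|\tau|$ parts, each $a_j\ge 1$, correctly identifies $\prod_\ell(\ell!)^{\tau_\ell}=a_1!\cdots a_m!$, and the induction on $m$ showing $\frac{N!}{a_1!\cdots a_m!}\ge m!$ is valid: the factorization $\frac{N!}{a_1!\cdots a_m!}=\binom{N}{a_m}\frac{(N-a_m)!}{a_1!\cdots a_{m-1}!}$ is exact, the inductive hypothesis applies because $a_1,\dots,a_{m-1}$ are positive with sum $N-a_m$, and the bound $\binom{N}{a_m}\ge N\ge m$ for $1\le a_m\le N-1$ is the standard endpoint-minimization of binomial coefficients. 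The only cosmetic remark is that the multinomial bound $\frac{N!}{a_1!\cdots a_m!}\ge m!$ also admits a one-line combinatorial proof (each of the $m!$ orderings of the $m$ nonempty blocks yields a distinct arrangement counted by the multinomial coefficient), but your induction is equally rigorous.
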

\begin{proof}
    \cite[Lemma 2.1]{BDG2018}.
\end{proof}

\begin{lemma}\label{lemma_algebraic_2}
    For each $\N\in\N$, $\tau\in\N_0^N$ and $R\in\R$ we have that
    \begin{equation*}
        \sum_{\Delta(N)}\frac{|\tau|!}{\tau!}R^{|\tau|}=R(1+R)^{N-1}.
    \end{equation*}
\end{lemma}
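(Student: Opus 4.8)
The plan is to prove the identity by translating the left-hand side into a count of compositions of $N$ and then applying the binomial theorem. First I would observe that $\tau!=\tau_1!\cdots\tau_N!$, so $\tfrac{|\tau|!}{\tau!}=\binom{|\tau|}{\tau_1,\dots,\tau_N}$ is a multinomial coefficient, and that the set $\Delta(N)$ from \eqref{Delta_set_defi} is exactly the set of multiplicity vectors of partitions of $N$: a vector $\tau$ with $\tau_\ell$ parts equal to $\ell$ has $|\tau|$ parts in total and weight $\sum_\ell \ell\tau_\ell=N$. For a fixed such partition with $k=|\tau|$ parts, $\binom{k}{\tau_1,\dots,\tau_N}$ is precisely the number of sequences $(a_1,\dots,a_k)$ with $a_i\ge 1$ and $a_1+\cdots+a_k=N$ whose underlying multiset is that partition. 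Summing over $\tau\in\Delta(N)$ therefore collects each composition of $N$ exactly once, weighted by $R$ raised to its number of parts:
\[
\sum_{\tau\in\Delta(N)}\frac{|\tau|!}{\tau!}R^{|\tau|}=\sum_{k=1}^{N}\#\{(a_1,\dots,a_k):a_i\ge1,\ a_1+\cdots+a_k=N\}\,R^k .
\]

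Next I would use the elementary stars-and-bars count $\#\{(a_1,\dots,a_k):a_i\ge1,\ \sum a_i=N\}=\binom{N-1}{k-1}$, so that, after the substitution $j=k-1$,
\[
\sum_{\tau\in\Delta(N)}\frac{|\tau|!}{\tau!}R^{|\tau|}=\sum_{k=1}^{N}\binom{N-1}{k-1}R^k=R\sum_{j=0}^{N-1}\binom{N-1}{j}R^{j}=R(1+R)^{N-1},
\]
which is the assertion.

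As a self-contained alternative packaging of the same facts, one can use generating functions: with $S_N(R)$ denoting the left-hand side, the multinomial theorem in the form $\bigl(\sum_{\ell\ge1}t^\ell\bigr)^k=\sum_{|\tau|=k}\tfrac{k!}{\tau!}\,t^{\,\tau_1+2\tau_2+\cdots}$ and reading off the coefficient of $t^N$ give
\[
\sum_{N\ge1}S_N(R)\,t^N=\sum_{k\ge1}R^k\Bigl(\frac{t}{1-t}\Bigr)^k=\frac{Rt}{1-(1+R)t}=\sum_{N\ge1}R(1+R)^{N-1}t^N ,
\]
and comparing coefficients of $t^N$ concludes. A third option is a one-line induction on $N$ using the recursion $S_N(R)=R+R\sum_{m=1}^{N-1}S_m(R)$, obtained by splitting off the first part of a composition.

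I do not anticipate any genuine obstacle: the content is a standard combinatorial identity, and the only care required is the bookkeeping that matches $\Delta(N)$ with the partitions of $N$ and $\tfrac{|\tau|!}{\tau!}$ with the number of compositions realizing a given partition. Once that dictionary is in place, the remainder is the binomial theorem.
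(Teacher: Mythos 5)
Your argument is correct. The paper does not prove this lemma at all; it simply cites \cite[Lemma 1.4.1]{krantz}, so any complete argument you give is by construction ``different'' from what is in the text. Your main route --- identifying $\Delta(N)$ with multiplicity vectors of partitions of $N$, recognizing $\tfrac{|\tau|!}{\tau!}$ as the number of compositions of $N$ realizing that partition, counting compositions of $N$ into $k$ parts as $\binom{N-1}{k-1}$, and finishing with the binomial theorem --- is a clean, self-contained combinatorial proof, and all three of your variants (composition count, generating function via $\bigl(\tfrac{t}{1-t}\bigr)^k$ and comparison of coefficients of $t^N$, and the recursion $S_N(R)=R+R\sum_{m=1}^{N-1}S_m(R)$ with induction) are sound; the generating-function version is essentially the standard proof found in the cited reference. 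The only caveat worth noting is cosmetic: the identity holds for all real $R$ (both sides are polynomial identities in $R$, and your composition-counting proof delivers this directly, whereas the power-series packaging should be read formally or for small $t$), and the case $|\tau|\ge 1$ forced by the weight condition is what makes the right-hand side vanish at $R=0$, consistent with your formula. No gap to report.
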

\begin{proof}
	\cite[Lemma 1.4.1]{krantz}.
\end{proof}

\begin{lemma}\label{lemma-exponential}
Let $L,\mu>0$ and $s\in\N_0$. Then
\begin{equation*}
     e^{-L|x|^{\frac{1}{\mu}}}|x|^{s}\leq (\mu/L)^{\mu s}s!^{\mu},\quad\forall x\in\R.
\end{equation*}
\end{lemma}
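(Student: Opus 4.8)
The plan is to reduce the two-sided real statement to a one-variable inequality and then conclude with a single term of the exponential series. Since only $|x|$ appears, it suffices to argue for $x\ge 0$, and I would substitute $t=|x|^{1/\mu}\ge 0$, so that $|x|=t^{\mu}$ and $|x|^{s}=t^{\mu s}$. The claim then becomes
\[
e^{-Lt}\,t^{\mu s}\le (\mu/L)^{\mu s}\,s!^{\mu},\qquad t\ge 0.
\]

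The key observation is the factorization $e^{-Lt}t^{\mu s}=\bigl(e^{-(L/\mu)t}\,t^{s}\bigr)^{\mu}$, valid because $\bigl(e^{-(L/\mu)t}\bigr)^{\mu}=e^{-Lt}$ and $(t^{s})^{\mu}=t^{\mu s}$; this reduces matters to estimating the base $e^{-(L/\mu)t}t^{s}$, where now the exponent $s$ is a genuine nonnegative integer. For this I would use the elementary bound $e^{y}\ge y^{s}/s!$ for all $y\ge 0$ and $s\in\N_0$ — just one term of the Taylor expansion of the exponential, all of whose terms are nonnegative for $y\ge 0$ (with the convention $0^{0}=1$ taking care of $s=0$). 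Applying it with $y=(L/\mu)t\ge 0$ gives $e^{(L/\mu)t}\ge\bigl((L/\mu)t\bigr)^{s}/s!$, hence for all $t\ge 0$ (the case $t=0$ being immediate)
\[
e^{-(L/\mu)t}\,t^{s}\le \frac{s!\,t^{s}}{\bigl((L/\mu)t\bigr)^{s}}=s!\,(\mu/L)^{s}.
\]
Raising to the $\mu$-th power yields $e^{-Lt}t^{\mu s}\le\bigl(s!\,(\mu/L)^{s}\bigr)^{\mu}=s!^{\mu}(\mu/L)^{\mu s}$, and undoing the substitution $t=|x|^{1/\mu}$ finishes the proof.

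There is essentially no real obstacle; the only point deserving a word of care is the edge case $s=0$, for which the argument degenerates to $e^{-L|x|^{1/\mu}}\le 1$, trivially true. I note for completeness that one could instead prove the one-variable inequality by calculus: the function $t\mapsto e^{-Lt}t^{\mu s}$ attains its maximum $(\mu/L)^{\mu s}(s/e)^{\mu s}$ at $t=\mu s/L$, after which $(s/e)^{\mu s}\le s!^{\mu}$ follows from the standard bound $s!\ge(s/e)^{s}$ — but the factorization argument avoids optimization entirely and is the route I would write up.
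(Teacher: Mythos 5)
Your argument is correct and is essentially the paper's own proof: both rest on the elementary bound $e^{-y}y^{s}\le s!$ (equivalently $e^{y}\ge y^{s}/s!$) applied at $y=(L/\mu)|x|^{1/\mu}$, followed by raising the resulting inequality to the $\mu$-th power. The only cosmetic differences are your explicit substitution $t=|x|^{1/\mu}$ and your one-line Taylor-series justification of the elementary bound, which the paper simply cites as standard.
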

\begin{proof}
    The proof is a consequence of the the estimate $e^{-t}t^N\leq N!$, for every $t>0$ and $N\in\mathbb{N}_0$. Indeed, let $N=s$ and $t=\frac{L}{\mu}|x|^{\frac{1}{\mu}}$. Then, we have
    \[s!\geq e^{-\frac{L}{\mu}|x|^{\frac{1}{\mu}}}\left(\frac{L}{\mu}|x|^{\frac{1}{\mu}}\right)^{s} = \left(e^{-L|x|^{\frac{1}{\mu}}}\right)^{\frac{1}{\mu}}\left(\frac{L}{\mu}|x|^{\frac{1}{\mu}}\right)^{s}. \]
    Raising both sides of the inequality above by $\mu$, we obtain the desired estimate.
\end{proof}

\begin{lemma}\label{lemmaintegralineq}
	Let $\psi\in C^{\infty}(\T^1)$ be a smooth real-valued function such that $\psi\geq 0$ and $s_0\in\T^1$ is a zero of order greater than one for $\psi$, that is, $\psi(s_0)=0=\psi'(s_0)$. Then, there exists $M>0$ such that for all $\lambda>0$ sufficiently big and $\delta>0$ we have that
	$$\int_{s_0-\delta}^{s_0+\delta}e^{-\lambda\psi(s)}\,\mathrm{d}s\geq \left(\int_{-\delta}^{\delta}e^{-s^2}\,\mathrm{d}s\right)\lambda^{-1/2}M^{-1/2}.$$
\end{lemma}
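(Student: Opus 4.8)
The plan is to prove this by a quantitative version of Laplace's method: bound $\psi$ from above by a quadratic centered at $s_0$ and reduce the integral to a Gaussian.

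First I would fix the constant $M$. Since $\psi\in C^\infty(\T^1)$ is nonnegative with $\psi(s_0)=\psi'(s_0)=0$, Taylor's formula gives $\psi(s)=\tfrac12\psi''(s_0)(s-s_0)^2+o((s-s_0)^2)$ near $s_0$, so the function $s\mapsto\psi(s)/\operatorname{dist}_{\T^1}(s,s_0)^2$, assigned the value $\tfrac12\psi''(s_0)$ at $s=s_0$, extends continuously to all of $\T^1$ (away from $s_0$ the denominator is bounded below and the numerator is bounded). By compactness of $\T^1$ it is bounded; let $M\ge 1$ be an upper bound, which depends only on $\psi$, not on $\lambda$ or $\delta$. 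Identifying $\psi$ with its $2\pi$-periodic extension to $\R$ and using $\operatorname{dist}_{\T^1}(s,s_0)\le|s-s_0|$, we obtain $0\le\psi(s)\le M(s-s_0)^2$ for every $s\in\R$.

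Then I would estimate the integral. From this pointwise bound,
\[
\int_{s_0-\delta}^{s_0+\delta}e^{-\lambda\psi(s)}\,\mathrm{d}s\ \ge\ \int_{s_0-\delta}^{s_0+\delta}e^{-\lambda M(s-s_0)^2}\,\mathrm{d}s\ =\ \int_{-\delta}^{\delta}e^{-\lambda M u^2}\,\mathrm{d}u,
\]
and the substitution $v=\sqrt{\lambda M}\,u$ rewrites the last integral as $(\lambda M)^{-1/2}\int_{-\delta\sqrt{\lambda M}}^{\delta\sqrt{\lambda M}}e^{-v^2}\,\mathrm{d}v$. Once $\lambda$ is large enough that $\lambda M\ge 1$ (which, since $M\ge 1$, holds for every $\lambda\ge 1$), the interval $[-\delta\sqrt{\lambda M},\delta\sqrt{\lambda M}]$ contains $[-\delta,\delta]$, so positivity of $e^{-v^2}$ gives $\int_{-\delta\sqrt{\lambda M}}^{\delta\sqrt{\lambda M}}e^{-v^2}\,\mathrm{d}v\ge\int_{-\delta}^{\delta}e^{-v^2}\,\mathrm{d}v$. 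Combining these inequalities and using $(\lambda M)^{-1/2}=\lambda^{-1/2}M^{-1/2}$ yields exactly $\int_{s_0-\delta}^{s_0+\delta}e^{-\lambda\psi(s)}\,\mathrm{d}s\ge\lambda^{-1/2}M^{-1/2}\int_{-\delta}^{\delta}e^{-s^2}\,\mathrm{d}s$.

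I do not anticipate a genuine obstacle here; the only point that needs a little care is the \emph{uniformity} of $M$ with respect to $\delta$ and $\lambda$, which is exactly why I extract the quadratic majorant $\psi(s)\le M(s-s_0)^2$ globally (rather than on an interval depending on $\delta$) from compactness of $\T^1$ together with the second-order vanishing of $\psi$ at $s_0$; everything else is a routine substitution and monotonicity of the integral.
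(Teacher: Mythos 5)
Your proof is correct: the global quadratic majorant $\psi(s)\le M\,(s-s_0)^2$ (obtained from Taylor expansion at $s_0$ plus compactness of $\T^1$, with $M\ge 1$) reduces the integral to a Gaussian, and the substitution plus monotonicity step gives exactly the claimed bound uniformly in $\delta$ once $\lambda\ge 1$, which is the only delicate point and you address it explicitly. The paper does not reproduce a proof here but simply cites the first author's earlier work, where the argument is of the same Laplace-method type, so your route matches the intended one.
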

\begin{proof}
	See \cite{KowTmRn}.
\end{proof}

\begin{lemma}\label{lemmaodesol}
	Let $g,\theta \in C^{\infty}(\T^1)$, and set $\theta_0 = \frac{1}{2\pi}\int_0^{2\pi}\theta(t)\,\mathrm{d}t$. If $\theta_0\not\in i\mathbb{Z}$, then the differential equation
	\begin{equation}\label{ode}
		\partial_t u(t)+\theta(t)u(t)=g(t),\,\qquad t\in\T^1,
	\end{equation}
	admits a unique solution in $C^\infty(\T^1)$ given by
	\begin{equation}\label{sol-}
		u(t) = \frac{1}{1-e^{-2\pi\theta_0}}\int_0^{2\pi}g(t-s)e^{-\int_{t-s}^t\theta(\tau)\,\mathrm{d}\tau}\,\mathrm{d}s,
	\end{equation} 
	or equivalently by
	\begin{equation}\label{sol+}
		u(t) = \frac{1}{e^{2\pi\theta_0}-1}\int_0^{2\pi}g(t+s)e^{\int_{t}^{t+s}\theta(\tau)\,\mathrm{d}\tau}\,\mathrm{d}s.
	\end{equation}
	
	If $\theta_0\in i\mathbb{Z}$, then equation \eqref{ode} admits infinitely many solutions given by
	\begin{equation*}
		u_\lambda(t) = \lambda e^{-\int_0^t\theta(\tau)\,\mathrm{d}\tau}+\int_0^t g(s)e^{-\int_s^t\theta(\tau)\,\mathrm{d}\tau}\,\mathrm{d}s,
	\end{equation*}
	for every $\lambda\in\mathbb{R}$, if and only if
	$$ \int_0^{2\pi}g(t)e^{\int_0^t\theta(\tau)\,\mathrm{d}\tau}\,\mathrm{d}t=0.$$
\end{lemma}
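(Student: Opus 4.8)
The plan is to treat \eqref{ode} as a linear first-order ODE on $\R$ with $2\pi$-periodic coefficients and to determine which of its solutions descend to $\T^1$. First I would set $\Theta(t)=\int_0^t\theta(\tau)\,\mathrm{d}\tau$, which is smooth and, by periodicity of $\theta$, satisfies the quasi-periodicity relation $\Theta(t+2\pi)=\Theta(t)+2\pi\theta_0$. Multiplying \eqref{ode} by the integrating factor $e^{\Theta(t)}$ rewrites it as $\partial_t\!\left(e^{\Theta(t)}u(t)\right)=e^{\Theta(t)}g(t)$, so every solution on $\R$ is of the form
\[
u(t)=e^{-\Theta(t)}\!\left(c+\int_0^t e^{\Theta(s)}g(s)\,\mathrm{d}s\right),\qquad c=u(0).
\]
Such a $u$ defines a smooth function on $\T^1$ exactly when $u(2\pi)=u(0)$: the matching of all derivatives at the endpoints is then automatic, obtained by differentiating \eqref{ode} repeatedly and using that $g,\theta$ are $2\pi$-periodic.

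Next I would impose this single scalar compatibility condition. Evaluating the formula at $t=2\pi$ and using $\Theta(2\pi)=2\pi\theta_0$ gives
\[
\left(1-e^{-2\pi\theta_0}\right)c=e^{-2\pi\theta_0}\int_0^{2\pi}e^{\Theta(s)}g(s)\,\mathrm{d}s.
\]
If $\theta_0\notin i\Z$ then $e^{-2\pi\theta_0}\neq1$, so $c$ is uniquely determined, namely $c=\left(e^{2\pi\theta_0}-1\right)^{-1}\int_0^{2\pi}e^{\Theta(s)}g(s)\,\mathrm{d}s$, and reinserting it yields the unique periodic solution; differentiation under the integral sign confirms that the resulting function is genuinely smooth and solves \eqref{ode}, so existence is not merely formal.

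Then I would turn this expression into the two stated forms. Substituting $c$ and combining the integrals, the identity $e^{2\pi\theta_0}e^{\Theta(s)}=e^{\Theta(s+2\pi)}$ together with the periodicity of $g$ lets one shift the range $[0,t]$ to $[2\pi,2\pi+t]$, producing $u(t)=e^{-\Theta(t)}\left(e^{2\pi\theta_0}-1\right)^{-1}\int_t^{t+2\pi}e^{\Theta(s)}g(s)\,\mathrm{d}s$; the change of variables $s=t+r$ together with $\Theta(t+r)-\Theta(t)=\int_t^{t+r}\theta(\tau)\,\mathrm{d}\tau$ then gives \eqref{sol+}, while the parallel manipulation, using $\left(e^{2\pi\theta_0}-1\right)^{-1}=e^{-2\pi\theta_0}\left(1-e^{-2\pi\theta_0}\right)^{-1}$ to shift instead to $[t-2\pi,t]$ and then $s=t-r$, gives \eqref{sol-}.

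Finally, for $\theta_0\in i\Z$ we have $e^{-2\pi\theta_0}=1$, so the compatibility condition collapses to $\int_0^{2\pi}e^{\Theta(s)}g(s)\,\mathrm{d}s=0$, i.e.\ $\int_0^{2\pi}g(t)e^{\int_0^t\theta(\tau)\,\mathrm{d}\tau}\,\mathrm{d}t=0$; when it holds, $c$ is a free parameter, and writing $e^{-\Theta(t)}\int_0^t e^{\Theta(s)}g(s)\,\mathrm{d}s=\int_0^t g(s)e^{-\int_s^t\theta(\tau)\,\mathrm{d}\tau}\,\mathrm{d}s$ with $c=\lambda$ recovers exactly the family $u_\lambda$, whose members are pairwise distinct since $u_\lambda(0)=\lambda$; if the integral does not vanish, no periodic solution exists. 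The argument has no genuine obstacle; the only mildly delicate points are the bookkeeping in the changes of variables that symmetrize the solution into \eqref{sol-} and \eqref{sol+}, and the remark that periodicity of a smooth solution on $\T^1$ reduces to the scalar condition $u(2\pi)=u(0)$.
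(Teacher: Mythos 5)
Your proof is correct and complete. The paper itself offers no argument for this lemma---it simply cites an external reference (Lemma A.4 of the work cited as \cite{Lucy})---and what you wrote is precisely the standard argument that such a reference contains: integrating factor $e^{\Theta(t)}$, reduction of periodicity of a solution on $\R$ to the single monodromy condition $u(2\pi)=u(0)$ (justified, as you note, by uniqueness for the initial value problem, so that $u(\cdot+2\pi)$ and $u$ coincide once they agree at one point), unique determination of $c$ when $e^{2\pi\theta_0}\neq 1$, and the two changes of variables using $\Theta(t+2\pi)=\Theta(t)+2\pi\theta_0$ that convert the solution into the forms \eqref{sol-} and \eqref{sol+}. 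The resonant case $\theta_0\in i\Z$ is also handled correctly: the compatibility condition collapses to $\int_0^{2\pi}g(t)e^{\int_0^t\theta(\tau)\,\mathrm{d}\tau}\,\mathrm{d}t=0$, under which $c=\lambda$ is free and yields exactly the family $u_\lambda$, and otherwise no periodic solution exists. The only virtue your write-up adds over the paper is self-containedness; there is no gap to report.
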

\begin{proof}
   \cite[Lemma A.4]{Lucy}.
\end{proof}

\bibliographystyle{plain}
\bibliography{references}

\end{document}